\newif\ifarxiv
\newtheorem{theorem}{Theorem}[section]
\newtheorem{corollary}[theorem]{Corollary}
\newtheorem{definition}[theorem]{Definition}
\newtheorem{example}[theorem]{Example}
\newtheorem{lemma}[theorem]{Lemma}
\newtheorem{proposition}[theorem]{Proposition}
\newtheorem{remark}[theorem]{Remark}
\makeatletter\@addtoreset{equation}{section}\makeatother
\DeclareFontFamily{U}{mathx}{\hyphenchar\font45}
\DeclareFontShape{U}{mathx}{m}{n}{
      <5> <6> <7> <8> <9> <10>
      <10.95> <12> <14.4> <17.28> <20.74> <24.88>
      mathx10
      }{}
\DeclareSymbolFont{mathx}{U}{mathx}{m}{n}
\DeclareMathAccent{\widecheck}{0}{mathx}{"71}
\newcommand{\eex}{\hbox{}\hfill{\rule{.8ex}{.8ex}}}
\newcommand{\eremk}{\eex}
\newenvironment{numberedproof}[2][Proof]{\noindent \emph{#1 #2} }{\hfill \qed}
\newcommand{\BbbR}{\mathbb R}
\newcommand{\BbbC}{\mathbb C}
\newcommand{\BbbN}{\mathbb N}
\newcommand{\BbbZ}{\mathbb Z}
\def\XXint#1#2#3{{\setbox0=\hbox{$#1{#2#3}{\int}$ }
\vcenter{\hbox{$#2#3$ }}\kern-.6\wd0}}
\begin{document}

%\begin{center}
%\Large{
%\bfseries{
\ifarxiv
\title{A note on the shift theorem for the Laplacian in polygonal domains (extended version)}
\else 
\title{A note on the shift theorem for the Laplacian in polygonal domains}
\fi
%\normalsize{\textit{\today}}
%\end{center}

%\author{J.M.~Melenk\thanks{\{(melenk@tuwien.ac.at, claudio.rojik@tuwien.ac.at)\} Institut f\"{u}r Analysis und
%Scientific Computing, Technische Universit\"{a}t Wien, Wiedner Hauptstrasse
%8-10, A-1040 Wien, Austria.}
%\and C.~Rojik\footnotemark[1]}
%

\author{J.M.~Melenk, Wien, C.~Rojik, Wien}

%
%\maketitle
%
%
\ifarxiv
\textsl{\vspace*{2mm} \begin{center} in memoriam Ivo M.~Babu{\v s}ka (1926--2023)\end{center}}
\fi

\abstract
We present a shift theorem for solutions of the Poisson equation 
in a finite planar cone (and hence also on plane polygons) for Dirichlet, Neumann, and mixed  boundary conditions. 
The range in which the shift theorem holds depends on the angle of the cone. For the right endpoint of the range, 
the shift theorem is described in terms of Besov spaces rather than Sobolev spaces.  
\endabstract

\keywords
Besov spaces, corner domains, corner singularities, Mellin calculus
\endkeywords

\subjclass
%%%%%
%%%Mathematics Subject Classification 2020
%%%%%
35J25, 
35B65
\endsubjclass

\thanks
JMM acknowledges support by the Austrian Science Fund (FWF) project \href{https://doi.org/10.55776/F65}{10.55776/F65} and CR support
by the FWF under project P 28367-N35. 
\endthanks

%\maketitle

\section{Introduction}

The classical shift theorem for second order elliptic boundary value problems 
expresses the observation that the regularity of the solution $u$ is 
two Sobolev orders better than the right-hand side $f$. For example, 
for the Laplacian with Dirichlet boundary conditions and smooth domains,  
this shift theorem takes the form 
\begin{equation}
\label{eq:shift-theorem-intro} 
\text{ $f \in H^{-1+s}$ implies $u \in H^{1+s}$ } 
\end{equation}
for any $s \ge 0$, \cite{Evans_2010}, \cite[Chap.~2]{Grisvard_2011}. 
In 2D polygonal domains or even Lipschitz domains, the shift theorem
(\ref{eq:shift-theorem-intro}) is still valid, however, for a restricted  
range of values $s \in [0,s_0)$, where $s_0 = 1/2$ for Lipschitz domains \cite{savare98} and $s_0 > 1/2$ for 
polygonal $\Omega$ depends on the interior angles 
of $\Omega$, \cite{Grisvard_2011}. While the shift theorem does not hold 
in the limiting case $s = s_0$ in the scale of Sobolev spaces,  
we show that it holds in suitable Besov spaces. 

We prove the shift theorem in the limiting case $s = s_0$ using the 
well-known expansion of the solution in terms of singularity functions. For the purpose of exposition, consider a 
cone ${\mathcal C}$ with apex at the origin and angle $\omega > \pi$. 
Then, near the origin, a solution $u \in H^1({\mathcal C})$ of the Dirichlet problem can be written as 
\begin{equation}
\label{eq:expansion-intro}
u = S(f) s^+ \chi + u_0
\end{equation}
where $s^+$ is a known singularity function 
(see  (\ref{eq:s1D}), where $s^+ = s_1^D$), $\chi$ is a smooth cut-off function with $\chi \equiv 1$ near the origin, $u_0 \in H^2$ for 
$f \in L^2$, and $f \mapsto S(f)$ is a linear functional. We prove the shift
theorem in Besov spaces for the limiting case using three ingredients: 
\begin{enumerate*}[label=(\roman*)]
\item 
we assert that $s^+ \in B^{\alpha}_{2,\infty}$ for a Besov space $B^\alpha_{2,\infty}$; 
\item 
we show that $f \mapsto S(f)$ is a linear functional on a Besov space of the type $B^{\alpha'}_{2,1}$; 
\item 
we use the Mellin calculus to get a shift theorem for the mapping $f \mapsto u_0$.  
\end{enumerate*}

Shift theorems involving Besov spaces for the endpoints of the Sobolev scale have been 
shown to be appropriate in \cite[Thm.~{2}]{savare98}. For Lipschitz domains and Dirichlet conditions, 
it is shown that the solution $u$ of the Poisson problem $-\nabla \cdot (a \nabla u) = f$ (with sufficiently regular
elliptic $a$) satisfies $\|u\|_{B^{1+1/2}_{2,\infty}(\Omega)} \lesssim \|f\|_{B^{-1+1/2}_{2,1}(\Omega)}$. 
(A similar result holds for Neumann boundary conditions.) The 
proof relies on difference quotient techniques that are adapted to Dirichlet or Neumann conditions; an extension to 
mixed boundary conditions has to impose convexity conditions on the geometry, \cite{EF1999,Eb2002}. 
The endpoint result of the shift theorem of \cite[Thm.~{2}]{savare98} implies by interpolation the regularity result 
for the Poisson problem of \cite[Thms.~{1.1}, {1.3}]{jerison-kenig95}, which was obtained by a completely different method, 
namely, tools from harmonic analysis, although interpolation spaces are employed \textsl{en route}; 
these tools from harmonic analysis allow one to show shift theorems up to $1/2$ in scales of Sobolev spaces 
for the Dirichlet or Neumann Laplace problem (i.e., homogeneous right-hand side but inhomogeneous boundary conditions) 
on Lipschitz domains \emph{including} the endpoint $1/2$, \cite{jerison-kenig81,jerison-kenig82}. 

Moving from general Lipschitz domains to polygonal (in 2D) or polyhedral (in 3D) gives the solution more structure. 
A powerful way to describe the solution structure consists in expansions 
of the form (\ref{eq:expansion-intro}) and the Mellin calculus to derive these expansions. Expansions
in corner domains started with the seminal work on 2D corner domains in \cite{kondratiev-1967}; a comprehensive 
discussion  of the 2D case was achieved in \cite{Grisvard_2011,grisvard92}. 
Settings in $L^p$-spaces, the much more complex higher-dimensional cases and higher order equations
and even certain nonlinear equations were addressed 
in \cite{mazya-plamenevskii78,mazya-plamenevskii78-translated}, in 
\cite{dauge88} and 
\cite{costabel-stephan-1985, CostabelbiLaplace, Grisvard_2011, dauge88, nicaise93,kozlov-mazya-rossmann-1997, mazja-plamenevsky-1977, nazarov-plamenevsky-1994,mazya-rossmann2010}. Formulas for the linear functionals $f \mapsto S(f)$ alluded to above go back to the work by Maz'ya and Plamenvskii \cite{mazja-plamenevsky-1977}. 

Describing solutions in terms of expansions of the form (\ref{eq:expansion-intro}) leads to a further possible regularity theory
for solutions of elliptic boundary value problems in corner domains, namely, the use of weighted spaces, which has applications
to finite element approximation theory on graded meshes, \cite{babuska-kellogg-pitkaranta79}. 
While corner weighted spaces 
of finite regularity are a natural habitat of solutions and data in the framework of the Mellin calculus, weighted analytic regularity for 
problems in corner domains was developed by Babu{\v s}ka and Guo in \cite{babuska-guo88,babuska-guo89} for polygonal domains 
and by Costabel, Dauge, and Nicaise in \cite{costabel-dauge-nicaise12} for polyhedra.

Elliptic shift theorems in Besov spaces have been derived 
in \cite{dahlke-devore97,dahlke-1999} with a view to characterize optimal
convergence rates for adaptive numerical methods. Our present focus on
the limiting case $s = s_0$ is close to the works 
\cite{bacuta-diss, bacuta-bramble-2002, bacuta-bramble-xu-2003}. 
Indeed, \cite{bacuta-bramble-xu-2003} obtains the same shift theorem 
as we do but effectively restricts the attention to convex domains 
with one corner with an interior angles between $\pi/2$ and $\pi$; 
 \cite{bacuta-bramble-2002} restricts to 
non-convex domains and right-hand sides $f$ in a Besov space that 
is the interpolation space between $H^{-1}$ and a subspace of $L^2$ of co-dimension $1$. 
In the present work, by analyzing the singularity function $s^+$ and the associated linear functional $f \mapsto S(f)$ 
in (\ref{eq:expansion-intro}), we are able to lift these restrictions of \cite{bacuta-bramble-2002,bacuta-bramble-xu-2003}
and show in Theorem~\ref{thm:shift-theorem-local-version} a local shift theorem near a corner without restrictions on the 
interior angle in the framework of standard Besov spaces.  Additionally, we explicitly consider Dirichlet, Neumann, and 
mixed boundary conditions. 

Our proof of the limiting case of the shift theorem relies on expansions in singularity functions and rather explicit
formulas for the stress intensity functions. Extensions to 3D might be possible for geometries with point singularities; 
the presence of edge singularities would require new tools.

Our main result, Theorem~\ref{thm:shift-theorem-local-version} is formulated in terms of $L^2$-based Besov spaces. 
Besov spaces based on $L^p$-spaces can alternatively be considered. In Section~\ref{sec:Lp} we briefly indicate 
that endpoint results in such $L^p$-based Besov spaces can be obtained by the same approach. 

%--------------------------------------------------
\subsection{Notation}
%--------------------------------------------------
%--------------------------------------------------
\subsubsection{Interpolation spaces}
%--------------------------------------------------
For Banach spaces 
$(X_0,\|\cdot\|_{X_0})$, $(X_1,\|\cdot\|_{X_1})$ with continuous embedding $X_1 \subset X_0$
and $\theta \in (0,1)$, $q \in [1,\infty]$, 
we define with the so-called ``real method''/``$K$-method'' the interpolation spaces 
$X_{\theta,q}:= (X_0,X_1)_{\theta,q} 
:= \{u \in X_0\,|\,  \|u\|_{(X_0,X_1)_{\theta,q}} < \infty\}$, 
where the norm 
$\|u\|_{X_{\theta,q}}:= \|u\|_{(X_0,X_1)_{\theta,q}}$ is given by  
\begin{equation}
\label{eq:interpolation-norm}
\|u\|_{X_{\theta,q}}:= \|u\|_{(X_0,X_1)_{\theta,q}} := 
\begin{cases}
\left( \int_{t=0}^\infty \left(t^{-\theta} K(t,u)  \right)^q \frac{dt}{t} \right)^{1/q}, & q \in [1,\infty)  \\
 \sup_{t > 0} t^{-\theta} K(t,u) & q = \infty
\end{cases}
\end{equation}
with the $K$-functional 
\begin{equation*}
K(t,u) = \inf_{v \in X_1} \|u - v\|_{X_0} + t \|v\|_{X_1} . 
\end{equation*} 
We refer to \cite{Mclean00,Tartar_2007,Triebel_2002} for discussions of interpolation spaces. 
We have the continuous embedding $X_{\theta,q} \subset X_{\theta',q'}$ if $\theta > \theta'$ ($q$, $q'$ arbitrary)
or $\theta = \theta'$ and $q \leq q'$. 
We highlight that in the present case of $X_1 \subset X_0$, the integral $\int_{0}^\infty$ 
in (\ref{eq:interpolation-norm}) can actually be replaced with the finite integral $\int_0^1$, 
\cite[Chap.~6, Sec.~7]{DEVore-Lorentz-93}. An important property of interpolation spaces is the Reiteration Theorem, \cite[Thm.~{26.3}]{Tartar_2007}, 
which states that for $0 \leq \theta_1 < \theta_2 \leq 1$ and arbitrary $\theta \in (0,1)$, $q_1$, $q_2$, $q \in [1,\infty]$
one has (with norm equivalence) 
$(X_{\theta_1,q_1}, X_{\theta_2,q_2})_{\theta_1 (1-\theta) + \theta_2 \theta,q} = X_{\theta,q}$.

%--------------------------------------------------
\subsubsection{Sobolev and Besov spaces}
%--------------------------------------------------
For domains $D \subset \BbbR^d$, $d \in \{1,2\}$, we employ the usual Sobolev spaces $H^s(D)$ 
and $\widetilde{H}^s(D)$ for $s \in \BbbR$ as described in, e.g., \cite{Mclean00} or \cite{Triebel2ndEd}. 
To be specific and following \cite{Mclean00}, with the space ${\mathcal S}^\star$ of tempered distributions and the Fourier transformation ${\mathcal F}$, 
the spaces $H^s(\BbbR^d)$ are given by $ H^s(\BbbR^d) = \{u \in {\mathcal S}^\star \colon \|u\|^2_{H^s(\BbbR^d)}:=\int_{\xi \in \BbbR^d} (1 + |\xi|^2)^s |{\mathcal F} u|^2 \, d\xi < \infty\}$. 
We set $H^s(D)  := \{ u \in {\mathcal D}^\star(D)\colon u = U|_D \text{ for some $U \in H^s(\BbbR^d)$}\}$ with the norm 
$\|u\|_{H^s(D)}:= \inf\{ \|U\|_{H^s(\BbbR^d)}\,|\, U|_D = u\}$, where ${\mathcal D}^\star(D)$ denotes the space of distributions on $D$. 
We set $\widetilde{H}^s(D):= \{u \in H^s(\BbbR^d)\,|\,  \operatorname{supp} u \subset \overline{D}\}$
with norm $\|u\|_{\widetilde{H}^s(D)}=  \|u\|_{H^s(\BbbR^d)}$. (The space $\widetilde{H}^s(D)$ is denoted $H^s_{\overline{D}}$ in \cite[p.~76]{Mclean00} but 
coincides with the space $\widetilde{H}^s(D)$ defined in \cite[p.~77]{Mclean00} by \cite[Thm.~{3.29}]{Mclean00}.) An important relation of these 
spaces is the duality relation \cite[Thm.~{3.30}]{Mclean00}
\begin{align*}
\widetilde{H}^{-s}(D) &= \left(H^s(D) \right)^\star, 
&
H^{-s}(D) &= \left(\widetilde{H}^s(D) \right)^\star, 
\qquad s \in \BbbR. 
\end{align*}
Furthermore, one has $H^0(D) = \widetilde{H}^0(D) = L^2(D)$ and, 
by \cite[Cor.~{1.4.4.5}]{Grisvard_2011} for $s \in (0,1/2)$ and by duality for $s \in (-1/2,0)$, 
\begin{align}
\label{eq:Hs=tildeHs}
H^s(D) = \widetilde{H}^s(D), \quad |s| < 1/2. 
\end{align}

The two scales $H^s(D)$, $\widetilde{H}^s(D)$, $s \in \BbbR$, of Sobolev spaces are scales of interpolation spaces: By \cite[Thms.~{B.8}, {B.9}]{Mclean00} 
we have for $s_1$, $s_2 \in \BbbR$, $\theta \in (0,1)$, 
\begin{align}
\label{eq:sobolev-space-interpolation-scale}
(H^{s_1}(D), H^{s_2}(D))_{\theta,2} &= H^{(1-\theta) s_1 + \theta s_2}(D) , 
&
(\widetilde{H}^{s_1}(D), \widetilde{H}^{s_2}(D))_{\theta,2} &= \widetilde{H}^{(1-\theta) s_1 + \theta s_2}(D). 
\end{align}
The scales of Besov spaces $B^s_{2,q}(D)$ and $\widetilde{B}^s_{2,q}(D)$ are defined by interpolating between Sobolev spaces: given $s \in \BbbR$, select
$s_1 < s < s_2$ and set 
\begin{align}
\label{eq:besov-space}
B^s_{2,q}(D)&:= (H^{s_1}(D), H^{s_2}(D))_{\theta,q}, 
&
\widetilde{B}^s_{2,q}(D)&:= (\widetilde{H}^{s_1}(D), \widetilde{H}^{s_2}(D))_{\theta,q}, 
\qquad \theta:= \frac{s - s_1}{s_2 - s_1}. 
\end{align}
The Reiteration Theorem \cite[Thm.~{26.3}]{Tartar_2007} asserts that the precise choice of $s_1$, $s_2$ is immaterial. 
(\ref{eq:Hs=tildeHs}) also implies 
\begin{equation}
\label{eq:B=tildeB}
B^s_{2,q}(D)=  \widetilde{B}^s_{2,q}(D), 
\qquad |s| < 1/2, \quad q \in [1,\infty]. 
\end{equation}

%--------------------------------------------------
\subsection{Setting and main results}
%--------------------------------------------------
We study the regularity of solutions of elliptic problems
in a cone. For an angle $\omega \in (0,2\pi)$ we therefore introduce 
in polar coordinates\footnote{throughout, we will freely identify points $\mathbf{x} = (x,y) \in \BbbR^2$ 
either in Cartesian or polar coordinates $(r,\phi)$} $(r,\phi)$ 
the cone $\mathcal{C}$  and the truncated cones $\mathcal{C}_R$ by 
\begin{align} 
\label{eq:cone} 
\mathcal{C}&:= \{(r\cos \phi,r \sin\phi)\,|\,  r > 0, \phi \in G\}, 
\qquad G:= (0,\omega), \\
\mathcal{C}_R &:= \mathcal{C} \cap B_R(0), 
\end{align} 
where $B_r(0)\subset \BbbR^2$ denotes the (open) ball of radius $r > 0$ centered at $0$. 
The two lateral sides of $\mathcal{C}$ are $\Gamma_0 = \{(r,0)\,|\,  r > 0\}$ 
and $\Gamma_\omega = \{(r \cos \omega,r \sin\omega)\,|\,  r > 0\}$. The three
boundary parts of $\mathcal{C}_R$ are 
$\Gamma_{0,R} = \Gamma_0 \cap B_R(0)$, 
$\Gamma_{\omega,R} = \Gamma_\omega \cap B_R(0)$, and  
$\widetilde \Gamma_R:= \{(R\cos \phi,R\sin\phi)\,|\,  \phi \in (0,\omega)\}$. 
We consider 
$H^1(\mathcal{C}_R)$-functions $u$ that satisfy 
\begin{subequations}
\label{eq:problem}
\begin{align}
\label{eq:problem-a}
- \Delta u & = f \quad \mbox{ in $\mathcal {C}_R$}, \\
\label{eq:problem-b}
 u & = 0 \quad \mbox{ on $\Gamma_D$}, \\
\label{eq:problem-c}
 \partial_n u & = 0 \quad \mbox{ on $\Gamma_N$}. 
\end{align}
\end{subequations}
Concerning the boundary conditions, we consider three cases: 
\begin{itemize}
\item 
\emph{Dirichlet case}:  
$\Gamma_D = \Gamma_{0,R} \cup \Gamma_{\omega,R}$ and $\Gamma_N = \emptyset$; 
\item\emph{Neumann case}: 
$\Gamma_N = \Gamma_{0,R} \cup \Gamma_{\omega,R}$ and $\Gamma_D = \emptyset$; 
\item\emph{mixed case}: 
$\Gamma_D = \Gamma_{0,R}$ and $\Gamma_N = \Gamma_{\omega,R}$. 
\end{itemize} 
Equation (\ref{eq:problem}) is understood in a weak sense. That is, 
we define the space 
%\begin{equation*}
$\displaystyle H^1_D(\mathcal{C}_R):= \{v \in H^1(\mathcal{C}_R) \,|\,  v|_{\widetilde \Gamma_R \cup \Gamma_D} = 0\} 
$
%\end{equation*}
and its dual $H^{-1}_D(\mathcal{C}_R):= \left(H^1_D(\mathcal{C}_R)\right)^\star$. 
%For $k \ge 0$ we introduce the spaces 
%\begin{align*}
%H_D^k(\mathcal{C}_R) = \{u\in H^k(\mathcal{C}_R):u|_{\Gamma_D}=0\}
%\end{align*}
%and the corresponding dual spaces
%\begin{align*}
%H_D^{-k}(\mathcal{C}_R) = \left(H_D^k(\mathcal{C}_R)\right)^\star.
%\end{align*}
The minimal regularity assumption for (\ref{eq:problem}) is $f \in H^{-1}_D(\mathcal{C}_R)$. 
Then, 
$u \in H^1(\mathcal{C}_R)$ solves (\ref{eq:problem}) if 
$u|_{\Gamma_D} = 0$ (in the sense of traces) and 
the equations (\ref{eq:problem-a}), (\ref{eq:problem-c}) are satisfied in a weak sense, i.e., 
\begin{equation*}
\label{eq:problem-weak} 
\int_{\mathcal{C}_R} \nabla u \cdot \nabla v = \langle f, v\rangle_{H^{-1}_D(\mathcal{C}_R) \times H^1_D(\mathcal{C}_R)} 
\qquad \forall v \in H^1_D(\mathcal{C}_R). 
\end{equation*}
For solutions $u$ of (\ref{eq:problem}), we have the following result: 
\begin{theorem}[Shift theorem, Besov spaces]
\label{thm:shift-theorem-local-version}
Let $\omega \in (0,2\pi)$. 
Fix $0 < R'< R$. Let $f\in H_D^{-1}(\mathcal{C}_R)$, and let $\chi_{R} \in C^\infty_0(B_R(0))$
with $\chi_{R} \equiv 1$ on $B_{R'}(0)$. 
Then for a solution $u \in H^1(\mathcal{C}_R)$ of (\ref{eq:problem}) the following statements hold with implied constants 
depending only on $\omega$, $R$, $R'$, and $\chi_R$: 
\begin{enumerate}[label=(\roman*)]
\item \label{item:thm:shift-theorem-local-version-dirichlet} Dirichlet case: 
For $\chi_{R} f\in B_{2,1}^{\frac{\pi}{\omega}-1}(\mathcal{C}_R)$ one has 
$u\in B_{2,\infty}^{\frac{\pi}{\omega}+1}(\mathcal{C}_{R'})$ with the estimate
\begin{align}
\label{eq:thm:shift-theorem-local-version-ii}
\|u\|_{B_{2,\infty}^{\frac{\pi}{\omega}+1}(\mathcal{C}_{R'})} \lesssim \|\chi_R f\|_{B_{2,1}^{\frac{\pi}{\omega}-1}(\mathcal{C}_R)} + \|u\|_{H^1(\mathcal{C}_R)}.
\end{align}
\item \label{item:thm:shift-theorem-local-version-neumann} Neumann case:  
%If $\chi_{R} f\in B_{2,1}^{\pi/\omega-1}(\mathcal{C}_R)$ (if $\omega \ge \pi$) or 
%$\chi_{R} f\in \widetilde{B}_{2,1}^{\pi/\omega-1}(\mathcal{C}_R)$ (if $\omega \leq \pi$) 
%one has 
For $\chi_{R} f\in {B}_{2,1}^{\frac{\pi}{\omega}-1}(\mathcal{C}_R)$ 
%(if $\omega \ge \pi$) 
%or 
%$\chi_{R} f\in {B}_{2,1}^{\frac{\pi}{\omega}-1}(\mathcal{C}_R)$ 
%(if $\omega < \pi$) 
one has 
$u\in B_{2,\infty}^{\pi/\omega+1}(\mathcal{C}_{R'})$ with the estimate
\begin{align}
\label{eq:thm:shift-theorem-local-version-iii}
\|u\|_{B_{2,\infty}^{\frac{\pi}{\omega}+1}(\mathcal{C}_{R'})} \lesssim 
%\begin{cases}
\|\chi_R f\|_{{B}_{2,1}^{\frac{\pi}{\omega}-1}(\mathcal{C}_R)} + \|u\|_{H^1(\mathcal{C}_R)}. 
%& \mbox{ if } \omega < \pi  \\
%\|\chi_R f\|_{\widetilde{B}_{2,1}^{\frac{\pi}{\omega}-1}(\mathcal{C}_R)} + \|u\|_{H^1(\mathcal{C}_R)}
%& \mbox{ if } \omega \ge  \pi.
%\end{cases} 
\end{align}
%\begin{align}
%\label{eq:thm:shift-theorem-local-version-iii}
%\|u\|_{B_{2,\infty}^{\pi/\omega+1}(\mathcal{C}_{R'})} \lesssim 
%\begin{cases} 
%\|\chi_{R} f\|_{B^{-1+\pi/\omega}(\mathcal{C}_R)} + \|u\|_{H^1(\mathcal{C}_R)} & \text{if $\omega \leq \pi$}  \\
%\|\chi_{R} f\|_{\widetilde{B}^{-1+\pi/\omega}(\mathcal{C}_R))} + \|u\|_{H^1(\mathcal{C}_R)} & \text{if $\omega \ge \pi$}.  
%\end{cases}
%\end{align}
\item \label{item:thm:shift-theorem-local-version-mixed} Mixed case:  
If $\chi_{R} f\in \widetilde{B}_{2,1}^{\frac{\pi}{2\omega}-1}(\mathcal{C}_R)$ (if $\omega \ge \pi/2$) or 
$\chi_{R} f\in {B}_{2,1}^{\frac{\pi}{2\omega}-1}(\mathcal{C}_R)$ (if $\omega < \pi/2$) 
one has 
$u\in B_{2,\infty}^{1+\frac{\pi}{2\omega}}(\mathcal{C}_{R'})$ with the estimate
\begin{align}
\label{eq:thm:shift-theorem-local-version-iv}
\|u\|_{B_{2,\infty}^{1+\frac{\pi}{2\omega}}(\mathcal{C}_{R'})} \lesssim 
\begin{cases} 
\|\chi_{R} f\|_{B^{-1+\frac{\pi}{2\omega}}_{2,1}(\mathcal{C}_R)} + \|u\|_{H^1(\mathcal{C}_R)} & \text{if $\omega <\pi/2$}  \\
\|\chi_{R} f\|_{\widetilde{B}^{-1+\frac{\pi}{2\omega}}_{2,1}(\mathcal{C}_R)} + \|u\|_{H^1(\mathcal{C}_R)} & \text{if $\omega \ge \pi/2$}.  
\end{cases}
\end{align}
\end{enumerate}
\end{theorem}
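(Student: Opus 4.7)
The plan is to follow the three-ingredient roadmap outlined in the introduction. Throughout, let $\alpha = \pi/\omega$ in the Dirichlet and Neumann cases and $\alpha = \pi/(2\omega)$ in the mixed case, and let $s^+(r,\phi) = r^\alpha\sin(\alpha\phi)$ (Dirichlet, mixed) or $r^\alpha\cos(\alpha\phi)$ (Neumann) be the leading singular harmonic of the corresponding Mellin pencil.

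\textbf{Localization.} First I pass from $\mathcal{C}_R$ to the full cone $\mathcal{C}$ using an auxiliary cut-off $\chi_1\in C^\infty_0(B_R(0))$ with $\chi_1\equiv 1$ on $B_{R'}(0)$ and $\chi_R\chi_1=\chi_1$. Setting $\tilde u := \chi_1 u$ yields
\begin{equation*}
-\Delta \tilde u \;=\; \chi_1 f - 2\nabla\chi_1\cdot\nabla u - (\Delta\chi_1)\,u \;=:\; \tilde f
\end{equation*}
with the same boundary conditions on $\Gamma_0\cup\Gamma_\omega$ as $u$. The commutator terms are supported in an annulus bounded away from the corner, hence controlled in any Besov norm by $\|u\|_{H^1(\mathcal{C}_R)}$, so $\tilde f$ lies in the Besov space indicated in the theorem. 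It therefore suffices to prove the estimate for $\tilde u$ on $\mathcal{C}$.

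\textbf{Singularity decomposition.} I decompose $\tilde u = c(\tilde f)\,\chi\,s^+ + u_0$, where $\chi$ is a smooth corner cut-off, $c(\cdot)$ is the Maz'ya--Plamenevskii stress intensity functional, and $u_0$ is a regular remainder. The proof then rests on three independent estimates. \emph{(i)} A direct Mellin computation applied to $\chi s^+$ shows $\chi s^+\in B^{1+\alpha}_{2,\infty}(\mathcal{C})$ but $\chi s^+\notin H^{1+\alpha}(\mathcal{C})$; this endpoint Besov regularity forces the target index $q=\infty$ in the theorem. \emph{(ii)} The stress intensity functional admits the representation $c(\tilde f)=\langle \tilde f,\psi\rangle$, where $\psi$ is built from the dual singularity $r^{-\alpha}\sin(\alpha\phi)$ (or $\cos$), localized and corrected to satisfy the adjoint boundary conditions; a parallel Mellin analysis places $\psi$ in $B^{1-\alpha}_{2,\infty}$, respectively its $\widetilde B$-variant when $|1-\alpha|>1/2$. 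The duality pairing $(B^{-1+\alpha}_{2,1})^\star = B^{1-\alpha}_{2,\infty}$ and its $\widetilde B$-counterpart then extend $c$ continuously to the source space of $f$ and fix the source index $q=1$. \emph{(iii)} The Kondratiev/Mellin shift theorem gives $f\mapsto u_0$ bounded $H^{-1+s}\to H^{1+s}$ for $s$ in a small interval $[s_1,s_2]$ containing $\alpha$ in its interior and avoiding higher exponents of the Mellin pencil. Interpolating these two Sobolev estimates at fine index $q=1$ via \eqref{eq:sobolev-space-interpolation-scale} and \eqref{eq:besov-space} yields $f\mapsto u_0\colon B^{-1+\alpha}_{2,1}\to B^{1+\alpha}_{2,1}\hookrightarrow B^{1+\alpha}_{2,\infty}$. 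Combining (i)--(iii) with the triangle inequality closes the argument.

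\textbf{Main obstacle.} The technical heart lies in (i) and (ii): establishing the sharp endpoint Besov regularity of the singularity and its dual, and propagating the correct fine indices through the duality pairing. The additional case split in the mixed problem is dictated by \eqref{eq:B=tildeB}: outside the range $|s|<1/2$ the $B$ and $\widetilde B$ scales diverge, and the dichotomy at $\omega=\pi/2$ tracks the sign of the source exponent $-1+\pi/(2\omega)$, which governs which of the two scales provides the natural dual to the localized $\psi$.
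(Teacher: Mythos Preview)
Your overall architecture matches the paper's: localize, decompose into a singular part $c(\tilde f)\chi s^+$ plus a regular remainder $u_0$, establish the endpoint Besov regularity $s^+\in B^{1+\alpha}_{2,\infty}$, and identify the stress intensity functional with an element of the dual Besov space. Where your argument breaks down is step (iii).

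You claim that $f\mapsto u_0$ is bounded $H^{-1+s}\to H^{1+s}$ for $s$ ranging over an interval $[s_1,s_2]$ with $s_1<\alpha<s_2$, and then interpolate. But this map is \emph{not} a single bounded linear operator across the threshold $s=\alpha$. If you define $u_0:=\tilde u-c(\tilde f)\chi s^+$, then for $s_1<\alpha$ the functional $c(\tilde f)=\langle\tilde f,r^{-\alpha}\sin(\alpha\phi)\rangle$ is not continuous on $H^{-1+s_1}$: the dual singularity $r^{-\alpha}\sin(\alpha\phi)$ lies only in $B^{1-\alpha}_{2,\infty}$, not in $H^{1-s_1}$ when $1-s_1>1-\alpha$. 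If instead you take ``$u_0$'' to mean the Kondratiev solution obtained by Mellin inversion on a fixed line $\operatorname{Im}\zeta=-(1+s_2)$, that inversion requires $f$ to lie in the corresponding weighted space, which again fails for $f$ merely in $H^{-1+s_1}$. Either way the two endpoint estimates you want to interpolate refer to different operators (or to one operator that is unbounded at the lower endpoint), so the interpolation is illegitimate.

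This is exactly the difficulty the paper isolates and resolves with its abstract Lemma~2.7: one works with the \emph{full} solution operator $\widetilde T$, which is bounded $X_0\to Y_0$ by Lax--Milgram, and uses the decomposition $\widetilde T f=S_1(f)+S_{\theta_1}(f)$ only for $f\in X_1$. The lemma then manufactures the endpoint estimate $\widetilde T:(X_0,X_1)_{\theta_1,1}\to(Y_0,Y_1)_{\theta_1,\infty}$ by splitting $f=f_0+f_1$ at each scale $t$ and handling $\widetilde T f_0$ and $S_1(f_1)+S_{\theta_1}(f_1)$ separately; this avoids ever evaluating $S_1$ on low-regularity data. Your outline is missing this mechanism or an equivalent one. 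A secondary omission is that for $\alpha>1$ the paper needs the polynomial corrections $P_{k-1}$ (built from the Taylor jet of $f$ at the corner) before the Mellin step applies; your sketch implicitly assumes $f$ vanishes to the right order at the origin.
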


\begin{proof}
Item \ref{item:thm:shift-theorem-local-version-dirichlet} is shown 
in Section~\ref{sec:dirichlet},  
item \ref{item:thm:shift-theorem-local-version-neumann} is discussed in 
Section~\ref{sec:neumann}, and 
item \ref{item:thm:shift-theorem-local-version-mixed} in Section~\ref{sec:mixed}. 
\end{proof}
\begin{remark}
\label{rem:omega=pi}
The cases $\omega = \pi$ for Dirichlet and Neumann boundary conditions can be sharpened. 
This case corresponds to a smooth geometry
so that by standard elliptic regularity theory \cite{Evans_2010, Gilbarg} the solution is as smooth as the right-hand side $f$ permits near the origin, 
i.e., one has estimates of the form 
$$
\|u\|_{H^{1+s}(\mathcal{C}_{R'})} \lesssim \|\chi_R f\|_{H^{-1+s}(\mathcal{C}_R)} + \|u\|_{H^1(\mathcal{C}_R)} 
$$
for all $s \ge 0$. The implied constant additionally depends on $s$. 
\eremk
\end{remark}
%\begin{remark} 
%\label{rem:shift-theorem} 
%Theorem~\ref{thm:shift-theorem-local-version} discusses a limiting case of the shift theorem. With similar techniques, 
%one can show for the Dirichlet and the Neumann case estimates of the form 
%\begin{equation}
%\label{eq:rem:shift-theorem} 
%\|u\|_{B^{\alpha +1}_{2,q}({\mathcal C}_{R'})} \lesssim \|\chi_R f\|_{B^{\alpha-1}_{2,q}({\mathcal C}_R)} + \|u\|_{H^1{\mathcal C}_R)}
%\end{equation}
%for $0 < \alpha < \pi/\omega$ and $q \in [1,\infty]$. Such an estimate can be expected from interpolation arguments:
%If a linear map $U: X_0 \rightarrow Y_0$ and $U:X_1 \rightarrow Y_1$ is bounded, then it is bounded 
%$(X_0,X_1)_{\theta,q} \rightarrow (Y_0,Y_1)_{\theta,q}$ for $0 < \theta < 1$ and $q \in [1,\infty]$. 
%\eremk
%\end{remark}
Theorem~\ref{thm:shift-theorem-local-version} discusses a limiting case of the shift theorem. With similar techniques as those
used in the proof of Theorem~\ref{thm:shift-theorem-local-version}, one can show a shift theorem in a range of regularity indices: 
\begin{corollary}
\label{cor:shift-theorem} 
Assume the hypotheses and notation of Theorem~\ref{thm:shift-theorem-local-version}. 
\begin{enumerate}[label=(\roman*)]
\item Dirichlet case: A solution $u \in H^1(\mathcal{C}_R)$ of (\ref{eq:problem}) 
satisfies for $0 < s < \pi/\omega$ and $q \in [1,\infty]$ 
\begin{equation}
\label{eq:cor:shift-theorem-dirichlet} 
\|u\|_{B^{s +1}_{2,q}({\mathcal C}_{R'})} \lesssim \|\chi_R f\|_{B^{s-1}_{2,q}({\mathcal C}_R)} + \|u\|_{H^1({\mathcal C}_R)}. 
\end{equation}
\item Neumann case: A solution $u \in H^1(\mathcal{C}_R)$ of (\ref{eq:problem}) 
satisfies for $0 < s < \pi/\omega$ and $q \in [1,\infty]$ 
\begin{equation}
\label{eq:cor:shift-theorem-neumann} 
\|u\|_{B^{s +1}_{2,q}({\mathcal C}_{R'})} \lesssim 
\begin{cases} \|\chi_R f\|_{\widetilde{B}^{s-1}_{2,q}({\mathcal C}_R)} + \|u\|_{H^1({\mathcal C}_R)} & \mbox{ if $s < 1$} \\
              \|\chi_R f\|_{{B}^{s-1}_{2,q}({\mathcal C}_R)} + \|u\|_{H^1({\mathcal C}_R)} & \mbox{ if $s \ge 1$}.
\end{cases}
\end{equation}
\item Mixed case: A solution $u \in H^1(\mathcal{C}_R)$ of (\ref{eq:problem}) 
satisfies for $0 < s < \pi/(2\omega)$ and $q \in [1,\infty]$ 
\begin{equation}
\label{eq:cor:shift-theorem-mixed} 
\|u\|_{B^{s +1}_{2,q}({\mathcal C}_{R'})} \lesssim 
\begin{cases} \|\chi_R f\|_{\widetilde{B}^{s-1}_{2,q}({\mathcal C}_R)} + \|u\|_{H^1({\mathcal C}_R)} & \mbox{ if $s < 1$} \\
              \|\chi_R f\|_{{B}^{s-1}_{2,q}({\mathcal C}_R)} + \|u\|_{H^1({\mathcal C}_R)} & \mbox{ if $s \ge 1$}.
\end{cases} 
\end{equation}
\end{enumerate}
\end{corollary}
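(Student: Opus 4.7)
My plan is to derive Corollary~\ref{cor:shift-theorem} from the endpoint statement of Theorem~\ref{thm:shift-theorem-local-version} by real interpolation. The inequalities to be shown have the shape $\|u\|_{Y_s}\lesssim \|\chi_R f\|_{X_s}+\|u\|_{H^1(\mathcal{C}_R)}$ with $X_s$, $Y_s$ Besov scales, so I first reduce each to a bound on a genuine linear operator. Let $T\colon g\mapsto v$ be the canonical solution operator that assigns to $g\in H^{-1}_D(\mathcal{C}_R)$ the unique weak solution $v\in H^1_D(\mathcal{C}_R)$ of $-\Delta v=g$ (with the boundary conditions inherent in the definition of $H^1_D$). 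For $g=\chi_R f$, the difference $w:=u-v$ lies in $H^1(\mathcal{C}_R)$ and satisfies the homogeneous PDE with trace $u$ on $\widetilde\Gamma_R$; standard interior elliptic regularity on $\mathcal{C}_{R'}\subset\subset\mathcal{C}_R$, together with boundary regularity along the smooth pieces of $\partial\mathcal{C}_R$ where $w$ inherits homogeneous conditions, gives $\|w\|_{B^{s+1}_{2,q}(\mathcal{C}_{R'})}\lesssim \|u\|_{H^1(\mathcal{C}_R)}$ uniformly in the allowed range of $s$ and $q$. Hence the corollary reduces to a linear mapping property of $T$ composed with restriction to $\mathcal{C}_{R'}$.

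For the Dirichlet case, Theorem~\ref{thm:shift-theorem-local-version} together with the trivial bound $\|v\|_{H^1}\lesssim \|g\|_{H^{-1}_D}$ supplies
\[
T\colon H^{-1}_D(\mathcal{C}_R)\to H^1(\mathcal{C}_{R'}),\qquad
T\colon B^{\pi/\omega-1}_{2,1}(\mathcal{C}_R)\to B^{\pi/\omega+1}_{2,\infty}(\mathcal{C}_{R'}).
\]
Pick any $s_0>\pi/\omega$ and identify $B^{\pi/\omega-1}_{2,1}(\mathcal{C}_R)=(H^{-1}(\mathcal{C}_R),H^{s_0-1}(\mathcal{C}_R))_{\eta,1}$ with $\eta=\pi/(\omega s_0)$ via (\ref{eq:besov-space}). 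Real interpolation with parameter $\theta\in(0,1)$ and any $q\in[1,\infty]$, combined with the Reiteration Theorem, yields
\[
(H^{-1},B^{\pi/\omega-1}_{2,1})_{\theta,q}=B^{-1+\theta\pi/\omega}_{2,q},\qquad
(H^1,B^{\pi/\omega+1}_{2,\infty})_{\theta,q}=B^{1+\theta\pi/\omega}_{2,q},
\]
and the substitution $s=\theta\pi/\omega$ delivers (\ref{eq:cor:shift-theorem-dirichlet}) for the full range $s\in(0,\pi/\omega)$.

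The Neumann and mixed cases follow the same interpolation recipe, with the subtlety that the endpoint data space in Theorem~\ref{thm:shift-theorem-local-version} is $B$ (Neumann and mixed with $\omega<\pi/2$) or $\widetilde B$ (mixed with $\omega\ge\pi/2$), while $H^{-1}_D(\mathcal{C}_R)$ depends on which part of $\partial\mathcal{C}_R$ belongs to $\Gamma_D$. By (\ref{eq:B=tildeB}) the two Besov scales coincide for regularity indices in $(-1/2,1/2)$, and this is exactly what produces the announced split at $s=1$ in (\ref{eq:cor:shift-theorem-neumann}) and (\ref{eq:cor:shift-theorem-mixed}): for $s\ge 1$ one interpolates entirely within the $B$-scale against the endpoint, while for $s<1$ one interpolates within the $\widetilde B$-scale so as to match the natural dual-of-$H^1_D$ structure at $s=0$. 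I expect this matching of scales between the endpoint of Theorem~\ref{thm:shift-theorem-local-version} and the base space $H^{-1}_D(\mathcal{C}_R)$ to be the only genuinely delicate step; once accomplished, the Reiteration Theorem reduces everything to arithmetic on regularity indices.
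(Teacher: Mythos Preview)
Your overall strategy---real interpolation between the energy estimate $T:H^{-1}_D\to H^1$ and the Besov endpoint of Theorem~\ref{thm:shift-theorem-local-version}---is exactly the paper's, and for the Dirichlet case your argument is essentially complete. There are, however, two gaps.

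First, your reduction via $w=u-T(\chi_R f)$ is not as harmless as you suggest. You write that ``standard interior elliptic regularity \dots\ together with boundary regularity along the smooth pieces'' controls $w$ on $\mathcal{C}_{R'}$, but $\mathcal{C}_{R'}$ contains the corner at the origin, and a harmonic function on the wedge with homogeneous boundary conditions is in general no better than $r^{\pi/\omega}$ there---this is precisely the corner regularity question, not standard regularity. What actually works is to apply Theorem~\ref{thm:shift-theorem-local-version} itself to $w$ on a nested cone (with right-hand side zero near the origin), giving $\|w\|_{B^{1+\pi/\omega}_{2,\infty}}\lesssim\|w\|_{H^1}$, and then use the embedding $B^{1+\pi/\omega}_{2,\infty}\hookrightarrow B^{1+s}_{2,q}$ for $s<\pi/\omega$. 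The paper avoids this detour by working with the localized operator $\widetilde T=\chi_{R'}T\chi_{\overline R}$ and the localized datum $\widetilde f$ of (\ref{eq:thm:shift-theorem-local-version-5}); the commutator terms $\nabla\chi_{\widetilde R}\cdot\nabla u$ and $(\Delta\chi_{\widetilde R})u$ live on an annulus away from the corner and are handled by Lemma~\ref{lemma:regularity-lemma-annuli}.

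Second, and more substantially, for Neumann and mixed boundary conditions with small angle (so that $\pi/\omega>1$, resp.\ $\pi/(2\omega)>1$) your interpolation does not go through as written. The base space is $\widetilde H^{-1}$ while the endpoint space of Theorem~\ref{thm:shift-theorem-local-version} is $B^{\pi/\omega-1}_{2,1}$; once $\pi/\omega-1>1/2$ these lie in genuinely different scales ($B\ne\widetilde B$) and cannot be interpolated directly. Saying ``for $s\ge1$ one interpolates entirely within the $B$-scale against the endpoint'' leaves open what the second interpolation endpoint is. The paper supplies the missing bridge: Corollary~\ref{cor:solution-with-polynomial-neumann} with $k=0$ yields $\widetilde T:H^\epsilon\to H^{2+\epsilon}$ for small $\epsilon\in(0,1/2)$, and since $H^\epsilon=\widetilde H^\epsilon$ this point lies in both scales. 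One then interpolates $\widetilde H^{-1}\to H^1$ with $H^\epsilon\to H^{2+\epsilon}$ to cover $s\in(0,1+\epsilon)$ in the $\widetilde B$-scale, and $H^\epsilon\to H^{2+\epsilon}$ with the Besov endpoint to cover $s\in(1,\pi/\omega)$ in the $B$-scale. This intermediate mapping property is not a consequence of Theorem~\ref{thm:shift-theorem-local-version}; it requires going back into the Mellin analysis, which is the ``genuinely delicate step'' you anticipated but did not carry out.
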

\begin{proof}
The result follows by inspection of the proof of Theorem~\ref{thm:shift-theorem-local-version}. For example, for the case of Dirichlet 
conditions, the proof rests on two ingredients: 
\begin{enumerate*}[label=(\alph*)]
\item the shift theorem for the operator $\widetilde{T}$ of (\ref{eq:operator-T}) and
\item the estimate of the function $\widetilde{f}$ in (\ref{eq:thm:shift-theorem-local-version-10}).  
\end{enumerate*}
The operator $\widetilde{T}$ of (\ref{eq:operator-T}) is directly amenable to interpolation arguments as it maps
$H^{-1} \rightarrow H^1$ and, by (\ref{eq:dirichlet-case-goal}), $B^{\pi/\omega-1}_{2,1} \rightarrow B^{\pi/\omega+1}_{2,\infty}$. 
Inspection of the proof of (\ref{eq:thm:shift-theorem-local-version-10}) leads to having to control
$\|\nabla \chi_R \cdot \nabla u \|_{B^{-1+s}_{2,q}}$  and $\|\Delta \chi_R u \|_{B^{-1+s}_{2,q}}$. These terms can be estimated 
with Lemma~\ref{lemma:regularity-lemma-annuli}. 

For the Neumann case (and similarly for the mixed case), the analysis is also reduced to understanding the mapping properties
of the corresponding operator $\widetilde{T}$. If $\omega > \pi$ (i.e., $\pi/\omega - 1 \in (-1/2,0)$) one observes that 
$\widetilde{B}^{\pi/\omega-1}_{2,1} = B^{\pi/\omega-1}_{2,1}$ so that one has by (\ref{eq:neumann-case-goal}) the mapping properties
$\widetilde{T}: \widetilde{H}^{-1} \rightarrow H^1$ and $\widetilde{T}: \widetilde{B}^{\pi/\omega-1}_{2,1} \rightarrow B^{\pi/\omega+1}_{2,\infty}$. 
An interpolation argument as in the Dirichlet case concludes the argument. 
If $\omega < \pi$, one splits the argument into two interpolation steps. First, one observes from 
Corollary~\ref{cor:solution-with-polynomial-neumann} for $k = 0$ and $\epsilon \in( 0,1/2)$ 
sufficiently small that $\widetilde{T}:H^\epsilon \rightarrow H^{2+\epsilon}$. Hence, 
$\widetilde{T}:\widetilde{H}^{-1} \rightarrow H^1$ and 
$\widetilde{T}:\widetilde{H}^{\epsilon} = H^\epsilon \rightarrow H^{2+\epsilon}$, which provides the desired result for $s \in (0,1+\epsilon)$ 
by interpolation. 
For $s \in (1,\pi/\omega)$, one interpolates using the mapping properties 
$\widetilde{T}:H^\epsilon \rightarrow H^{2+\epsilon}$ and $\widetilde{T}: B^{-1 + \pi/\omega}_{2,1} \rightarrow B^{1+\pi/\omega}_{2,\infty}$ 
provided by (\ref{eq:neumann-case-goal}). 
\end{proof} 
%--------------------------------------------------
\section{Dirichlet boundary conditions}
\label{sec:dirichlet}
%--------------------------------------------------
We start with introducing corner-weighted functions that are useful in connection with Mellin transform techniques: 

\begin{definition}[weighted spaces]
For $s\in\mathbb{N}_0$ and $\gamma\in\mathbb{R}$, we define
\begin{align*}
K_\gamma^s(\mathcal{C}):=\{u\in L_{loc}^2(\mathcal{C})\,|\, 
\|u\|_{K_\gamma^s(\mathcal{C})}^2 := \sum_{|\alpha|\leq s} \|r^{|\alpha|-s+\gamma} D^\alpha u\|_{L^2(\mathcal{C})}^2 < \infty, \quad |\alpha| \leq s\}. 
%r^{|\alpha|-s+\gamma} D^\alpha u \in L^2(\mathcal{C}), |\alpha| \leq s\}
\end{align*}
%with the natural norm
The spaces $K_\gamma^s(\mathcal{C}_R)$ are defined in the same way, just by replacing $\mathcal{C}$ by $\mathcal{C}_R$.
\end{definition}

Fractional order Sobolev spaces of functions that are constrained to vanish to a certain order at the origin 
are shown in the following Lemma~\ref{lemma:Sobolev-to-cone-second-version} to be subspaces of suitable weighted Sobolev spaces 
of the $K_\gamma^s$-type; 
%It is, however, necessary to ensure decay at zero as shown in the following 
%Lemma~\ref{lemma:Sobolev-to-cone-second-version}; 
similar estimate with a focus on integer order Sobolev spaces 
are well-known in the literature, see., e.g., \cite[Chap.~{7.1}]{kozlov-mazya-rossmann-1997}. 

\begin{lemma}
\label{lemma:Sobolev-to-cone-second-version}
Let $f\in H^{k+\epsilon}(\mathcal{C})$ with $\operatorname{supp} f \subset B_1(0)$ for some $k\in\mathbb{N}_0$ and $\epsilon\in (0,1)$ and assume 
$\partial_x^i \partial_y^j f(0)=0$ for $i+j\leq k-1$. Then $f\in K_{-\epsilon}^k(\mathcal{C})$ with the norm estimate
\begin{align*}
\|f\|_{K_{-\epsilon}^k(\mathcal{C})} \lesssim \|f\|_{H^{k+\epsilon}(\mathcal{C}_1)}.
\end{align*}
\end{lemma}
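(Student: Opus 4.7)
The plan is to reduce the statement to a weighted Hardy inequality applied to each derivative $D^\alpha f$ individually. Unfolding the definition of $K^k_{-\epsilon}(\mathcal{C})$, the goal is to show, for every multi-index $\alpha$ with $|\alpha| = m \leq k$,
\begin{equation*}
\|r^{m-k-\epsilon} D^\alpha f\|_{L^2(\mathcal{C})} \lesssim \|f\|_{H^{k+\epsilon}(\mathcal{C}_1)}.
\end{equation*}
Setting $t := k - m + \epsilon$ and $g := D^\alpha f$, this reads $\|r^{-t} g\|_{L^2(\mathcal{C})} \lesssim \|g\|_{H^{t}(\mathcal{C})} \lesssim \|f\|_{H^{k+\epsilon}(\mathcal{C})}$, where the second estimate is immediate since differentiation of order $m$ maps $H^{k+\epsilon}$ into $H^{k+\epsilon - m} = H^t$.

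First, I would extend $f$ from $\mathcal{C}$ to a compactly supported function on $\mathbb{R}^2$ by a standard Stein-type extension operator, preserving the $H^{k+\epsilon}$-norm up to a constant depending only on $\omega$. The key hypothesis $\partial_x^i\partial_y^j f(0) = 0$ for $i+j \leq k-1$ translates, for each $|\alpha| = m$, into the vanishing conditions
\begin{equation*}
D^\beta g(0) = D^{\alpha+\beta} f(0) = 0 \qquad \text{for all $|\beta| \leq k - 1 - m$.}
\end{equation*}
(For $m = k$ this condition is vacuous, as it must be.) Now I would invoke the standard weighted Hardy inequality in $\mathbb{R}^2$: for any $t > 0$ with $t - 1 \notin \mathbb{N}_0$ and any compactly supported $g \in H^t(\mathbb{R}^2)$ whose derivatives vanish at the origin up to order $\lfloor t - 1 \rfloor$,
\begin{equation*}
\|r^{-t} g\|_{L^2(\mathbb{R}^2)} \lesssim \|g\|_{H^t(\mathbb{R}^2)}.
\end{equation*}
Since $t = k - m + \epsilon$ and $\epsilon \in (0,1)$, the exponent $t$ is never an integer, so $\lfloor t - 1 \rfloor = k - m - 1$, matching precisely the order of vanishing provided by the hypothesis. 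Applied to $g = D^\alpha f$ (with the extended $f$), Hardy yields the desired estimate, and restriction to $\mathcal{C}$ preserves it.

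Summing the squared estimates over all $|\alpha| \leq k$ produces the bound on $\|f\|_{K^k_{-\epsilon}(\mathcal{C})}$. The one delicate point is the boundary case $m = k$, where $t = \epsilon < 1 = d/2$ and no vanishing is required — classical Hardy then applies directly — and the case $m = 0$, where the full $k$ orders of vanishing of $f$ itself are consumed. In between, the vanishing budget provided by the Taylor condition is exactly aligned with what Hardy demands because $\epsilon$ is strictly between two integers. The main obstacle is simply locating (or briefly recording) a version of the fractional Hardy inequality on $\mathbb{R}^2$ with the precise assumption that finitely many derivatives vanish at the origin; such a statement is standard (see, e.g., the weighted estimates in \cite[Chap.~7.1]{kozlov-mazya-rossmann-1997}) and can alternatively be obtained by interpolation between the integer-order Hardy inequalities.
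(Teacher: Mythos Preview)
Your approach is correct and is genuinely different from the paper's. The paper works intrinsically on the cone: it proves a Poincar\'e-type inequality on the reference annulus $A(1,2)$ that involves trace terms on the two straight edges $\Gamma^1_\ell$, scales this to dyadic annuli $A(d,2d)$, and sums. The accumulated boundary terms become weighted $L^2$-norms of $\nabla^j f$ along the edges, which are then controlled via the trace theorem together with Grisvard's weighted inequality \cite[Thm.~{1.4.4.4}]{Grisvard_2011}; the vanishing hypothesis enters by ensuring that the traces lie in the ``tilde'' spaces so that Grisvard's result applies. The paper's argument needs a separate treatment of $\epsilon=1/2$ (by interpolation) because the trace exponent $k-j+\epsilon-1/2$ becomes an integer there.

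Your route---extend to $\BbbR^2$ by a Stein operator and invoke a full-space fractional Hardy inequality with point-vanishing conditions---sidesteps the boundary traces entirely and hence also the $\epsilon=1/2$ case split. The one step you gloss over but which is important to state is why the extension preserves the vanishing at the origin: since $Ef\in H^{k+\epsilon}(\BbbR^2)\subset C^{k-1}(\BbbR^2)$ and $Ef|_{\mathcal{C}}=f$, the derivatives $D^\beta(Ef)(0)$ are obtained as limits from inside $\mathcal{C}$ and thus inherit $D^\beta f(0)=0$. Your remark that the required Hardy inequality ``can alternatively be obtained by interpolation between the integer-order Hardy inequalities'' is misleading, since in $\BbbR^2$ the integer endpoints $t\in\BbbN$ are precisely where the inequality fails; the clean route is the inductive reduction $\|r^{-t}g\|_{L^2}\lesssim\|r^{-(t-1)}\nabla g\|_{L^2}$ (valid because $t\notin\BbbN$) down to the base case $t=\epsilon\in(0,1)$, where the classical fractional Hardy inequality on $\BbbR^2$ applies. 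With that clarification your argument is complete.
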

\begin{proof}
See Appendix~\ref{sec:inequalities}. Note that $H^{k+\epsilon} \subset C^{k}$ by Sobolev embedding. 
\end{proof}

\subsection{A recap of regularity based on the Mellin calculus}
\label{sec:recap}
%---------------------------------------------------------
%---------------------------------------------------------
\subsubsection{Preliminaries}
%---------------------------------------------------------
The following properties of the Mellin transformation are at the heart of the analysis of \cite{kondratiev-1967,dauge88,kozlov-mazya-rossmann-1997, mazja-plamenevsky-1977, nazarov-plamenevsky-1994,mazya-rossmann2010} and are collected
in \cite[Sec.~{3}]{costabel-dauge-nicaise10}; we also refer to \cite[Chap.~3]{rojik19} 
for detailed proofs. 
\ifarxiv
We also point to Appendix~\ref{appendix:L2} for a rather self-contained proof of the results about the Mellin calculus presented
in this section. 
\fi

For a sufficiently regular function $u$ on the cone ${\mathcal C}$, we define its Mellin transform ${\mathcal M}[u]$ by 
\begin{equation}
\label{eq:mellin-transform}
{\mathcal M}[u](\zeta,\phi):= \frac{1}{\sqrt{2\pi}} \int_{r=0}^\infty r^{-i \zeta} \widetilde u(r,\phi)\, \frac{dr}{r}, 
\end{equation}
where $\widetilde u(r,\phi)  = u(r \cos \phi, r\sin \phi)$, i.e., the representation of $u$ in polar coordinates. 
We emphasize, however, that henceforth we will write $u$ for the function both in Cartesian and polar coordinates.  
The Mellin transformation is connected 
to the Fourier transformation in that one has with the change of variables $r = e^{t}$ 
\begin{equation*}
{\mathcal M} [u](\zeta,\phi) = \frac{1}{\sqrt{2\pi}} \int_{t=-\infty}^\infty e^{-i \zeta t} u(e^t,\phi)\, dt. 
\end{equation*}
This connection with the Fourier transformation is at the heart of the following norm equivalence: if $u(\cdot, \phi) \in L^2(0,\infty)$, then 
${\mathcal M}[u](\cdot,\phi)$ is in $L^2(\BbbR - i)$ with equivalent norms, and the inverse Mellin transformation correspondingly takes the form 
\begin{equation*}
u = \frac{1}{\sqrt{2\pi}} \int_{\operatorname{Im} \zeta = -i} r^{i \zeta} {\mathcal M} [u](\zeta,\phi)\, d\zeta. 
\end{equation*}
%(The right-hand side is a function of $(r,\phi)$, so the left-hand side function $u$ is understood as given in polar coordinates.)
More generally, one has for $k \in \BbbN_0$ and $\gamma \in \BbbR$ the norm equivalence 
\begin{align*}
\|u\|^2_{K^k_\gamma({\mathcal C})} &\sim \int_{\xi \in \BbbR} \|{\mathcal M}[u](\xi - i\eta)\|^2_{H^k(G; |\xi|)}\, d\xi, 
\qquad \|v\|^2_{H^k(G; |\xi|)}:= \sum_{j\leq k} (1 + |\xi|^2)^{k-j} \|\partial^j_{\phi} v\|^2_{L^2(G)}, 
\quad \eta:= k - \gamma  -1, 
\end{align*}
where we view the Mellin transformation, which acts only on the variable $r$ (with dual variable $\zeta$), 
as a mapping from $K^k_\gamma({\mathcal C})$ into a space of $H^k(G)$-valued functions. 
A final important property of the Mellin transformation is that if $u \in K^k_\gamma({\mathcal C})$ satisfies additionally 
$\operatorname{supp} u \subset B_1(0)$, then, by a variant of the Paley-Wiener Theorem, 
${\mathcal M}[u]$ is actually holomorphic on $\{z \in \BbbC\,|\, \operatorname{Im} z > -\eta\}$. This property
allows one to use the Cauchy integral theorem/residue theorem, whose use leads to expansions in terms of corner singularity functions. 

%---------------------------------------------------------
\subsubsection{The isomorphism in weighted spaces and expansion in corner singularity functions}
\label{sec:regularity-in-weighted-spaces}
%---------------------------------------------------------
Let $k \in \BbbN_0$ and $\epsilon \in (0,1)$. 
Consider $f\in H^{k+\epsilon}(\mathcal{C})$ with $\operatorname{supp} f\subseteq B_1(0)$ and $\partial_x^i \partial_y^j f(0)=0$ for $i+j\leq k-1$. 
For convenience, we assume ${k+1+\epsilon} < 2 \frac{\pi}{\omega}$. Note that by Lemma~\ref{lemma:Sobolev-to-cone-second-version} the function $f \in K_{-\epsilon}^{k}(\mathcal{C})$. Assume that $u_1\in H^1(\mathcal{C})$ with $\operatorname{supp} u_1 \subseteq B_1(0)$ solves the problem
\begin{align}
\label{eq:elliptic2.25}
-\Delta u_1 &= f \in H^{k+\epsilon}(\mathcal{C}), 
\qquad 
u_1 = 0 \ \mbox{ on $\Gamma_0$ and $\Gamma_\omega$.}
\end{align}
Further we pose the auxiliary problem
\begin{align}
\label{eq:elliptic}
-\Delta u_0 &= f \in K_{-\epsilon}^k(\mathcal{C}), 
\qquad 
u_0 = 0 \ \mbox{ on $\Gamma_0$ and $\Gamma_\omega$.}
\end{align}
This latter problem admits a unique solution $u_0 \in K^{k+2}_{-\epsilon}({\mathcal C})$ by the Mellin calculus going back to 
\cite{kondratiev-1967}
with the norm estimate $\|u_0\|_{K^{k+2}_{-\epsilon}({\mathcal C})} \lesssim \|f\|_{K^k_{-\epsilon}(\mathcal{C})}$ 
(see, e.g., 
\cite[Sec.~{6.1.8}]{kozlov-mazya-rossmann-1997}, \cite{dauge88}, or \cite{rojik19} for more details).
By elliptic regularity based on a dyadic decomposition of the cone ${\mathcal C}$ (see Lemma~\ref{lemma:scaling-arguments-cone}), one actually has 
$u_0\in H^{k+2+\epsilon}(\mathcal{C}_R)$ with the estimate $\|u_0\|_{H^{k+2+\epsilon}({\mathcal C}_R)} \lesssim \|f\|_{H^{k+\epsilon}(\mathcal{C})}$
for each fixed $R > 1$. 
Following the classical path, we now analyze the relation of the solutions $u_0$ and $u_1$.
The Mellin transformation yields
\begin{align}
\label{eq:Mellin-transform-1}
\mathcal{L}(\zeta)\mathcal{M}[u_1]& =\mathcal{M}[g] \quad \text{on } \{\zeta\in\mathbb{C}:\operatorname{Im}\zeta > 0\} , \\
\label{eq:Mellin-transform-2}
\mathcal{L}(\zeta)\mathcal{M}[u_0]& =\mathcal{M}[g] \quad \text{on } \{\zeta\in\mathbb{C}:\operatorname{Im}\zeta = -1-k-\epsilon\}
\end{align}
with the operator $\mathcal{L}(\zeta) := (-\partial_\phi^2+\zeta^2)$ and $\mathcal{M}[g]$ 
being the Mellin transform of the function $g=r^2 f$. 
Note that $\mathcal{M}[g]$ is holomorphic on $\{\zeta\in\mathbb{C}:\operatorname{Im}\zeta > -1-k-\epsilon\}$ with values in $H^{k}(G)$ 
and that $\mathcal{M}[u_1]$ is holomorphic on $\{\zeta\in\mathbb{C}:\operatorname{Im}\zeta > 0\}$ with values in $H^2(G)$. 
Since the operator $(\mathcal{L}(\zeta))^{-1}$ is meromorphic on $\mathbb{C}$ with poles at the discrete set 
\begin{equation}
\pm i \sigma^D \quad \mbox{ with }  \sigma^D:= \{\lambda^D_n \,|\, n \in \BbbN\}, \qquad \lambda^D_n:= n \frac{\pi}{\omega}, 
\end{equation}
%$\pm i\lambda^D_n$, where $\lambda^D_n:=\frac{\pi}{\omega}n$, 
we observe that $\mathcal{M}[u_1]$ can be extended meromorphically to $\{\zeta\in\mathbb{C}:\operatorname{Im}\zeta>-1-k-\epsilon\}$ by
\begin{align*}
U(\zeta):=\mathcal{M}[u_1](\zeta):=\left(\mathcal{L}(\zeta)\right)^{-1}\mathcal{M}[g](\zeta).
\end{align*}
Let us mention that $U(\zeta)$ and $\mathcal{M}[u_0](\zeta)$ coincide on $\{\zeta \in \BbbC\,|\,  \operatorname{Im} \zeta= -1-k-\epsilon\}$, 
as well as $U(\zeta)$ and $\mathcal{M}u_1(\zeta)$ on $\{\operatorname{Im} \zeta = 0\}$. 
%Denoting $\sigma^D=\{\lambda^D_n:n\in\mathbb{N}\}$, 
Inverse transformations and Residue Theorem then lead to
\begin{align}
\label{eq:residue-dirichlet-higher-order0.5}
u_0-u_1 = \sum_{\substack{\zeta_0\in -i\sigma^D \colon\\ \operatorname{Im}\zeta_0\in (-1-k-\epsilon,0)}} \frac{2\pi i}{\sqrt{2\pi}} \operatorname*{Res}_{\zeta = \zeta_0} \left(r^{i\zeta}\left(\mathcal{L}(\zeta)\right)^{-1} \mathcal{M}[g](\zeta)\right).
\end{align}
Since we assumed $k+1+\epsilon < 2 \frac{\pi}{\omega}$, the sum in \eqref{eq:residue-dirichlet-higher-order0.5} has at most one term. Determining the residue yields
\begin{align*}
u_1 = \begin{cases}
u_0, & \mbox{ if } \quad k+1+\epsilon < \frac{\pi}{\omega} \\
u_0 - \frac{1}{\pi} \left(\int_{\mathcal{C}} r^{-\lambda_1} \sin(\lambda_1\phi) f(x) \, dx\right) r^{\lambda_1} \sin(\lambda_1\phi), & \mbox{ if } \quad 1+k+\epsilon > \frac{\pi}{\omega} 
\end{cases}
\end{align*}
%\begin{align*}
%u_1 = \left\{\begin{array}{cc}u_0, & \omega<\frac{\pi}{1+k+\epsilon} \\ u_0 - \frac{1}{\pi} \left(\int_{\mathcal{C}} r^{-\lambda_1} \sin(\lambda_1\phi) f(x) \, dx\right) r^{\lambda_1} \sin(\lambda_1\phi), & \omega>\frac{\pi}{1+k+\epsilon}\end{array}\right.
%\end{align*}
with $u_0\in H^{k+2+\epsilon}(\mathcal{C}_R)$ for any chosen $R > 0$. 
These observations are collected in the following result, cf.\ \cite[Sec.~{6.1.8}]{kozlov-mazya-rossmann-1997}.
\begin{proposition}
\label{prop:solution_u1}
Let $R > 0$. 
Let $k\in\mathbb{N}_0$ and $\epsilon\in (0,1)$ satisfy $k+1+\epsilon < \lambda^D_2 = 2 \frac{\pi}{\omega}$ and $k+1+\epsilon \ne \lambda^D_1 = \frac{\pi}{\omega}$. 
Let $f\in H^{k+\epsilon}(\mathcal{C})$ with $\operatorname{supp} f\subseteq B_1(0)$. Further assume $\partial_x^i \partial_y^j f(0)=0$ for $i+j \leq k-1$. Then $u_1\in H^1(\mathcal{C})$ with $\operatorname{supp} u_1 \subseteq B_1(0)$ solving
\begin{align}
%\label{eq:elliptic2.25}
-\Delta u_1 &= f \in H^{k+\epsilon}(\mathcal{C}), 
\qquad 
u_1 = 0 \mbox{ for $\phi\in \{0,\omega\}$}, 
\end{align}
has the form 
\begin{align}
\label{eq:prop_solution_u1}
u_1 = 
\begin{cases} 
u_0, & \mbox{ if } \quad \frac{\pi}{\omega} = \lambda^D_1 > k+\epsilon+1 \\ 
u_0 - \frac{1}{\pi} \left(\int_{\mathcal{C}} r^{-\lambda^D_1} \sin(\lambda^D_1\phi) f(\mathbf{x}) \, d\mathbf{x}\right) r^{\lambda^D_1} \sin(\lambda^D_1\phi), & 
\mbox{ if } \quad \frac{\pi}{\omega} = \lambda^D_1 < k+1+\epsilon < \lambda^D_2
\end{cases} 
\end{align}
%\begin{align}
%\label{eq:prop_solution_u1}
%u_1 = \left\{\begin{array}{cc}u_0, & \frac{\pi}{\omega} = \lambda^D_1 > k+\epsilon+1 \\ u_0 - \frac{1}{\pi} \left(\int_{\mathcal{C}} r^{-\lambda_1} \sin(\lambda_1\phi) f(x) \, dx\right) r^{\lambda_1} \sin(\lambda_1\phi), & \frac{\pi}{\omega} = \lambda^D_1 < k+1+\epsilon < \lambda^D_2\end{array}\right.
%\end{align}
for a $u_0\in H^{k+2+\epsilon}(\mathcal{C}_R )$ with the estimate
\begin{align*}
\|u_0\|_{H^{k+2+\epsilon}(\mathcal{C}_R )} \lesssim \|f\|_{H^{k+\epsilon}(\mathcal{C}_1)}.
\end{align*}
\end{proposition}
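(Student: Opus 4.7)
The proposition is essentially the assembly of the ingredients already laid out in Section~\ref{sec:recap}, so my plan is to make that assembly rigorous and compute the single residue explicitly.

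First, I would verify membership in the weighted space. Since $f \in H^{k+\epsilon}(\mathcal{C})$ has compact support in $B_1(0)$ and vanishes together with its derivatives up to order $k-1$ at the origin, Lemma~\ref{lemma:Sobolev-to-cone-second-version} gives $f \in K^k_{-\epsilon}(\mathcal{C})$ with $\|f\|_{K^k_{-\epsilon}} \lesssim \|f\|_{H^{k+\epsilon}}$. Hence the auxiliary problem (\ref{eq:elliptic}) admits by the Kondrat'ev/Mellin theory cited a unique solution $u_0 \in K^{k+2}_{-\epsilon}(\mathcal{C})$ with $\|u_0\|_{K^{k+2}_{-\epsilon}(\mathcal{C})} \lesssim \|f\|_{K^k_{-\epsilon}(\mathcal{C})}$, and a dyadic decomposition argument (Lemma~\ref{lemma:scaling-arguments-cone}) upgrades this to $u_0 \in H^{k+2+\epsilon}(\mathcal{C}_R)$ with the quantitative bound claimed in the statement.

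Next, I would identify $u_1 - u_0$ by Mellin analysis. Applying $\mathcal{M}$ to the equations for $u_0$ and $u_1$ gives (\ref{eq:Mellin-transform-1})--(\ref{eq:Mellin-transform-2}) on the two horizontal lines $\operatorname{Im}\zeta = 0$ and $\operatorname{Im}\zeta = -1-k-\epsilon$; the compact support of $f$ (and hence of $g = r^2 f$) via the Paley--Wiener variant guarantees that $\mathcal{M}[g]$ is holomorphic on the whole strip $\{\operatorname{Im}\zeta > -1-k-\epsilon\}$ with values in $H^k(G)$. Since $\mathcal{L}(\zeta)^{-1}$ on $H^{-1}(G) \to H^1_0(G)$ is meromorphic with poles exactly at $\pm i \sigma^D$, the function $U(\zeta):=\mathcal{L}(\zeta)^{-1}\mathcal{M}[g](\zeta)$ provides a meromorphic extension of $\mathcal{M}[u_1]$ into that strip, and its poles inside the strip are located at $-i\lambda_n^D$ with $0 < \lambda_n^D < 1+k+\epsilon$. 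Applying the inverse Mellin transform along the two horizontal contours and closing the rectangle yields, via the residue theorem, the identity (\ref{eq:residue-dirichlet-higher-order0.5}). Under the hypothesis $k+1+\epsilon < \lambda^D_2$ the sum contains at most the term at $\zeta_0 = -i\lambda_1^D$, and it is absent precisely when $k+1+\epsilon < \lambda^D_1$.

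Finally, when the single residue is present, I would compute it. The pole of $\mathcal{L}(\zeta)^{-1}$ at $\zeta_0 = -i\lambda_1^D$ is simple, and a standard calculation using the eigenfunction $\sin(\lambda_1^D \phi)$ of $-\partial_\phi^2$ on $G$ with eigenvalue $(\lambda_1^D)^2$ (normalized by $\int_0^\omega \sin^2(\lambda_1^D \phi)\,d\phi = \omega/2$) gives
\begin{equation*}
\operatorname*{Res}_{\zeta = -i\lambda_1^D} \mathcal{L}(\zeta)^{-1} h \;=\; \frac{1}{-2i\lambda_1^D}\,\frac{2}{\omega}\Bigl(\int_0^\omega h(\phi)\sin(\lambda_1^D\phi)\,d\phi\Bigr)\sin(\lambda_1^D\phi).
\end{equation*}
Substituting $h(\phi) = \mathcal{M}[g](-i\lambda_1^D,\phi)$ and unfolding the definition of $\mathcal{M}$, the prefactor $\tfrac{2\pi i}{\sqrt{2\pi}}$ in (\ref{eq:residue-dirichlet-higher-order0.5}) combines with the residue and the factor $r^{\lambda_1^D}$ to give exactly
\begin{equation*}
\frac{1}{\pi}\Bigl(\int_{\mathcal{C}} r^{-\lambda_1^D}\sin(\lambda_1^D\phi)\,f(\mathbf{x})\,d\mathbf{x}\Bigr)\,r^{\lambda_1^D}\sin(\lambda_1^D\phi),
\end{equation*}
which is the claimed formula (\ref{eq:prop_solution_u1}).

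The main obstacle is the contour-shift/residue step: one must justify that the Mellin inversion integrals along the two horizontal lines $\operatorname{Im}\zeta = 0$ and $\operatorname{Im}\zeta = -1-k-\epsilon$ can be connected by vertical segments on which the contribution vanishes as $|\operatorname{Re}\zeta|\to\infty$. This needs uniform $H^2(G;|\xi|)$-decay estimates of $\mathcal{L}(\zeta)^{-1}$ along strips avoiding the poles, which are standard in the Kondrat'ev framework but should be invoked carefully. Once that is in place, the explicit form of the residue and the identification of $u_1$ with $u_0$ minus the singularity term follow immediately, and the $H^{k+2+\epsilon}$-estimate on $u_0$ inherited from the weighted-space result finishes the proof.
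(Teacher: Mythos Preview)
Your proposal is correct and follows essentially the same approach as the paper: the proof is precisely the assembly of the ingredients from Section~\ref{sec:recap} (weighted-space membership via Lemma~\ref{lemma:Sobolev-to-cone-second-version}, the Kondrat'ev solution $u_0 \in K^{k+2}_{-\epsilon}$, the upgrade to $H^{k+2+\epsilon}$ via Lemma~\ref{lemma:scaling-arguments-cone}, the contour shift/residue identity (\ref{eq:residue-dirichlet-higher-order0.5}), and the explicit evaluation of the single residue). Your explicit residue computation and your identification of the contour-shift justification as the only nontrivial technical point are both accurate and match what the paper does (with details deferred to the appendix and the cited literature).
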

The next lemma allows us to remove from Proposition~\ref{prop:solution_u1} 
the condition that $f$ vanish to order $k-1$ at zero. 
\begin{lemma}
\label{lemma:polynomial-solution}
Let $i$, $j$, $k\in\mathbb{N}_0$ with $i+j=k$. Set $\Sigma^D_{k+2}:=\{n \in \{2,\ldots, k+2\}\,|\,  n \frac{\omega}{\pi} \in\BbbN\}$ 
and $S^D_{k+2}:= \operatorname{span} \{ r^n \left(\ln r \sin (n \phi) + \phi \cos (n \phi)\right)\,|\,  n \in \Sigma^D_{k+2}\}$. Then 
there is a polynomial $\widetilde p_{i,j}$ of degree $k+2$ and a harmonic function $p^\prime_{i,j} \in S^D_{k+2}$ such that  
$p^D_{i,j}:= \widetilde p_{i,j} + p^\prime_{i,j}$ satisfies 
\begin{align}
\label{eq:lemma:polynomial-solution}
\begin{split}
-\Delta p^D_{i,j} &= x^i y^j \quad \text{on } \mathcal{C}, \\
p^D_{i,j}|_\Gamma &= 0. 
\end{split}
\end{align}
In the special case $\omega = \pi$, the contribution $p^\prime_{i,j}$ may be taken to be zero. 
\end{lemma}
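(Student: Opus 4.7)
The plan is to construct $p^D_{i,j}$ by first solving the Poisson equation with a homogeneous polynomial of degree $k+2$, then adding a harmonic correction that cancels the Dirichlet trace, with a singular term from $S^D_{k+2}$ appearing only when a resonance condition is triggered at the top frequency $n := k+2$.

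First I would construct a homogeneous polynomial $q$ of degree $k+2$ with $-\Delta q = x^i y^j$. Such a $q$ exists by a dimension count: $-\Delta$ maps the $(k+3)$-dimensional space of homogeneous polynomials of degree $k+2$ onto the $(k+1)$-dimensional space of homogeneous polynomials of degree $k$, because its kernel is the $2$-dimensional space $\operatorname{span}\{\operatorname{Re}(z^{k+2}),\operatorname{Im}(z^{k+2})\}$ of harmonic polynomials of degree $k+2$. Writing $q(r,\phi) = r^n Q(\phi)$ in polar coordinates, the Dirichlet traces of $q$ are $r^n Q(0)$ at $\phi=0$ and $r^n Q(\omega)$ at $\phi=\omega$; it remains to cancel them by a harmonic function on $\mathcal{C}$.

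Second, I would seek the correction as a linear combination of three harmonic functions at frequency $n$:
\begin{equation*}
h(r,\phi) := A\,r^n\cos(n\phi) + B\,r^n\sin(n\phi) + C\,r^n\bigl(\ln r\,\sin(n\phi)+\phi\cos(n\phi)\bigr).
\end{equation*}
The first two modes are $\operatorname{Re}(z^n)$ and $\operatorname{Im}(z^n)$, hence harmonic polynomials, while the third is $\operatorname{Im}(z^n\log z)$ and thus harmonic on $\mathcal{C}$. Evaluating $h$ on the rays gives $h(r,0)=Ar^n$ at $\phi=0$, and at $\phi=\omega$ a coefficient of $\ln r$ proportional to $C\sin(n\omega)$ plus a pure $r^n$ contribution. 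In the non-resonant case $\sin(n\omega)\neq 0$ (equivalently $n\omega/\pi\notin\mathbb{N}$), the $\ln r$-coefficient forces $C=0$ and the residual $2\times 2$ system for $(A,B)$ is uniquely solvable; then $h$ is a harmonic polynomial and we may set $\widetilde p_{i,j}:=q+h$, $p^\prime_{i,j}:=0$. In the resonant case $n\omega = m\pi$, the $\sin(n\omega)$-terms vanish identically, $B$ drops out of the boundary trace (so we take $B=0$), and the two remaining conditions give $A=-Q(0)$ and $C=(Q(0)-(-1)^m Q(\omega))/\omega$. The polynomial piece is absorbed into $\widetilde p_{i,j}:=q+A r^n\cos(n\phi)$, and $p^\prime_{i,j}:=C r^n(\ln r\sin(n\phi)+\phi\cos(n\phi))\in S^D_{k+2}$, since $n=k+2\in\Sigma^D_{k+2}$.

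Finally, for the special case $\omega=\pi$ the resonance condition $n\omega/\pi=n\in\mathbb{N}$ holds automatically; however, because $q$ is a homogeneous polynomial of degree $n$, only the $x^n$-monomial contributes on the real axis, so $Q(\pi)=q(-1,0)=(-1)^n q(1,0)=(-1)^n Q(0)$. Substituting into the formula for $C$ with $m=n$ yields $C=(Q(0)-(-1)^{2n}Q(0))/\pi=0$, hence no singular correction is needed, in agreement with the claim. The main obstacle is the resonant case: the regular $2\times 2$ boundary system degenerates because $r^n\sin(n\phi)$ vanishes on both rays, and one must recognize that the logarithmic harmonic function $r^n(\ln r\sin(n\phi)+\phi\cos(n\phi))$ provides precisely the missing rank, contributing a trace $\omega(-1)^m r^n$ at $\phi=\omega$ while vanishing at $\phi=0$.
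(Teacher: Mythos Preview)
Your proof is correct and follows essentially the same strategy as the paper's: construct a homogeneous polynomial particular solution of degree $k+2$, then correct the Dirichlet traces with harmonic functions at the single frequency $n=k+2$, using $\operatorname{Im}(z^n\log z)$ precisely in the resonant case $n\omega/\pi\in\BbbN$. The only differences are organizational---you use a dimension count instead of the paper's induction for the particular solution, you solve both boundary conditions simultaneously rather than sequentially, and your treatment of $\omega=\pi$ (computing $C=0$ directly) is a pleasant alternative to the paper's separate argument.
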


\begin{proof}
\emph{Step~1:} There is a polynomial $\hat p_{k+2}$ of degree $k+2$ such that $-\Delta \hat p_{k+2} = x^i y^j$. This is shown by induction on $j$: 
one observes for any $i \in \BbbN_0$ that $\Delta x^{i+2} y^0 = (i+2)(i+1) x^i y^0$ so the case $j = 0$ is shown; the formula 
$\Delta (x^{i+2} y^{j+1}) = (i+2)(i+1) x^i y^{j+1} + (j+1) j x^{i+2} y^{j-1}$ provides the induction step from $j$ to $j+1$. 

\emph{Step~2:} If $\omega = \pi$, then the boundary condition on the line $\{y = 0\}$ can be enforced by subtracting a suitable 
harmonic polynomial $\operatorname{Re} \sum_{n=0}^{k+2} a_n z^n$ with $z =x + iy$ and coefficients $a_n \in \BbbR$. 

\emph{Step 3:} If $\omega \ne \pi$, then the boundary condition at $\phi = 0$ is again enforced 
by subtracting a suitable harmonic polynomial of the form $\operatorname{Re} \sum_{n=0}^{k+2} a_n z^n$. To correct the boundary
condition at $\phi = \omega$, we note (see, e.g.,  \cite[Lem.~{6.1.1}]{Melenk_1995}) that $\operatorname{Im} z^n = r^n \sin (n\phi)$ 
and $\operatorname{Im} (z^n \ln z) = r^n \left(\ln r \sin (n \phi) + \phi \cos (n \phi)\right)$. Both functions vanish on $\phi = 0$ and are harmonic. 
Next, we can match a function of the form $r^n$ on the line $\phi = \omega$ by either $\operatorname{Im} z^n$ if $n\frac{\omega}{\pi} \notin\BbbN$ or 
by $\operatorname{Im} z^n \ln z$ if $n \frac{\omega}{\pi} \in \BbbN$. The function $\operatorname{Im} (z \ln z)$ is not required in the set $S^D_{k+2}$ 
since the case $1 \cdot \frac{\omega}{\pi} \in \BbbN$ can only arise for $\omega = \pi$. 
\end{proof}

\begin{corollary}
\label{cor:solution-with-polynomial}
Let $R > 0$. 
Let $k\in\mathbb{N}_0$ and $\epsilon\in (0,1)$ satisfy $k+1+\epsilon < \lambda^D_2 = \frac{2\pi}{\omega}$ and $k+1+\epsilon \ne \lambda_1^D = \frac{\pi}{\omega}$. 
Let $f\in H^{k+\epsilon}(\mathcal{C})$ with $\operatorname{supp} f\subseteq B_1(0)$. Let $\chi \in C^\infty_0(B_1(0))$ with $\chi\equiv 1$ near the origin. Then every function $u_1\in H^1(\mathcal{C})$ with $\operatorname{supp} u_1 \subseteq B_1(0)$ solving
\begin{align}
\label{eq:cor:solution-with-polynomial}
%\begin{split}
-\Delta u_1 &= f \in H^{k+\epsilon}(\mathcal{C}), 
\qquad 
u_1 = 0, \mbox{ for $\phi\in \{0,\omega\}$}, 
%\end{split}
\end{align}
has the form $u_1 = u_0 + \chi P_{k-1} + \delta$ with 
$u_0\in H^{k+2+\epsilon}(\mathcal{C}_R )$,  
\begin{align}
\label{eq:cor:solution-with-polynomial-1.5}
\delta & = \begin{cases} 0, & \mbox{ if } \quad k+\epsilon+1<\lambda^D_1 = \frac{\pi}{\omega} \\ 
                           S^D_1(f) s^D_1, & \mbox{ if } \quad k+\epsilon+1> \lambda^D_1 = \frac{\pi}{\omega} , 
\end{cases} \\
S^D(f) &:= - \frac{1}{\pi} \left(\int_{\mathcal{C}} r^{-\lambda_1^D} \sin(\lambda_1^D\phi) (f(\mathbf{x})+\Delta(\chi(\mathbf{x}) P_{k-1}(\mathbf{x}))) \, d\mathbf{x}
\right), \\
\label{eq:s1D}
s_1^D & := r^{\lambda_1^D} \sin(\lambda_1^D\phi)  ,\\
\label{eq:Pkm1}
P_{k-1}(\mathbf{x}) &:=\sum_{i+j\leq k-1} \frac{1}{i!j!} p^D_{i,j}(\mathbf{x})(\partial_x^i \partial_y^j f)(0), 
\end{align}
and the $p^D_{i,j}$ are the fixed functions from Lemma~\ref{lemma:polynomial-solution}. Furthermore, 
\begin{align}
\label{eq:cor:solution-with-polynomial-1.75}
\|u_0\|_{H^{k+2+\epsilon}(\mathcal{C}_R) } &\lesssim \|f\|_{H^{k+\epsilon}(\mathcal{C})},  \\
\label{eq:cor:solution-with-polynomial-1.75a}
\|P_{k-1}\|_{H^{k+2+\epsilon}({\mathcal C}_R)} &\lesssim \|f\|_{B^{k}_{2,1}({\mathcal C})} \quad \mbox{ if $\Sigma^D_{k+1}  =\emptyset$}, \\
\label{eq:cor:solution-with-polynomial-1.75b}
\|P_{k-1}\|_{B^{n^\star+1}_{2,\infty}({\mathcal C}_R)} &\lesssim \|f\|_{B^{k}_{2,1}({\mathcal C})} \quad \mbox{ if $\Sigma^D_{k+1}  \ne \emptyset$}, 
\qquad n^\star:= \min\{n \in \{2,\ldots,k+1\}\,|\, n \frac{\omega}{\pi} \in \BbbN\}, \\ 
\label{eq:cor:solution-with-polynomial-1.75c}
\|\Delta (\chi P_{k-1})\|_{H^{k+\epsilon}({\mathcal C}_R)} &\lesssim \|f\|_{B^{k}_{2,1}({\mathcal C})} 
\lesssim \|f\|_{H^{k+\epsilon}(\mathcal{C})}. 
\end{align}
The implied constants depend only $k$, $\epsilon$, the angle $\omega$, and the choice of the cut-off function $\chi$.
\end{corollary}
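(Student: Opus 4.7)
The plan is a direct reduction to Proposition~\ref{prop:solution_u1} by subtracting a cut-off polynomial correction that forces the right-hand side to vanish to order $k-1$ at the origin. Since $H^{k+\epsilon}(\mathcal{C})$ embeds into $C^{k-1}$ in two dimensions, the Taylor coefficients $(\partial_x^i\partial_y^j f)(0)$ with $i+j\le k-1$ are well-defined, so the function $P_{k-1}$ defined in \eqref{eq:Pkm1} is meaningful. I would set $\tilde u := u_1 - \chi P_{k-1}$ and $\tilde f := f + \Delta(\chi P_{k-1})$. Because every $p^D_{i,j}$ vanishes on $\Gamma_0\cup \Gamma_\omega$ by Lemma~\ref{lemma:polynomial-solution}, $\tilde u \in H^1(\mathcal{C})$ still satisfies homogeneous Dirichlet conditions there, has support in $B_1(0)$, and solves $-\Delta \tilde u = \tilde f$ in $\mathcal{C}$.

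The crucial observation is that, on the set $\{\chi\equiv 1\}$, one has $\Delta(\chi P_{k-1}) = \Delta P_{k-1} = -T_{k-1} f$, because the polynomial pieces $\tilde p_{i,j}$ of the $p^D_{i,j}$ contribute $-\Delta \tilde p_{i,j} = x^i y^j$ while the harmonic correctors $p^\prime_{i,j}$ contribute nothing to $\Delta$. Consequently, near the origin $\tilde f = f - T_{k-1} f$ vanishes to order $k-1$ at $0$, while globally $\tilde f \in H^{k+\epsilon}(\mathcal{C})$ with support in $B_1(0)$. Proposition~\ref{prop:solution_u1} applied to $(\tilde u,\tilde f)$ then supplies $\tilde u = u_0 + \delta$ with $u_0 \in H^{k+2+\epsilon}(\mathcal{C}_R)$ and $\delta$ as in \eqref{eq:cor:solution-with-polynomial-1.5}, the stress-intensity factor being $S^D(\tilde f)$, which coincides with $S^D(f)$ as defined in the statement because $\tilde f = f + \Delta(\chi P_{k-1})$. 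Re-adding $\chi P_{k-1}$ gives the required decomposition $u_1 = u_0 + \chi P_{k-1} + \delta$, and \eqref{eq:cor:solution-with-polynomial-1.75} follows from the proposition's estimate combined with $\|\tilde f\|_{H^{k+\epsilon}(\mathcal{C})}\le \|f\|_{H^{k+\epsilon}(\mathcal{C})} + \|\Delta(\chi P_{k-1})\|_{H^{k+\epsilon}(\mathcal{C}_R)}$ and \eqref{eq:cor:solution-with-polynomial-1.75c}.

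It remains to verify the bounds on $P_{k-1}$ and $\Delta(\chi P_{k-1})$. The key input is the critical two-dimensional Besov embedding $B^k_{2,1}(\mathbb{R}^2)\hookrightarrow C^{k-1}$, giving $|(\partial_x^i\partial_y^j f)(0)|\lesssim \|f\|_{B^k_{2,1}(\mathcal{C})}$ for $i+j\le k-1$ (after a suitable extension); hence $\|P_{k-1}\|_Y \lesssim \|f\|_{B^k_{2,1}(\mathcal{C})}$ in whatever target space $Y$ the fixed functions $p^D_{i,j}$ live in. If $\Sigma^D_{k+1}=\emptyset$, each $p^D_{i,j}$ entering $P_{k-1}$ is a genuine polynomial of degree $\le k+1$, which gives \eqref{eq:cor:solution-with-polynomial-1.75a}; otherwise the worst contribution is of type $r^{n^\star}(\ln r\sin(n^\star\phi) + \phi\cos(n^\star\phi))$, whose sharp regularity is $B^{n^\star+1}_{2,\infty}$, yielding \eqref{eq:cor:solution-with-polynomial-1.75b}. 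The step I expect to require the most care is \eqref{eq:cor:solution-with-polynomial-1.75c}: writing $\Delta(\chi P_{k-1}) = \chi\Delta P_{k-1} + 2\nabla\chi\cdot\nabla P_{k-1} + (\Delta\chi)P_{k-1}$, the first term equals $-\chi T_{k-1} f$ and is a smooth compactly supported function with $H^{k+\epsilon}$-norm controlled by the Taylor coefficients of $f$, while the remaining two are supported in the annulus $\{\nabla\chi\ne 0\}$, where $P_{k-1}$ is $C^\infty$ since the logarithmic terms are smooth away from the origin. The central technical mechanism is therefore the annihilation of the logarithmic singularities of $P_{k-1}$ by $\Delta$ via harmonicity of the $p^\prime_{i,j}$; it is precisely this cancellation that allows $\Delta(\chi P_{k-1})$ to retain the full $H^{k+\epsilon}$-regularity, which is what drove the estimate for $u_0$ in the previous paragraph.
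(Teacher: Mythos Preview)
Your proposal is correct and follows essentially the same approach as the paper: subtract $\chi P_{k-1}$ to force the modified right-hand side $\tilde f = f + \Delta(\chi P_{k-1})$ to vanish to order $k-1$ at the origin, then apply Proposition~\ref{prop:solution_u1} to $(\tilde u,\tilde f)$, and control $P_{k-1}$ via the critical embedding $B^k_{2,1}\hookrightarrow C^{k-1}$. Your handling of \eqref{eq:cor:solution-with-polynomial-1.75c} via the product-rule split and the harmonicity of the $p'_{i,j}$ is exactly the mechanism the paper exploits (phrased there as ``$\Delta(\chi P'_{k+1}) = 0$ near the origin'' together with smoothness of $P'_{k+1}$ on the annulus $\operatorname{supp}\nabla\chi$).
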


\begin{proof}
We only consider $k\geq 1$, since the claim for $k=0$ is a restatement of Proposition~\ref{prop:solution_u1} and $P_{-1} = 0$. 

\emph{Step 1:}
Lemma~\ref{lemma:polynomial-solution} provides functions $p^D_{i,j}$ such that the function $P_{k-1}=\sum_{i+j\leq k-1} \frac{1}{i!j!} p^D_{i,j} (\partial_x^i \partial_y^j f)(0)$ solves the problem
\begin{align}
\label{eq:cor:solution-with-polynomial-2}
%\begin{split}
-\Delta P_{k-1} &= \sum_{i+j\leq k-1} \frac{1}{i!j!} x^i y^j (\partial_x^i \partial_y^j f)(0), 
\qquad 
%\\
P_{k-1} = 0, \ \mbox{ for $\phi\in \{0,\omega\}$}.
%\end{split}
\end{align}
We have for $k \ge 1$ the embedding $B^{k}_{2,1} \subset C^{k-1}$, which is asserted in 
\cite[Thm.~{2.8.1(c)}]{Triebel2ndEd} and could, alternatively, be obtained by combining 
the classical Gagliardo inequality (in 2D) $\|\nabla^j u\|_{L^\infty} \lesssim 
\|u\|_{H^{k+1}}^\theta \|u\|_{L^2}^{1-\theta}$ with $\theta = (j+1)/(k+1)$ for $0 \leq j \leq k-1$ with 
the result \cite[Thm.~{25.3}]{Tartar_2007}. In view of (\ref{eq:Pkm1}), this embedding leads to 
the estimates (\ref{eq:cor:solution-with-polynomial-1.75a}), (\ref{eq:cor:solution-with-polynomial-1.75b}): 
In the first case, $\Sigma^D_{k+1} = \emptyset$, the functions $p^D_{i,j}$ are polynomials (and hence smooth). 
In the second case, $\Sigma^D_{k+1} \ne \emptyset$, the functions $p^D_{i,j}$ are sums of polynomials, which are smooth, 
and functions $p^\prime_{i,j} \in S^D_{k+1}$, which are in $B^{n^\star+1}_{2,\infty}(\mathcal{C}_1)$ by 
Lemma~\ref{lemma:bounded-linear-besov-2}\ref{item:lemma:bounded-linear-besov-2-iii} ahead.  

\emph{Step 2:}
We now define $\widetilde{u_1}:=u_1 - \chi P_{k-1}$, which also has support in $B_1(0)$. Since $u_1$ solves \eqref{eq:cor:solution-with-polynomial}, the function $\widetilde{u_1}\in H^1(\mathcal{C})$ solves the problem
\begin{align}
\label{eq:cor:solution-with-polynomial-3}
%\begin{split}
-\Delta \widetilde{u_1} &= \widetilde{f} := f+\Delta(\chi P_{k-1}) \in H^{k+\epsilon}(\mathcal{C}), 
\qquad 
\widetilde{u_1} = 0 \ \mbox{ for $\phi\in \{0,\omega\}$}.
%\end{split}
\end{align}
By constuction of $P_{k-1}$, the right-hand side $\widetilde{f}$ satisfies %for $l=0,\ldots,k-1$ and $i+j=l$
%\begin{align}
%\label{eq:cor:solution-with-polynomial-4}
%\begin{split}
%D^l \widetilde{f}(0) &= (\partial_x^i \partial_y^j f)(0) + (\partial_x^i \partial_y^j \Delta(\chi P_{k-1}))(0) \\
%&= (\partial_x^i \partial_y^j f)(0) + \underbrace{(\partial_x^i \partial_y^j \Delta\chi P_{k-1})(0)}_{=0} + \underbrace{2(\partial_x^i \partial_y^j (\nabla\chi\cdot \nabla P_{k-1}))(0)}_{=0} + \underbrace{(\partial_x^i \partial_y^j \chi\Delta P_{k-1})(0)}_{=(\partial_x^i \partial_y^j \Delta P_{k-1})(0)} \\
%&= (\partial_x^i \partial_y^j f)(0) - \left(\partial_x^i \partial_y^j \sum_{i^\prime + j^\prime \leq k-1} \frac{1}{i^\prime ! j^\prime !} x^{i^\prime} y^{j^\prime} (\partial_x^{i^\prime} \partial_y^{j^\prime} f)(0)\right)(0) \\
%&= (\partial_x^i \partial_y^j f)(0) - (\partial_x^i \partial_y^j f)(0) = 0.
%\end{split}
%\end{align}
\begin{align}
\label{eq:cor:solution-with-polynomial-4}
\partial_x^i \partial_y^j \widetilde{f}(0) = 0 \quad \text{for $i+j \leq k-1$.}
\end{align}
Thus Proposition~\ref{prop:solution_u1} can be applied to problem \eqref{eq:cor:solution-with-polynomial-3}, and we obtain
\begin{align*}
\widetilde{u_1} = \begin{cases} 
u_0, & \mbox{ if } \quad k+1+\epsilon<\lambda^D_1 = \frac{\pi}{\omega}, \\ 
u_0 - \frac{1}{\pi} \left(\int_{\mathcal{C}} r^{-\lambda_1^D} \sin(\lambda_1^D\phi) \widetilde{f}(\mathbf{x}) \, d\mathbf{x}\right) 
r^{\lambda_1^D} \sin(\lambda_1^D\phi), & \mbox{ if } \quad k+1+\epsilon>  \lambda^D_1 = \frac{\pi}{\omega}
\end{cases}
\end{align*}
%\begin{align*}
%\widetilde{u_1} = \left\{\begin{array}{cc}{u_0}, & k+1+\epsilon<\lambda^D_1 = \frac{\pi}{\omega}, \\ u_0 - \frac{1}{\pi} \left(\int_{\mathcal{C}} r^{-\lambda_1^D} \sin(\lambda_1^D\phi) \widetilde{f}(x) \, dx\right) r^{\lambda_1^D} \sin(\lambda_1^D\phi), & k+1+\epsilon>  \lambda^D_1 = \frac{\pi}{\omega}\end{array}\right.
%\end{align*}
with $\|{u_0}\|_{H^{k+2+\epsilon}(\mathcal{C}_R )} \lesssim \|\widetilde f\|_{H^{k+\epsilon}(\mathcal{C})}$. To complete the proof, we distinguish the cases
$\Sigma^D_{k+1}  = \emptyset$ and 
$\Sigma^D_{k+1}  \ne \emptyset$. If  
$\Sigma^D_{k+1}  = \emptyset$, then $P_{k-1}$ is a polynomial of degree $k+1$ and together with 
(\ref{eq:cor:solution-with-polynomial-1.75a}) we get $\|\widetilde f\|_{H^{k+\epsilon}(\mathcal{C}_R)}\lesssim \|f\|_{H^{k+\epsilon}(\mathcal{C}_R)}$. 
If 
$\Sigma^D_{k+1}  \ne \emptyset$, then $P_{k-1}$ is the sum of a polynomial of degree $k+1$, for which we can argue as in the case $\Sigma^D_{k+1} = \emptyset$, 
and a harmonic contribution $P^\prime_{k+1} \in S^D_{k+1}$. The function $P^\prime_{k+1}$ is smooth away from the origin and by harmonicity we have $\Delta (\chi P^\prime_{k+1}) = 0$ near the origin
so that in total we arrive again at the estimate (\ref{eq:cor:solution-with-polynomial-1.75c}) and thus 
$\|\widetilde f\|_{H^{k+\epsilon}(\mathcal{C})} \lesssim \|f\|_{H^{k+\epsilon}(\mathcal{C})}$. 
\end{proof}

%----------------------------------------------------------
\subsection{Regularity of the singularity function and the stress intensity functional}
%----------------------------------------------------------
The following Lemma~\ref{lemma:bounded-linear-besov-2} clarifies in which Besov spaces the singularity functions arising in corner domains lie. 
The proof of Lemma~\ref{lemma:bounded-linear-besov-2}\ref{item:lemma:bounded-linear-besov-2-ii}, \ref{item:lemma:bounded-linear-besov-2-iii} 
relies on arguments given in \cite{babuska-kellogg-pitkaranta79} or \cite[Thm.~{2.1}]{babuska-osborn91}. 
\begin{lemma}
\label{lemma:bounded-linear-besov-2}
The following statements hold.
\begin{enumerate}[leftmargin=8mm, label=(\roman*)]
\item \label{item:lemma:bounded-linear-besov-2-i} For $\alpha>1$, $\alpha\notin\mathbb{N}$, set $k:=\lfloor \alpha \rfloor-1$ 
%and let $P_{k-1}$ be the function and $\chi$ be the cut-off function from Corollary~\ref{cor:solution-with-polynomial}. 
and let $P_{k-1}$, $\chi$ be as in Corollary~\ref{cor:solution-with-polynomial}. 
Then the mapping
\begin{align*}
f \mapsto S(f):=\int_{\mathcal{C}_1} r^{-\alpha} \sin(\alpha\phi) (f+\Delta(\chi P_{k-1})) \, d\mathbf{x}
\end{align*}
is bounded and linear on $B_{2,1}^{\alpha-1}(\mathcal{C}_1)$. 
\item \label{item:lemma:bounded-linear-besov-2-ia} Let $0<\alpha\leq 1$. Then the mapping
\begin{align*}
f \mapsto S(f):=\int_{\mathcal{C}_1} r^{-\alpha} \sin(\alpha\phi) f \, d\mathbf{x}
\end{align*}
is bounded and linear on $\widetilde{B}_{2,1}^{\alpha-1}(\mathcal{C}_1)$. 
For $\alpha >1/2$, it is also bounded and linear on ${B}_{2,1}^{\alpha-1}(\mathcal{C}_1) = \widetilde{B}^{\alpha-1}_{2,1}(\mathcal{C}_1)$.  
\item \label{item:lemma:bounded-linear-besov-2-ii} For $\beta \ge -1$, the function $s^+(r,\phi)=r^\beta \sin(\beta\phi)$ is in the space $B_{2,\infty}^{1+\beta}(\mathcal{C}_1)$.
\item \label{item:lemma:bounded-linear-besov-2-iii} 
Let $\Phi \in C^\infty(\BbbR^2)$ with $|\Phi(x,y)| \leq C r^n$ as $r\rightarrow 0$. 
The functions $v(x,y) =\Phi(x,y) \ln r$ and $w(x,y) = \phi \Phi(x,y)$ 
are in the space $B_{2,\infty}^{n+1}(\mathcal{C}_1)$.
\item
The statements 
\ref{item:lemma:bounded-linear-besov-2-i}, 
\ref{item:lemma:bounded-linear-besov-2-ia}, 
\ref{item:lemma:bounded-linear-besov-2-ii} remain true if the function $\sin$ is replaced with $\cos$. 
\end{enumerate}
\end{lemma}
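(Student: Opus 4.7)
The plan is to establish the K-functional bound for the singularity function in \ref{item:lemma:bounded-linear-besov-2-ii} first, deduce \ref{item:lemma:bounded-linear-besov-2-ia} by duality, and then handle \ref{item:lemma:bounded-linear-besov-2-i} by a Taylor-subtraction argument combined with real interpolation; part \ref{item:lemma:bounded-linear-besov-2-iii} will follow the same template as \ref{item:lemma:bounded-linear-besov-2-ii}.

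For \ref{item:lemma:bounded-linear-besov-2-ii}, I would pick integer endpoints $s_1 < 1+\beta < s_2$ and a radial cut-off $\eta_\tau \in C^\infty_0(B_{2\tau}(0))$ with $\eta_\tau \equiv 1$ on $B_\tau(0)$, and decompose $s^+ = \eta_\tau s^+ + (1-\eta_\tau)s^+$. Since $|D^\alpha s^+(r,\phi)| \lesssim r^{\beta-|\alpha|}$ and $|D^\alpha \eta_\tau| \lesssim \tau^{-|\alpha|}$, direct polar-coordinate integration (switching to duality when $s_1 < 0$) yields
\begin{equation*}
\|\eta_\tau s^+\|_{H^{s_1}(\mathcal{C}_1)} \lesssim \tau^{1+\beta-s_1}, \qquad \|(1-\eta_\tau)s^+\|_{H^{s_2}(\mathcal{C}_1)} \lesssim \tau^{1+\beta-s_2}.
\end{equation*}
Feeding these into the K-functional between $H^{s_1}$ and $H^{s_2}$ with $t = \tau^{s_2-s_1}$ gives $K(t,s^+) \lesssim t^\theta$ for $\theta = (1+\beta-s_1)/(s_2-s_1)$, which is exactly the semi-norm bound for $s^+ \in B^{1+\beta}_{2,\infty}(\mathcal{C}_1)$. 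For \ref{item:lemma:bounded-linear-besov-2-ia}, I apply \ref{item:lemma:bounded-linear-besov-2-ii} with $\beta = -\alpha \in [-1,0)$ to place $r^{-\alpha}\sin(\alpha\phi)$ in $B^{1-\alpha}_{2,\infty}(\mathcal{C}_1)$, and exploit the duality $(\widetilde B^{\alpha-1}_{2,1}(\mathcal{C}_1))^\star = B^{1-\alpha}_{2,\infty}(\mathcal{C}_1)$ (obtained by interpolating the Sobolev duality $\widetilde H^{-s} = (H^s)^\star$) to produce the functional bound on $\widetilde B^{\alpha-1}_{2,1}$; for $\alpha > 1/2$, (\ref{eq:B=tildeB}) identifies $B^{\alpha-1}_{2,1}$ with $\widetilde B^{\alpha-1}_{2,1}$.

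For \ref{item:lemma:bounded-linear-besov-2-i}, the correction $\chi P_{k-1}$ is engineered so that $-\Delta P_{k-1}$ equals the Taylor polynomial $T_{k-1} f$ of $f$ at the origin. Expanding $\Delta(\chi P_{k-1}) = \chi \Delta P_{k-1} + 2\nabla \chi \cdot \nabla P_{k-1} + P_{k-1}\Delta \chi$ and using that the commutator terms are supported in an annulus where $r^{-\alpha}\sin(\alpha\phi)$ is smooth, the functional $S(f)$ reduces, modulo terms controlled by $\|f\|_{L^2}$, to $\int r^{-\alpha}\sin(\alpha\phi)(f - \chi T_{k-1} f)$. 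The remainder $\widetilde f := f - \chi T_{k-1} f$ vanishes to order $k-1$ at the origin, so Lemma~\ref{lemma:Sobolev-to-cone-second-version} places it in $K^k_{-\epsilon'}(\mathcal{C}_1)$ for any $\epsilon' \in (0,\alpha-1-k)$, and weighted Cauchy-Schwarz against the weight $r^{-\alpha+k+\epsilon'}$ gives $|S(f)| \lesssim \|f\|_{H^{k+\epsilon'}}$. The hard part is that the critical choice $\epsilon' = \alpha-1-k$ makes the radial weight $L^2$-divergent by a logarithm; my plan is to carry out the estimate for two off-critical values $\epsilon' = (\alpha-1-k) \pm \delta$ (shifting the Taylor order by one in the rougher case if necessary to keep the subtraction well defined), obtaining boundedness of $S$ on $H^{\alpha-1-\delta}$ and on $H^{\alpha-1+\delta}$, and then to invoke the reiteration identity $B^{\alpha-1}_{2,1} = (H^{\alpha-1-\delta}, H^{\alpha-1+\delta})_{1/2,1}$ together with the interpolation theorem for linear functionals.

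Part \ref{item:lemma:bounded-linear-besov-2-iii} is handled by the same dyadic-cut-off K-functional scheme used in \ref{item:lemma:bounded-linear-besov-2-ii}: differentiation of $\Phi \ln r$ and $\phi \Phi$ produces factors $r^{n-|\alpha|}$ up to bounded or logarithmic angular contributions, and the latter are absorbed by the weakness of the $q=\infty$ target. The replacement of $\sin$ by $\cos$ in all items is immediate because the arguments use only smoothness and boundedness of the angular factor, not its specific form.
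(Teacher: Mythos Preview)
Your treatment of \ref{item:lemma:bounded-linear-besov-2-ii}, \ref{item:lemma:bounded-linear-besov-2-ia}, and \ref{item:lemma:bounded-linear-besov-2-iii} is essentially the paper's argument (cut-off splitting for the $K$-functional, then duality), and the remark on replacing $\sin$ by $\cos$ is fine.

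The gap is in \ref{item:lemma:bounded-linear-besov-2-i}. Your weighted Cauchy--Schwarz step has the range reversed: writing $\int r^{-\alpha}\widetilde f = \int r^{-\alpha+k+\epsilon'}\cdot r^{-k-\epsilon'}\widetilde f$, the factor $r^{-\alpha+k+\epsilon'}$ lies in $L^2(\mathcal{C}_1)$ only when $\epsilon' > \alpha-1-k$, not when $\epsilon' < \alpha-1-k$. So this argument yields $|S(f)|\lesssim\|f\|_{H^{\alpha-1+\delta}}$ for $\delta>0$, but it cannot give a bound on $H^{\alpha-1-\delta}$. In fact $S$ is genuinely unbounded there: for $\alpha\in(1,2)$ (so $P_{k-1}=0$) and $f_\varepsilon(r,\phi)=\psi(r/\varepsilon)\sin(\alpha\phi)$ with a fixed radial bump $\psi$, one computes $|S(f_\varepsilon)|\sim\varepsilon^{2-\alpha}$ while $\|f_\varepsilon\|_{H^{\alpha-1-\delta}}\sim\varepsilon^{2-\alpha+\delta}$, so the ratio blows up like $\varepsilon^{-\delta}$. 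Your ``shift the Taylor order'' fix does not rescue this either: replacing $P_{k-1}$ by $P_{k-2}$ alters $S$ by $\int r^{-\alpha}\sin(\alpha\phi)\,\Delta(\chi(P_{k-1}-P_{k-2}))$, whose near-origin contribution involves $\int_0^c r^{k-\alpha}\,dr$, divergent since $k=\lfloor\alpha\rfloor-1<\alpha-1$.

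What is missing is precisely the mechanism that produces the secondary index $q=1$. The paper does not try to bound $S$ on two endpoint Sobolev spaces; instead it introduces a radial cut-off at scale $\delta$, splits $S=S_1+S_2$, estimates $|S_1|\lesssim \delta^{k+1-\alpha+\epsilon}\|f\|_{H^{k+\epsilon}}$ via Lemma~\ref{lemma:Sobolev-to-cone-second-version} and $|S_2|\lesssim\delta^{k+2-\alpha}\|f\|_{B^{k+1}_{2,1}}$ via the embedding $B^{k+1}_{2,1}\subset C^k$, and then \emph{optimizes in $\delta$} to obtain the multiplicative inequality
\[
|S(f)|\lesssim \|f\|_{H^{k+\epsilon}}^{1-\theta}\,\|f\|_{B^{k+1}_{2,1}}^{\theta},\qquad \theta=\frac{\alpha-k-1-\epsilon}{1-\epsilon}.
\]
It is this product bound, together with \cite[Lem.~{25.2}]{Tartar_2007}, that places $S$ in $\bigl((H^{k+\epsilon},B^{k+1}_{2,1})_{\theta,1}\bigr)^\star=(B^{\alpha-1}_{2,1})^\star$. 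Plain interpolation of a linear map between two spaces on which it is bounded cannot manufacture the $q=1$ endpoint here, because one of the two boundedness hypotheses is false.
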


\begin{proof}
\emph{Proof of \ref{item:lemma:bounded-linear-besov-2-i}:} Choose $0<\epsilon\ll 1$ such that $k+1-\alpha+\epsilon<0$. 
By the Reiteration Theorem~\cite[Thm.~{26.3}]{Tartar_2007} 
\begin{align*}
B_{2,1}^{\alpha-1}(\mathcal{C}_1) &= (H^{k+\epsilon}(\mathcal{C}_1),H^{k+1}(\mathcal{C}_1))_{\frac{\alpha-k-1-\epsilon}{1-\epsilon},1} = (H^{k+\epsilon}(\mathcal{C}_1),(H^{k+\epsilon}(\mathcal{C}_1),H^{k+2+\epsilon}(\mathcal{C}_1))_{\frac{1-\epsilon}{2},2})_{\frac{\alpha-k-1-\epsilon}{1-\epsilon},1} \\
&= (H^{k+\epsilon}(\mathcal{C}_1),(H^{k+\epsilon}(\mathcal{C}_1),H^{k+2+\epsilon}(\mathcal{C}_1))_{\frac{1-\epsilon}{2},1})_{\frac{\alpha-k-1-\epsilon}{1-\epsilon},1} = (H^{k+\epsilon}(\mathcal{C}_1),B_{2,1}^{k+1}(\mathcal{C}_1))_{\frac{\alpha-k-1-\epsilon}{1-\epsilon},1}.  
\end{align*} 
Now assume $f\in C^\infty(\overline{\mathcal{C}_1})$, $f\neq 0$, the general statement will then follow by density arguments. For
\begin{align*}
\delta:=\operatorname{min}\left\{\|f\|_{H^{k+\epsilon}(\mathcal{C}_1)}^{\frac{1}{1-\epsilon}} \|f\|_{B_{2,1}^{k+1}(\mathcal{C}_1)}^{-\frac{1}{1-\epsilon}},\frac{1}{2}\operatorname{diam}\{\mathbf{x}\in\mathcal{C}:\chi(\mathbf{x})=1\}\right\},
\end{align*}
denote by $\chi_\delta$ a smooth cut-off function that equals zero for $r<\delta$ and one for $r>2\delta$. We have
\begin{align}
\nonumber 
S(f) & = \int_{\mathcal{C}_1} r^{-\alpha} \sin(\alpha\phi) \chi_\delta (f+\Delta(\chi P_{k-1})) \, d\mathbf{x} + \int_{\mathcal{C}_1} r^{-\alpha} \sin(\alpha\phi) (1-\chi_\delta) (f+\Delta(\chi P_{k-1})) \, d\mathbf{x} \\
& =:S_1 + S_2. 
\label{eq:lemma:bounded-linear-besov-2-1}
\end{align}
The first integral, $S_1$, is estimated by
\begin{align*}
S_1 & = \left|\int_{\mathcal{C}_1} r^{-\alpha+k+\epsilon} \sin(\alpha\phi) \chi_\delta \frac{f+\Delta(\chi P_{k-1})}{r^{k+\epsilon}} \, dx\right| \\
&\quad \lesssim \left\|\frac{f+\Delta(\chi P_{k-1})}{r^{k+\epsilon}}\right\|_{L^2(\mathcal{C}_1)} \left|\int_\delta^1 r^{-2\alpha+2k+2\epsilon+1} \chi_\delta^2 \, dr\right|^{\frac{1}{2}} \lesssim \left\|\frac{f+\Delta(\chi P_{k-1})}{r^{k+\epsilon}}\right\|_{L^2(\mathcal{C}_1)} \delta^{k+1-\alpha+\epsilon}.
\end{align*}
The remaining $L^2$-norm can be handled with Lemma~\ref{lemma:Sobolev-to-cone-second-version}: We have, 
since $\partial_x^i\partial_y^j (f+\Delta(\chi P_{k-1}))(0)=0$ for $i+j\leq k-1$, cf.\ \eqref{eq:cor:solution-with-polynomial-4}, 
\begin{align*}
\left\|\frac{f+\Delta(\chi P_{k-1})}{r^{k+\epsilon}}\right\|_{L^2(\mathcal{C}_1)}^2 \leq \|f+\Delta(\chi P_{k-1})\|_{K_{-\epsilon}^k(\mathcal{C}_1)}^2 \lesssim \|f+\Delta(\chi P_{k-1})\|_{H^{k+\epsilon}(\mathcal{C}_1)}^2 \stackrel{\eqref{eq:cor:solution-with-polynomial-1.75c}}{\lesssim} \|f\|_{H^{k+\epsilon}(\mathcal{C}_1)}^2. 
\end{align*}
Since the expression $\delta$ involves a minimum, we analyze two cases. If $\|f\|_{H^{k+\epsilon}(\mathcal{C}_1)}^{\frac{1}{1-\epsilon}} \|f\|_{B_{2,1}^{k+1}(\mathcal{C}_1)}^{-\frac{1}{1-\epsilon}} \leq \frac{1}{2}\operatorname{diam}\{\mathbf{x}\in\mathcal{C}:\chi(\mathbf{x})=1\}$, we get directly
\begin{align*}
\delta^{k+1-\alpha+\epsilon} = \left(\|f\|_{H^{k+\epsilon}(\mathcal{C}_1)}^{\frac{1}{1-\epsilon}} \|f\|_{B_{2,1}^{k+1}(\mathcal{C}_1)}^{-\frac{1}{1-\epsilon}}\right)^{k+1-\alpha+\epsilon},
\end{align*}
since the exponent $k+1-\alpha+\epsilon < 0$; if, on the other hand, $\|f\|_{H^{k+\epsilon}(\mathcal{C}_1)}^{\frac{1}{1-\epsilon}} \|f\|_{B_{2,1}^{k+1}(\mathcal{C}_1)}^{-\frac{1}{1-\epsilon}} > \frac{1}{2}\operatorname{diam}\{\mathbf{x}\in\mathcal{C}:\chi(\mathbf{x})=1\}$, then 
the continuous embedding 
$B_{2,1}^{k+1}(\mathcal{C}_1) \subset H^{k+\epsilon}(\mathcal{C}_1)$ implies 
$\|f\|_{H^{k+\epsilon}(\mathcal{C}_1)}^{\frac{1}{1-\epsilon}} \|f\|_{B_{2,1}^{k+1}(\mathcal{C}_1)}^{-\frac{1}{1-\epsilon}} \lesssim 1$, 
%since $B_{2,1}^{k+1}(\mathcal{C}_1) \subset H^{k+\epsilon}(\mathcal{C}_1)$ continuously, 
and  we arrive at 
\begin{align*}
\delta^{k+1-\alpha+\epsilon} \lesssim \left(\frac{1}{2}\operatorname{diam}\{\mathbf{x}\in\mathcal{C}:\chi(\mathbf{x})=1\}\right)^{k+1-\alpha+\epsilon} \lesssim \left(\|f\|_{H^{k+\epsilon}(\mathcal{C}_1)}^{\frac{1}{1-\epsilon}} \|f\|_{B_{2,1}^{k+1}(\mathcal{C}_1)}^{-\frac{1}{1-\epsilon}}\right)^{k+1-\alpha+\epsilon}.
\end{align*}
For the second integral of \eqref{eq:lemma:bounded-linear-besov-2-1}, $S_2$,  we obtain
\begin{align*}
S_2 &
= \left|\int_{\mathcal{C}_1} \! r^{-\alpha+k} \sin(\alpha\phi) (1-\chi_\delta) \frac{f\!+\!\Delta(\chi P_{k-1})}{r^k} \, dx\right| 
\lesssim \left\|\frac{f+\Delta(\chi P_{k-1})}{r^k}\right\|_{L^\infty(\mathcal{C}_{2\delta}) } \int_0^{2\delta} r^{-\alpha+k+1} \, dr.
\end{align*}
Since $\Delta(\chi P_{k-1})=\Delta P_{k-1}=-\sum_{i+j\leq k-1} \frac{1}{i!j!} x^i y^j (\partial_x^i \partial_y^j f)(0)$ in the region where $\chi\equiv 1$, it follows with the embedding $B_{2,1}^1(\mathcal{C}_1) \subseteq C(\overline{\mathcal{C}_1})$, cf.~\cite[Thm.~{4.6.1}]{Triebel2ndEd},
\begin{align*}
%&\left\|\frac{f+\Delta(\chi P_{k-1})}{r^k}\right\|_{L^\infty(\mathcal{C}_{2\delta})} \int_0^{2\delta} r^{-\alpha+k+1} \, dr 
S_2 &
\lesssim \left\|\frac{f-\sum_{i+j\leq k-1} \frac{1}{i!j!} x^i y^j (\partial_x^i \partial_y^j f)(0)}{r^k}\right\|_{L^\infty(\mathcal{C}_1)} \delta^{k+2-\alpha} \\
&\qquad \lesssim \|D^k f\|_{L^\infty(\mathcal{C}_1)} \delta^{k+2-\alpha} \lesssim \|f\|_{B_{2,1}^{k+1}(\mathcal{C}_1)} \left(\|f\|_{H^{k+\epsilon}(\mathcal{C}_1)}^{\frac{1}{1-\epsilon}} \|f\|_{B_{2,1}^{k+1}(\mathcal{C}_1)}^{-\frac{1}{1-\epsilon}}\right)^{k+2-\alpha}.
\end{align*}
In total, we have arrived at
\begin{align}
\label{eq:foo-10}
|S(f)| \lesssim \|f\|_{H^{k+\epsilon}(\mathcal{C}_1)}^{\frac{k+2-\alpha}{1-\epsilon}} \|f\|_{B_{2,1}^{k+1}(\mathcal{C}_1)}^{\frac{\alpha-k-1-\epsilon}{1-\epsilon}}.
\end{align}
By \cite[Lem.~{25.2}]{Tartar_2007}, the estimate (\ref{eq:foo-10}) implies $S\in \left((H^{k+\epsilon}(\mathcal{C}_1),B_{2,1}^{k+1}(\mathcal{C}_1))_{\frac{\alpha-k-1-\epsilon}{1-\epsilon},1}\right)^\star = (B_{2,1}^{\alpha-1}(\mathcal{C}_1))^\star$.

%\vspace{\baselineskip}

\emph{Proof of \ref{item:lemma:bounded-linear-besov-2-ia}:} 
By \ref{item:lemma:bounded-linear-besov-2-ii} we have for $\alpha \in (0,1]$ 
that $s^-(r,\phi):=r^{-\alpha} \sin(\alpha\phi) \in  B^{-\alpha+1}_{2,\infty}(\mathcal{C}_1)$. From the 
characterization of dual spaces of interpolation spaces, \cite[Lem.~{41.3}]{Tartar_2007}, \cite[Thm.~{1.11.2}]{Triebel2ndEd}, we have 
for any $\epsilon \in (0,1/2)$ in view of $-\alpha + 1 \in [0,1)$ with $\theta = (-\alpha+1+\epsilon)/(1+\epsilon)$
$$
B^{-\alpha+1}_{2,\infty} =  (H^{-\epsilon}, H^1)_{\theta,\infty} = 
\left(((H^1)^\star, (H^{-\epsilon})^\star)_{1-\theta,1}\right)^\star = 
\left((\widetilde{H}^{-1}, \widetilde{H}^{\epsilon})_{1-\theta,1}\right)^\star  = \left(\widetilde{B}^{\alpha-1}_{2,1}\right)^\star. 
$$
For $\alpha > 1/2$, we note that $\alpha - 1 \in (-1/2,0)$ so that by (\ref{eq:B=tildeB}) we have $\widetilde{B}^{\alpha-1}_{2,1} = B^{\alpha-1}_{2,1}$. 
%
%This follows by noting that by \ref{item:lemma:bounded-linear-besov-2-ii} we have 
%\begin{align*}
%s^-(r,\phi)=r^{-\alpha} \sin(\alpha\phi) \in (L^2(\mathcal{C}_1), H^1(\mathcal{C}_1))_{1-\alpha,\infty}
%\end{align*}
%and by using the fact $(L^2(\mathcal{C}_1), H^1(\mathcal{C}_1))_{1-\alpha,\infty} = \left((\widetilde{H}^{-1}(\mathcal{C}_1), L^2(\mathcal{C}_1))_{\alpha,1}\right)^\star$. For $\alpha > 1/2$, we use the Reiteration Theorem 
%and the property $H^{s}(D) = \widetilde{H}^s(D)$ for $|s| < 1/2$ (\cite[Cor.~{1.4.4.5}]{grisvard92} for $s \in (0,1/2)$ and duality for $s \in (-1/2,0)$)
%to argue as follows: let $-1/2 < s_1 < -(1-\alpha)$ and $\theta \in (0,1)$ with $s_1 \theta = -(1-\alpha)$. Then, 
%$$
%(\widetilde{H}^{-1},L^2)_{\alpha,1} = 
%(\widetilde{H}^{s_1},L^2)_{\theta,1} = 
%({H}^{s_1},L^2)_{\theta,1} = (H^{-1},L^2)_{\alpha,1}. 
%$$
%
%\vspace{\baselineskip}

\emph{Proof of \ref{item:lemma:bounded-linear-besov-2-ii}:} 

\emph{Step~1 ($\beta \in \BbbN_0$):} For $\beta \in \BbbN_0$, the function $(x,y) \mapsto r^\beta \sin (\beta \phi)$ is a polynomial and thus smooth. 

\emph{Step~2 ($\beta > -1$, $\beta \not\in\BbbN_0$):} 
We write the Besov space as the interpolation space
\begin{align*}
B_{2,\infty}^{1+\beta}(\mathcal{C}_1) = (L^2(\mathcal{C}_1),H^{\lfloor\beta\rfloor+2}(\mathcal{C}_1))_{\frac{1+\beta}{\lfloor\beta\rfloor+2},\infty}.
\end{align*}
Next we select a smooth cut-off function $\chi_t$ with $\chi_t \equiv 0$ on $B_{t^{\frac{1}{\lfloor\beta\rfloor+2}}/2}(0)$ and $\chi_t \equiv 1$ on $B_1(0) \backslash B_{t^{\frac{1}{\lfloor\beta\rfloor+2}}}(0)$, and whose derivatives satisfy $\|\nabla^k \chi_t\|_{L^\infty(\mathcal{C}_1)} \lesssim t^{-\frac{k}{\lfloor\beta\rfloor+2}}$. We then get
\begin{align}
\label{eq:lemma:regularity-splus-2}
\|(1-\chi_t)s^+\|_{L^2(\mathcal{C}_1)}^2 \lesssim \int_0^1 (1-\chi_t)^2 r^{2\beta} r \, dr \lesssim \int_0^{t^{\frac{1}{\lfloor\beta\rfloor+2}}} r^{2\beta+1} \, dr \lesssim t^{\frac{2(\beta+1)}{\lfloor\beta\rfloor+2}}.
\end{align}
For the derivatives we obtain
\begin{align}
\label{eq:lemma:regularity-splus-3}
\begin{split}
\|\nabla^{\lfloor\beta\rfloor+2}(\chi_t s^+)\|_{L^2(\mathcal{C}_1)}^2 &\lesssim \int_0^1 \sum_{s=0}^{\lfloor\beta\rfloor+2} |\nabla^s \chi_t(r)|^2 |\nabla^{\lfloor\beta\rfloor+2-s} r^\beta|^2 r \, dr \\
&\lesssim \int_{t^{\frac{1}{\lfloor\beta\rfloor+2}}/2}^1 r^{2(\beta-\lfloor\beta\rfloor-2)+1} \, dr + \sum_{s=1}^{\lfloor\beta\rfloor+2} t^{-\frac{2s}{\lfloor\beta\rfloor+2}} \left(t^{\frac{1}{\lfloor\beta\rfloor+2}}\right)^{2(\beta+s-\lfloor\beta\rfloor-2)+2} \\
&\lesssim t^{\frac{2\beta-2\lfloor\beta\rfloor-2}{\lfloor\beta\rfloor+2}} + \sum_{s=1}^{\lfloor\beta\rfloor+2} t^{-\frac{2s}{\lfloor\beta\rfloor+2}} \left(t^{\frac{1}{\lfloor\beta\rfloor+2}}\right)^{2(\beta+s-\lfloor\beta\rfloor-2)+2} \lesssim t^{\frac{2(\beta+1)}{\lfloor\beta\rfloor+2}-2}.
\end{split}
\end{align}
The $L^2$-norm satisfies
\begin{align}
\label{eq:lemma:regularity-splus-4}
\|\chi_t s^+\|_{L^2(\mathcal{C}_1)}^2 \lesssim \int_{t^{\frac{1}{\lfloor\beta\rfloor+2}}/2}^1 r^{2\beta+1} \, dr \lesssim t^{\frac{2(\beta+1)}{\lfloor\beta\rfloor+2}-2},
\end{align}
since the integral is bounded and $\frac{2(\beta+1)}{\lfloor\beta\rfloor+2}-2 < 0$. Then \eqref{eq:lemma:regularity-splus-2}, \eqref{eq:lemma:regularity-splus-3}, and \eqref{eq:lemma:regularity-splus-4} imply
%\begin{align*}
$K(t,s^+) \lesssim t^{\frac{1+\beta}{\lfloor\beta\rfloor+2}}
$
%\end{align*}
and thus $s^+\in B_{2,\infty}^{1+\beta}(\mathcal{C}_1)$.

\emph{Step~3 ($\beta = -1$):} Fix $\epsilon \in (0,1/2)$. We assert $s^+ \in B^0_{2,1}(\mathcal{C}_1) = (H^{-\epsilon}(\mathcal{C}_1),H^{\epsilon}(\mathcal{C}_1))_{1/2,1}$ using the 
``Babu\v{s}ka trick'', \cite[Thm.~{1.4.5.3}]{Grisvard_2011}. Let $\chi_t$ be the cut-off function of Step~2 (with $\beta = -1$) and split $s^+ = r^{-1} \sin(\phi)$ as 
$s^+ = (1-\chi_t) s^+ + \chi_t s^+$. We estimate with the Cauchy-Schwarz inequality and Lemma~\ref{lemma:Sobolev-to-cone-second-version}
\begin{align*}
\|(1-\chi_t) s^+\|_{H^{-\epsilon}(\mathcal{C}_1)} & = \sup_{v \in \widetilde{H}^\epsilon(\mathcal{C}_1)\,|\,  \|v\|_{{\widetilde{H}}^\epsilon} = 1} \int_{\mathcal{C}_1} (1-\chi_t) r^{\epsilon} s^+ r^{-\epsilon} v\, d\mathbf{x}
\stackrel{\text{C.S.}}{\lesssim} t^\epsilon \sup_{\|v\|_{{\widetilde{H}}^\epsilon = 1}} \|r^{-\epsilon} v\|_{L^2(\mathcal{C}_1)} 
\stackrel{\text{Lemma~\ref{lemma:Sobolev-to-cone-second-version}}}{\lesssim} t^\epsilon.  
\end{align*}
For the term $\|\chi_t s^+\|_{H^{\epsilon}(\mathcal{C}_1)}$, we employ the ``Babu{\v s}ka trick'', i.e., we use the Sobolev embedding $W^{1,p} \subset H^{\epsilon}$ for $1/p = 1-\epsilon/2$, 
\cite[Thm.~{1.4.5.2}]{Grisvard_2011}, \cite[Thm.~{2.8.1}]{Triebel2ndEd}, to conveniently estimate the norm $\|\cdot\|_{H^\epsilon}$. A calculation shows for $t < 1$ 
\begin{align*}
\|\chi_t s^+\|_{H^\epsilon(\mathcal{C}_1)} \lesssim \|\chi_t s^+ \|_{W^{1,p}(\mathcal{C}_1)} \lesssim t^{-\epsilon}. 
\end{align*}
In conclusion, the $K$-functional of $s^+$ for the pair $(H^{-\epsilon}(\mathcal{C}_1), H^\epsilon(\mathcal{C}_1))$ satisfies 
$K(\tau,s^+)\lesssim t^\epsilon + \tau t^{-\epsilon}$. Selecting $t = \tau^{1/(2\epsilon)}$ shows $K(\tau,s^+) \lesssim \tau^{1/2}$ 
so that $s^+ \in (H^{-\epsilon}(\mathcal{C}_1), H^\epsilon(\mathcal{C}_1))_{1/2,\infty} = B^{0}_{2,\infty}(\mathcal{C}_1) = \widetilde{B}^0_{2,\infty}(\mathcal{C}_1)$. 

%\vspace{\baselineskip}

\emph{Proof of \ref{item:lemma:bounded-linear-besov-2-iii}:} First we consider $v=\Phi(x,y) \ln r$ with $\Phi = O(r^n)$ as $r \rightarrow 0$. 
We define for every $0 < t<1$ the function
\begin{align*}
v_t := \Phi(x,y) \int_r^1 - \chi_t(\tau) \tau^{-1} \, d\tau,
\end{align*}
where $\chi_t$ is a smooth cut-off function with the properties $\chi_t(\tau)= 1$ for $\tau>t$ and $\chi_t(\tau)=0$ for $\tau<t/2$
and $|\nabla^j \chi| \lesssim t^{-j}$. Note that $v_t\in C^\infty(\mathcal{C}_1)$ and $v_t=v$ for $r>t$. We obtain
\begin{align*}
\|v-v_t\|_{L^2(\mathcal{C})}^2 &= \int_{\phi = 0}^\omega \int_0^t \Phi^2\left|\int_r^1 \frac{1}{\tau}(\chi_t(\tau)-1) \, d\tau\right|^2 r \, dr \lesssim \int_0^t r^{2n+1} \left|\int_r^t \frac{1}{\tau} \, d\tau\right|^2 \, dr \\
&\lesssim \int_0^t r^{2n} t \frac{r}{t} \ln^2 \frac{t}{r} \, dr = t^{2n+2} \int_0^1 r^{2n+1} \ln^2 r  \, dr \lesssim t^{2n+2}.
\end{align*}
By the smoothness of $\Phi$, we have $|\nabla^j \Phi|  = O(r^{n-j})$ for $j \in \{0,\ldots,n\}$ and $|\nabla^j \Phi| = O(1)$ for $j > n$. 
The product rule then implies (using that $r \sim t$ on $\operatorname{supp} \nabla \chi_t \subset \overline{B}_t(0)\setminus B_{t/2}(0)$)
\begin{align*}
|\nabla^{n+2} v_t| &\lesssim \sum_{\mu=0}^{n} |\nabla^\mu \Phi|\,|\nabla^{n+1-\mu} \frac{\chi_t}{r}| + |r^{-1} \chi_t| + |\int_{r}^1 \chi_t \tau^{-1}\,d\tau| \\
& \lesssim \sum_{\mu=0}^n r^{n-\mu} r^{-(n+2-\mu)} \chi_{\overline{B}_t(0)\setminus B_{t/2}(0))} + r^{-1} \chi_{r > t/2} + |\ln r| 
 \lesssim r^{-2} \chi_{\overline{B}_t(0)\setminus B_{t/2}(0)} + r^{-1} \chi_{r > t/2} + |\ln r|. 
\end{align*}
This implies 
\begin{align*}
\|\nabla^{n+2}v_t\|^2_{L^2({\mathcal C})} = \int_{\phi=0}^\omega \int_{r=0}^1 |\nabla^{n+2} v_t|^2r\,dr\,d\phi \lesssim t^{-2}. 
\end{align*}
Since the above calculations hold for every $t\in(0,1)$, we choose $t=\tau^{1/(n+2)}$ and get for the $K$-functional
\begin{align*}
K(\tau,v)^2 \lesssim \|v-v_t\|_{L^2(\mathcal{C}_1)}^2 + \tau^2 \|v_t\|_{H^{n+2}(\mathcal{C}_1)}^2 \lesssim \tau^{\frac{2(n+1)}{n+2}},
\end{align*}
which shows $v\in \left(L^2(\mathcal{C}_1),H^{n+2}(\mathcal{C}_1)\right)_{\frac{n+1}{n+2},\infty} = B_{2,\infty}^{n+1}(\mathcal{C}_1)$. 
%The proof of $w\in B_{2,\infty}^{n+1}(\mathcal{C}_1)$ follows the same lines and also leads to $u\in B_{2,\infty}^{n+1}(\mathcal{C}_1)$.

The proof that the function $w = \Phi(x,y) \phi$ is in $B^{n+1}_{2,\infty}(\mathcal{C}_1)$ 
follows similar lines using the function $v_t:= \Phi(x,y) \phi \chi_t$. 
\end{proof}

The core of the proof of the shift Theorem~\ref{thm:shift-theorem-local-version}
is the following abstract result: 
\begin{lemma}
\label{lemma:abstract}
Let $(X_1, \|\cdot\|_{X_1})  \subset (X_0,\|\cdot\|_{X_0})$ and $(Y_1, \|\cdot\|_{Y_1}) \subset (Y_0,\|\cdot\|_{Y_0})$ be Banach spaces with continuous embeddings. 
Let $q_j$, $p_j \in [1,\infty]$, $\theta_j\in (0,1)$, $j=1,\ldots,J$, with $0 < \theta_1 < \theta_2 < \cdots < \theta_J < 1$.  
Let $\widetilde{T}: X_0 \rightarrow Y_0$, 
$S_1: X_1 \rightarrow Y_1$, $S_{\theta_j}: X_{\theta,p_j} \rightarrow Y_{\theta,q_j}$, $j \in \{1,\ldots,J\}$, be bounded linear. 
Assume $\widetilde{T} f = S_1(f) + \sum_{j=1}^J S_{\theta_j}(f)$ for all $f \in X_1$. Then, 
$\widetilde{T} : (X_0,X_1)_{\theta_1,p_1} \rightarrow (Y_0,Y_1)_{\theta_1,\infty}$ is bounded linear. 
\end{lemma}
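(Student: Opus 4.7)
The plan is to estimate $K(s,\widetilde{T}f;Y_0,Y_1)$ pointwise in $s>0$ via a discrete Peetre decomposition of $f$ combined with the splitting $\widetilde{T}=S_1+\sum_j S_{\theta_j}$ that holds on $X_1$. A useful preliminary observation is that on $X_1$ one already has $\widetilde{T}(X_1)\subset Y_{\theta_1,\infty}$, because $S_1(X_1)\subset Y_1\hookrightarrow Y_{\theta_1,\infty}$ and each $S_{\theta_j}(X_1)\subset Y_{\theta_j,q_j}\hookrightarrow Y_{\theta_1,\infty}$ (using $\theta_j\ge\theta_1$); the task is to quantify this so that it extends to $f\in X_{\theta_1,p_1}$ while keeping the target $Y_{\theta_1,\infty}$.

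Given $f\in(X_0,X_1)_{\theta_1,p_1}$, for each $k\in\BbbZ$ I would choose a near-optimal decomposition $f=a_k+b_k$ with $\|a_k\|_{X_0}+2^k\|b_k\|_{X_1}\le 2K(2^k,f;X_0,X_1)$ and set $u_k:=b_{k-1}-b_k\in X_1$, so that $f=\sum_k u_k$ in $X_0$ with $\|u_k\|_{X_0}+2^k\|u_k\|_{X_1}\lesssim K(2^k,f)$. Writing $\alpha_k:=2^{-k\theta_1}K(2^k,f)$, the hypothesis $f\in X_{\theta_1,p_1}$ translates via the discrete $K$-method to $(\alpha_k)\in\ell^{p_1}(\BbbZ)$ with $\|(\alpha_k)\|_{\ell^{p_1}}\lesssim\|f\|_{X_{\theta_1,p_1}}$. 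On each $u_k\in X_1$ the identity $\widetilde{T}u_k=S_1u_k+\sum_j S_{\theta_j}u_k$ applies. Using subadditivity of $K(s,\cdot)$, the standard bound $K(s,z;Y_0,Y_1)\le s^{\theta}\|z\|_{Y_{\theta,\infty}}$ for $z\in Y_{\theta,\infty}$, the mapping properties of $S_1$ and $S_{\theta_j}$, and the classical interpolation inequality $\|u_k\|_{X_{\theta_j,p_j}}\lesssim\|u_k\|_{X_0}^{1-\theta_j}\|u_k\|_{X_1}^{\theta_j}$, one obtains
\[
K(s,\widetilde{T}u_k;Y_0,Y_1)\lesssim K(2^k,f)\Bigl[(s/2^k)+\sum_{j=1}^{J}(s/2^k)^{\theta_j}\Bigr].
\]
Together with the trivial $K(s,\widetilde{T}u_k)\le\|\widetilde{T}u_k\|_{Y_0}\lesssim K(2^k,f)$, this produces the key uniform per-$k$ estimate
\[
s^{-\theta_1}K(s,\widetilde{T}u_k;Y_0,Y_1)\lesssim\alpha_k\,\min\bigl\{1,(2^k/s)^{\theta_1}\bigr\}.
\]

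Finally, subadditivity of $K(s,\cdot)$ applied to $\widetilde{T}f=\sum_k\widetilde{T}u_k$ gives
\[
s^{-\theta_1}K(s,\widetilde{T}f;Y_0,Y_1)\lesssim\sum_{k\in\BbbZ}\alpha_k\,\min\bigl\{1,(2^k/s)^{\theta_1}\bigr\}.
\]
The ``small-$k$'' part ($2^k\le s$) equals $s^{-\theta_1}\sum_{2^k\le s}K(2^k,f)$, which by the geometric bound $K(2^k,f)\le 2^{k\theta_1}\|f\|_{X_{\theta_1,\infty}}$ is uniformly controlled by a constant times $\|f\|_{X_{\theta_1,\infty}}\lesssim\|f\|_{X_{\theta_1,p_1}}$. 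The ``large-$k$'' tail $\sum_{2^k>s}\alpha_k$ is the main technical obstacle: in the case $p_1=1$ used in the paper it is immediately bounded by $\|(\alpha_k)\|_{\ell^1}\lesssim\|f\|_{X_{\theta_1,1}}$, and for general $p_1\in[1,\infty]$ one splits the tail at the crossover scale $k_0$ where $2^{-k_0\theta_1}\|f\|_{X_0}=\|f\|_{X_{\theta_1,\infty}}$, uses $\alpha_k\le 2^{-k\theta_1}\|f\|_{X_0}$ for $k\ge k_0$, and handles the finitely-many-scale region $s\le 2^k\le 2^{k_0}$ by a H\"older / discrete Hardy argument. Taking $\sup_{s>0}$ then delivers $\|\widetilde{T}f\|_{Y_{\theta_1,\infty}}\lesssim\|f\|_{X_{\theta_1,p_1}}$.
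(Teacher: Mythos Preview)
Your dyadic-series approach is genuinely different from the paper's and is correct for $p_1=1$ (the only case actually invoked in the applications): the per-$k$ estimate $s^{-\theta_1}K(s,\widetilde{T}u_k)\lesssim\alpha_k\min\{1,(2^k/s)^{\theta_1}\}$ is right, and for $p_1=1$ both the ``small-$k$'' geometric sum and the ``large-$k$'' tail $\sum_{2^k>s}\alpha_k\le\|(\alpha_k)\|_{\ell^1}$ are uniformly bounded in $s$.

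However, the lemma is stated for general $p_1\in[1,\infty]$, and there your tail argument has a real gap. After your ``crossover'' split, the middle region $s\le 2^k\le 2^{k_0}$ contains $\sim k_0-\log_2 s$ scales; since $k_0$ is fixed independently of $s$, this count is unbounded as $s\to 0$. With only $(\alpha_k)\in\ell^{p_1}$, neither H\"older nor a discrete Hardy inequality yields a uniform-in-$s$ bound on $\sum_{s\le 2^k\le 2^{k_0}}\alpha_k$ (e.g.\ for $p_1=\infty$ each $\alpha_k\le\|f\|_{X_{\theta_1,\infty}}$ and the sum blows up logarithmically). The underlying reason is that decomposing $f$ into pieces $u_k$ and bounding $\|u_k\|_{X_{\theta_1,p_1}}$ individually via $\|u_k\|_{X_0}^{1-\theta_1}\|u_k\|_{X_1}^{\theta_1}$ gives back $\sum_k\alpha_k\sim\|f\|_{X_{\theta_1,1}}$, not $\|f\|_{X_{\theta_1,p_1}}$.

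The paper's proof sidesteps this entirely by using, for each fixed $t$, a \emph{single} near-optimal decomposition $f=f_0+f_1$ and applying the operator splitting only to $f_1\in X_1$. The missing ingredient is the Bramble--Scott lemma, which gives $\|f_1\|_{X_{\theta_1,p_1}}\le 3\|f\|_{X_{\theta_1,p_1}}$ directly; interpolating this with $\|f_1\|_{X_1}\lesssim t^{\theta_1-1}\|f\|_{X_{\theta_1,p_1}}$ then controls $\|f_1\|_{X_{\theta_j,p_j}}$ for $j\ge 2$. This handles all $p_1\in[1,\infty]$ uniformly in a few lines. Your approach trades the Bramble--Scott step for a full series decomposition, which is more work and, as written, only recovers the $p_1=1$ case.
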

\begin{proof}
For $t > 0$, decompose $f \in X_{\theta_1,p_1}$ as $f = f_0 + f_1$ with 
$f_0 \in X_0$, $f_1 \in X_1$, and 
\begin{equation}
\label{eq:abstract-10}
\|f_0\|_{X_0} + t\|f_1\|_{X_1} \leq 2 K(t,f) \lesssim t^{\theta_1} \|f\|_{X_{\theta_1,\infty}} \lesssim t^{\theta_1} \|f\|_{X_{\theta_1,p_1}}. 
\end{equation}
By \cite[Lemma]{Bramble_Scott_1978} and the interpolation inequality, we have additionally 
\begin{align} 
\label{eq:abstract-20}
\|f_1\|_{X_{\theta_1,p_1}} &\stackrel{\text{\cite[Lemma]{Bramble_Scott_1978}}}{\leq} 3 \|f\|_{X_{\theta_1,p_1}}, \\
\label{eq:abstract-30}
\|f_1\|_{X_{\theta_j,p_j}} &\lesssim \|f_1\|_{X_{\theta_1,p_1}}^{(1-\theta_j)/(1-\theta_1)} \|f_1\|_{X_1}^{(\theta_j-\theta_1)/(1-\theta_1)} 
\stackrel{(\ref{eq:abstract-10})}{\lesssim} t^{-(\theta_j - \theta_1)} \|f_1\|_{X_{\theta_1},p_1}, \qquad j=2,\ldots,J. 
\end{align} 
We write $\widetilde{T} f = \widetilde{T} f_0 + \widetilde{T} f_1 = \widetilde{T} f_0 + S_1(f_1) + \sum_{j=1}^JS_{\theta_j}(f_1)$.  
For $t > 0$ and $j \in \{1,\ldots,J\}$ decompose $S_{\theta_j}(f_1) = s_{j,0}(f_1) + s_{j,1}(f_1)$ with 
$s_{j,0}(f_1) \in Y_0$, $s_{j,1}(f_1) \in Y_1$ and 
\begin{align*}
\|s_{1,0}(f_1)\|_{Y_0} + t \|s_{1,1}(f_1)\|_{Y_1} &\leq 2 K(t,S_{\theta_1}(f_1)) \lesssim t^{\theta_1} \|S_{\theta_1}(f_1)\|_{Y_{\theta_1,q_1}} 
\lesssim t^{\theta_1} \|f_1\|_{X_{\theta_1,p_1}} \stackrel{(\ref{eq:abstract-20})}{\lesssim} t^{\theta_1}\|f\|_{X_{\theta_1,p_1}}, \\
\|s_{j,0}(f_1)\|_{Y_0} + t \|s_{j,1}(f_1)\|_{Y_1} &\leq 2 K(t,S_{\theta_j}(f_1)) \lesssim t^{\theta_j} \|S_{\theta_j}(f_1)\|_{Y_{\theta_j,q_j}} 
\lesssim t^{\theta_j} \|f_1\|_{X_{\theta_j,p_j}} 
\stackrel{(\ref{eq:abstract-30})}{\lesssim} 
t^{\theta_1} \|f\|_{X_{\theta_1,p_1}}. 
\end{align*}
This implies the decomposition $\widetilde{T} f = \left(\widetilde{T} f_0 + \sum_{j=1}^J s_{j,0}(f_1) \right) + \left(S_1(f_1) + \sum_{j=1}^Js_{j,1}(f_1) \right)
=: y_0 + y_1$ with 
\begin{align*}
\|y_0\|_{Y_0} + t \|y_1\|_{Y_1} &\lesssim \|f_0\|_{X_0} + t^{\theta_1} \|f\|_{X_{\theta_1,p_1}} 
+ t \left(\|f_1\|_{X_1} + t^{\theta_1-1} \|f\|_{X_{\theta_1,p_1}}\right)
\stackrel{ (\ref{eq:abstract-30})} {\lesssim }
t^{\theta_1} \|f\|_{X_{\theta_1,p_1}}. 
\end{align*}
Hence, $\widetilde{T} f \in X_{\theta_1,\infty}$ with $\|\widetilde{T} f\|_{X_{\theta_1,\infty}} \lesssim \|f\|_{X_{\theta_1,p_1}}$. 
\end{proof}

We are now in position to prove the shift Theorem~\ref{thm:shift-theorem-local-version} for Dirichlet boundary conditions.

\begin{numberedproof}{of Theorem~\ref{thm:shift-theorem-local-version}\ref{item:thm:shift-theorem-local-version-dirichlet} (Dirichlet conditions):}
We denote by $\chi_r$, $r > 0$, a smooth cutoff function with $\operatorname{supp} \chi_r \subset B_r(0)$ and $\chi_r \equiv 1$ near $0$. 
We assume for simplicity $R < 1$. 
By Remark~\ref{rem:omega=pi} a stronger shift theorem holds  $\omega \ne \pi$ so that we will assume $\omega \ne \pi$. 

\textit{Step 0: (local regularity)} Since the two lines $\{\phi = 0\}$ and $\{\phi = \omega\}$ are smooth, local elliptic regularity gives for any $0 < R_1 < R_2 <R_3 < R$ that for any 
$s \in \BbbN_0$ (see, e.g., \cite[Sec.~{6}]{Evans_2010}; this is even true for any $s \ge 0$, see Lemma~\ref{lemma:regularity-lemma-annuli} for details)
\begin{align}
\label{eq:thm:shift-theorem-local-version-1000}
\|u\|_{H^{s+2}(\mathcal{C}_{R_2} \setminus \mathcal{C}_{R_1})} \lesssim \|f\|_{H^s(\mathcal{C}_{R_3})} + \|u\|_{H^1(\mathcal{C}_{R_3})}. 
\end{align}

\textit{Step 1: (localized equation)} Since $R' < R$ and we are interested in the regularity of $u$ in $\mathcal{C}_{R'}$, 
we fix a smooth cut-off function $\chi_{\widetilde R} \in C^\infty_0(B_{\widetilde R}(0))$ with $\chi_{\widetilde{R}} \equiv 1$ on $\mathcal{C}_{R'}$, 
where $R_2 < R' < \widetilde{R} < R_3 < R < 1$ are such that $\chi_{R} \equiv 1$ on $\mathcal{C}_{R_3}$. We set $\widetilde{u}:= \chi_{\widetilde{R}} u$, and we note that $\widetilde{u}$ satisfies 
\begin{align}
\label{eq:thm:shift-theorem-local-version-5}
-\Delta \widetilde u &= -\chi_{\widetilde R} \Delta u - 2\nabla\chi_{\widetilde R} \cdot \nabla u - \Delta\chi_{\widetilde R} u 
= \chi_{\widetilde R} f - 2\nabla\chi_{\widetilde R}\cdot \nabla u - \Delta\chi_{\widetilde R} u =: \widetilde{f} \quad \mbox{ in $\mathcal{C}_R$}, \\
\widetilde{u} & = 0 \quad \mbox{ on $\Gamma_D \cup \widetilde  \Gamma_R = \partial\mathcal{C}_R$.}
\end{align}
\emph{Claim:}
\begin{align}
\label{eq:thm:shift-theorem-local-version-10}
\|\widetilde f\|_{B^{\pi/\omega-1}_{2,1}(\mathcal{C}_R)} \lesssim \|\chi_R f\|_{B^{\pi/\omega-1}_{2,1}(\mathcal{C}_R)} + \|u\|_{H^1(\mathcal{C}_R)}. 
\end{align}
To see this, we consider the cases $\omega < \pi$, and $\omega \ge \pi$ separately. 

\emph{Proof of (\ref{eq:thm:shift-theorem-local-version-10}) for $\omega \leq \pi$:}
Let $s:= \lfloor \lambda^D_1\rfloor = \lfloor \pi/\omega\rfloor \ge 1$ and note the continuous embedding $H^{s+1} \subset B^{\pi/\omega}_{2,1}$ so that together with the support properties of $\nabla \chi_{\widetilde R}$  
\begin{align}
\nonumber 
%\begin{split}
\|\widetilde{f}\|_{B_{2,1}^{\pi/\omega-1}(\mathcal{C}_R)} &\stackrel{H^{s+1} \subset B^{\pi/\omega}_{2,1}}{\lesssim} 
\|\chi_{\widetilde R} f\|_{B_{2,1}^{\pi/\omega-1}(\mathcal{C}_R)} + \|u\|_{H^{s+1}(\mathcal{C}_{\widetilde R}\setminus\mathcal{C}_{R'} )} \\
&\stackrel{\eqref{eq:thm:shift-theorem-local-version-1000}}{\lesssim}
\|\chi_{\widetilde R} f\|_{B_{2,1}^{\pi/\omega-1}(\mathcal{C}_R)} + \|f\|_{H^{s-1}(\mathcal{C}_{R_3}\setminus\mathcal{C}_{R_2})} + \|u\|_{H^1(\mathcal{C}_R)} 
%\\ &
\label{eq:thm:shift-theorem-local-version-5.5}
\lesssim \|\chi_R f\|_{B_{2,1}^{\pi/\omega-1}(\mathcal{C}_R)} + \|u\|_{H^1(\mathcal{C}_R)},
%\end{split}
\end{align}
where, in the last step, we used $\chi_{\widetilde R} \chi_{R}  = \chi_{\widetilde R}$ and the continuity of the multiplication with smooth functions in Sobolev (and hence, by interpolation, in Besov spaces)
so that $\|\chi_{\widetilde R} f\|_{B^{\pi/\omega-1}_{2,1}(\mathcal{C}_R)} = \|\chi_{\widetilde R} \chi_R f\|_{B^{\pi/\omega-1}_{2,1}(\mathcal{C}_R)} 
\lesssim \| \chi_{R} f\|_{B^{\pi/\omega-1}_{2,1}(\mathcal{C}_R)}$ and 
$\|f\|_{H^{s-1}(\mathcal{C}_{R_3}\setminus\mathcal{C}_{R_2})} \leq \|f\|_{H^{s-1}(\mathcal{C}_{R_3})} \leq \|\chi_R f\|_{H^{s-1}(\mathcal{C}_{R})} \leq \|\chi_R f\|_{B^{\pi/\omega-1}_{2,1}(\mathcal{C}_R)}$
as $s - 1 \in \BbbN_0$. 

\emph{Proof of (\ref{eq:thm:shift-theorem-local-version-10}) for $\omega > \pi$:} 
Again, set 
$s:= \lfloor \lambda^D_1\rfloor = \lfloor \pi/\omega\rfloor = 0$ and note the continuous embedding 
$L^{2} \subset B^{\pi/\omega-1}_{2,1}$. One may then argue as in the first line of (\ref{eq:thm:shift-theorem-local-version-5.5}), which gives the result. 

\textit{Step 2:} In order to analyze the regularity of $\widetilde u$ on $\mathcal{C}_{R'}$, 
we select $\overline{R} < R$ and $\chi_{\overline{R}} \in C^\infty_0(B_{\overline{R}}(0))$ with $\chi_{\overline{R}} \equiv 1$ on 
$\operatorname{supp} \chi_{R} \subset B_{R}(0)$ and introduce the operators $T$, $\widetilde{T}$ by 
\begin{align}
\label{eq:operator-T}
T:\left\{\begin{array}{ccc} H^{-1}(\mathcal{C}_R) & \rightarrow & H_0^1(\mathcal{C}_R) \\ f & \mapsto & v\end{array}\right. , \qquad \widetilde{T}:\left\{\begin{array}{ccc} H^{-1}(\mathcal{C}_R) & \rightarrow & H_0^1(\mathcal{C}_R) \\ f & \mapsto & \chi_{R'} T \chi_{\overline{R}} f\end{array}\right. ,
\end{align}
where $v = Tf$ solves 
\begin{align*}
-\Delta v &= f \quad \mbox{in $\mathcal{C}_R$,} & v & = 0 \quad \mbox{ on $\Gamma_D \cup \widetilde \Gamma_R = \partial \mathcal{C}_R$}. 
\end{align*}
%and $\chi_{\overline{R}} \in C^\infty_0(B_{\overline{R}}(0))$ is such that  $\chi_{\overline{R}} \equiv 1$ on $\operatorname{supp} \chi_{R}$ and $\overline{R} < 1$. 
Then, since $\operatorname{supp} \widetilde{f} \subset B_{\widetilde{R}}(0)$, we have $\chi_{\overline{R}} \widetilde{f} = \widetilde{f}$ and 
therefore $\widetilde{u} = \widetilde{T} \widetilde{f}$. The proof of 
Theorem~\ref{thm:shift-theorem-local-version}\ref{item:thm:shift-theorem-local-version-dirichlet} is complete once we ascertain
\begin{align}
\label{eq:dirichlet-case-goal}
\|\widetilde{T} \widetilde{f} \|_{B^{\pi/\omega+1}_{2,\infty}(\mathcal{C}_{R'})} \lesssim \|\widetilde f\|_{B^{\pi/\omega-1}_{2,1}(\mathcal{C}_R)}, 
\end{align}
which we will do in the ensuing Steps~3--5. 

\textit{Step 3:} We show (\ref{eq:dirichlet-case-goal})
for the case $\lambda^D_1 = \frac{\pi}{\omega} \notin \mathbb{N}$ and $\omega<\pi$ using Lemma~\ref{lemma:abstract}
and Corollary~\ref{cor:solution-with-polynomial}. 
Since $\omega < \pi$ and $\lambda^D_1 \not\in\BbbN$, we can find 
$(k,\epsilon) \in \BbbN_0 \times (0,1)$ such that 
$\lambda^D_1 < k + \epsilon + 1 < \lambda^D_2$ together with 
$\lfloor k+\epsilon+1\rfloor = \lfloor \lambda^D_1 \rfloor  \ge 1$. 
Set $\theta_1:= \frac{\pi}{\omega(k+\epsilon + 1)} \in (0,1)$ as well as 
\label{eq:X0X1Y0Y1-dirichlet}
\begin{align}
\label{eq:dirichlet-case-interpolation-couples}
X_0 &:= H^{-1}(\mathcal{C}_R), & X_1 &= H^{k+\epsilon}(\mathcal{C}_R) , 
& 
Y_0 &:= H^{1}_0(\mathcal{C}_R), & Y_1 &= H^{2+k+\epsilon}(\mathcal{C}_R) . 
\end{align}
We have $B^{\pi/\omega-1}_{2,1}({\mathcal C}_R) = (X_0,X_1)_{\theta_1,1}$ and $B^{\pi/\omega+1}_{2,\infty}({\mathcal C}_R) = (Y_0,Y_1)_{\theta_1,\infty}$. 
By Lax-Milgram, we have $\widetilde{T}:X_0 \rightarrow Y_0$ is bounded, linear. 
With $u_0$, $S^D_1$, $s^D_1$, $\chi P_{k-1}$ given by Corollary~\ref{cor:solution-with-polynomial}, we set 
$S_1(f) = u_0(f) + \chi P_{k-1}$, $S_{\theta_1}(f) = S^D(f) s^D_1$. 
We note that $\Sigma^D_{k+1} 
= \{n \in \{2,\ldots,\lfloor\frac{\pi}{\omega}\rfloor\}\,|\,  n \frac{\omega}{\pi} \in \BbbN\} = \emptyset$ in view of 
$ \lfloor \frac{\pi}{\omega} \rfloor \frac{\omega}{\pi} < 1$. Corollary~\ref{cor:solution-with-polynomial} shows that 
$S_1:X_1 \rightarrow Y_1$ is bounded, linear; Lemma~\ref{lemma:bounded-linear-besov-2}\ref{item:lemma:bounded-linear-besov-2-i} asserts that 
$S^D_1 \in \left((X_0,X_1)_{\theta_1,1}\right)^\star$ and $s^D_1 \in (X_0,X_1)_{\theta,\infty}$. The desired 
assertion (\ref{eq:dirichlet-case-goal}) now follows from Lemma~\ref{lemma:abstract}.

\textit{Step 4:} We show (\ref{eq:dirichlet-case-goal})
for the case $\lambda^D_1 = \frac{\pi}{\omega} \in \mathbb{N}$ and $\omega < \pi$ 
using 
Lemma~\ref{lemma:abstract} and Corollary~\ref{cor:solution-with-polynomial}.  
As in Step~3, choose $(k,\epsilon) \in \BbbN_0 \times (0,1)$ with $ \lambda^D_1 = \frac{\pi}{\omega} < k+\epsilon + 1 < \lambda^D_1 + 1 < \lambda^D_2$ 
such that $\lfloor k+\epsilon + 1\rfloor = \lambda^D_1$ 
and take with this choice of $k$, $\epsilon$ the spaces $X_0$, $X_1$, $Y_0$, $Y_1$ as in (\ref{eq:X0X1Y0Y1-dirichlet})
and set $\theta_1 = \frac{\pi}{\omega (k+\epsilon+1)}$
so that $B^{\pi/\omega-1}_{2,1}(\mathcal{C}_R) = (X_0,X_1)_{\theta_1,1}$ and 
$B^{\pi/\omega+1}_{2,1}(\mathcal{C}_R) = (Y_0,Y_1)_{\theta_1,\infty}$. 
In Corollary~\ref{cor:solution-with-polynomial}, our choice of $k$ corresponds to $\Sigma^D_{k+1} = \{\frac{\pi}{\omega}\}$ and 
$n^\star = \frac{\pi}{\omega} = \lambda^D_1$. Again, $\widetilde{T}: X_0 \rightarrow Y_0$ is bounded by Lax-Milgram. 
With the functions $u_0$, $S^D$, $s^D_1$, $P_{k-1}$ from Corollary~\ref{cor:solution-with-polynomial}, 
Corollary~\ref{cor:solution-with-polynomial} gives the decomposition $\widetilde{T}f = (u_0(f) + S^D(f) s^D_1) + \chi P_{k-1} 
=: S_1(f) + S_{\theta_1}(f)$. Since $s^D_1$ is a polynomial (and thus smooth), 
Corollary~\ref{cor:solution-with-polynomial} asserts the boundedness of $S_1: X_1 \rightarrow Y_1$ and $S_{\theta_1}: X_{\theta_1,1} \rightarrow Y_{\theta_1,\infty}$. 
Lemma~\ref{lemma:abstract} then implies (\ref{eq:dirichlet-case-goal}). 

\textit{Step 5:} We show (\ref{eq:dirichlet-case-goal})
for the case ${\omega} > \pi$.  The procedure is similar to that of the preceding Steps~{3}, {4}. Since $\omega \in (\pi,2\pi)$, 
we have $\lambda^D_1 = \frac{\pi}{\omega} < 1 < 2 \frac{\pi}{\omega} = \lambda^D_2$ so that 
we may select $k = 0$ and $\epsilon \in (0,1)$ such that  $\lambda^D_1 < 1 < k + \epsilon + 1< \lambda^D_2$.  
Then $\Sigma^D_{k+1} = \emptyset$, $P_{k-1} \equiv 0$ in Corollary~\ref{cor:solution-with-polynomial}, and we may argue as in Step~3 with
the choice $X_0$, $X_1$, $Y_0$, $Y_1$ from (\ref{eq:X0X1Y0Y1-dirichlet}) and $\theta_1 = \frac{\pi}{\omega(k+\epsilon+1)}$; note that 
$1/2 <  \pi/\omega < 1$, so that Lemma~\ref{lemma:bounded-linear-besov-2}\ref{item:lemma:bounded-linear-besov-2-ia} implies $S^D_1 \in \left(B^{\pi/\omega-1}_{2,1}(\mathcal{C}_R)\right)^\star
 = \left((X_0,X_1)_{\theta_1,1}\right)^\star$. 
\end{numberedproof}

%------------------------------------------------------------------
\section{Neumann boundary conditions}
\label{sec:neumann} 
%-------------------------------------------------------------------
The case of Neumann boundary conditions is handled in way similar to the Dirichlet case of Section~\ref{sec:regularity-in-weighted-spaces}. 
We define 
\begin{equation}
\sigma^N:=\{\lambda^N_n\,|\,  n = 0,1,\ldots\} 
\qquad \mbox{ with } \lambda^N_n:= n \frac{\pi}{\omega}. 
\end{equation}
Analogous to Proposition~\ref{prop:solution_u1}, we have: 
\begin{proposition}
\label{prop:elliptic-Neumann}
Let $R > 0$. 
Let $k\in\mathbb{N}_0$ and $\epsilon\in (0,1)$ satisfy $k+1+\epsilon < \lambda^N_2 = 2 \frac{\pi}{\omega}$, $k+1+\epsilon \ne \lambda^N_1 = \frac{\pi}{\omega}$, and let $f\in H^{k+\epsilon}(\mathcal{C})$ with $\operatorname{supp} f\subseteq B_1(0)$. Further assume $\partial_x^i \partial_y^j f(0)=0$ for $i+j\leq k-1$. Then every function $u_1\in H^1(\mathcal{C})$ with $\operatorname{supp} u_1 \subseteq B_1(0)$ solving
\begin{align}
\label{eq:elliptic3.25}
%\begin{split}
-\Delta u_1 &= f \in H^{k+\epsilon}(\mathcal{C}), 
\quad 
\partial_n u_1 = 0 \ \mbox{ for $\phi\in \{0,\omega\}$}
%\end{split}
\end{align}
has the form 
\begin{align}
\label{eq:elliptic-Neumann-5}
u_1 = \begin{cases} u_0, & \mbox{ if } \quad \frac{\pi}{\omega} = \lambda^N_1 > k+\epsilon+1 \\ 
      u_0 - \frac{1}{\pi} \left(\int_{\mathcal{C}} r^{-\lambda^N_1} \cos(\lambda^N_1\phi) f(\mathbf{x}) \, d\mathbf{x}\right) r^{\lambda^N_1} \cos (\lambda^N_1\phi), & \mbox{ if } \quad \frac{\pi}{\omega} = \lambda^N_1 < k+\epsilon+1<\lambda^N_2 
\end{cases}
\end{align}
%\begin{align}
%\label{eq:elliptic-Neumann-5}
%u_1 = \left\{\begin{array}{cc}u_0, & \frac{\pi}{\omega} = \lambda^N_1 > k+\epsilon+1 \\ 
%      u_0 - \frac{1}{\pi} \left(\int_{\mathcal{C}} r^{-\lambda^N_1} \cos(\lambda^N_1\phi) f(x) \, dx\right) r^{\lambda^N_1} \cos (\lambda^N_1\phi), & \frac{\pi}{\omega} = \lambda^N_1 < k+\epsilon+1<\lambda^N_2 \end{array}\right.
%\end{align}
for a $u_0\in H^{k+2+\epsilon}(\mathcal{C}_R)$ with the estimate
\begin{align*}
\|u_0\|_{H^{k+2+\epsilon}(\mathcal{C}_R)} \lesssim \|f\|_{H^{k+\epsilon}(\mathcal{C}_1)}.
\end{align*}
\end{proposition}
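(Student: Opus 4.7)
The plan is to follow the proof of Proposition~\ref{prop:solution_u1} almost verbatim, with the Dirichlet Mellin symbol replaced by its Neumann counterpart. By Lemma~\ref{lemma:Sobolev-to-cone-second-version}, the assumed vanishing of $f$ at the origin gives $f\in K^k_{-\epsilon}(\mathcal{C})$. The auxiliary Neumann problem
\begin{equation*}
-\Delta u_0 = f \text{ in } \mathcal{C}, \qquad \partial_n u_0 = 0 \text{ on } \Gamma_0\cup\Gamma_\omega
\end{equation*}
is uniquely solvable in $K^{k+2}_{-\epsilon}(\mathcal{C})$ via the Kondrat'ev--Mellin calculus (see \cite[Sec.~{6.1.8}]{kozlov-mazya-rossmann-1997}), with $\|u_0\|_{K^{k+2}_{-\epsilon}(\mathcal{C})}\lesssim \|f\|_{K^{k}_{-\epsilon}(\mathcal{C})}$; a dyadic scaling argument then upgrades this to $u_0\in H^{k+2+\epsilon}(\mathcal{C}_R)$ with the asserted norm estimate.

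To relate $u_1$ to $u_0$, I apply the Mellin transform to $-\Delta u_j = f$: setting $g=r^2 f$, both $\mathcal{M}[u_0]$ and $\mathcal{M}[u_1]$ satisfy $\mathcal{L}(\zeta)\mathcal{M}[u_j]=\mathcal{M}[g]$, but now $\mathcal{L}(\zeta)=-\partial_\phi^2+\zeta^2$ is taken on $G=(0,\omega)$ under \emph{Neumann} boundary conditions at $\phi\in\{0,\omega\}$. The spectrum of $-\partial_\phi^2$ on $(0,\omega)$ under these conditions is $\{(n\pi/\omega)^2\mid n\in\BbbN_0\}$ with eigenfunctions $\cos(\lambda^N_n\phi)$, so the poles of $\mathcal{L}(\zeta)^{-1}$ lie at $\zeta=\pm i\lambda^N_n$. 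This permits the meromorphic extension $U(\zeta):=\mathcal{L}(\zeta)^{-1}\mathcal{M}[g](\zeta)$ of $\mathcal{M}[u_1]$ into $\{\operatorname{Im}\zeta>-1-k-\epsilon\}$, and a contour deformation yields
\begin{equation*}
u_0-u_1 = \sum_{\substack{\zeta_0\in-i\sigma^N:\\ \operatorname{Im}\zeta_0\in(-1-k-\epsilon,0)}}\frac{2\pi i}{\sqrt{2\pi}}\operatorname*{Res}_{\zeta=\zeta_0}\bigl(r^{i\zeta}\mathcal{L}(\zeta)^{-1}\mathcal{M}[g](\zeta)\bigr).
\end{equation*}

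The hypothesis $k+\epsilon+1<\lambda^N_2$ ensures that at most the pole $\zeta_0=-i\lambda^N_1$ lies strictly inside the open strip $(-1-k-\epsilon,0)$; the remaining candidate $\zeta=0$ (from $\lambda^N_0=0$) sits on the boundary and is excluded from the open residue sum. A direct residue computation---projecting $\mathcal{M}[g](-i\lambda^N_1)$ onto $\cos(\lambda^N_1\phi)$ in $L^2(G)$, using $\|\cos(\lambda^N_1\phi)\|^2_{L^2(G)}=\omega/2$ together with $\omega\lambda^N_1=\pi$, and unpacking $\mathcal{M}[g]$ in terms of $f$---produces precisely the coefficient $-\tfrac{1}{\pi}\int_{\mathcal{C}} r^{-\lambda^N_1}\cos(\lambda^N_1\phi)\,f\,d\mathbf{x}$ multiplying $r^{\lambda^N_1}\cos(\lambda^N_1\phi)$, giving (\ref{eq:elliptic-Neumann-5}).

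The main obstacle relative to the Dirichlet argument is justifying the meromorphic extension of $\mathcal{M}[u_1]$ through the line $\operatorname{Im}\zeta=0$: because constants lie in the Neumann kernel of $-\partial_\phi^2$, the operator $\mathcal{L}(\zeta)^{-1}$ has a double pole at $\zeta=0$, and the Paley--Wiener-type argument that secured holomorphy of $\mathcal{M}[u_1]$ on $\{\operatorname{Im}\zeta>0\}$ in the Dirichlet case (via the Hardy inequality giving $u_1\in K^1_0(\mathcal{C})$) is no longer automatic for $u_1\in H^1(\mathcal{C})$. The plan is to circumvent this by shifting the upper contour to $\operatorname{Im}\zeta=\delta>0$, performing the residue computation there, and passing to $\delta\downarrow 0$: any constant mode of $u_1$ picked up in this limit is itself a smooth function which can be absorbed into $u_0$ without affecting the $H^{k+2+\epsilon}(\mathcal{C}_R)$ bound, and the vanishing moment hypothesis on $f$ prevents additional singular contributions from $\zeta=0$.
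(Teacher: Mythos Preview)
Your overall strategy---Mellin transform, contour shift, residue evaluation at $-i\lambda^N_1$---matches the paper. The treatment of the pole at $\zeta=0$, however, contains a genuine error.

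You first write the residue sum over the open strip $(-1-k-\epsilon,0)$ and claim $\zeta=0$ ``sits on the boundary and is excluded.'' That is not how the argument works: since $u_1$ only lies in $K^1_\delta(\mathcal{C})$ for $\delta>0$ (not $K^1_0$, precisely because the Neumann Hardy inequality fails), the upper contour \emph{must} be $\operatorname{Im}\zeta=\delta>0$ and the pole at $\zeta=0$ \emph{is} inside the strip. You acknowledge this in your final paragraph, but then explain the resulting contributions incorrectly. The double pole at $\zeta=0$ produces two terms,
\[
s_0=\Bigl(\tfrac{i}{\omega\sqrt{2\pi}}\textstyle\int_{\mathcal{C}}f\,d\mathbf{x}\Bigr)\ln r
\quad\text{and}\quad
s_1=\Bigl(\tfrac{i}{\omega\sqrt{2\pi}}\textstyle\int_{\mathcal{C}}(\ln r)\,f\,d\mathbf{x}\Bigr)\cdot 1.
\]
The logarithmic term $s_0$ does \emph{not} vanish because of ``the vanishing moment hypothesis on $f$''; for $k=0$ there is no such hypothesis at all. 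It vanishes because $u_1\in H^1(\mathcal{C})$: since $u_0$ and the $r^{\lambda^N_1}\cos(\lambda^N_1\phi)$ term are in $H^1$ near the origin while $\ln r$ is not, the coefficient $\int_{\mathcal{C}}f$ must be zero. (This is of course the Neumann compatibility condition, forced here by $\operatorname{supp}u_1\subset B_1(0)$ and integration by parts.) The constant $s_1$ is then absorbed into $u_0$, and to keep the $H^{k+2+\epsilon}$ estimate you need $f\mapsto\int_{\mathcal{C}}(\ln r)f$ bounded on $H^{k+\epsilon}$; the paper gets this from $\ln r\in B^1_{2,\infty}(\mathcal{C}_1)$ (Lemma~\ref{lemma:bounded-linear-besov-2}\ref{item:lemma:bounded-linear-besov-2-iii}). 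Finally, there is no ``passing to $\delta\downarrow0$''---one simply works at a fixed $\delta>0$.
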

\begin{proof} 
The procedure is as in the Dirichlet case of Section~\ref{sec:regularity-in-weighted-spaces}: Mellin transformation yields 
the equations \eqref{eq:Mellin-transform-1}, \eqref{eq:Mellin-transform-2} for ${\mathcal M}[u_1]$ and ${\mathcal M}[u_0]$
together with the Neumann boundary conditions $\partial_\phi {\mathcal M}[u_1] = \partial_\phi {\mathcal M}[u_0] = 0$ on 
$\{\phi = 0\}$ and $\{\phi = \omega\}$. 
The operator ${\mathcal L}(\zeta)$ is meromorphic on $\BbbC$ with poles at 
$
\pm i \sigma^N. 
$
%\quad \mbox{ with } \sigma^N = \{\lambda^N_n:= n \frac{\pi}{\omega}\colon n \in \BbbN_0\}.
As in the Dirichlet case, $u_0 \in K^{k+2}_{-\epsilon}(\mathcal{C})$ so that $u_0 \in H^{k+1+\epsilon}(\mathcal{C}_R)$. 
Since $0 \in \pm i \sigma^N$, the inverse Mellin transformation cannot be performed on the line $\{\operatorname{Im} \zeta = 0\}$.  
In contrast to the Dirichlet case, where $u_1 \in K^1_0(\Gamma)$ due to the vanishing of $u_1$ on $\{\phi = 0\}$
and $\{\phi = \omega\}$ we only have $u_1 \in K^1_\delta(\mathcal{C})$, $\delta > 0$ arbitrary, in the case of Neumann boundary conditions. 
This implies that the inverse Mellin transformation has to be done on a line $\{\operatorname{Im} \zeta = \delta\}$ for chosen $\delta > 0$. 
The Cauchy integral formula relating $u_0$ and $u_1$ now uses the lines 
$\{\operatorname{Im} \zeta = \delta\}$ and $\{\operatorname{Im} \zeta = - (k+1+\epsilon)\}$ and leads to 
\begin{align*}
u_0-u_1 = \sum_{\substack{\zeta_0\in -i\sigma^N\\ \operatorname{Im}\zeta_0\in (-1-k-\epsilon,\delta)}} \frac{2\pi i}{\sqrt{2\pi}} \operatorname*{Res}_{\zeta = \zeta_0} \left(r^{i\zeta}\left(\mathcal{L}(\zeta)\right)^{-1} \mathcal{M}g(\zeta)\right).
\end{align*}
For the evaluation of the residues, we note that the double pole of ${\mathcal L}^{-1}$ at $\zeta_0=0$ leads to two contributions to the sum; 
if $k + 1 + \epsilon > \pi/\omega$, then a third contribution arises in the sum. The residues can be evaluated explicitly: 
\begin{align*}
\text{at $\zeta_0 = 0$:} & &&\left(\frac{i}{\omega}\frac{1}{\sqrt{2\pi}} \int_{\mathcal{C}} f \,d \mathbf{x}\right) \ln r + 
                           \left(\frac{i}{\omega}\frac{1}{\sqrt{2\pi}} \int_{\mathcal{C}} \ln r f \,d \mathbf{x}\right) 1 =: s_0 + s_1, \\
\text{at $\zeta_0 = -i \lambda^N_1$:} & && -\left(\frac{1}{\pi} \int_{\mathcal{C}} r^{-\lambda_1^N} \cos (\lambda_1^N \phi) f\, d\mathbf{x}\right) r^{\lambda_1^N} \cos (\lambda_1^N \phi). 
\end{align*}
By assumption $u_1 \in H^1(\mathcal{C})$ so that the contribution $s_0$ has to vanish. The contribution $s_1$ is a constant function and hence 
smooth. Additionally, the function $(x,y) \mapsto \ln r$ is in $B^{1}_{2,\infty}(\mathcal{C}_1)$ 
by Lemma~\ref{lemma:bounded-linear-besov-2} so that $f \mapsto \int_{\mathcal{C}} \ln r f$ is 
a bounded linear functional on $(B^1_{2,\infty}(\mathcal{C}_1))^\star \supset H^{k+\epsilon}(\mathcal{C}_1)$ and thus 
the sum $u_0 + s_1$ is in $H^{k+2+\epsilon}(\mathcal{C}_1)$ with the stated estimate. 
\end{proof}

The Neumann analog of Lemma~\ref{lemma:polynomial-solution} is: 
\begin{lemma}
\label{lemma:polynomial-solution-neumann}
Let $i$, $j$, $k\in\mathbb{N}_0$ with $i+j=k$. Set $\Sigma^N_{k+2}:=\{n \in \{2,\ldots, k+2\}\,|\,  n \frac{\omega}{\pi} \in\BbbN\}$ 
and $S^N_{k+2}:= \operatorname{span} \{ r^n \left(\ln r \cos (n \phi) - \phi \sin (n \phi)\right)\,|\, n \in \Sigma^N_{k+2}\}$. Then 
there is a polynomial $\widetilde p_{i,j}$ of degree $k+2$ and a harmonic function $p^\prime_{i,j} \in S^N_{k+2}$ such that  
$p^N_{i,j}:= \widetilde p_{i,j} + p^\prime_{i,j}$ satisfies 
\begin{align}
\label{eq:lemma:polynomial-solution-neumann}
%\begin{split}
-\Delta p^N_{i,j} &= x^i y^j \quad \text{on } \mathcal{C}, 
\qquad 
\partial_n p^N_{i,j}|_\Gamma = 0. 
%\end{split}
\end{align}
In the special case $\omega = \pi$, the contribution $p^\prime_{i,j}$ may be taken to be zero. 
\end{lemma}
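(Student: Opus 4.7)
My plan is to adapt the three-step proof of Lemma~\ref{lemma:polynomial-solution} to Neumann data by swapping the roles of the real and imaginary parts in the harmonic corrections. The building blocks $\operatorname{Re} z^n = r^n\cos(n\phi)$, $\operatorname{Im} z^n = r^n\sin(n\phi)$, and $\operatorname{Re}(z^n\ln z) = r^n\bigl(\ln r\cos(n\phi) - \phi\sin(n\phi)\bigr)$ are all harmonic, and their $\phi$-derivatives at $\phi\in\{0,\omega\}$ are exactly what is required to cancel the Neumann traces produced by the polynomial from Step~1.

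Step~1 is unchanged from Lemma~\ref{lemma:polynomial-solution}: induction on $j$ supplies a polynomial $\hat p_{k+2}$ of degree $k+2$ with $-\Delta\hat p_{k+2} = x^i y^j$. In Step~2 I annihilate the Neumann trace at $\phi=0$. Since $\partial_\phi = -y\partial_x + x\partial_y$, the trace $\partial_\phi \hat p_{k+2}|_{\phi=0}$ equals $r\cdot(\partial_y \hat p_{k+2})(r,0)$, which is a polynomial in $r$ with no constant term. Because $\partial_\phi(\operatorname{Im} z^n)|_{\phi=0} = n r^n$, subtracting a suitable harmonic polynomial $\operatorname{Im}\sum_{n\ge 1} a_n z^n$ kills this trace without altering $-\Delta$.

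In Step~3 I correct the Neumann trace at $\phi=\omega$ using harmonic functions whose $\phi$-derivative already vanishes at $\phi=0$, so that Step~2 is preserved. A direct computation shows $\partial_\phi(\operatorname{Re} z^n)|_{\phi=0}=0$ and $\partial_\phi(\operatorname{Re}(z^n\ln z))|_{\phi=0}=0$. At $\phi=\omega$, $\partial_\phi(\operatorname{Re} z^n) = -n r^n\sin(n\omega)$ is a nonzero multiple of $r^n$ exactly when $n\omega/\pi\notin\BbbN$, whereas $\partial_\phi(\operatorname{Re}(z^n\ln z))|_{\phi=\omega}$ collapses to $\mp n\omega r^n$ when $n\omega/\pi\in\BbbN$ (since then $\sin(n\omega)=0$ and $\cos(n\omega)=\pm 1$). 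I therefore match the residual trace mode-by-mode: $\operatorname{Re} z^n$ when $n\omega/\pi\notin\BbbN$ and $\operatorname{Re}(z^n\ln z)$ when $n\omega/\pi\in\BbbN$. Collecting the polynomial pieces into $\widetilde p_{i,j}$ and the logarithmic pieces into $p'_{i,j}\in S^N_{k+2}$ yields the claimed decomposition.

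The only real subtlety is that the logarithmic terms must be restricted to $n\ge 2$ so that $p'_{i,j}\in S^N_{k+2}$ as defined; since the forbidden value $n=1$ with $\omega/\pi\in\BbbN$ forces $\omega = \pi$, it is enough to handle that case separately. For $\omega=\pi$ the cone is a half-plane, and I can bypass the above construction by extending $x^iy^j$ evenly across $\{y=0\}$ and solving $-\Delta P = x^iy^j$ on $\BbbR^2$ by a polynomial $P$ that is even in $y$; such $P$ automatically satisfies $\partial_y P|_{y=0}=0$, so $p'_{i,j}$ may indeed be taken to be zero when $\omega=\pi$.
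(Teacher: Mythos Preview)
Your proof is correct and follows essentially the same approach as the paper: Step~1 is identical, the correction at $\phi=0$ uses $\operatorname{Im}\sum a_n z^n$, and the correction at $\phi=\omega$ uses $\operatorname{Re} z^n$ or $\operatorname{Re}(z^n\ln z)$ according to whether $n\omega/\pi\in\BbbN$. Your explicit verification that $\partial_\phi(\operatorname{Re} z^n)|_{\phi=0}=0$ and $\partial_\phi(\operatorname{Re}(z^n\ln z))|_{\phi=0}=0$, and your even-reflection argument for $\omega=\pi$, are slightly more detailed than the paper's terse treatment but amount to the same construction.
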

\begin{proof}
The proof is very similar to that of Lemma~\ref{lemma:polynomial-solution}. The correction on $\{\phi = 0\}$ is now done
with polynomials of the form $\operatorname{Im} \sum_{n=0}^{k+2} a_n z^n$, $a_n \in \BbbR$. For the correction 
on $\{\phi = \omega\}$ one uses the functions $\operatorname{Re} z^n = r^n \cos (n \phi)$ if $n \frac{\omega}{\pi} \not\in \BbbN_0$ and 
the function $\operatorname{Re} (z^n \ln z)= r^n \left( \ln r \cos (n \phi) - \phi \sin (n \phi)\right)$ if $n \frac{\omega }{\pi} \in \BbbN_0$. 
\end{proof}

\begin{corollary}
\label{cor:solution-with-polynomial-neumann}
Let $R > 0$. 
Let $k\in\mathbb{N}_0$ and $\epsilon\in (0,1)$ satisfy $k+1+\epsilon < 2 \frac{\pi}{\omega} = \lambda^N_2$, $k+1+\epsilon \neq \frac{\pi}{\omega} = \lambda^N_1$
%$\frac{n\omega}{\pi}\notin \mathbb{N}$ for $n=1,\ldots,k+1$, 
and $f\in H^{k+\epsilon}(\mathcal{C})$ with $\operatorname{supp} f\subseteq B_1(0)$. Let $\chi \in C^\infty_0(B_1(0))$ with $\chi\equiv 1$ near the origin. Then every function $u_1\in H^1(\mathcal{C})$ with $\operatorname{supp} u_1 \subseteq B_1(0)$ solving
\begin{align}
\label{eq:cor:solution-with-polynomial-neumann}
%\begin{split}
-\Delta u_1 &= f \in H^{k+\epsilon}(\mathcal{C}), 
\qquad 
\partial_n u_1 = 0, \ \mbox{ for $\phi\in \{0,\omega\}$},
%\end{split}
\end{align}
has the form $u_1 = u_0 + \chi P_{k-1} + \delta$ with 
$u_0\in H^{k+2+\epsilon}(\mathcal{C}_R )$,  
\begin{align}
\label{eq:cor:solution-with-polynomial-1.5-neumann}
\delta & = \begin{cases} 0, &  \mbox{ if } \quad \frac{\pi}{\omega} = \lambda^N_1 > k+\epsilon+1\\ 
                           S^N(f) s_1^N,  &  \mbox{ if } \quad \frac{\pi}{\omega} = \lambda^N_1 < k+\epsilon+1<\lambda^N_2 , 
\end{cases} 
\\
S^N(f) &:= - \frac{1}{\pi} \left(\int_{\mathcal{C}} r^{-\lambda_1^N} \cos(\lambda_1^N\phi) (f(\mathbf{x})+\Delta(\chi(\mathbf{x}) P_{k-1}(\mathbf{x}))) \, d\mathbf{x}
\right), \\
\label{eq:s1N}
s_1^N & := r^{\lambda_1^N} \cos(\lambda_1^N\phi)  ,\\
P_{k-1}(\mathbf{x}) &:=\sum_{i+j\leq k-1} \frac{1}{i!j!} p^N_{i,j}(\mathbf{x})(\partial_x^i \partial_y^j f)(0), 
\end{align}
and the $p^N_{i,j}$ are the fixed functions from Lemma~\ref{lemma:polynomial-solution-neumann}. Furthermore, the following estimates hold: 
\begin{align}
\label{eq:cor:solution-with-polynomial-1.75-neumann}
\|u_0\|_{H^{k+2+\epsilon}(\mathcal{C}_R) } &\lesssim \|f\|_{H^{k+\epsilon}(\mathcal{C})},  \\
\label{eq:cor:solution-with-polynomial-1.75a-neumann}
\|P_{k-1}\|_{H^{k+2+\epsilon}({\mathcal C}_R)} &\lesssim \|f\|_{B^{k}_{2,1}({\mathcal C})} \quad \mbox{ if $\Sigma^N_{k+1}  =\emptyset$}, \\
\label{eq:cor:solution-with-polynomial-1.75b-neumann}
\|P_{k-1}\|_{B^{n^\star+1}_{2,\infty}({\mathcal C}_R)} &\lesssim \|f\|_{B^{k}_{2,1}({\mathcal C})} \quad \mbox{ if $\Sigma^N_{k+1}  \ne \emptyset$}, 
\qquad n^\star:= \min\{n \in \{2,\ldots,k+1\}\,|\,  n \frac{\omega}{\pi} \in \BbbN\}, \\ 
\label{eq:cor:solution-with-polynomial-1.75c-neumann}
\|\Delta (\chi P_{k-1})\|_{H^{k+\epsilon}({\mathcal C}_R)} &\lesssim 
\|f\|_{B^{k}_{2,1}({\mathcal C})} \lesssim \|f\|_{H^{k+\epsilon}({\mathcal C})}.  
\end{align}
The implied constants depend only $k$, $\epsilon$, the angle $\omega$, and the choice of the cut-off function $\chi$.
\end{corollary}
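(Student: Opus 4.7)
The plan is to mirror the proof of Corollary~\ref{cor:solution-with-polynomial} step by step, replacing the Dirichlet ingredients with their Neumann analogues. First I would dispose of the case $k = 0$, which is a direct restatement of Proposition~\ref{prop:elliptic-Neumann} with $P_{-1} = 0$, and then assume $k \ge 1$.

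\textbf{Step 1 (construction and estimates of $P_{k-1}$).} I use Lemma~\ref{lemma:polynomial-solution-neumann} to obtain the functions $p^N_{i,j}$ and form $P_{k-1}$ as in the statement, so that by linearity
\begin{equation*}
-\Delta P_{k-1} = \sum_{i+j\leq k-1} \frac{1}{i!j!} x^i y^j (\partial_x^i \partial_y^j f)(0), \qquad \partial_n P_{k-1}|_\Gamma = 0.
\end{equation*}
To estimate the pointwise quantities $(\partial_x^i \partial_y^j f)(0)$ for $i+j \le k-1$, I invoke the embedding $B^{k}_{2,1}(\mathcal{C}) \subset C^{k-1}(\overline{\mathcal{C}})$ exactly as in the Dirichlet case (citing \cite[Thm.~{2.8.1(c)}]{Triebel2ndEd}). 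If $\Sigma^N_{k+1} = \emptyset$ the $p^N_{i,j}$ are honest polynomials, so smoothness of each summand gives (\ref{eq:cor:solution-with-polynomial-1.75a-neumann}). If $\Sigma^N_{k+1} \ne \emptyset$, then each $p^N_{i,j}$ splits as a polynomial plus a $S^N_{k+1}$-contribution of the form $r^n(\ln r\cos(n\phi) - \phi\sin(n\phi))$, and Lemma~\ref{lemma:bounded-linear-besov-2}\ref{item:lemma:bounded-linear-besov-2-iii} (applied with $\Phi(x,y)$ the harmonic polynomial $r^n\cos(n\phi) = \operatorname{Re} z^n$, respectively $\Phi = -r^n\sin(n\phi)=-\operatorname{Im} z^n$) delivers the bound in $B^{n^\star+1}_{2,\infty}(\mathcal{C}_R)$ stated in (\ref{eq:cor:solution-with-polynomial-1.75b-neumann}).

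\textbf{Step 2 (reduction to a right-hand side vanishing to order $k-1$).} Set $\widetilde{u_1} := u_1 - \chi P_{k-1}$, which still has support in $B_1(0)$ and satisfies
\begin{equation*}
-\Delta \widetilde{u_1} = \widetilde{f} := f + \Delta(\chi P_{k-1}), \qquad \partial_n \widetilde{u_1} = 0 \text{ on } \phi \in \{0,\omega\}.
\end{equation*}
Because $\chi \equiv 1$ near the origin, in a neighborhood of $0$ the polynomial part of $P_{k-1}$ contributes exactly $\sum_{i+j\leq k-1}\frac{1}{i!j!}x^iy^j(\partial_x^i\partial_y^j f)(0)$ to $-\Delta(\chi P_{k-1})$, cancelling the Taylor polynomial of $f$, while the singular $S^N_{k+1}$-contributions are harmonic near the origin and hence annihilated by $\Delta$ there. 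This yields $\partial_x^i \partial_y^j \widetilde{f}(0) = 0$ for $i+j \le k-1$, and together with smoothness away from the origin it also proves (\ref{eq:cor:solution-with-polynomial-1.75c-neumann}) by the same splitting argument as above.

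\textbf{Step 3 (application of Proposition~\ref{prop:elliptic-Neumann} and conclusion).} Since $\widetilde{f} \in H^{k+\epsilon}(\mathcal{C})$ is compactly supported and vanishes to order $k-1$ at $0$, Proposition~\ref{prop:elliptic-Neumann} applies to $\widetilde{u_1}$ and produces the decomposition $\widetilde{u_1} = u_0$ (if $k+\epsilon+1 < \lambda^N_1$) or $\widetilde{u_1} = u_0 + S^N(f) s^N_1$ (if $\lambda^N_1 < k+\epsilon+1 < \lambda^N_2$), with $\|u_0\|_{H^{k+2+\epsilon}(\mathcal{C}_R)} \lesssim \|\widetilde{f}\|_{H^{k+\epsilon}(\mathcal{C})}$ and the stress intensity factor given by the integral against $r^{-\lambda^N_1}\cos(\lambda^N_1\phi)$. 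Unravelling $\widetilde{u_1}= u_1 - \chi P_{k-1}$ and combining with the bound on $\widetilde{f}$ from Step 2 gives (\ref{eq:cor:solution-with-polynomial-1.75-neumann}) and completes the claimed form of $u_1$.

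The only genuinely delicate point I anticipate is the bookkeeping in Step 1 when $\Sigma^N_{k+1} \ne \emptyset$: because the Neumann ``polynomial'' solutions $p^N_{i,j}$ genuinely include logarithmic and angular factors, the Besov regularity of $P_{k-1}$ drops from $H^{k+2+\epsilon}$ to $B^{n^\star+1}_{2,\infty}$, and one must verify that $\Delta(\chi P_{k-1})$ is nevertheless in $H^{k+\epsilon}$ rather than in a larger Besov space. This is where harmonicity of the $p'_{i,j}$ near the origin is crucial, since the cut-off commutator contains only terms supported away from $0$ where every $p^N_{i,j}$ is smooth.
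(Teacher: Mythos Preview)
Your proposal is correct and follows exactly the approach the paper intends: the paper's own proof is the single sentence ``Follows in the same way as that of Corollary~\ref{cor:solution-with-polynomial},'' and you have faithfully transcribed that Dirichlet argument to the Neumann setting, including the use of Lemma~\ref{lemma:polynomial-solution-neumann}, the embedding $B^k_{2,1}\subset C^{k-1}$, Lemma~\ref{lemma:bounded-linear-besov-2}\ref{item:lemma:bounded-linear-besov-2-iii} for the $S^N_{k+1}$-terms, and the harmonicity argument for $\Delta(\chi P_{k-1})$. The only tacit assumption worth making explicit is that $\chi$ is radial (so that $\partial_n(\chi P_{k-1}) = \chi\,\partial_n P_{k-1} = 0$ on the straight edges and $\widetilde{u_1}$ genuinely inherits the Neumann boundary condition), but this is equally implicit in the paper.
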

\begin{proof}
Follows in the same way as that of Corollary~\ref{cor:solution-with-polynomial}. 
\end{proof}
\begin{numberedproof}{of Theorem~\ref{thm:shift-theorem-local-version}\ref{item:thm:shift-theorem-local-version-neumann}:}
The proof follows the strategy developed for the case of Dirichlet boundary conditions; the main difference lies in the 
fact that the basic stability estimate is now $\widetilde{T}:\widetilde{H}^{-1}(\mathcal{C}_R) \rightarrow H^1(\mathcal{C}_R)$
instead of $\widetilde{T}:H^{-1}(\mathcal{C}_R) \rightarrow H^1(\mathcal{C}_R)$ for the Dirichlet case. 

The regularity assertions of Step~0 still hold. The mapping $T$ has to be replaced with 
$T: \widetilde{H}^{-1}(\mathcal{C}_R) \rightarrow H^1_D(\mathcal{C}_R)$ where $Tf$ solves
$$
-\Delta Tf = f \quad \mbox{ in $\mathcal{C}_R$}, \qquad \partial_n (Tf) = 0\quad \mbox{ on $\Gamma_N$}, \quad Tf = 0 \quad \mbox{ on $\widetilde\Gamma_R$}
$$
and $\Gamma_N = \{\phi = 0\} \cup \{\phi = \omega\}$. 
We set $\widetilde{T}:= (\chi_{R'} T \chi_{\overline{R}})$ as in Step~2 of the Dirichlet case. 
The modified right-hand side $\widetilde{f}$ is defined as in (\ref{eq:thm:shift-theorem-local-version-5}), and it suffices to 
ascertain
\begin{align}
\label{eq:neumann-case-goal}
\|\widetilde{T} \widetilde{f} \|_{B^{\pi/\omega+1}_{2,\infty}(\mathcal{C}_{R'})} &\lesssim 
%\begin{cases}
\|\widetilde{f}\|_{B^{\pi/\omega-1}_{2,1}(\mathcal{C}_R)}.  
%\|\widetilde{f}\|_{\widetilde{B}^{\pi/\omega-1}_{2,1}(\mathcal{C}_R)} & \mbox{ if $\omega \ge \pi$}. 
%\end{cases}
\end{align}
As in the proof of
Theorem~\ref{thm:shift-theorem-local-version}\ref{item:thm:shift-theorem-local-version-dirichlet} for the Dirichlet case, we distinguish between
the cases $\omega < \pi$ and $\omega > \pi$, the case $\omega = \pi$ having already been discussed in Remark~\ref{rem:omega=pi}. 

\emph{Step~1 (preliminaries):} Let $\omega < \pi$. Then, $\lambda^N_1 > 1$. Select $k = 0$, $\epsilon \in (0,1)$ such that $1 < s':= k+\epsilon +1 < \lambda^N_1$.  
Corollary~\ref{cor:solution-with-polynomial-neumann} 
asserts that $\widetilde{T}:H^{s'-1}(\mathcal{C}_R) \rightarrow H^{s'+1}(\mathcal{C}_R)$ is a bounded operator. 

\emph{Step~2:} Let $\omega < \pi$ and $\pi/\omega \not\in\BbbN$. 
Select $(k,\epsilon) \in \BbbN_0 \times (0,1)$ with $\lambda^N_1 < k+\epsilon + 1 < \lambda^N_2$ 
such that $\lfloor k+\epsilon+1\rfloor = \lfloor \lambda^N_1\rfloor \ge 1$. 
Take with $s'$ from Step~1
\label{eq:X0X1Y0Y1-neumann}
\begin{align}
X_0 &:= H^{s'-1}(\mathcal{C}_R), & X_1 &= H^{k+\epsilon}(\mathcal{C}_R) 
& 
Y_0 &:= H^{s'+1}(\mathcal{C}_R), & Y_1 &= H^{2+k+\epsilon}(\mathcal{C}_R). 
\end{align}
Note that with $\theta_1:= (\pi/\omega -1 - (s'-1))/(k+\epsilon-(s'-1)) \in (0,1)$, we have 
$B^{\pi/\omega-1}_{2,1}(\mathcal{C}_R) = (X_0,X_1)_{\theta_1,1}$ and 
$B^{\pi/\omega+1}_{2,\infty}(\mathcal{C}_R) = (Y_0,Y_1)_{\theta_1,\infty}$.  
Then, $\widetilde{T}$ satisfies the assumptions of Lemma~\ref{lemma:abstract}: the mapping properties $X_0 \rightarrow Y_0$ are 
given by the above Step~1, and the mapping properties of the decomposition of $\widetilde{T} f = (u_0(f) + \chi P_{k-1}) + S^N_1(f) s^N_1 =: S_1(f) + S_{\theta_1}(f)$ for arguments $f \in X_1$ 
is provided by Corollary~\ref{cor:solution-with-polynomial-neumann} in conjunction with 
Lemma~\ref{lemma:bounded-linear-besov-2}\ref{item:lemma:bounded-linear-besov-2-i}. 

\emph{Step~3:} Let $\omega < \pi$ and $\pi/\omega \in\BbbN$. Choose $k$, $\epsilon$, $X_0$, $X_1$, $Y_0$, $Y_1$
as in Step~2 above. As in the Dirichlet case, choose the decomposition 
$\widetilde{T} f = (u_0(f) + S^N_1(f) s^N_1) + \chi P_{k-1} =: S_1(f) + S_\theta(f)$; Corollary~\ref{cor:solution-with-polynomial-neumann}
provides that $S_1$, $S_{\theta_1}$ satisfy the assumptions of Lemma~\ref{lemma:abstract}. 

\emph{Step~4:} Let $\omega > \pi$. Since $\omega \in (\pi,2\pi)$, we have $\lambda^N_1 = \frac{\pi}{\omega} < 1 < 2 \frac{\pi}{\omega} = \lambda^N_2$ 
and may therefore select $k = 0$ and $\epsilon \in (0,1/2)$ such that 
$\lambda^N_1 < 1 < k +\epsilon +1 < \lambda^N_2$. Set  
\label{eq:X0X1Y0Y1-neumann-}
\begin{align}
X_0 &:= \widetilde{H}^{-1}(\mathcal{C}_R), & X_1 &= H^{k+\epsilon}(\mathcal{C}_R) = \widetilde{H}^{k+\epsilon}(\mathcal{C}_R), 
& 
Y_0 &:= H^{1}(\mathcal{C}_R), & Y_1 &= H^{2+k+\epsilon}(\mathcal{C}_R), 
\end{align}
and $\theta_1 = (\pi/\omega-1+1)/(k+\epsilon+1) \in (0,1)$ so that 
$(X_0,X_1)_{\theta_1,1} = \widetilde{B}^{\pi/\omega-1}_{2,1}(\mathcal{C}_R)$ and 
$(Y_0,Y_1)_{\theta_1,1} = {B}^{\pi/\omega+1}_{2,1}(\mathcal{C}_R)$. As in the Dirichlet case, we note $\Sigma^N_{k+1} = \emptyset$
and that Corollary~\ref{cor:solution-with-polynomial-neumann} provides the assumptions of Lemma~\ref{lemma:abstract}
so that $\widetilde{T}:\widetilde{B}^{\pi/\omega-1}_{2,1}(\mathcal{C}_R) \rightarrow B^{\pi/\omega+1}_{2,\infty}(\mathcal{C}_R)$. 
Since $\omega \in (\pi,2\pi)$, we have $\pi/\omega - 1 \in (-1/2,0)$ 
and thus $\widetilde{B}^{\pi/\omega-1}_{2,1}(\mathcal{C}_R) = B^{\pi/\omega-1}_{2,1}(\mathcal{C}_R)$, cf.\ (\ref{eq:B=tildeB}). 
\end{numberedproof}
%-----------------------------------------------------------------------------------
\section{Mixed boundary conditions}
\label{sec:mixed}
%-----------------------------------------------------------------------------------
The case of mixed boundary conditions is similar to the Neumann case. 
We recall that $\Gamma_D = \{\phi = 0\}$, $\Gamma_N = \{\phi =\omega\}$. 
Set 
\begin{equation}
\sigma^M:=\{\lambda^M_n\,|\,  n \in \BbbN\}
\qquad \mbox{ with} \quad \lambda^M_n:= (n-1/2) \frac{\pi}{\omega}. 
\end{equation}
The operator $(\mathcal{L}(\zeta))^{-1}$ arising in the case of mixed boundary conditions is meromorphic on $\mathbb{C}$ with poles at $\pm i \sigma^M$. With similar arguments as in 
Proposition~\ref{prop:solution_u1} one obtains: 

\begin{proposition}
\label{prop:solution_u1-mixed}
Let $R > 0$. 
For $k\in\mathbb{N}_0$ and $\epsilon\in (0,1)$, let $k+1+\epsilon < \lambda^M_3$, $k+1+\epsilon \not\in \{\lambda^M_1, \lambda^M_2\} = \{\frac{\pi}{2 \omega}, \frac{3\pi}{2\omega}\}$, and $f\in H^{k+\epsilon}(\mathcal{C})$ with $\operatorname{supp} f\subseteq B_1(0)$. Further assume $\partial_x^i \partial_y^j f(0)=0$ for $i+j \leq k-1$. Then $u_1\in H^1(\mathcal{C})$ with $\operatorname{supp} u_1 \subseteq B_1(0)$ solving
\begin{align}
%\label{eq:elliptic2.25}
%\begin{split}
-\Delta u_1 &= f \in H^{k+\epsilon}(\mathcal{C}), 
\qquad 
u_1 = 0 \mbox{ on $\{\phi = 0\}$}, 
\quad 
\partial_n u_1 = 0 \mbox{ on $\{\phi = \omega\}$}, 
%\end{split}
\end{align}
has the form 
\begin{align}
\label{eq:elliptic-mixed-5}
u_1 = 
\begin{cases} u_0 & \mbox{ if } \quad k+\epsilon+1<\frac{\pi}{2\omega} = \lambda^M_1 \\ 
u_0 +S_1^M(f)s_1^M, & \mbox{ if } \quad \lambda^M_1 = \frac{\pi}{2\omega}<k+\epsilon+1 <\frac{3\pi}{2\omega}  = \lambda^M_2\\ 
u_0 +S_1^M(f)s_1^M + S_2^M(f)s_2^M, & \mbox{ if } \quad \frac{3\pi}{2\omega} = \lambda^M_2 < k+\epsilon+1< \lambda^M_3, 
\end{cases}
\end{align}
where
\begin{subequations}
\label{eq:singularity-fcts-mixed}
\begin{align}
S_1^M(f)&=- \frac{1}{\pi} \left(\int_{\mathcal{C}} r^{\frac{-\pi}{2\omega}} \sin(\frac{\pi}{2\omega}\phi) f(x) \, dx\right), \quad s_1^M = r^{\frac{\pi}{2\omega}} \sin(\frac{\pi}{2\omega}\phi),  \\
S_2^M(f)&=- \frac{1}{\pi} \left(\int_{\mathcal{C}} r^{\frac{-3\pi}{2\omega}} \sin(\frac{3\pi}{2\omega}\phi) f(x) \, dx\right), \quad s_2^M = r^{\frac{3\pi}{2\omega}} \sin(\frac{3\pi}{2\omega}\phi)
\end{align}
\end{subequations}
together with the estimate 
\begin{align*}
\|u_0\|_{H^{k+2+\epsilon}(\mathcal{C}_R)} \lesssim \|f\|_{H^{k+\epsilon}(\mathcal{C}_1)}. 
\end{align*}
\end{proposition}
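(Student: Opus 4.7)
\textbf{Proof plan for Proposition~\ref{prop:solution_u1-mixed}.}
The argument parallels those of Propositions~\ref{prop:solution_u1} and \ref{prop:elliptic-Neumann}; structurally it is closer to the Dirichlet case since, as will be seen, the operator symbol has no pole at the origin. First I would set up the auxiliary problem
\begin{equation*}
-\Delta u_0 = f \text{ in } \mathcal{C}, \qquad u_0 = 0 \text{ on } \{\phi=0\}, \qquad \partial_n u_0 = 0 \text{ on } \{\phi=\omega\},
\end{equation*}
with $f$ viewed as an element of $K^k_{-\epsilon}(\mathcal{C})$ via Lemma~\ref{lemma:Sobolev-to-cone-second-version}. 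The general Mellin calculus (see \cite[Sec.~6.1.8]{kozlov-mazya-rossmann-1997} or \cite{dauge88}) produces a unique solution $u_0 \in K^{k+2}_{-\epsilon}(\mathcal{C})$, and a dyadic decomposition of $\mathcal{C}$ via Lemma~\ref{lemma:scaling-arguments-cone} upgrades this to $u_0 \in H^{k+2+\epsilon}(\mathcal{C}_R)$ with the claimed norm estimate.

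Next I would apply the Mellin transform to both $u_0$ and $u_1$, obtaining
$\mathcal{L}(\zeta)\mathcal{M}[u_j] = \mathcal{M}[g]$, $g = r^2 f$,
where $\mathcal{L}(\zeta) = -\partial_\phi^2 + \zeta^2$ now acts on $H^2(G)$-valued functions subject to the boundary conditions $v(0)=0$, $v'(\omega)=0$. The corresponding spectral problem $-v'' = \mu v$ has eigenvalues $\mu_n = ((n-1/2)\pi/\omega)^2 = (\lambda^M_n)^2$ with eigenfunctions $\sin(\lambda^M_n \phi)$, so $\mathcal{L}(\zeta)^{-1}$ is meromorphic on $\mathbb{C}$ with simple poles exactly at $\pm i\sigma^M$. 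Because $\Gamma_D$ is nonempty, a Hardy-type inequality gives $u_1 \in K^1_0(\mathcal{C})$, so $\mathcal{M}[u_1]$ is holomorphic in a strip around $\{\operatorname{Im}\zeta = 0\}$ (no pole at the origin, in contrast to the Neumann case), while $\mathcal{M}[u_0]$ is holomorphic on $\{\operatorname{Im}\zeta > -1-k-\epsilon\}$.

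Then I would shift the inversion contour from $\{\operatorname{Im}\zeta = 0\}$ to $\{\operatorname{Im}\zeta = -1-k-\epsilon\}$ and apply the residue theorem to obtain
\begin{equation*}
u_0 - u_1 = \sum_{\zeta_0 \in -i\sigma^M,\ \operatorname{Im}\zeta_0 \in (-1-k-\epsilon,0)} \frac{2\pi i}{\sqrt{2\pi}} \operatorname*{Res}_{\zeta=\zeta_0}\left(r^{i\zeta}(\mathcal{L}(\zeta))^{-1}\mathcal{M}[g](\zeta)\right).
\end{equation*}
By the assumption $k+1+\epsilon < \lambda^M_3$, at most two poles (namely $-i\lambda^M_1$ and $-i\lambda^M_2$) can appear in the sum, leading to the three cases in \eqref{eq:elliptic-mixed-5}. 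Each pole is simple, so the residue is evaluated by projecting $\mathcal{M}[g](-i\lambda^M_n, \cdot)$ onto the one-dimensional eigenspace $\operatorname{span}\{\sin(\lambda^M_n\phi)\}$ normalized in $L^2(0,\omega)$ (with $\|\sin(\lambda^M_n\cdot)\|_{L^2(0,\omega)}^2 = \omega/2$) and using $\mathcal{M}[g](-i\lambda,\phi) = \frac{1}{\sqrt{2\pi}}\int_0^\infty r^{\lambda+1} f(r,\phi)\,dr$. Carrying this out at $\zeta_0 = -i\lambda^M_n$ produces exactly the pair $(S^M_n(f), s^M_n)$ from \eqref{eq:singularity-fcts-mixed}.

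The only non-routine point is the contour shift itself: one needs to verify that $\mathcal{M}[g](\xi-i\eta,\cdot)$ decays sufficiently in $|\xi|$, uniformly for $\eta$ in the relevant strip, so that the horizontal segments connecting the two contours contribute nothing in the limit. This follows from the Mellin-Plancherel norm equivalence in weighted spaces recalled in Section~\ref{sec:recap}, applied to $g \in K^k_{2-\epsilon}(\mathcal{C}) \cap K^k_{-1-k-\epsilon}(\mathcal{C})$. Once this is in place the argument is purely mechanical, and the estimate on $u_0$ in $H^{k+2+\epsilon}(\mathcal{C}_R)$ is inherited from the weighted-space theory together with the dyadic upgrade used in Proposition~\ref{prop:solution_u1}.
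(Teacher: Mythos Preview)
Your proposal is correct and follows essentially the same route the paper indicates (the paper merely says ``with similar arguments as in Proposition~\ref{prop:solution_u1}''). Your observation that the Dirichlet condition on $\{\phi=0\}$ yields $u_1\in K^1_0(\mathcal{C})$ via a one-sided Hardy inequality, and hence that no contour perturbation away from $\operatorname{Im}\zeta=0$ is needed (in contrast to the Neumann case), is exactly the structural point that makes the mixed case closer to the Dirichlet argument; this is implicit in the paper but not spelled out.
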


%------------------
\begin{lemma}
\label{lemma:polynomial-solution-mixed}
Let $i$, $j$, $k\in\mathbb{N}_0$ with $i+j=k$. Set $\Sigma^M_{k+2}:=\{n \in \{1,\ldots, k+2\}\,|\,  n \frac{\omega}{\pi} + \frac{1}{2} \in\BbbN\}$ 
and $S^M_{k+2}:= \operatorname{span} \{ r^n \left(\ln r \sin (n \phi) + \phi \cos (n \phi)\right)\,|\, n \in \Sigma^M_{k+2}\}$. Then 
there is a polynomial $\widetilde p_{i,j}$ of degree $k+2$ and a harmonic function $p^\prime_{i,j} \in S^M_{k+2}$ such that  
$p^M_{i,j}:= \widetilde p_{i,j} + p^\prime_{i,j}$ satisfies 
\begin{align}
\label{eq:lemma:polynomial-solution}
%\begin{split}
-\Delta p^M_{i,j} &= x^i y^j \quad \text{on } \mathcal{C} , 
\qquad p^M_{i,j} = 0 \mbox{ on $\{\phi = 0\}$,} \qquad 
\partial_n p^M_{i,j} = 0 \mbox{ on $\{\phi = \omega\}$}. 
%\end{split}
\end{align}
\end{lemma}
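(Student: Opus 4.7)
The plan is to follow the template of Lemma~\ref{lemma:polynomial-solution} and Lemma~\ref{lemma:polynomial-solution-neumann}: start with a polynomial particular solution of $-\Delta p = x^i y^j$ and then successively correct the two boundary conditions by adding harmonic functions, picking different building blocks depending on whether a resonance condition at $\phi = \omega$ is satisfied. Throughout I will use the identification $z = x + iy = r e^{i\phi}$ and exploit that $\operatorname{Re} z^n$, $\operatorname{Im} z^n$, $\operatorname{Re}(z^n \ln z)$, $\operatorname{Im}(z^n \ln z)$ are all harmonic as real and imaginary parts of analytic functions.

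First I would invoke Step~1 of the proof of Lemma~\ref{lemma:polynomial-solution} to obtain a polynomial $\hat{p}_{k+2}$ of degree $k+2$ satisfying $-\Delta \hat{p}_{k+2} = x^i y^j$. Next I would enforce the Dirichlet condition at $\phi = 0$ by subtracting a harmonic polynomial of the form $\operatorname{Re} \sum_{n=0}^{k+2} a_n z^n = \sum_{n=0}^{k+2} a_n r^n \cos(n\phi)$, whose trace on $\{\phi = 0\}$ is exactly $\sum a_n r^n$; choosing the $a_n$ to match $\hat{p}_{k+2}(r,0)$ yields a polynomial $\widetilde{p}_{k+2}$ of degree $k+2$ with $-\Delta \widetilde{p}_{k+2} = x^i y^j$ and $\widetilde{p}_{k+2}|_{\phi = 0} = 0$.

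The main step is to enforce $\partial_n p = 0$ at $\phi = \omega$ while preserving the Dirichlet condition at $\phi = 0$. The normal derivative $\partial_n \widetilde{p}_{k+2}|_{\phi = \omega} = \frac{1}{r}\partial_\phi \widetilde{p}_{k+2}|_{\phi = \omega}$ is a polynomial in $r$ of degree at most $k+1$, so it suffices to cancel each monomial $r^{n-1}$, $1 \le n \le k+2$, by adding a harmonic function that vanishes on $\{\phi = 0\}$ and whose normal derivative at $\phi = \omega$ is a nonzero multiple of $r^{n-1}$. As building blocks I take
\begin{itemize}
\item $\operatorname{Im} z^n = r^n \sin(n\phi)$, which vanishes at $\phi = 0$ and has normal derivative $n \cos(n\omega)\,r^{n-1}$ at $\phi = \omega$;
\item $\operatorname{Im}(z^n \ln z) = r^n\bigl(\ln r \sin(n\phi) + \phi \cos(n\phi)\bigr)$, which again vanishes at $\phi = 0$ and whose normal derivative at $\phi = \omega$ is
\[
r^{n-1}\bigl(n \ln r \cos(n\omega) + \cos(n\omega) - n\omega \sin(n\omega)\bigr).
\]
\end{itemize}
Whenever $\cos(n\omega) \ne 0$, i.e.\ $n\omega/\pi + 1/2 \notin \BbbN$ (so $n \notin \Sigma^M_{k+2}$), the first option suffices. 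When $\cos(n\omega) = 0$, i.e.\ $n \in \Sigma^M_{k+2}$, the first option degenerates, but then $\sin(n\omega) = \pm 1$ and the logarithmic term's normal derivative reduces to $-n\omega \sin(n\omega)\,r^{n-1}$, which is again a nonzero multiple of $r^{n-1}$. Choosing the coefficients of the $k+2$ corrections to kill the $k+2$ coefficients of $\partial_n \widetilde{p}_{k+2}|_{\phi = \omega}$ yields $p^M_{i,j}$, with $\widetilde{p}_{i,j}$ collecting all polynomial contributions and $p'_{i,j} \in S^M_{k+2}$ collecting the logarithmic/angular contributions from $n \in \Sigma^M_{k+2}$.

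The one place that requires some care is the computation of $\partial_\phi$ of $r^n(\ln r \sin(n\phi) + \phi \cos(n\phi))$ at $\phi = \omega$ in the resonant case: one must verify that the $\ln r$ term drops out precisely because $\cos(n\omega) = 0$, leaving a clean $r^{n-1}$ contribution. Once this is checked the construction is purely linear algebra on the coefficients, analogous to Lemmas~\ref{lemma:polynomial-solution} and~\ref{lemma:polynomial-solution-neumann}. (Note that unlike the Dirichlet case, $n = 1$ is admitted in $\Sigma^M_{k+2}$, corresponding to the genuine resonance at $\omega = \pi/2$, so no analogue of the $\omega = \pi$ simplification of Lemma~\ref{lemma:polynomial-solution} is available.)
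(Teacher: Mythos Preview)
Your proof is correct and follows essentially the same route as the paper: reduce to Lemma~\ref{lemma:polynomial-solution} for the particular solution and the Dirichlet correction at $\phi=0$, then correct the Neumann condition at $\phi=\omega$ with $\operatorname{Im} z^n$ in the non-resonant case and $\operatorname{Im}(z^n\ln z)$ in the resonant case $n\frac{\omega}{\pi}+\frac12\in\BbbN$. Your explicit verification that the $\ln r$ contribution in the normal derivative of $\operatorname{Im}(z^n\ln z)$ vanishes precisely when $\cos(n\omega)=0$, leaving the nonzero term $-n\omega\sin(n\omega)\,r^{n-1}$, is exactly the point the paper leaves implicit.
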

\begin{proof}
One proceeds as in the proof of Lemma~\ref{lemma:polynomial-solution}, the only difference being the correction 
on the line $\{\phi = \omega\}$. For that, one uses, for $n \ge 1$, the functions $\operatorname{Im} z^n$ if $n \frac{\omega}{\pi} + \frac{1}{2}\not\in \BbbN$ 
and $\operatorname{Im} (z^n \ln z)$ if $n\frac{\omega}{\pi} +\frac{1}{2} \in \BbbN$. 
\end{proof}
\begin{corollary}
\label{cor:solution-with-polynomial-mixed}
Let $R > 0$. 
Let $k\in\mathbb{N}_0$ and $\epsilon\in (0,1)$ satisfy $k+1+\epsilon < \lambda^M_3 = \frac{5\pi}{2\omega}$ and $k+1+\epsilon \not\in \{\lambda^M_1,\lambda^M_2\} = 
\{\frac{\pi}{2\omega}, \frac{3\pi}{2\omega}\}$. 
Let $f\in H^{k+\epsilon}(\mathcal{C})$ with $\operatorname{supp} f\subseteq B_1(0)$. Let $\chi \in C^\infty_0(B_1(0))$ with $\chi\equiv 1$ near the origin. Then every function $u_1\in H^1(\mathcal{C})$ with $\operatorname{supp} u_1 \subseteq B_1(0)$ solving
\begin{align}
\label{eq:cor:solution-with-polynomial-mixed}
%\begin{split}
-\Delta u_1 &= f \in H^{k+\epsilon}(\mathcal{C}), 
\qquad 
u_1 = 0  \mbox{ on $ \{\phi=0\}$}, \quad  \partial_n u_1  = 0 \mbox{ on $\{\phi = \omega\}$}
%\end{split}
\end{align}
has the form $u_1 = u_0 + \chi P_{k-1} + \delta$ with 
$u_0\in H^{k+2+\epsilon}(\mathcal{C}_R )$,  
\begin{align}
\label{eq:cor:solution-with-polynomial-1.5-mixed}
\delta & = \begin{cases} 0, &  \mbox{ if } \quad k+\epsilon+1<\lambda^M_1 = \frac{\pi}{2\omega} \\ 
                           S_1^M(f + \Delta (\chi P_{k-1})) s_1^M, & \mbox{ if } \quad  \lambda^M_1 < k+\epsilon+1< \lambda^M_2 \\
                           S_1^M(f + \Delta (\chi P_{k-1})) s_1^M +  S_2^M(f+ \Delta (\chi P_{k-1})) s_2^M,&  \mbox{ if } \quad \lambda^M_2 < k+\epsilon+1< \lambda^M_3 
\end{cases} 
\\
P_{k-1}(\mathbf{x}) &:=\sum_{i+j\leq k-1} \frac{1}{i!j!} p^M_{i,j}(\mathbf{x})(\partial_x^i \partial_y^j f)(0), 
\end{align}
where $S_1^M$, $S_2^M$, $s_1^M$, $s_2^M$ are given in (\ref{eq:singularity-fcts-mixed}), 
and the $p^M_{i,j}$ are the fixed functions from Lemma~\ref{lemma:polynomial-solution-mixed}. Furthermore, 
\begin{align}
\label{eq:cor:solution-with-polynomial-1.75-mixed}
\|u_0\|_{H^{k+2+\epsilon}(\mathcal{C}_R) } &\lesssim \|f\|_{H^{k+\epsilon}(\mathcal{C}_1)},  \\
\label{eq:cor:solution-with-polynomial-1.75a-mixed}
\|P_{k-1}\|_{H^{k+2+\epsilon}({\mathcal C}_R)} &\lesssim \|f\|_{B^{k}_{2,1}({\mathcal C}_1)} \quad \mbox{ if $\Sigma^M_{k+1}  =\emptyset$}, \\
\label{eq:cor:solution-with-polynomial-1.75b-mixed}
\|P_{k-1}\|_{B^{n^\star+1}_{2,\infty}({\mathcal C}_R)} &\lesssim \|f\|_{B^{k}_{2,1}({\mathcal C}_1)} \quad \mbox{ if $\Sigma^M_{k+1}  \ne \emptyset$}, 
\quad n^\star:= \min\{n \in \{1,\ldots,k+1\}\,|\,  n\frac{\omega}{\pi} + \frac{1}{2} \in \BbbN\}, \\ 
\label{eq:cor:solution-with-polynomial-1.75c-mixed}
\|\Delta (\chi P_{k-1})\|_{H^{k+\epsilon}({\mathcal C}_R)} &\lesssim \|f\|_{B^{k}_{2,1}({\mathcal C}_1)} \lesssim \|f\|_{H^{k+\epsilon}({\mathcal C})}.  
\end{align}
The implied constants depend only $k$, $\epsilon$, the angle $\omega$, and the choice of the cut-off function $\chi$.
\end{corollary}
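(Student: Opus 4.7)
The plan is to mimic the argument of Corollary~\ref{cor:solution-with-polynomial} (and its Neumann analogue, Corollary~\ref{cor:solution-with-polynomial-neumann}) line by line, replacing the Dirichlet ingredients (Lemma~\ref{lemma:polynomial-solution}, Proposition~\ref{prop:solution_u1}) with their mixed counterparts (Lemma~\ref{lemma:polynomial-solution-mixed}, Proposition~\ref{prop:solution_u1-mixed}). The case $k=0$ is immediate from Proposition~\ref{prop:solution_u1-mixed} with $P_{-1}=0$, so assume $k\geq 1$.

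First, I would use Lemma~\ref{lemma:polynomial-solution-mixed} to obtain the functions $p^M_{i,j}$ and form
$P_{k-1}(\mathbf{x})=\sum_{i+j\leq k-1}\tfrac{1}{i!j!}p^M_{i,j}(\mathbf{x})(\partial_x^i\partial_y^jf)(0)$, which satisfies the mixed boundary conditions and $-\Delta P_{k-1}=\sum_{i+j\leq k-1}\tfrac{1}{i!j!}x^iy^j(\partial_x^i\partial_y^jf)(0)$ on $\mathcal{C}$. The continuous embedding $B^{k}_{2,1}\subset C^{k-1}$ (quoted already in the proof of Corollary~\ref{cor:solution-with-polynomial}) controls the point values $(\partial_x^i\partial_y^jf)(0)$ by $\|f\|_{B^{k}_{2,1}(\mathcal{C})}$. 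When $\Sigma^M_{k+1}=\emptyset$, each $p^M_{i,j}$ is a polynomial and (\ref{eq:cor:solution-with-polynomial-1.75a-mixed}) follows trivially; when $\Sigma^M_{k+1}\neq\emptyset$, the non-polynomial contributions $p^\prime_{i,j}\in S^M_{k+1}$ are of the form $r^n(\ln r\sin(n\phi)+\phi\cos(n\phi))$, and Lemma~\ref{lemma:bounded-linear-besov-2}\ref{item:lemma:bounded-linear-besov-2-iii} places these in $B^{n^\star+1}_{2,\infty}(\mathcal{C}_R)$, giving (\ref{eq:cor:solution-with-polynomial-1.75b-mixed}).

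Next, I set $\widetilde{u_1}:=u_1-\chi P_{k-1}$, which still has support in $B_1(0)$ and satisfies the correct mixed boundary conditions. A direct computation gives
\begin{align*}
-\Delta\widetilde{u_1}=\widetilde{f}:=f+\Delta(\chi P_{k-1}),
\end{align*}
and, by construction of $P_{k-1}$, one checks $\partial_x^i\partial_y^j\widetilde{f}(0)=0$ for $i+j\leq k-1$. The decisive observation, as in the Dirichlet case, is that $\chi\equiv 1$ near the origin, so $\Delta(\chi P_{k-1})=\Delta P_{k-1}$ there; the polynomial part of $P_{k-1}$ cancels the Taylor expansion of $f$, while the harmonic part $\chi P'_{k-1}\in S^M_{k+1}$ contributes $\Delta(\chi P'_{k-1})=0$ near the origin (and is smooth away from the origin, since $\chi$ has compact support). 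Consequently (\ref{eq:cor:solution-with-polynomial-1.75c-mixed}) holds and $\widetilde{f}\in H^{k+\epsilon}(\mathcal{C})$ with $\|\widetilde{f}\|_{H^{k+\epsilon}(\mathcal{C})}\lesssim\|f\|_{H^{k+\epsilon}(\mathcal{C})}$.

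Finally, I apply Proposition~\ref{prop:solution_u1-mixed} to $\widetilde{u_1}$ and $\widetilde{f}$. Depending on whether $k+1+\epsilon$ lies below $\lambda^M_1$, between $\lambda^M_1$ and $\lambda^M_2$, or between $\lambda^M_2$ and $\lambda^M_3$, Proposition~\ref{prop:solution_u1-mixed} produces the regular part $u_0\in H^{k+2+\epsilon}(\mathcal{C}_R)$ together with zero, one, or two singular corrections $S^M_1(\widetilde{f})s^M_1$ and $S^M_2(\widetilde{f})s^M_2$, yielding the stated form of $\delta$. The bound (\ref{eq:cor:solution-with-polynomial-1.75-mixed}) follows from the estimate in Proposition~\ref{prop:solution_u1-mixed} combined with the $H^{k+\epsilon}$-bound on $\widetilde{f}$ just established. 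The main (mild) obstacle is bookkeeping the case distinction $\Sigma^M_{k+1}=\emptyset$ versus $\Sigma^M_{k+1}\neq\emptyset$ and verifying that the harmonic correction $P'_{k-1}$ is annihilated by $\Delta$ where $\chi\equiv 1$; once this is observed, the argument is a verbatim transcription of the Dirichlet proof.
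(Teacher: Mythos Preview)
Your proposal is correct and follows precisely the approach the paper intends: the paper's own proof reads ``Follows as in the Dirichlet case,'' and you have carried out exactly that transcription, replacing Lemma~\ref{lemma:polynomial-solution} and Proposition~\ref{prop:solution_u1} by their mixed analogues Lemma~\ref{lemma:polynomial-solution-mixed} and Proposition~\ref{prop:solution_u1-mixed}, and handling the additional case $\lambda^M_2 < k+1+\epsilon < \lambda^M_3$ that can arise here.
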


\begin{proof}
Follows as in the Dirichlet case. 
\end{proof}
As in the Neumann case, the proof of Theorem~\ref{thm:shift-theorem-local-version}\ref{item:thm:shift-theorem-local-version-mixed} comes down to showing
the mapping property $\widetilde{T}:B^{\pi/(2\omega)-1}_{2,1}(\mathcal{C}_R) \rightarrow B^{\pi/(2\omega)+1}_{2,\infty}(\mathcal{C}_R)$ for $\pi/(2\omega) -1 > -1/2$ 
and 
$\widetilde{T}:\widetilde{B}^{\pi/(2\omega)-1}_{2,1}(\mathcal{C}_R) \rightarrow B^{\pi/(2\omega)+1}_{2,\infty}(\mathcal{C}_R)$ for $\pi/(2\omega) -1 \leq -1/2$, 
where $\widetilde{T} = \chi_{R'} T \chi_{\overline{R}}$ with $Tf$ being the  solution of 
$$
-\Delta Tf = 0 \quad \mbox{ in $\mathcal{C}_R$}, \qquad Tf = 0 \quad \mbox{ on $\Gamma_{0,R} \cup \widetilde \Gamma_R$}, 
\quad \partial_n Tf = 0 \quad \mbox{ on $\Gamma_{\omega,R}$.}
$$

\begin{numberedproof}{of Theorem~\ref{thm:shift-theorem-local-version}\ref{item:thm:shift-theorem-local-version-mixed} for $\omega < \pi/2$:}
Since $\omega < \pi/2$, we may select $(k,\epsilon) \in \BbbN_0 \times (0,1)$ so that 
$1 < \lambda^M_1 < k+1+\epsilon < \lambda^M_2$ and $\lfloor k+\epsilon+1\rfloor = \lfloor \lambda^M_1\rfloor$. This is the setting, where exactly 
one singularity function appears in the representation of the solution in Corollary~\ref{cor:solution-with-polynomial-mixed} 
and the arguments of the Neumann case (Steps~2, 3 of the proof of Theorem~\ref{thm:shift-theorem-local-version}\ref{item:thm:shift-theorem-local-version-neumann}) 
apply.  
\end{numberedproof}

\begin{numberedproof}{of Theorem~\ref{thm:shift-theorem-local-version}\ref{item:thm:shift-theorem-local-version-mixed} for $\omega = \pi/2$:} 
Select $k = 0$, $\epsilon \in (0,1/2)$ such that $1 = \lambda^M_1 < k+\epsilon+1 < \lambda^M_2 = 3$. In Corollary~\ref{cor:solution-with-polynomial-mixed},
this corresponds to the case of one ``singularity'' function $S^M_1(f) s^M_1$, which is in fact a polynomial and thus smooth, 
and $P_{k-1}  = 0$. 
%and $\Sigma^M_{k+1} = \{1\}$. 
Lemma~\ref{lemma:bounded-linear-besov-2}\ref{item:lemma:bounded-linear-besov-2-ia} provides 
$S^M_1 \in \left(\widetilde{B}^0_{2,1}(\mathcal{C}_R)\right)^\star$. 
%and Corollary~\ref{cor:solution-with-polynomial-mixed} gives  $P_{k-1} \in B^2_{2,\infty}(\mathcal{C}_R)$. 
Set $X_0 = \widetilde{H}^{-1}(\mathcal{C}_R)$, $X_1 = \widetilde{H}^{\epsilon}(\mathcal{C}_R)$, 
$Y_0 = {H}^{1}(\mathcal{C}_R)$, $Y_1 = {H}^{2+\epsilon}(\mathcal{C}_R)$, $\theta = 1/(1+\epsilon)$. 
From Lemma~\ref{lemma:abstract}
and the solution representation of Corollary~\ref{cor:solution-with-polynomial-mixed} 
we infer $\widetilde{T}: \widetilde{B}^{\pi/(2\omega)-1}_{2,1} \rightarrow B^{\pi/(2\omega)+1}_{2,\infty}$. 
\end{numberedproof}

\begin{numberedproof}{of Theorem~\ref{thm:shift-theorem-local-version}\ref{item:thm:shift-theorem-local-version-mixed} for $\omega \in (\pi/2, 3\pi/2)$:} 
In this case, one can select $k = 0$ and $\epsilon \in (0,1)$ such that $\lambda^M_1 <1 < k+\epsilon+1 < \lambda^M_2$ so that in the application
of Corollary~\ref{cor:solution-with-polynomial-mixed} a single singularity function arises. This is handled as in the Neumann case 
with $\omega > \pi$ there. 
\end{numberedproof} 

\begin{numberedproof}{of Theorem~\ref{thm:shift-theorem-local-version}\ref{item:thm:shift-theorem-local-version-mixed} for $\omega \in (3\pi/2,2\pi)$:} 
We have $\lambda^M_1 < \lambda^M_2 < 1 < \frac{5}{4} < \lambda^M_3$. Select $k = 0$ and $\epsilon \in (0,1/2)$ such that 
$\lambda^M_2 < 1 < k + \epsilon+  1 < \lambda^M_3$. Set 
$X_0 = \widetilde{H}^{-1}(\mathcal{C}_R)$, 
$X_1 = \widetilde{H}^{\epsilon}(\mathcal{C}_R)$, 
$Y_0 = {H}^{1}(\mathcal{C}_R)$, 
$Y_1 = {H}^{2+k+\epsilon}(\mathcal{C}_R)$, $\theta_j = \lambda^M_j/(1+ k + \epsilon)$, $j \in \{1,2\}$, 
so that $X_{\theta_j,1} = \widetilde{B}^{\lambda^M_j-1}_{2,1}(\mathcal{C}_R)$ and 
$Y_{\theta_j,1} = {B}^{\lambda^M_j+1}_{2,1}(\mathcal{C}_R)$. 
Note %$\Sigma^M_{k+1} = \emptyset$ and 
$P_{k-1} \equiv 0$ in Corollary~\ref{cor:solution-with-polynomial-mixed}.  Corollary~\ref{cor:solution-with-polynomial-mixed}
yields for $f \in X_1$ the decomposition $\widetilde{T} f = u_0(f) + S^M_1(f) s^M_1 + S^M_2(f) s^M_2=: S_1(f) + S_{\theta_1}(f) + S_{\theta_2}(f)$
with $S_{\theta_j}: X_{\theta_j,1} \rightarrow Y_{\theta_j,\infty}$ (cf.\ Lemma~\ref{lemma:bounded-linear-besov-2}). The desired result now follows 
from Lemma~\ref{lemma:abstract}. 
\end{numberedproof}

\begin{numberedproof}{of Theorem~\ref{thm:shift-theorem-local-version}\ref{item:thm:shift-theorem-local-version-mixed} for $\omega = 3\pi/2$:} 
In contrast to the preceding case $\omega > 3\pi/2$, we have 
$\lambda^M_1 < \lambda^M_2 = 1 < \frac{5}{4} < \lambda^M_3$. We  
take $(k,\epsilon) = (0,\epsilon)$ as in the preceding case. 
%We have $\Sigma^M_{k+1} = \{1\}$ and $\chi P_{k-1} \in B^{n^\star+1}_{2,\infty}(\mathcal{C}_R) = B^{2}_{2,\infty}(\mathcal{C}_R) = Y_{\theta_2,\infty}$. 
The function 
$s^M_2$ is a polynomial and hence smooth. Corollary~\ref{cor:solution-with-polynomial-mixed} yields for $f \in X_1$ the decomposition
%$\widetilde{T} f = \left(u_0(f) + S^M_2(f) s^M_2\right) +  S^M_1(f) s^M_1 + \chi P_{k-1}=: S_1(f) + S_{\theta_1}(f) + S_{\theta_2}(f)$. 
$\widetilde{T} f = \left(u_0(f) + S^M_2(f) s^M_2\right) +  S^M_1(f) s^M_1 =: S_1(f) + S_{\theta_1}(f)$.  
Lemma~\ref{lemma:bounded-linear-besov-2} shows that the operators $S_1$ and $S_{\theta_1}$ have the mapping properties required by 
Lemma~\ref{lemma:abstract}. This concludes the proof. 
\end{numberedproof}

\section{An extension to $L^p$-based Besov spaces}
\label{sec:Lp}
%--------------------------------------
The Besov spaces $B^{s}_{2,q}$ considered so far are based on $L^2$-spaces. 
Several results obtained in the framework of these Besov spaces can be generalized
to $L^p$-based Besov spaces. We illustrate this in the present section for the 
case simplest case, that of Dirichlet boundary condition and the assumption of $W^{2,p}$-regularity.

For $k \in \BbbN_0$ and $p \in (1,\infty)$ and bounded domains $\Omega$, we define the spaces 
$W^{k,p}(\Omega)$ in the usual way by requiring that derivatives up to order $k$ be in $L^p(\Omega)$, 
\cite{Grisvard_2011}. 
%\JMM{besser den $\widetilde{W}$ einfuehren!}
For $s \ge 0$ with $s \not\in \BbbN_0$ one defines $W^{s,p}(\Omega):= (W^{\lfloor s \rfloor,p}(\Omega),W^{\lceil s\rceil ,p})_{s - \lfloor s \rfloor ,p}$. 
The norm $\|\cdot\|_{W^{s,p}(\Omega)}$ is in fact equivalent to the Aronstein-Slobodeckij norm, \cite[Chap.~{36}]{Tartar_2007}. 
A second important fact is that the Reiteration Theorem, \cite[Chap.~{26}]{Tartar_2007} allows one to show that 
for $k$, $m \in \BbbN_0$ and $k < s < m$ with $s \not\in \BbbN_0$ one has 
$W^{s,p}(\Omega) = (W^{k,p}(\Omega), W^{m,p}(\Omega))_{(s-k)/(m-k),p}$, \cite[Chap.~{34}]{Tartar_2007}. 
%As in \cite{Grisvard85}, we define for bounded Lipschitz domains $\Omega$ the space $W^{s,p}_0(\Omega)$ 
%as the complection of $C^\infty_0(\Omega)$ under the $W^{s,p}(\Omega)$ norm. 
Analogous to the case of Sobolev spaces, one defines for $s \ge 0$, $s \not\in \BbbN_0$, Besov spaces $B^{s}_{p,q}(\Omega)$ 
%and $\widetilde{B}^s_{p,q}(\Omega)$ 
by interpolation 
\begin{align*}
B^s_{p,q}(\Omega) = (W^{\lfloor{s}\rfloor ,p}(\Omega), W^{\lceil{s}\rceil,p}(\Omega))_{s - \lfloor{s}\rfloor, q} 
\qquad 
%\widetilde{B}^s_{p,q}(\Omega) = (W^{\lfloor{s}\rfloor ,p}_0(\Omega), W^{\lceil{s}\rceil,p}_0(\Omega))_{s - \lfloor{s}\rfloor, q}, 
\end{align*}
and notes that the Reiteration theorem would allow us to represent these spaces by interpolating between the spaces corresponding
to $s_1$, $s_2$ with $0 \leq s_1 < s < s_2$, \cite[Chap.~{24}]{Tartar_2007}, \cite[Sec.~{4.3.2}]{Triebel2ndEd}. 
%For $s > 0$, define 
%\begin{align*}
%B^{-s}_{p',q'}(\Omega):= (\widetilde{B}^s_{p,q}(\Omega))^\prime, \qquad 
%\widetilde{B}^{-s}_{p',q'}(\Omega):= (B^s_{p,q}(\Omega))^\prime, 
%\end{align*}
%where $p' = p/(p-1)$, $q' = q/(q-1)$ are the conjugates parameters. 
%\JMM{hier noch interpolationseigenschaft fuer negative indices}
As is customary in connection with $L^p$ spaces, we denote by $p' = p/(p-1)$ the conjugate exponent.  

Analogous to the spaces $K^{s}_\gamma ({\mathcal C})$, it is convenient to define 
$K^{s,p}_\gamma({\mathcal C})$ by the norm 
\begin{equation}
\|u\|_{K_\gamma^{s,p}(\mathcal{C})}^p := \sum_{|\alpha|\leq s} \|r^{|\alpha|-s+\gamma} D^\alpha u\|_{L^p(\mathcal{C})}^p.
\end{equation}

We will consider solutions $u \in W^{1,p}({\mathcal C})$ of 
\begin{align}
\label{eq:Lp-Dirichlet}
-\Delta u &= f \quad \mbox{ in ${\mathcal C}$,} \quad  u = 0 \ \mbox{ on $\phi  \in \{0,\omega\}$}. 
\end{align}
%--------------------------
\subsection{Regularity of the singularity functions and stress intensity functionals}
%--------------------------
Let $k \in \BbbN_0$. If $k > 2/p$, then by \cite[Thm.~{4.6.1}]{Triebel2ndEd}, we have the embedding 
$W^{k,p}({\mathcal C}_R) \subset C^{\lfloor k-2/p\rfloor}({\mathcal C}_R)$
so that one may define as in (\ref{eq:Pkm1}) the polynomial $P_{\lfloor k-2/p\rfloor}$. We have: 
\begin{lemma}
\label{lemma:embedding-Wkp}
For $k \in \BbbN_0$, $p \in (1,2) \cup (2,\infty)$, $f \in W^{k,p}({\mathcal C}_1)$ 
let $P_{\lfloor{k-2/p}\rfloor}$ be given by (\ref{eq:Pkm1}), $\chi \in C^\infty_0(B_1(0))$ with $\chi \equiv 1$ near the origin. Then, 
$f - \Delta (\chi P_{\lfloor k - 2/p\rfloor}) \in K^{k,p}_0({\mathcal C}_1)$. 
\end{lemma}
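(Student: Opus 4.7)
The plan is to adapt the proof of the $L^2$-analogue Lemma~\ref{lemma:Sobolev-to-cone-second-version} to the $L^p$-setting by invoking an iterated Hardy inequality in $\BbbR^2$. Set $m := \lfloor k - 2/p\rfloor$, with the convention $P_m = 0$ when $m < 0$. Since $p \in (1,2)\cup(2,\infty)$, $2/p \notin \BbbN$, so $k - 2/p \notin \BbbN$ and the strict inequality $m < k - 2/p$ combined with the Sobolev embedding $W^{k,p}(\mathcal{C}_1) \hookrightarrow C^m(\overline{\mathcal{C}_1})$ (\cite[Thm.~{4.6.1}]{Triebel2ndEd}) makes sense of the pointwise values $(\partial_x^i\partial_y^j f)(0)$ that define $P_m$. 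By the Leibniz rule together with the defining identity $-\Delta p^D_{i,j} = x^iy^j$ from Lemma~\ref{lemma:polynomial-solution}, on any ball $B_{r_0}(0)$ on which $\chi \equiv 1$ the function $\widetilde f := f - \Delta(\chi P_m)$ agrees (up to the sign convention adopted for $P_m$) with $f$ minus its degree-$m$ Taylor polynomial at the origin; in particular $D^\gamma \widetilde f(0) = 0$ for every $|\gamma|\leq m$.

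First I would treat the region $\mathcal{C}_1\setminus B_{r_0}(0)$. There, the polynomial part of $P_m$ is smooth, and the harmonic logarithmic/angular contributions from Lemma~\ref{lemma:polynomial-solution} are $C^\infty$ away from the origin; hence $\widetilde f \in W^{k,p}(\mathcal{C}_1\setminus B_{r_0}(0))$ with norm controlled by $\|f\|_{W^{k,p}(\mathcal{C}_1)}$. Since the weight $r^{|\alpha|-k}$ is bounded on $\{r\geq r_0\}$, this region contributes at most $C\|f\|_{W^{k,p}(\mathcal{C}_1)}$ to the $K^{k,p}_0(\mathcal{C}_1)$-norm.

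Near the origin, I would show $\|r^{|\alpha|-k}D^\alpha\widetilde f\|_{L^p(B_{r_0}(0))}\lesssim \|\widetilde f\|_{W^{k,p}(\mathcal{C}_1)}$ for every $|\alpha|\leq k$. The case $|\alpha|=k$ is immediate. For $\beta := k-|\alpha|\geq 1$ the estimate is obtained by $\beta$ iterations of the planar Hardy inequality $\|r^{-1}v\|_{L^p}\lesssim \|\nabla v\|_{L^p}$, which holds for $1<p<2$ without a vanishing condition and for $p>2$ provided $v(0)=0$ (meaningful via $W^{1,p}\subset C^0$). The vanishing conditions needed at each iteration are precisely those delivered by the $T_m$-subtraction in $\widetilde f$: combined with the relation $m + 1 > k - 2/p$, which is built into the definition $m = \lfloor k - 2/p\rfloor$, they furnish the right balance between the singular weight $r^{-\beta}$ and the order of vanishing $D^\gamma \widetilde f(0) = 0$ for $|\gamma|\leq m$. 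This balance is the single place where the hypothesis $p\neq 2$ (equivalently $k - 2/p \notin \BbbN$) enters decisively.

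The main obstacle will be making the iterated Hardy inequality rigorous in a $p$-uniform fashion, especially for $1<p<2$ where the first Hardy step requires no vanishing but subsequent iterations do. A clean way to proceed is to combine Hardy with the integral representation $v(x) = \int_0^1 \nabla v(tx)\cdot x\, dt$ (valid when $v(0)=0$); Minkowski's integral inequality and the change of variables $y = tx$ then reduce $\|r^{-\beta}v\|_{L^p}$ to $\|r^{-(\beta-1)}\nabla v\|_{L^p}$, allowing the chain of estimates to close. This is the natural $L^p$-analogue of the arguments underlying Lemma~\ref{lemma:Sobolev-to-cone-second-version} and, as for that lemma, the technical verification is most cleanly deferred to an appendix.
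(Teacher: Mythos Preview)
Your approach via an iterated bulk Hardy inequality is correct and genuinely different from the paper's. The paper argues via a dyadic covering of the cone by annuli $A(d,2d)$: a Poincar\'e-type inequality on the reference annulus $A(1,2)$ (with trace terms on the two straight edges $\Gamma^1_\ell$ playing the role of the lower-order part) is scaled and summed over $d=2^{-j}$, reducing the $K^{k,p}_0$-control to $|D^k f|_{L^p}$ plus weighted edge-trace norms $\|r^{-(k-1/p-j)}\nabla^j\widetilde f\|_{L^p(\Gamma_\ell)}$; these are then bounded via the one-dimensional Hardy inequality \cite[Thm.~{1.4.4.4}]{Grisvard_2011}, using the vanishing of $\nabla^j\widetilde f$ at the vertex for $j\leq m$ (and, for $p<2$, the identification $W^{1-1/p,p}(\Gamma_\ell)=\widetilde W^{1-1/p,p}(\Gamma_\ell)$ at the top step $j=k-1$). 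Your route avoids both the dyadic decomposition and the passage to boundary traces, which is more direct for integer $k$; the paper's trace-based machinery, on the other hand, is identical to that used for the fractional-order $L^2$ analogue Lemma~\ref{lemma:Sobolev-to-cone-second-version} and so extends more naturally to non-integer regularity. One small correction to your last paragraph: for $1<p<2$ the ordering is reversed---the steps with weight exponent $\beta\geq 2$ are the ones that require (and have, since $m=k-2$) the vanishing $D^\gamma\widetilde f(0)=0$, while only the terminal step $\beta=1$ invokes the classical $p<d=2$ Hardy inequality with no vanishing condition. Correspondingly, your Minkowski/integral-representation argument produces the factor $\int_0^1 t^{\beta-1-2/p}\,dt$, finite precisely when $\beta>2/p$, so it covers exactly the vanishing steps; the $\beta=1$ step for $p<2$ needs the separate classical-Hardy argument you allude to earlier, together with a harmless lower-order term.
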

\begin{proof}
The proof follows structurally that of Lemma~\ref{lemma:Sobolev-to-cone-second-version-appendix}. Note that $k - 2/p$ is not an integer, which 
allows one to avoid the introduction of $\epsilon > 0$ as in Lemma~\ref{lemma:Sobolev-to-cone-second-version-appendix}. 
\end{proof} 
In the following Lemma~\ref{lemma:singularity-fct-Lp}, which allows us to ascertain the regularity of the singularity functions 
and the stress intensity functional, the functions $P_{\lfloor \alpha - 2\rfloor}$ of (\ref{eq:Pkm1}) arise. These functions 
vanish for $\alpha- 2  <0$ and are well-defined for $f \in B^{\lfloor \alpha - 2\rfloor +2/p}_{p,1} \subset C^{\lfloor \alpha-2\rfloor}$ in view of 
\cite[Thm.~{4.6.1}]{Triebel2ndEd}. 
\begin{lemma}
\label{lemma:singularity-fct-Lp}
Let $p \in (1,2) \cup (2,\infty)$. 
\begin{enumerate}[label=(\roman*)]
\item 
\label{item:lemma:singularity-fct-Lp-i}
For $\beta + 2/p > 0$ the function $s^+(r,\phi) = r^\beta \sin(\beta \phi)$ is in the space $B^{\beta+2/p}_{p,\infty}({\mathcal C}_1)$. 
\item 
\label{item:lemma:singularity-fct-Lp-ii}
Let $\Phi \in C^\infty(\BbbR^2)$ with $|\Phi(x,y)| \leq C r^n$ as $r \rightarrow 0$ for some $n \in \BbbN_0$. 
Then the functions $v(x,y) = \Phi(x,y) \ln r$ 
and $w(x,y) = \phi \Phi(x,y)$ are in the space $B^{n+2/p}_{p,\infty}({\mathcal C}_1)$. 
\item 
\label{item:lemma:singularity-fct-Lp-iii}
Let $\alpha - 2/p' >  0$ with $\alpha- 2 \not\in \BbbN_0$. 
Let $P_{\lfloor{ \alpha - 2}\rfloor}$ be given by (\ref{eq:Pkm1}). Then
\begin{align*} 
f \mapsto S(f):= \int_{{\mathcal C}_1} r^{-\alpha} \sin(\alpha \phi) (f + \Delta (\chi P_{\lfloor{\alpha - 2}\rfloor}))
\end{align*}
is bounded linear on $B^{\alpha-2/p'}_{p,1}({\mathcal C}_1)$. 
%\item 
%\label{item:lemma:singularity-fct-Lp-iv} \JMM{adaptieren!} Let $0<\alpha\leq 1$. Then the mapping
%\begin{align*}
%f \mapsto S(f):=\int_{\mathcal{C}_1} r^{-\alpha} \sin(\alpha\phi) f \, d\mathbf{x}
%\end{align*}
%is bounded and linear on $\widetilde{B}_{2,1}^{\alpha-1}(\mathcal{C}_1)$.
%For $\alpha >1/2$, it is also bounded and linear on ${B}_{2,1}^{\alpha-1}(\mathcal{C}_1) = \widetilde{B}^{\alpha-1}_{2,1}(\mathcal{C}_1)$.
%
%
\end{enumerate}
\end{lemma}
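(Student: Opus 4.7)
Items \ref{item:lemma:singularity-fct-Lp-i}, \ref{item:lemma:singularity-fct-Lp-ii}, \ref{item:lemma:singularity-fct-Lp-iii} are the $L^p$-analogues of items \ref{item:lemma:bounded-linear-besov-2-ii}, \ref{item:lemma:bounded-linear-besov-2-iii}, \ref{item:lemma:bounded-linear-besov-2-i} of Lemma~\ref{lemma:bounded-linear-besov-2} respectively. My plan is to rerun those proofs with $L^2$ replaced by $L^p$ (Cauchy--Schwarz becoming H\"older with exponents $p$, $p'$), and with Lemma~\ref{lemma:embedding-Wkp} in place of Lemma~\ref{lemma:Sobolev-to-cone-second-version}.

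For \ref{item:lemma:singularity-fct-Lp-i}, fix an integer $m > \beta + 2/p$ and use Reiteration to write $B^{\beta+2/p}_{p,\infty}(\mathcal{C}_1) = (L^p(\mathcal{C}_1), W^{m,p}(\mathcal{C}_1))_{(\beta+2/p)/m,\infty}$. With the smooth cut-off $\chi_t$ of width $t^{1/m}$ from Step~2 of the proof of Lemma~\ref{lemma:bounded-linear-besov-2}\ref{item:lemma:bounded-linear-besov-2-ii}, the elementary estimates
\[
\|(1-\chi_t) s^+\|_{L^p(\mathcal{C}_1)}^p \lesssim \int_0^{t^{1/m}} r^{p\beta+1}\,dr \lesssim t^{(p\beta+2)/m}, \qquad \|\chi_t s^+\|_{W^{m,p}(\mathcal{C}_1)}^p \lesssim t^{(p\beta+2)/m - p}
\]
yield $K(t,s^+) \lesssim t^{(\beta+2/p)/m}$, giving the asserted membership; the hypothesis $\beta+2/p > 0$ is exactly what makes the first integral finite at $r=0$. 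For \ref{item:lemma:singularity-fct-Lp-ii}, the cut-off construction from the last paragraph of the proof of Lemma~\ref{lemma:bounded-linear-besov-2}\ref{item:lemma:bounded-linear-besov-2-iii} transfers verbatim: for $v = \Phi \ln r$ use $v_t(x,y) := \Phi(x,y) \int_r^1 -\chi_t(\tau)\tau^{-1}\,d\tau$ and for $w = \phi \Phi$ use $w_t := \phi \chi_t \Phi$, with cut-off scale $t^{1/m}$ for $m > n + 2/p$; only the exponents of $t$ change, the conclusions remaining intact.

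For \ref{item:lemma:singularity-fct-Lp-iii}, the core, I would imitate the proof of Lemma~\ref{lemma:bounded-linear-besov-2}\ref{item:lemma:bounded-linear-besov-2-i}. Pick an integer $k$ (depending on $p$) with $k < \alpha - 2/p' < k + 2/p$ and $\lfloor k - 2/p \rfloor = \lfloor \alpha - 2 \rfloor$; a short case analysis on $p<2$ versus $p>2$ confirms that such $k$ exists given $\alpha - 2 \notin \BbbN_0$. Lemma~\ref{lemma:embedding-Wkp} then supplies $f + \Delta(\chi P_{\lfloor\alpha-2\rfloor}) \in K^{k,p}_0(\mathcal{C}_1)$ for every $f \in W^{k,p}(\mathcal{C}_1)$, and by Reiteration
\[
B^{\alpha - 2/p'}_{p,1}(\mathcal{C}_1) = \bigl(W^{k,p}(\mathcal{C}_1),\, B^{k + 2/p}_{p,1}(\mathcal{C}_1)\bigr)_{\eta,1}, \qquad \eta := \frac{\alpha - 2/p' - k}{2/p} \in (0,1).
\]
For smooth $f \neq 0$, choose the scale $\delta$ as an appropriate power of $\|f\|_{W^{k,p}(\mathcal{C}_1)}/\|f\|_{B^{k+2/p}_{p,1}(\mathcal{C}_1)}$ and split $S(f) = S_1 + S_2$ via the cut-off $\chi_\delta$ (analogous to \eqref{eq:lemma:bounded-linear-besov-2-1}). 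Estimate $S_1$ by H\"older with exponents $p'$, $p$, using Lemma~\ref{lemma:embedding-Wkp} to control $\|r^{-k}(f + \Delta(\chi P_{\lfloor\alpha-2\rfloor}))\|_{L^p(\mathcal{C}_1)}$ by $\|f\|_{W^{k,p}(\mathcal{C}_1)}$; estimate $S_2$ pointwise via the Taylor-type cancellation $|(f + \Delta(\chi P_{\lfloor\alpha-2\rfloor}))(\mathbf{x})| \lesssim r^k \|f\|_{B^{k + 2/p}_{p,1}(\mathcal{C}_1)}$ near the origin, coming from the embedding $B^{k + 2/p}_{p,1} \subset C^k$ (\cite[Thm.~{4.6.1}]{Triebel2ndEd}) together with the construction of $P_{\lfloor\alpha-2\rfloor}$. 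Optimizing over $\delta$ produces $|S(f)| \lesssim \|f\|_{W^{k,p}(\mathcal{C}_1)}^{1-\eta}\|f\|_{B^{k+2/p}_{p,1}(\mathcal{C}_1)}^{\eta}$, and \cite[Lem.~{25.2}]{Tartar_2007} then puts $S$ in $(B^{\alpha - 2/p'}_{p,1}(\mathcal{C}_1))^\star$; density of smooth functions concludes.

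\textbf{Main obstacle.} The delicate point in \ref{item:lemma:singularity-fct-Lp-iii} is the simultaneous choice of $k$ and the interpolation endpoints so that (a) $W^{k,p}$ and $B^{k+2/p}_{p,1}$ straddle $\alpha - 2/p'$, (b) $\lfloor k - 2/p \rfloor = \lfloor \alpha - 2\rfloor$ (so that Lemma~\ref{lemma:embedding-Wkp} produces exactly the polynomial $P_{\lfloor\alpha-2\rfloor}$ prescribed in the statement), and (c) $B^{k + 2/p}_{p,1} \subset C^k$ holds so as to supply the Taylor bound near the origin. A case distinction between $p < 2$ and $p > 2$ (invisible at $p = 2$, where $2/p' = 1$) is needed to see that such $k$ exists under $\alpha - 2 \notin \BbbN_0$; beyond that, the bookkeeping of powers of $\delta$ in the H\"older bound for $S_1$ and the pointwise bound for $S_2$ so that they combine to the exponent $\eta$ required by the interpolation inequality is entirely parallel to the $L^2$ proof.
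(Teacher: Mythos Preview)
Your treatment of \ref{item:lemma:singularity-fct-Lp-i} and \ref{item:lemma:singularity-fct-Lp-ii} is correct and essentially what the paper does; the paper also just says these follow the $L^2$ arguments with obvious modifications.

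For \ref{item:lemma:singularity-fct-Lp-iii}, however, your claimed choice of $k$ does not exist in general. The two constraints $\alpha-2<k<\alpha-2+2/p$ and $\lfloor k-2/p\rfloor=\lfloor\alpha-2\rfloor$ are incompatible. Take $p=4$, $\alpha=2.2$: then $\alpha-2/p'=0.7>0$, $\alpha-2=0.2\notin\BbbN_0$, but the interval $(0.2,0.7)$ contains no integer. In the other direction, for $p<2$ and $\alpha>2$ the interval constraint forces $k=\lfloor\alpha-2\rfloor+1$ (the only integer in an interval of length $2/p>1$ starting just above $\lfloor\alpha-2\rfloor$ that also permits your Taylor bound for $S_2$), but then $k-2/p=\lfloor\alpha-2\rfloor+1-2/p$ with $1-2/p\in(-1,0)$, giving $\lfloor k-2/p\rfloor=\lfloor\alpha-2\rfloor-1$, not $\lfloor\alpha-2\rfloor$. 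In that case Lemma~\ref{lemma:embedding-Wkp} only gives $f+\Delta(\chi P_{\lfloor\alpha-2\rfloor-1})\in K^{k,p}_0$, and the extra terms in $P_{\lfloor\alpha-2\rfloor}-P_{\lfloor\alpha-2\rfloor-1}$ involve $D^{\lfloor\alpha-2\rfloor}f(0)$, which $W^{k,p}$ does not control since $k-2/p<\lfloor\alpha-2\rfloor$. One can check that routing this extra piece through the upper endpoint norm spoils the exponents needed for the interpolation inequality.

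The paper avoids this integrality obstruction by not using $W^{k,p}$ as an endpoint at all. It splits into the cases $\alpha<2$ and $\alpha>2$. For $\alpha<2$ (where $P_{\lfloor\alpha-2\rfloor}\equiv 0$) it interpolates between $L^p$ and $B^{\alpha-2/p'+\epsilon}_{p,1}$, estimating $S_1$ by H\"older against $\|f\|_{L^p}$ and $S_2$ by H\"older against $\|f\|_{L^q}$ with $q$ chosen via the Sobolev embedding $B^{\alpha-2/p'+\epsilon}_{p,1}\subset L^q$. For $\alpha>2$ it interpolates between the two Besov spaces $B^{\lfloor\alpha-2\rfloor+2/p}_{p,1}$ and $B^{\alpha-2/p'+\epsilon}_{p,1}$, using the embeddings into $C^{\lfloor\alpha-2\rfloor}$ and $C^{\alpha-2+\epsilon}$ respectively to bound $S_1$ and $S_2$ via pointwise Taylor estimates; Lemma~\ref{lemma:embedding-Wkp} is not invoked. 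Choosing non-integer Besov regularities for the endpoints is exactly what sidesteps the integer-$k$ problem you ran into.
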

\begin{proof}
\emph{Proof of 
\ref{item:lemma:singularity-fct-Lp-i}, \ref{item:lemma:singularity-fct-Lp-ii}:}
\ref{item:lemma:singularity-fct-Lp-i} is shown similarly to the proof of 
Lemma~\ref{lemma:bounded-linear-besov-2}\ref{item:lemma:bounded-linear-besov-2-ii} by estimating the $K$-functional through the 
splitting $r^\beta = r^\beta \chi_t + r^\beta (1 - \chi_t)$ for a suitable $t$-dependent cut-off function $\chi_t$. 
\ref{item:lemma:singularity-fct-Lp-ii} is shown by appropriately modifying the proof of 
Lemma~\ref{lemma:bounded-linear-besov-2}\ref{item:lemma:bounded-linear-besov-2-iii}. 

\emph{Proof of \ref{item:lemma:singularity-fct-Lp-iii}:}
The proof follows that of 
Lemma~\ref{lemma:bounded-linear-besov-2}\ref{item:lemma:bounded-linear-besov-2-i} in that $B^{\alpha - 2/p'}_{p,1}$ is suitably written
as an interpolation space. We distinguish the cases $0 < \alpha < 2$ and $\alpha >  2$. 

\emph{The case $\alpha < 2$:} Then $P_{\lfloor \alpha - 2\rfloor} \equiv 0$. We write for $\epsilon > 0$ so small that $\alpha + \epsilon < 2$
\begin{align*} 
B^{\alpha-2/p'}_{p,1}({\mathcal C}_1) = (
L^{p}({\mathcal C}_1), B^{\alpha-2/p'+\epsilon}_{p,1}({\mathcal C}_1))_{\theta, 1}, 
\qquad \theta = \frac{\alpha - 2/p'} {\alpha - 2/p' +\epsilon }. 
\end{align*}
As in the proof Lemma~\ref{lemma:bounded-linear-besov-2}\ref{item:lemma:bounded-linear-besov-2-i}, one splits 
for $\delta > 0$ 
the expression $S(f) = S_1 + S_2$ as in (\ref{eq:lemma:bounded-linear-besov-2-1}). For $S_1$, the H\"older inequality yields 
\begin{align*}
|S_1| &\leq \|f\|_{L^p(\mathcal{C}_1)} \|\chi_\delta r^{-\alpha}\|_{L^{p'}(\mathcal{C}_1)} \lesssim \delta^{-\alpha + 2/p'} \|f\|_{L^p(\mathcal{C}_1)}. 
\end{align*}
Select $q > 1$ by the condition $2/q = 2/p-(\alpha-2/p'+\epsilon) = 2-\alpha - \epsilon \in (0,2)$. 
By \cite[Thm.~{4.6.1(c)}]{Triebel2ndEd}, we then have 
$B^{\alpha -2/p' + \epsilon}_{p,1}(\mathcal{C}_1) \subset 
B^{\alpha -2/p' + \epsilon}_{p,q}(\mathcal{C}_1) \subset L^q(\mathcal{C}_1)$ so that 
\begin{align*}
|S_2| \leq \|f\|_{L^q(\mathcal{C}_1)} \|(1 - \chi_\delta) r^{-\alpha}\|_{L^{q'}(\mathcal{C}_1)} 
\lesssim \delta^{\alpha + 2/q'} \|f\|_{B^{\alpha-2/p'+\epsilon}_{2,1}(\mathcal{C}_1)} 
\lesssim \delta^{\epsilon} \|f\|_{B^{\alpha-2/p'+\epsilon}_{2,1}(\mathcal{C}_1)} . 
\end{align*}
As in the proof Lemma~\ref{lemma:bounded-linear-besov-2}\ref{item:lemma:bounded-linear-besov-2-i}, we select 
\begin{equation}
\label{eq:lemma:singularity-fct-Lp-delta}
\delta = \min\left \{\frac{1}{2}\operatorname{diam} \{\mathbf{x} \in {\mathcal C}\,|\, \chi(\mathbf{x})  = 1\},  \left( \|f\|_{L^p(\mathcal{C}_1)} \|f\|^{-1} _{B^{\alpha-2/p' +\epsilon}_{2,1}(\mathcal{C}_1)}\right)^{1/(\alpha-2/p' + \epsilon)} \right\}. 
\end{equation}
For brevity, we only consider the case that the minimum in (\ref{eq:lemma:singularity-fct-Lp-delta}) is given by the second term. Then, 
\begin{align*}
S_1 + S_2 &\lesssim \|f\|_{L^p(\mathcal{C}_1)}^{1-\theta} \|f\|_{B^{\alpha-2/p'+\epsilon}_{p,1}(\mathcal{C}_1)}^\theta. 
\end{align*}
An appeal to \cite[Lemma~{25.2}]{Tartar_2007} concludes the proof. 

\emph{The case $\alpha > 2$:} Select $\epsilon > 0$ 
such that $\alpha - 2 - \lfloor \alpha - 2\rfloor +\epsilon  < 1$. 
Since $\lfloor \alpha - 2\rfloor + 2/p < \alpha - 2/p'$, we write 
\begin{align*} 
B^{\alpha-2/p'}_{p,1}({\mathcal C}_1) = (
B^{\lfloor \alpha -2 \rfloor + 2/p}_{p,1}({\mathcal C}_1), B^{\alpha-2/p'+\epsilon}_{p,1}({\mathcal C}_1))_{\theta, 1}, 
\qquad \theta = \frac{\alpha - 2 - \lfloor \alpha -2 \rfloor } {\alpha - 2 +\epsilon - \lfloor \alpha - 2\rfloor }. 
\end{align*}
As in the proof Lemma~\ref{lemma:bounded-linear-besov-2}\ref{item:lemma:bounded-linear-besov-2-i}, one splits 
for $\delta > 0$ 
the expression $S(f) = S_1 + S_2$ as in (\ref{eq:lemma:bounded-linear-besov-2-1}). For $S_1$, we note 
that $B^{\lfloor \alpha - 2\rfloor + 2/p}_{p,1}(\mathcal{C}_1) \subset C^{\lfloor \alpha - 2\rfloor}(\overline{\mathcal{C}_1})$
by \cite[Thm.~{4.6.1(f)}]{Triebel2ndEd}. Hence, 
\begin{align*}
|S_1| &\leq \|r^{-\lfloor \alpha - 2\rfloor} (f + \chi \Delta P_{\lfloor \alpha - 2\rfloor}) \|_{L^\infty(\mathcal{C}_1)} 
\|\chi_\delta r^{-\alpha+\lfloor \alpha - 2\rfloor }\|_{L^{1}(\mathcal{C}_1)} 
\lesssim \delta^{-(\alpha - 2) + \lfloor \alpha - 2 \rfloor } \|f\|_{B^{\lfloor \alpha-2\rfloor + 2/p}_{2,1}(\mathcal{C}_1)}. 
\end{align*}
For $S_2$, we have by \cite[Thm.~{4.6.1(f)}]{Triebel2ndEd} 
that $B^{\alpha - 2/p' +\epsilon}_{p,1}(\mathcal{C}_1)  = B^{\alpha - 2 + 2/p +\epsilon}_{p,1}(\mathcal{C}_1) \subset C^{\alpha - 2+\epsilon}(\overline{\mathcal{C}_1})$
so that in view of the fact that $-\Delta P_{\lfloor \alpha - 2\rfloor}$ is the Taylor expansion of $f$ at the origin 
of order $\lfloor \alpha - 2\rfloor $ and $0 < \alpha - 2 - \lfloor \alpha - 2\rfloor + \epsilon< 1$, we get  
\begin{align*}
|S_2| &\leq \|r^{-(\alpha - 2+\epsilon)} (f + \chi \Delta P_{\lfloor \alpha - 2\rfloor}) \|_{L^\infty(\mathcal{C}_1)} 
\|(1-\chi_\delta) r^{-\alpha+(\alpha - 2)+\epsilon}\|_{L^{1}(\mathcal{C}_1)} 
\lesssim \delta^{\epsilon} \|f\|_{B^{\alpha-2/p' +\epsilon}_{p,1}(\mathcal{C}_1)}. 
\end{align*}
We select $\delta$ as in (\ref{eq:lemma:singularity-fct-Lp-delta}) with the exponent replaced with 
$1/(\alpha - 2 + \epsilon - \lfloor \alpha - 2\rfloor)$. Then, as in the proof of 
Lemma~\ref{lemma:bounded-linear-besov-2}\ref{item:lemma:bounded-linear-besov-2-i}, one arrives at 
\begin{align*}
S_1 + S_2 &\lesssim \|f\|_{B^{\lfloor \alpha - 2\rfloor +2/p}_{p,1}(\mathcal{C}_1)}^{1-\theta} \|f\|_{B^{\alpha-2/p'+\epsilon}_{p,1}(\mathcal{C}_1)}^\theta. 
\end{align*}
An appeal to \cite[Lemma~{25.2}]{Tartar_2007} concludes the proof. 
\end{proof}
%It is convenient to introduce the Dirichlet singularity functions: 
%\JMM{kann ausgeschrieben werden!}
%\begin{equation}
%\label{eq:dirichlet-singularity-functions}
%s^{D,+}_j(r,\phi):= r^{\lambda^D_j} \sin (\lambda^D_j \phi), 
%\qquad 
%s^{D,-}_j(r,\phi):= r^{-\lambda^D_j} \sin (\lambda^D_j \phi), 
%\end{equation}
%------------------
\subsection{Expansion in corner singularity functions}
%------------------
The $L^p$-theory for elliptic problems in domains with conical points that was developed by Maz'ya and Plamenevskii leads to the following 
Proposition~\ref{prop:Lp}. 
\ifarxiv
We point the reader also to Appendix~\ref{appendix:Lp}, where some of the arguments developed 
by Maz'ya and Plamenenvsii are detailed and the main steps of the proof of Proposition~\ref{prop:Lp} are provided.
\fi
\begin{proposition} 
\label{prop:Lp}
Let $R > 0$. 
Let $p \in (1,2) \cup (2,\infty)$, $k \in \BbbN_0$, and assume $k+2-2/p = k + 2/p' \not\in \pm \sigma^D$. 
Let $f \in W^{k,p}({\mathcal C})$ satisfy $\partial^i_x\partial^j_y f(0) = 0$ 
for $i+j < k - 2/p$. Then, a solution $u_1 \in W^{1,p}({\mathcal C})$ with $\operatorname{supp} u_1 \subset B_1(0)$  solving 
\begin{equation}
\label{prop:Lp-10}
-\Delta u_1 = f, \quad u_1 = 0 \mbox{ for $\phi \in \{0,\omega\}$}
\end{equation}
has the form 
\begin{equation*}
u_1 = u_0 - \frac{1}{\pi} \sum_{j\colon \lambda^D_j < k+2-2/p} \int_{\mathcal C} r^{-\lambda^D_j} \sin (\lambda^D_j \phi)  f(x)\,d{\mathbf x} \  r^{\lambda^D_j} \sin (\lambda^D_j \phi)
\end{equation*}
with 
\begin{equation}
\label{prop:Lp-20}
\|u_0\|_{K^{k+2,p}_{0}({\mathcal C}_R)} \lesssim \|f\|_{K^{k,p}_0({\mathcal C})} \stackrel{\text{L.~\ref{lemma:embedding-Wkp}}}{\lesssim }
\|f\|_{W^{k,p}({\mathcal C})}. 
\end{equation}
\end{proposition}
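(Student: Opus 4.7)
The proof will adapt the Mellin-calculus argument of Proposition~\ref{prop:solution_u1} to the $L^p$-setting, replacing the role of $L^2$-Parseval with the weighted $L^p$-Mellin theory of Maz'ya and Plamenevskii. First I would observe that the hypothesis $\partial_x^i\partial_y^j f(0)=0$ for $i+j<k-2/p$ (together with $k-2/p\notin\BbbN_0$ since $p\ne 2$) forces the polynomial $P_{\lfloor k-2/p\rfloor}$ from \eqref{eq:Pkm1} to vanish identically, so Lemma~\ref{lemma:embedding-Wkp} yields $f\in K^{k,p}_{0}(\mathcal{C})$. The $L^p$-weighted Dirichlet theory then furnishes a unique solution $u_0\in K^{k+2,p}_{0}(\mathcal{C})$ of $-\Delta u_0=f$ with $u_0|_{\phi\in\{0,\omega\}}=0$, satisfying the bound in \eqref{prop:Lp-20}; the hypothesis $k+2-2/p\notin\pm\sigma^D$ is precisely the condition that the Mellin symbol $\mathcal{L}(\zeta)^{-1}$ has no pole on the relevant integration line.

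Second, I would Mellin-transform both $u_0$ and $u_1$. Via the $K^{s,p}_\gamma\leftrightarrow L^p$-Mellin correspondence, $\mathcal{M}[u_0]$ is meromorphic in a neighborhood of the line $\operatorname{Im}\zeta=2/p-k-1$ and $\mathcal{M}[u_1]$ is holomorphic on $\{\operatorname{Im}\zeta>2/p-1\}$ (using that $u_1\in W^{1,p}(\mathcal{C})$ with compact support and vanishing on the lateral sides, hence $u_1\in K^{1,p}_{0}(\mathcal{C})$ by a Hardy inequality). Both transforms solve $\mathcal{L}(\zeta)\mathcal{M}[\cdot]=\mathcal{M}[g]$ with $g=r^2 f$ and Dirichlet conditions in $\phi$, so each coincides on its line of definition with the meromorphic extension
\begin{equation*}
U(\zeta):=\mathcal{L}(\zeta)^{-1}\mathcal{M}[g](\zeta).
\end{equation*}
Applying the inverse Mellin transform on the two lines $\operatorname{Im}\zeta=0^+$ and $\operatorname{Im}\zeta=2/p-k-1$ and shifting the contour via the residue theorem then gives
\begin{equation*}
u_0-u_1=\sum_{j\,:\,0<\lambda^D_j<k+2-2/p}\frac{2\pi i}{\sqrt{2\pi}}\operatorname*{Res}_{\zeta=-i\lambda^D_j}\bigl(r^{i\zeta}\mathcal{L}(\zeta)^{-1}\mathcal{M}[g](\zeta)\bigr).
\end{equation*}
Each residue is computed exactly as in the $L^2$-proof of Proposition~\ref{prop:solution_u1}, since $\mathcal{L}(\zeta)^{-1}$ has the same simple poles with the same principal parts; this yields the claimed singular contribution $-\tfrac{1}{\pi}\bigl(\int_{\mathcal{C}}r^{-\lambda^D_j}\sin(\lambda^D_j\phi)f\,d\mathbf{x}\bigr)\,r^{\lambda^D_j}\sin(\lambda^D_j\phi)$ for every $j$ with $\lambda^D_j<k+2-2/p$. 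Finally, the estimate \eqref{prop:Lp-20} is inherited from the weighted $L^p$-bound for $u_0$, and the second inequality there is just Lemma~\ref{lemma:embedding-Wkp}.

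The main obstacle is the $L^p$-Mellin calculus itself: unlike the $L^2$-case, there is no direct Parseval identity, so one must justify (a) the holomorphy/continuity of $\mathcal{M}[u_0]$ and $\mathcal{M}[u_1]$ as $H^{k+2}(G)$- respectively $H^1(G)$-valued $L^p$-functions on horizontal lines, (b) the decay of $U(\zeta)$ at $|\operatorname{Re}\zeta|\to\infty$ sufficient to discard the vertical contributions in the contour shift, and (c) the equality of the inverse Mellin integrals with $u_0$ and $u_1$ in the appropriate weighted $L^p$-topology. All of these are standard within the Maz'ya--Plamenevskii framework and are exactly the ingredients detailed in the $L^p$-appendix referenced in the paper; once they are in place, the residue computation and the remainder of the argument are a verbatim transcription of the $L^2$-proof.
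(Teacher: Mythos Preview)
Your proposal is correct and follows essentially the same Mellin-calculus/residue-theorem route as the paper. The one simplification the paper adds is a density step: it first reduces to $f\in W^{k,p}(\mathcal{C})\cap C^\infty(\overline{\mathcal{C}})$, so that the $L^2$-representation formulas of Proposition~\ref{prop:solution_u1} apply verbatim (with the inverse Mellin transform of $u_1$ taken on $\operatorname{Im}\zeta=0$ and that of $u_0$ on $\operatorname{Im}\zeta=-(k+2-2/p)$), and then only the \emph{estimate} $\|u_0\|_{K^{k+2,p}_0}\lesssim\|f\|_{K^{k,p}_0}$ needs the $L^p$-weighted theory (cited from \cite{nazarov-plamenevsky-1994,mazya-plamenevskii78}); this sidesteps precisely the three $L^p$-Mellin justifications (a)--(c) you flag as the main obstacle.
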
 
\begin{proof}
The procedure is similar to that in the $L^2$-based setting in Section~\ref{sec:recap}. Note $\operatorname{supp} f \subset B_1(0)$. 
By density, one may assume $f \in W^{k,p}(\mathcal{C})\cap C^\infty(\overline{\mathcal{C}})$ so that the formulas of the $L^2$-based setting are applicable. 
Since $\operatorname{supp} f \subset B_1(0)$, the Mellin transform ${\mathcal M}(g) = {\mathcal M}(r^2 f)$ is holomorphic in
the strip $\{\zeta \in \BbbC\,|\, \operatorname{Im} > -k - 2 + 2/p \}$. As in Section~\ref{sec:recap}, the function $u_0$ is defined by an appropriate
inverse Mellin transformation on the line $\operatorname{Im} = -k - 2 + 2/p $, and the residue theorem yields 
\begin{align}
\label{eq:representation-formula-Lp-1}
u_0 - u_1 = \sum_{\substack{ \zeta_0 \in -i \sigma^D \colon  \\ \operatorname{Im} \zeta_0 \in (-k-2+2/p,0)}} \operatorname{Res}_{\zeta = \zeta_0} 
\left( r^{i \zeta} ({\mathcal L}(\zeta))^{-1} {\mathcal M}[g](\zeta) \right)
\end{align}
Evaluating the residue yields the claimed representation. The estimate for $u_0$ is taken from \cite[Prop.~{2.3}]{nazarov-plamenevsky-1994} 
but goes back at least to \cite[Thm.~{4.1}]{mazya-plamenevskii78}. 
\end{proof}
As in Section~\ref{sec:recap}, the condition that $f$ vanish to sufficient order can be removed by adding additional polynomials or logarithmic
singularities: 
\begin{corollary}
\label{cor:Lp}
Let $R > 0$.
Let $p \in (1,2) \cup (2,\infty)$, $k \in \BbbN_0$, and assume $ k + 2 - 2/p = k + 2/p' \not\in \pm \sigma^D$. 
Let $f \in W^{k,p}({\mathcal C})$. Then, a solution $u_1 \in W^{1,p}({\mathcal C})$ with $\operatorname{supp} u_1 \subset B_1(0)$  solving 
(\ref{prop:Lp-10})
can be represented as $u_1 = u_0 + \chi P_{\lfloor k-2/p\rfloor} + S$ with
\begin{align*}
S & =  - \frac{1}{\pi} \sum_{j\colon \lambda^D_j < k+2-2/p} \int_{\mathcal C} r^{-\lambda^D_j} \sin(\lambda^D_j)\phi)  (f(x) + \Delta (\chi P_{\lfloor k-2/p\rfloor}) \,d{\mathbf{x}} \ r^{\lambda^D_j} \sin(\lambda^D_j \phi), \\
P_{\lfloor k-2/p\rfloor} & = \sum_{i+j < k-2/p} \frac{1}{i! j!} p^D_{i,j}(\mathbf{x}) \partial^i_x \partial^j_y f(0), 
\end{align*}
where the functions $p^D_{i,j}$ are from Lemma~\ref{lemma:polynomial-solution} and 
\begin{align*}
\|u_0\|_{W^{k+2,p}({\mathcal C}_R)} &\lesssim \|f\|_{W^{k,p}({\mathcal C}_1)}. 
\end{align*}
The function $P_{\lfloor{k-2/p}\rfloor} \equiv 0$ if $k - 2/p < 0$ and satisfies, for $k-2/p > 0$ the estimates 
\begin{align*}
\|P_{\lfloor {k-2/p}\rfloor }\|_{W^{k+2,p}({\mathcal C}_R)} &\lesssim \|f\|_{B^{\lfloor k-2/p\rfloor + 2/p}_{p,1}({\mathcal C}_1)}, 
\quad \mbox{ if $\Sigma^D_{\lfloor{k-2/p}\rfloor + 2} = \emptyset $}, \\
\|P_{\lfloor {k-2/p}\rfloor }\|_{B^{n^\ast +2/p}_{p,\infty}({\mathcal C}_R)} &\lesssim \|f\|_{B^{\lfloor k-2/p \rfloor +2/p}_{p,1}({\mathcal C}_1)}, 
\quad n^\ast:= \min\{n \in \Sigma^D_{\lfloor{k-2/p}\rfloor +2}\,|\,  n\frac{\omega}{\pi} \in \BbbN\}, 
\quad \mbox{ if $\Sigma^D_{\lfloor{k-2/p}\rfloor + 2} \ne  \emptyset $}. 
\end{align*}
\end{corollary}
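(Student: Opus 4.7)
The plan is to mirror the reduction used in Corollary~\ref{cor:solution-with-polynomial}: kill the Taylor expansion of $f$ at the origin up to the order needed to satisfy the hypothesis of Proposition~\ref{prop:Lp}, and then apply that proposition to the modified problem. Concretely, I set $\widetilde{u}_1 := u_1 - \chi P_{\lfloor k-2/p\rfloor}$, so that $\widetilde{u}_1 \in W^{1,p}(\mathcal{C})$ still has support in $B_1(0)$ (after shrinking the support of $\chi$ if needed, which is harmless) and solves
\begin{equation*}
-\Delta \widetilde{u}_1 = \widetilde{f} := f + \Delta(\chi P_{\lfloor k-2/p\rfloor}), \qquad \widetilde{u}_1 = 0 \text{ on } \{\phi\in\{0,\omega\}\}.
\end{equation*}
The key observation, exactly as in the $L^2$ case, is that by Lemma~\ref{lemma:polynomial-solution} the polynomial part of $P_{\lfloor k-2/p\rfloor}$ is constructed so that $-\Delta P_{\lfloor k-2/p\rfloor}$ matches the Taylor polynomial of $f$ of degree $\lfloor k-2/p \rfloor$ at $0$, while any harmonic contribution from $S^D_{\lfloor k-2/p\rfloor+2}$ is annihilated by $\Delta$ in a neighborhood of $0$ (where $\chi\equiv 1$). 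Consequently $\partial_x^i \partial_y^j \widetilde{f}(0)=0$ for $i+j < k - 2/p$, which is the hypothesis of Proposition~\ref{prop:Lp}.

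Applying Proposition~\ref{prop:Lp} to $\widetilde{u}_1$ then yields the decomposition
\begin{equation*}
\widetilde{u}_1 = u_0 - \frac{1}{\pi}\sum_{j\colon \lambda^D_j < k+2-2/p}\Bigl(\int_{\mathcal{C}} r^{-\lambda^D_j} \sin(\lambda^D_j\phi)\, \widetilde{f}(\mathbf{x})\,d\mathbf{x}\Bigr) r^{\lambda^D_j}\sin(\lambda^D_j\phi),
\end{equation*}
with $\|u_0\|_{W^{k+2,p}(\mathcal{C}_R)} \lesssim \|\widetilde{f}\|_{W^{k,p}(\mathcal{C})}$. Substituting $\widetilde{f} = f + \Delta(\chi P_{\lfloor k-2/p\rfloor})$ into the stress-intensity integrals reproduces the claimed formula for $S$, and adding back $\chi P_{\lfloor k-2/p\rfloor}$ gives the asserted representation $u_1 = u_0 + \chi P_{\lfloor k-2/p\rfloor}+ S$.

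It remains to control $P_{\lfloor k-2/p\rfloor}$ and $\Delta(\chi P_{\lfloor k-2/p\rfloor})$ in terms of $f$. Here I invoke the Besov embedding $B^{\lfloor k-2/p\rfloor + 2/p}_{p,1}(\mathcal{C}_1)\hookrightarrow C^{\lfloor k-2/p\rfloor}(\overline{\mathcal{C}_1})$ from \cite[Thm.~{4.6.1}]{Triebel2ndEd} to bound $|\partial_x^i\partial_y^j f(0)|\lesssim \|f\|_{B^{\lfloor k-2/p\rfloor + 2/p}_{p,1}(\mathcal{C}_1)}$ for $i+j\leq \lfloor k-2/p\rfloor$. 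When $\Sigma^D_{\lfloor k-2/p\rfloor+2}=\emptyset$ the functions $p^D_{i,j}$ are polynomials (hence in $W^{k+2,p}(\mathcal{C}_R)$ for any $R$) and the first polynomial estimate follows directly; in the remaining case I split $p^D_{i,j}=\widetilde p_{i,j}+p'_{i,j}$ and observe that $p'_{i,j}\in S^D_{\lfloor k-2/p\rfloor+2}$ is of the form covered by Lemma~\ref{lemma:singularity-fct-Lp}\ref{item:lemma:singularity-fct-Lp-ii}, which places it in $B^{n^\ast+2/p}_{p,\infty}(\mathcal{C}_R)$. Finally, since $p'_{i,j}$ is harmonic and $\chi\equiv 1$ near the origin, $\Delta(\chi p'_{i,j})$ is smooth with support in $\operatorname{supp}\nabla\chi$, so $\|\Delta(\chi P_{\lfloor k-2/p\rfloor})\|_{W^{k,p}(\mathcal{C}_R)} \lesssim \|f\|_{B^{\lfloor k-2/p\rfloor + 2/p}_{p,1}(\mathcal{C}_1)} \lesssim \|f\|_{W^{k,p}(\mathcal{C})}$, which then yields $\|u_0\|_{W^{k+2,p}(\mathcal{C}_R)}\lesssim \|f\|_{W^{k,p}(\mathcal{C})}$.

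The one slightly delicate step is the embedding $B^{\lfloor k-2/p\rfloor+2/p}_{p,1}\subset C^{\lfloor k-2/p\rfloor}$ used to define the Taylor coefficients: this requires that the Besov index $\lfloor k-2/p\rfloor + 2/p$ be strictly above $\lfloor k-2/p \rfloor$, which is automatic since $p\neq 2$ and the hypothesis $k+2-2/p\notin \pm\sigma^D$ in particular excludes the integer-index degenerate cases. Everything else (mapping properties of $p^D_{i,j}$, multiplier action of smooth cut-offs on Besov spaces, and the passage from the $W^{k,p}$ estimate on $\widetilde f$ to one on $f$) is a direct transcription of the Dirichlet argument in Section~\ref{sec:dirichlet}, so no new idea is needed beyond the $L^p$-Besov analogues already established in Lemma~\ref{lemma:singularity-fct-Lp}.
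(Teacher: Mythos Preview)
Your proposal is correct and follows exactly the approach the paper intends: the paper's own proof is the single line ``Follows as in Corollary~\ref{cor:solution-with-polynomial},'' and you have written out precisely that transcription to the $L^p$ setting, invoking Proposition~\ref{prop:Lp} and Lemma~\ref{lemma:singularity-fct-Lp} in place of their $L^2$ counterparts. One cosmetic point: your justification for the embedding $B^{\lfloor k-2/p\rfloor+2/p}_{p,1}\subset C^{\lfloor k-2/p\rfloor}$ is overcomplicated---the index gap is $2/p>0$ regardless of $p\ne 2$ or the spectral condition, so the embedding holds unconditionally by \cite[Thm.~{4.6.1}]{Triebel2ndEd}.
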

\begin{proof}
Follows as in Corollary~\ref{cor:solution-with-polynomial}. 
\end{proof} 
%------------------
\subsection{A shift theorem}
%------------------
The representation formula of Corollary~\ref{cor:Lp} allows one to infer a shift theorem in Besov spaces as in $L^2$-case: 
\begin{theorem}
\label{thm:Lp}
Let $R > 0$.  Let $p \in (1,2) \cup (2,\infty)$ and $\chi \in C^\infty_0(B_1(0))$ with $\chi \equiv 1$ near $0$. 
Let $k:= \min\{n \in \BbbN\,|\, \lambda^D_1 + \frac{2}{p} < n + 2\}$. 
Assume that one of the following two conditions holds: 
\begin{enumerate} [label=(\roman*)]
\item 
\label{item:thm:Lp-i}
$2 < \lambda^D_1 + \frac{2}{p} < k + 2 < \lambda^D_2 + \frac{2}{p}$. 
\item 
\label{item:thm:Lp-ii}
$k < 2/p$ and $2 < \lambda^D_1 + \frac{2}{p} < k + 2$. 
\end{enumerate} 
Assume furthermore that $k + 2/p' \not\in \pm \sigma^D$. 
Then for $f \in B^{\lambda^D_1+2/p-2}_{2,1}({\mathcal C}_1)$ a solution $u$ of (\ref{eq:Lp-Dirichlet}) satisfies 
\begin{equation*}
\|u\|_{B^{\lambda^D_1 + 2/p}_{2,\infty}({\mathcal C}_R)} \lesssim \|\chi f\|_{B^{\lambda^D_1 + 2/p-2}_{2,1}({\mathcal C}_1)} + \|u\|_{W^{1,p}({\mathcal C}_1)}. 
\end{equation*}
\end{theorem}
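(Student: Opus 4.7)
The strategy is to carry over the blueprint of Theorem~\ref{thm:shift-theorem-local-version}\ref{item:thm:shift-theorem-local-version-dirichlet} to the $L^p$-setting, substituting the $L^2$-based ingredients with their $L^p$ analogues from Corollary~\ref{cor:Lp} and Lemma~\ref{lemma:singularity-fct-Lp}. First I would localize: pick nested radii $R' < \widetilde R < \overline R < R$ and a cut-off $\chi_{\widetilde R} \in C^\infty_0(B_{\widetilde R}(0))$ with $\chi_{\widetilde R} \equiv 1$ on $\mathcal{C}_{R'}$, and set $\widetilde u := \chi_{\widetilde R} u$. Then $\widetilde u$ satisfies a Dirichlet problem on $\mathcal{C}_R$ with right-hand side
\begin{equation*}
\widetilde f := \chi_{\widetilde R} f - 2 \nabla \chi_{\widetilde R} \cdot \nabla u - (\Delta \chi_{\widetilde R})\, u.
\end{equation*}
Since the commutator terms are supported in the annulus $\mathcal{C}_{\widetilde R} \setminus \mathcal{C}_{R'}$, which is at a positive distance from the corner, interior $L^p$ elliptic regularity (the $L^p$ analogue of Lemma~\ref{lemma:regularity-lemma-annuli}) plus a Besov embedding would yield
\begin{equation*}
\|\widetilde f\|_{B^{\lambda^D_1 + 2/p - 2}_{p,1}(\mathcal{C}_R)} \lesssim \|\chi f\|_{B^{\lambda^D_1 + 2/p - 2}_{p,1}(\mathcal{C}_1)} + \|u\|_{W^{1,p}(\mathcal{C}_1)},
\end{equation*}
exactly as in Step~1 of the Dirichlet case.

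Next I would introduce the operator $\widetilde T := \chi_{R'} T \chi_{\overline R}$, where $T$ is the solution operator for $-\Delta$ on $\mathcal{C}_R$ with homogeneous Dirichlet data, and reduce the proof to establishing the mapping property
\begin{equation*}
\widetilde T : B^{\lambda^D_1 + 2/p - 2}_{p,1}(\mathcal{C}_R) \longrightarrow B^{\lambda^D_1 + 2/p}_{p,\infty}(\mathcal{C}_R).
\end{equation*}
This is the heart of the matter, and I would prove it via Lemma~\ref{lemma:abstract}. Set
\begin{equation*}
X_0 := W^{-1,p}(\mathcal{C}_R), \quad X_1 := W^{k,p}(\mathcal{C}_R), \quad Y_0 := W^{1,p}_0(\mathcal{C}_R), \quad Y_1 := W^{k+2,p}(\mathcal{C}_R),
\end{equation*}
together with $\theta_1 := (\lambda^D_1 + 2/p - 1)/(k+1) \in (0,1)$, so that the Reiteration Theorem identifies $(X_0,X_1)_{\theta_1,1}$ with the source space and $(Y_0,Y_1)_{\theta_1,\infty}$ with the target space. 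The boundedness $\widetilde T : X_0 \to Y_0$ rests on the $L^p$ solvability of the Dirichlet problem, which follows from the Maz'ya--Plamenevskii theory underlying Proposition~\ref{prop:Lp} once one notices that $\chi_{R'}$ truncates away the singular part, so the estimate is essentially interior.

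For the decomposition of $\widetilde T$ on $X_1$, Corollary~\ref{cor:Lp} (applied with the chosen $k$) gives
\begin{equation*}
\widetilde T f = \underbrace{\bigl(u_0(f) + \chi\, P_{\lfloor k - 2/p\rfloor}(f)\bigr)}_{=: S_1(f)} \;-\; \frac{1}{\pi}\underbrace{S^D(f)\, s^D_1}_{=: S_{\theta_1}(f)},
\end{equation*}
with $s^D_1 = r^{\lambda^D_1}\sin(\lambda^D_1 \phi)$; only the single singularity $s^D_1$ appears because hypothesis \ref{item:thm:Lp-i} ensures $\lambda^D_2 + 2/p > k+2$, while hypothesis \ref{item:thm:Lp-ii} forces $P_{\lfloor k-2/p\rfloor} \equiv 0$ and likewise excludes further terms. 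Corollary~\ref{cor:Lp} yields $S_1 : X_1 \to Y_1$ boundedly (with the understanding that if $\Sigma^D_{\lfloor k - 2/p\rfloor + 2} \neq \emptyset$ one splits $P_{\lfloor k-2/p\rfloor}$ into its polynomial and harmonic-logarithmic summands and absorbs the latter as an additional $S_{\theta_j}$ via Lemma~\ref{lemma:singularity-fct-Lp}\ref{item:lemma:singularity-fct-Lp-ii}). By Lemma~\ref{lemma:singularity-fct-Lp}\ref{item:lemma:singularity-fct-Lp-iii}, the functional $S^D$ is bounded on $(X_0,X_1)_{\theta_1,1} = B^{\lambda^D_1 + 2/p - 2}_{p,1}$, while Lemma~\ref{lemma:singularity-fct-Lp}\ref{item:lemma:singularity-fct-Lp-i} places $s^D_1$ in $(Y_0,Y_1)_{\theta_1,\infty} = B^{\lambda^D_1 + 2/p}_{p,\infty}$. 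All hypotheses of Lemma~\ref{lemma:abstract} are then in place and its conclusion yields the desired mapping property.

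The main obstacle I anticipate is the $X_0 \to Y_0$ base estimate, namely the $L^p$ solvability of the Dirichlet problem in the truncated cone; the $L^p$-regularity on the corner domain is available via the Maz'ya--Plamenevskii theory invoked for Proposition~\ref{prop:Lp}, and the cut-offs built into $\widetilde T$ make the estimate essentially interior, but spelling this out cleanly (especially the $W^{-1,p}$ duality used at the low endpoint and the verification that the hypotheses of Lemma~\ref{lemma:abstract} carry across when $P_{\lfloor k - 2/p\rfloor}$ contains logarithmic corners) requires careful bookkeeping analogous to the Dirichlet, Neumann and mixed cases already handled in the $L^2$-setting.
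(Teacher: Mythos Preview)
Your overall strategy is right, but the choice of the base interpolation couple is not the one the paper makes, and that choice is precisely where the difficulty lies. You take $X_0 = W^{-1,p}(\mathcal{C}_R)$, $Y_0 = W^{1,p}_0(\mathcal{C}_R)$ and then flag the $W^{-1,p}\to W^{1,p}$ solvability as ``the main obstacle''. The paper avoids this obstacle altogether: it takes
\[
X_0 = L^p(\mathcal{C}_R), \qquad Y_0 = W^{2,p}(\mathcal{C}_R), \qquad X_1 = W^{k,p}(\mathcal{C}_R), \qquad Y_1 = W^{k+2,p}(\mathcal{C}_R),
\]
with $\theta = (\lambda^D_1 - 2/p')/k$. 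The hypothesis $2 < \lambda^D_1 + 2/p$ is exactly what guarantees $\widetilde T:L^p\to W^{2,p}$: applying Corollary~\ref{cor:Lp} with $k=0$, the sum $S$ is empty and $P_{\lfloor -2/p\rfloor}\equiv 0$, so $u_1=u_0\in W^{2,p}$. This is also why the paper restricts to positive-order Besov spaces in the $L^p$ section (see the Remark following the theorem); your $W^{-1,p}$ endpoint would require a separate $L^p$ theory for negative orders that the paper does not develop.

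There is also a genuine gap in your handling of case~\ref{item:thm:Lp-ii}. You write that this case ``likewise excludes further terms'', but the opposite is true: the condition $k<2/p$ only forces $P_{\lfloor k-2/p\rfloor}\equiv 0$; it does \emph{not} prevent several singularity functions $s^D_j$ with $\lambda^D_j < k+2-2/p$ from appearing in the representation of Corollary~\ref{cor:Lp}. The point of case~\ref{item:thm:Lp-ii} is precisely that, once $P\equiv 0$, Lemma~\ref{lemma:abstract} can absorb all of them simultaneously via several $S_{\theta_j}$ with $\theta_j = \lambda^D_j/(k+2-2/p)$ (this is the same mechanism used in the mixed case for $\omega\in(3\pi/2,2\pi)$). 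Your single-$S_{\theta_1}$ decomposition does not cover this.
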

\begin{proof}
In the following, we assume $\omega \ne \pi$ since in the case $\omega = \pi$ one has a full shift theorem analogous to the case discussed 
in Remark~\ref{rem:omega=pi}, see \cite[Sec.~9]{Gilbarg}. 

The two conditions \ref{item:thm:Lp-i}, \ref{item:thm:Lp-ii} are such that the procedure already used in the $L^2$ setting is applicable. 
Inspection of the proof of Theorem~\ref{thm:shift-theorem-local-version}\ref{item:thm:shift-theorem-local-version-dirichlet} shows that it relies 
on the following ingredients \ref{item:(a)}--\ref{item:(d)}: 
\begin{enumerate}[wide,label=(\Alph*)] 
\item \label{item:(a)} Local regularity assertions as in Steps~0--1 of that proof that underlie the estimate (\ref{eq:thm:shift-theorem-local-version-10}). 
The local regularity in $L^p$-spaces is available, e.g., \cite[Sec.~{9}]{Gilbarg}. 
\item \label{item:(b)} Solution operators $T$ and $\widetilde{T}$ for the Dirichlet problem as in (\ref{eq:operator-T}).  In contrast 
to (\ref{eq:dirichlet-case-interpolation-couples}), where $X_0=  H^{-1}(\mathcal{C}_R)$, $Y_0 = H^1_0({\mathcal C}_R)$, 
we view $T:L^p \rightarrow W^{2,p} \cap W^{1,p}_0$ and select 
\begin{align}
\label{eq:Lp-case-interpolation-couples} 
X_0 &= L^p(\mathcal{C}_R), 
&
X_1 &= W^{k,p}(\mathcal{C}_R), 
&
Y_0 &= W^{2,p}(\mathcal{C}_R), 
&
Y_1 &= W^{k+2,p}(\mathcal{C}_R), 
\end{align}
where we recall that is taken as the smallest integer with $k + 2 > \lambda^D_1+ 2/p$. 
One then has 
$B^{\lambda^D_1-2/p'}_{p,1} = (X_0,X_1)_{\theta,1}$ and 
$B^{\lambda^D_1+2/p}_{p,\infty} = (Y_0,Y_1)_{\theta,\infty}$ for $\theta = (\lambda^D_1 -2/p')/k$. 

The mapping property $\widetilde{T}: X_0 \rightarrow Y_0$ follows from the assumption $2 < \lambda^D_1 + 2/p$ since this implies
that the sum $S$ in the solution representation in Corollary~\ref{cor:Lp} is empty for $k = 0$ and that the function 
$P_{\lfloor k -2/p\rfloor}$ vanishes for $k=0$ so that $u_1 = u_0$. 
\item \label{item:(c)} The representation formula for the solution for data from $X_1$. This is provided by Corollary~\ref{cor:Lp}. 
\item \label{item:(d)} The interpolation argument of Lemma~\ref{lemma:abstract}. In the $L^2$-setting, it was applied
in two situations: 
  \begin{enumerate*}[label=(\alph*)] 
   \item The sum $S$ contains exactly one singularity function.  
   \item The sum $S$ contains several singularity functions but $P_{k-1} \equiv 0$;  this was the case
         in Theorem~\ref{thm:shift-theorem-local-version}\ref{item:thm:shift-theorem-local-version-mixed}
         for $\omega \in (3\pi/2,2\pi)$. 
  \end{enumerate*}
These two cases correspond to the conditions \ref{item:thm:Lp-i} and \ref{item:thm:Lp-ii}, respectively.
In the remainder of the proof, we discuss in more detail the application of the interpolation argument of Lemma~\ref{lemma:abstract}. 
\end{enumerate}

\emph{Proof under condition \ref{item:thm:Lp-i} and $\lambda^D_1 \not \in \BbbN$:}
%If condition \ref{item:thm:Lp-i} holds, then selecting the smallest $k \in \BbbN_0$ such that the condition holds reads 
The definition of $k$ reads 
\begin{equation}
\label{eq:smallest-k}
k+1-2/p \leq \lambda^D_1 < k+2 - 2/p, 
\end{equation}
 which gives 
\begin{equation}
\label{eq:thm-Lp-10}
\lfloor k+2-2/p\rfloor \ge \lfloor \lambda^D_1 \rfloor \ge \lfloor k+1-2/p\rfloor  = \lfloor k+2-2/p\rfloor - 1. 
\end{equation}
We claim that $\Sigma^D_{\lfloor k-2/p\rfloor + 2} = \emptyset$. To see this, note that
(\ref{eq:thm-Lp-10}) implies $\lfloor k-2/p\rfloor + 2 = \lfloor k + 2 -2/p \rfloor \leq \lfloor \lambda^D_1 \rfloor + 1$ so that 
for any $n \in \BbbN$ with $n \leq \lfloor k -2/p\rfloor + 2$ we have 
\begin{equation*}
\frac{n}{\lambda^D_1} \leq \frac{1+\lfloor \lambda^D_1\rfloor}{\lambda^D_1} \leq 1 + \frac{1}{\lambda^D_1} < 3. 
\end{equation*}
For $n \in \Sigma^D_{\lfloor k - 2/p \rfloor + 2}$, one has $n/\lambda^D_1 \in \BbbN$, leading to the possible cases $n= \lambda^D_1$ and 
$n = 2 \lambda^D_1$. The first case is excluded by $\lambda^D_1 \not\in \BbbN$. The second case is also excluded since
otherwise $2 \lambda^D_1  = n \leq \lfloor{k-2/p}\rfloor + 2 \leq 1 + \lfloor \lambda^D_1\rfloor \leq 1 + \lambda^D_1$, 
which implies $\lambda^D_1 \leq 1$, and then 
$2 \lambda^D_1 = n \in \BbbN$ leads to $\lambda^D_1 \in \{1/2,1\}$, which is not possible due to $\omega \in (0,2\pi)$ and $\omega \ne \pi$. 
Since $\Sigma^D_{\lfloor k - 2/p \rfloor +2} = \emptyset$, the interpolation argument based on 
Lemma~\ref{lemma:abstract} can be done as in the proof of 
         Theorem~\ref{thm:shift-theorem-local-version}\ref{item:thm:shift-theorem-local-version-dirichlet} for the case 
$\lambda^D_1 \not\in \BbbN$ there. 

\emph{Proof under condition \ref{item:thm:Lp-i} and $\lambda^D_1 \in \BbbN$:} The value $k$ satisfying (\ref{eq:smallest-k}) is given by 
\begin{align*}
k = 
\begin{cases}
\lambda^D_1 & \mbox{ if $p \in (1,2)$} \\
\lambda^D_1 -1 & \mbox{ if $p \in (2,\infty)$}
\end{cases}
\qquad 
\mbox{ leading to } 
\qquad 
\lfloor k - 2/p\rfloor + 2 = 
\begin{cases}
\lambda^D_1 & \mbox{ if $p \in (1,2)$} \\
\lambda^D_1 & \mbox{ if $p \in (2,\infty)$}. 
\end{cases}
\end{align*}
Hence, $\Sigma^D_{\lfloor{k-2/p}\rfloor+2} = \{\lambda^D_1\}$ and $n^\star = \lambda^D_1$ in the estimate for $P_{\lfloor{k-2/p}\rfloor}$ 
in Corollary~\ref{cor:Lp}. That is, Corollary~\ref{cor:Lp} ascertains 
$\|P_{\lfloor {k - 2/p} \rfloor} \|_{B^{\lambda^D_1 + 2/p}_{p,\infty}}
\lesssim \|f\|_{B^{\lambda^D_1 + 2/p - 2}_{p,1}}$ as used in the proof of 
         Theorem~\ref{thm:shift-theorem-local-version}\ref{item:thm:shift-theorem-local-version-dirichlet} for the case 
$\lambda^D_1 \in \BbbN$ there. 

\emph{Proof under condition \ref{item:thm:Lp-ii}:} 
Multiple singularity functions in the representation of Corollary~\ref{cor:Lp} are allowed. However, the 
condition $k < 2/p$ ensures that $P_{\lfloor k - 2/p\rfloor} = 0$ so that as in the proof of 
Theorem~\ref{thm:shift-theorem-local-version}\ref{item:thm:shift-theorem-local-version-mixed}
for $\omega \in (3\pi/2,2\pi)$ one can use Lemma~\ref{lemma:abstract} since the linear functionals
$f \mapsto \int_{\mathcal C} r^{-\lambda^D_j} \sin (\lambda^D_j \phi) f\, d{\mathbf x}$ are 
bounded, linear on $B^{\lambda^D_j-2/p'}_{2,1}$ for $j \in \BbbN$ with  $\lambda^D_j + 2/p < k+2$. 
\end{proof}
\begin{remark} 
Theorem~\ref{thm:Lp} is based on the assumption that the solution operator for the Dirichlet problem maps $L^p \rightarrow W^{2,p}$, 
which is expressed by the condition $2 < \lambda^D_1 + 2/p$. This restriction is due to our working with positive order Besov spaces 
$B^s_{p,q}$, $s > 0$. To remove this restriction, one would have to use negative order Besov spaces similarly to the way it is done
in the $L^2$-setting. 
\eremk
\end{remark}
%--------------------------------------------------------------------------------------------
%--------------------------------------------------------------------------------------------
%\begin{appendices}
\appendix
%\section{Appendix}

%--------------------------------------------------
%\subsection{Inequalities}
\section{Weighted spaces and elliptic regularity in weighted spaces}
\label{sec:inequalities}
%--------------------------------------------------

We introduce for $0<\rho<\sigma$ the annuli
\begin{align}
\label{def:annuli}
A(\rho,\sigma):=\{x\in\mathcal{C}:\rho<|x|<\sigma\}.
\end{align}

\begin{lemma}
\label{lemma:Sobolev-to-cone-second-version-appendix}
Let $f\in H^{k+\epsilon}(\mathcal{C})$ with $\operatorname{supp} f \subset B_1(0)$ for some $k\in\mathbb{N}_0$ and $\epsilon\in (0,1)$ and assume 
$\partial_x^i \partial_y^j f(0)=0$ for $i+j\leq k-1$. Then $f\in K_{-\epsilon}^k(\mathcal{C})$ with the norm estimate
$
\|f\|_{K_{-\epsilon}^k(\mathcal{C})} \lesssim \|f\|_{H^{k+\epsilon}(\mathcal{C}_1)}.
$
\end{lemma}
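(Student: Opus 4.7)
The approach is a standard dyadic decomposition and scaling argument in the spirit of Kondratiev and \cite[Chap.~{7}]{kozlov-mazya-rossmann-1997}. I would partition $\mathcal{C} \cap B_1(0)$ into dyadic annuli $A_j := \mathcal{C} \cap \{2^{-j-1} < r < 2^{-j}\}$ for $j \ge 0$, so that $\|f\|^2_{K^k_{-\epsilon}(\mathcal{C})} = \sum_{j \ge 0} \|f\|^2_{K^k_{-\epsilon}(A_j)}$. For each $j$, rescale $A_j$ onto a fixed reference annulus $\hat A := \mathcal{C} \cap \{1/2 < r < 1\}$ via $y = 2^j x$, $f_j(y) := f(2^{-j}y)$. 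Since $r \sim 2^{-j}$ on $A_j$, a direct change of variables yields the scaling identity
\begin{align*}
\|f\|^2_{K^k_{-\epsilon}(A_j)} \sim 2^{2j(k+\epsilon-1)} \|f_j\|^2_{H^k(\hat A)}.
\end{align*}

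The core of the argument then reduces to bounding $\|f_j\|^2_{H^k(\hat A)}$ uniformly in $j$, using the fact that the vanishing conditions at the origin are preserved under the rescaling: $D^\alpha f_j(0) = 2^{-j|\alpha|} D^\alpha f(0) = 0$ for $|\alpha| \le k-1$. Since $H^{k+\epsilon} \hookrightarrow C^{k-1}$ in two dimensions (for $\epsilon > 0$), the evaluations $g \mapsto D^\alpha g(0)$ are continuous linear functionals on $H^{k+\epsilon}$ of any neighborhood of the origin, and together annihilate polynomials of degree $\le k-1$. A Bramble--Hilbert lemma on a reference domain $\tilde A := \mathcal{C} \cap B_2(0) \supset \hat A \cup \{0\}$ yields
\begin{align*}
\|f_j\|^2_{H^k(\hat A)} \le \|f_j\|^2_{H^{k+\epsilon}(\tilde A)} \lesssim |f_j|^2_{H^{k+\epsilon}(\tilde A)},
\end{align*}
and the Slobodeckij seminorm scales back as $|f_j|^2_{H^{k+\epsilon}(\tilde A)} = 2^{2j(1-k-\epsilon)} |f|^2_{H^{k+\epsilon}(\tilde A_j)}$ with $\tilde A_j = \mathcal{C} \cap B_{2^{-j+1}}(0)$.

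The main obstacle is that the preimages $\tilde A_j$ form a nested, not finitely overlapping sequence of shrinking balls, so naive summation over $j$ produces a divergent series. I would overcome this by combining the above with the Stein--Weiss fractional Hardy inequality
\begin{align*}
\int_{\BbbR^2} |x|^{-2\epsilon} |g|^2 dx \lesssim \|g\|^2_{H^\epsilon(\BbbR^2)},
\end{align*}
valid in two dimensions since $\epsilon < 1 = n/2$. Applied to $g = D^\alpha f$ extended by zero (admissible since $\operatorname{supp} f \subset B_1(0)$), this directly handles the top-order contribution $|\alpha| = k$ in $\|f\|^2_{K^k_{-\epsilon}(\mathcal{C})}$. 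For lower-order terms $|\alpha| < k$, the vanishing of $D^\beta(D^\alpha f)(0)$ for $|\beta| \le k-|\alpha|-1$ permits iterating a standard radial Hardy inequality to reduce the weight exponent and derivative order step by step until the Stein--Weiss bound applies. Summing over $|\alpha| \le k$ then gives the claimed estimate.
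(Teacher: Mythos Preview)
Your direct Hardy-inequality approach is correct, with one small fix: extending $D^\alpha f$ by zero outside $\mathcal{C}$ is \emph{not} admissible --- the condition $\operatorname{supp} f \subset B_1(0)$ only controls behavior for $r \geq 1$, not on the straight edges of $\partial\mathcal{C}$, so zero-extension would create a jump there. Simply take instead any Sobolev extension $F \in H^{k+\epsilon}(\mathbb{R}^2)$ of $f$ (which exists by the very definition of $H^{k+\epsilon}(\mathcal{C})$) and apply Stein--Weiss to $D^\alpha F$; the radial-Hardy iteration for $|\alpha|<k$ then goes through on $\mathcal{C}$ as you describe, since each $D^\alpha f$ with $|\alpha|\le k-1$ is continuous and vanishes at the apex.

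The paper's route is genuinely different. Rather than abandoning the dyadic scaling when the nested-domain obstacle appears, it replaces your Bramble--Hilbert step (which uses point-functionals at the apex and forces $\tilde A_j \ni 0$) by a Poincar\'e-type inequality on the reference annulus that instead uses \emph{boundary traces} on the two straight edges $\Gamma^1_\ell$:
\[
\|f\|_{H^{k+\epsilon}(A(1,2))} \lesssim |\nabla^k f|_{H^\epsilon(A(1,2))} + \sum_{\ell=1}^2 \sum_{j=0}^{k-1} \|\nabla^j f\|_{L^2(\Gamma^1_\ell)}.
\]
After scaling to $A(d,2d)$ and summing over dyadic $d$, the seminorm contributions sum (by finite overlap) to $|\nabla^k f|^2_{H^\epsilon(\mathcal{C}_1)}$, while the boundary terms sum to weighted 1D integrals $\|r^{-(k+\epsilon-1/2-j)}\nabla^j f\|^2_{L^2(\Gamma_\ell^{\mathcal{C}_1})}$ along the full edges. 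These are controlled via the trace theorem ($\nabla^j f|_{\Gamma_\ell} \in H^{k-j+\epsilon-1/2}$), the vanishing at the apex (upgrading to $H^{k-j+\epsilon-1/2}_0$), and a 1D Hardy inequality \cite[Thm.~{1.4.4.4}]{Grisvard_2011}. Your approach is more elementary --- it avoids trace theory and handles all $\epsilon\in(0,1)$ uniformly --- whereas the paper's argument must treat $\epsilon=1/2$ separately by interpolation, since the 1D Hardy estimate there excludes half-integer exponents.
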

\begin{numberedproof}{of Lemma~\ref{lemma:Sobolev-to-cone-second-version-appendix}/Lemma~\ref{lemma:Sobolev-to-cone-second-version}}
We will show the result for the cases $\epsilon \in (0,1/2)$ and $\epsilon \in (1/2,1)$ separately. The limiting case $\epsilon = 1/2$ is 
then obtained by an interpolation argument (cf.\ \cite[Chap.~{23}]{Tartar_2007} for the interpolation of $L^2$-based spaces with weights). 
We also flag that we may assume $\omega \ne \pi$ as in the case $\omega = \pi$ the cone $\mathcal{C}$ can be split into 2 cone with apertures $\ne \pi$ 
and each cone is considered separately. 

\emph{Step 1:} \emph{Claim:} For $\omega \ne \pi$ there holds 
\begin{align}
\label{eq:norm-equivalence-A12}
\|f\|_{H^{k+\epsilon}(A(1,2))} &\leq C |\nabla^k f|_{H^\epsilon(A(1,2))} + 
\begin{cases}
\sum_{\ell=1}^2 \sum_{j=0}^{k-1} \|\nabla^j f\|_{L^2(\Gamma^1_\ell)}, & k \ge 1 \\
\sum_{\ell=1}^2 \|f\|_{L^2(\Gamma^1_\ell)}, & k \ge 0 \quad \mbox{ and } \epsilon > 1/2, 
\end{cases}
\end{align}
where $\Gamma^1_\ell$, $\ell \in \{1,2\}$, are the two straight parts of $\partial A(1,2)$. 
This estimate is a variant of a classical Poincar\'e inequality. By the Deny-Lions lemma and the connectedness of $A(1,2)$, 
the full norm $\|f \|_{H^{k+\epsilon}}$ and the seminorm $|\nabla^f f|_{H^\epsilon}$ are equivalent up the polynomials of degree $k$. 
To see that, for $k \ge 1$, the map $f \mapsto \|f\|_*:= \sum_{\ell=1}^2 \sum_{j=0}^{k-1}\|\nabla^j f\|_{L^2(\Gamma^1_\ell)}$ 
is a norm on ${\mathcal P}_k$, the space of polynomials of degree $k$, 
we have to show that $\pi \in {\mathcal P}_k$ and $\|\pi\|_* = 0$ implies $\pi =  0$. 
Assuming, as we may, that  $\Gamma^1_1 \subset \BbbR \times \{0\}$ and writing $\pi(x,y) = \sum_{i+j \leq k} a_{ij} x^i y^j$, we 
see that $a_{ij} = 0$ for all $i$ and $j=0,\ldots,k-1$ so that $\pi(x,y) = a_{0k} y^k$. Since the line $\Gamma^1_2$ is not 
colinear with $\Gamma^1_1$ due to $\omega \ne \pi$, we finally conclude $\pi = 0$. The case $k  = 0$ is easy. 

\emph{Step 2: $k \ge 1$ and $\epsilon \ne 1/2$:}
For $d>0$ consider $A(d,2d)$, denote $\Gamma^d_\ell$, $\ell \in \{1,2\}$, the two straight parts of $\partial A(d,2d)$,  
and write $\widehat{f}$ for the function $f$ scaled to the reference element $A(1,2)$. 
Scaling to $A(1,2)$, using the norm equivalence (\ref{eq:norm-equivalence-A12}), and scaling back yields 
\begin{align}
\label{eq:lemma:Sobolev-to-cone-second-version-1}
&\int_{A(d,2d)} r^{-2k-2\epsilon} |f|^2 \lesssim d^{-2k-2\epsilon+2} \|\widehat{f}\|_{L^2(A(1,2))}^2 
\lesssim d^{-2k-2\epsilon+2} \|\widehat{f}\|_{H^{k+\epsilon}(A(1,2))} 
\\ &
\nonumber 
\qquad \lesssim d^{-2k-2\epsilon+2} \left(|D^k \widehat{f}|_{H^\epsilon(A(1,2))}^2 + \sum_{j=0}^{k-1} \sum_{l=1}^2 \|\nabla^j \widehat{f}\|_{L^2(\Gamma_l^1)}^2\right) \\
%&\qquad \lesssim d^{-2k-2\epsilon+2} \left(d^{-2+2(k+\epsilon)} |D^k f|_{H^\epsilon(A(d,2d))}^2 + \sum_{j=0}^{k-1} d^{-1+2j} \sum_{l=1}^2 \|\nabla^j f\|_{L^2(\Gamma_l^d)}^2\right) \\
&\qquad \lesssim |D^k f|_{H^\epsilon(A(d,2d))}^2 + \sum_{j=0}^{k-1} \sum_{l=1}^2 \|r^{-(k+\epsilon-(1/2+j))} \nabla^j f\|_{L^2(\Gamma_l^d)}^2,
\nonumber 
\end{align}
where we used $d \sim r$ on $A(d,2d)$.  Covering $\mathcal{C}$ by annuli of the form $A(d,2d)$, we obtain
\begin{align*}
\int_{\mathcal{C}} r^{-2k-2\epsilon} |f|^2 \lesssim |D^k f|_{H^\epsilon(\mathcal{C}_1)}^2 + \sum_{j=0}^{k-1} \sum_{l=1}^2 \|r^{-(k+\epsilon-(1/2+j))} \nabla^j f\|_{L^2(\Gamma_\ell^{\mathcal{C}_1})}^2,
\end{align*}
where $\Gamma_\ell^{\mathcal{C}_1}$, $l=1$, $2$, denote the straight-lined parts of $\partial\mathcal{C}_1$. 
As $f\in H^{k+\epsilon}(\mathcal{C}_1)$, the trace theorem gives 
$\nabla^j f|_{\Gamma_\ell^{\mathcal{C}_1}}\in H^{k-j+\epsilon-1/2}(\Gamma_\ell^{\mathcal{C}_1})$ for $j \in \{0,\ldots,k-1\}$. 
Since $\partial_x^i \partial_y^{j'} f(0)=0$ for $i+j'\leq k-1$, we even have 
$\chi (\nabla^j f)|_{\Gamma_\ell^{\mathcal{C}_1}}\in H_0^{k-j+\epsilon-1/2}(\Gamma_l^{\mathcal{C}_1})$, cf.\ \cite[Thm.~{3.40}]{Mclean00},  
for smooth cut-off functions $\chi$ with $\chi\equiv 1$ near the origin. (The cut-off function is merely introduced for notational convenience 
to localize near the origin.) It follows by \cite[Thm.~{1.4.4.4}]{Grisvard_2011} 
\begin{align}
\label{eq:lemma:Sobolev-to-cone-second-version-2}
\sum_{j=0}^{k-1} \|r^{-(k+\epsilon-(1/2+j))} \nabla^j f\|_{L^2(\Gamma_\ell^{\mathcal{C}_1})}^2 \lesssim \|f\|_{H^{k+\epsilon-1/2}(\Gamma_\ell^{\mathcal{C}_1})}^2 \lesssim \|f\|_{H^{k+\epsilon}(\mathcal{C}_1)}^2. 
\end{align}
The higher derivatives of $f$ appearing in the norm $\|\cdot\|_{K^k_{-\epsilon}(\mathcal{C})}$ are treated by a similar argument. 
%\begin{align*}
%\int_{A(d,2d)} r^{-2k-2\epsilon+2j} |\nabla^j f|^2 \lesssim d^{-2k-2\epsilon+2} \|\nabla^j \widehat{f}\|_{L^2(A(1,2))}^2
%\end{align*}
%for $j=1,\ldots,k$. Proceeding as in \eqref{eq:lemma:Sobolev-to-cone-second-version-1} via Lemma~\ref{lemma:ehrling-lemma-inequality}, covering $\mathcal{C}$ with annuli and using \eqref{eq:lemma:Sobolev-to-cone-second-version-2} shows $f\in K_{-\epsilon}^k(\mathcal{C})$.

\emph{Step 3: $k =  0$ and $\epsilon \ne 1/2$:} 
The result in the case $k=0$ and $\epsilon \in (0,1/2)$ is found in \cite[Thm.~{1.4.4.3}]{Grisvard_2011}. For $\epsilon \in (1/2,1)$, we proceed 
as in Step~2, replacing the sum $\sum_{j=0}^{k-1}$ with the sum $\sum_{\ell=1}^2 \|f\|_{L^2(\Gamma^1_\ell)}$. The key estimate
\cite[Thm.~{1.4.4.4}]{Grisvard_2011} is again applicable,  which yields the result. 
\end{numberedproof}

\begin{lemma}
\label{lemma:regularity-lemma-annuli}
Let $\rho_1 < \rho_2 < \rho_3 < \rho_4 $ and $\widehat{A}_2:= A(\rho_1,\rho_4)$, $\widehat{A}_2:= A(\rho_2, \rho_3)$. 
Let $k \in \BbbN_0$, $\epsilon \in (0,1)$, $f \in H^{k+\epsilon}(\widehat{A}_2)$ 
and $u \in H^1(\widehat{A}_2)$ satisfy $-\Delta u = f$ on $\widehat{A}_2$ with additionally either homogeneous Dirichlet conditions, 
homogeneous Neumann conditions, or homogeneous mixed boundary conditions on the two straight parts of $\partial \widehat{A}_2$. Then, there is 
$C > 0$, depending only on $\omega$ and $\rho_i$, $i=1,\ldots,4$, such that 
$|\nabla^{k+2} u|_{H^\epsilon(\widehat{A}_1)} \leq C ( \|f\|_{H^{k+\epsilon}(\widehat{A}_2)} + \|u\|_{H^{1}(\widehat{A}_2)}). $
More generally, for any $q \in [1,\infty]$ and $s > 1/2$, 
one has $\|u\|_{B^{s+2}_{2,q}(\widehat{A}_1)} \leq C ( \|f\|_{B^{-1+s}_{2,q}(\widehat{A}_2)} + \|u\|_{H^1(\widehat{A}_2)})$. 
For Dirichlet boundary conditions, this estimate actually holds for $s > 0$. 

\end{lemma}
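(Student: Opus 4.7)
The plan is to reduce the lemma to classical local elliptic regularity. The geometric point driving the reduction is that $\widehat A_1$ is strictly separated from the four corners of $\widehat A_2$ (the points where the two straight radial sides meet the circular arcs of radii $\rho_1$ and $\rho_4$), so only interior estimates and boundary estimates on a \emph{smooth}, in fact straight-line, piece of the boundary ever enter the argument.

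For the first (Sobolev-scale) statement I would introduce a cut-off $\chi \in C^\infty(\BbbR^2)$ with $\chi \equiv 1$ on $\widehat A_1$, $\operatorname{supp}\chi \Subset \widehat A_2$, and $\chi$ vanishing in a neighbourhood of each of the four corners of $\widehat A_2$. I would then cover $\operatorname{supp}\chi$ by finitely many open balls, each of which is either compactly contained in $\widehat A_2$ or centred on one of the straight radial sides of $\partial\widehat A_2$ and bounded away from both circular arcs. On each interior ball, standard interior elliptic regularity in the $H^{k+\epsilon}$-scale (e.g.\ \cite[Sec.~6.3]{Evans_2010}, extended to fractional $\epsilon$ by the Nirenberg difference-quotient method) gives the desired bound; on each boundary ball, boundary regularity with a flat boundary and one of the three homogeneous boundary conditions applies verbatim. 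Summation over the finite covering and absorption of the resulting $L^2$-error in $\|u\|_{H^1(\widehat A_2)}$ yields the first asserted estimate.

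For the Besov-scale statement I would interpret the lemma as a mapping property of the linear operator $T: f\mapsto \chi_1 u$, where $\chi_1$ is a cut-off with $\chi_1\equiv 1$ on $\widehat A_1$ and $\operatorname{supp}\chi_1\Subset \widehat A_2$. The first step of the proof supplies continuous mappings $T: H^{k+\epsilon}(\widehat A_2)\to H^{k+2+\epsilon}(\widehat A_1)$ (modulo the universally controlled $\|u\|_{H^1}$-term) for every $k\in\BbbN_0$ and every $\epsilon\in(0,1)$. Real interpolation between two such pairs gives $T: B^{s-1}_{2,q}(\widehat A_2)\to B^{s+1}_{2,q}(\widehat A_1)$ for every $q\in[1,\infty]$ and every $s>1/2$, where the lower bound $1/2$ is forced by the need to keep the interpolation couple unambiguous via the identification $H^{s-1}=\widetilde H^{s-1}$ of (\ref{eq:Hs=tildeHs}). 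For Dirichlet conditions one can additionally extend $u$ by odd reflection across each straight Dirichlet side, so that the problem reduces to a flat-plane Poisson problem whose data are in $H^{s-1}$ for every $s>0$; this is what lets the Dirichlet range reach down to $s>0$.

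The main obstacle, and the reason the range is restricted to $s>1/2$ in the Neumann and mixed cases, is precisely the endpoint behaviour: for $s\leq 1/2$ the normal trace of an $H^{s+1}$-function is not well defined as an $L^2$-datum, so the even/odd reflection trick that eliminates the boundary for Dirichlet does not carry over to Neumann or mixed conditions without producing spurious surface distributions. Consequently the interpolation couple loses its intrinsic characterisation at $s=1/2$, and one cannot interpolate cleanly below this threshold; everything else in the argument is routine.
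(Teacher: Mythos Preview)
Your argument for the Sobolev-scale estimate (the first assertion) is fine and matches the paper's localisation strategy.

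The interpolation step for the Besov estimate, however, has a genuine gap. The object $T\colon f\mapsto \chi_1 u$ is \emph{not} a well-defined linear operator on $f$: the function $u$ in the lemma is only assumed to satisfy $-\Delta u=f$ on $\widehat A_2$ with homogeneous conditions on the two straight edges, and nothing is prescribed on the circular arcs of $\partial\widehat A_2$, so $u$ is not determined by $f$. An \textsl{a priori} estimate of the form $\|u\|_{Y_j}\lesssim \|f\|_{X_j}+\|u\|_{H^1}$ cannot be interpolated as stated, because the interpolation theorem for linear operators requires a single map acting on $f$ alone; the phrase ``modulo the universally controlled $\|u\|_{H^1}$-term'' is exactly where the argument breaks.

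The paper fixes this by introducing a genuine solution operator: on a local half-ball $H_\rho$ one solves the \emph{full} Dirichlet problem, obtaining a linear map $T\colon f\mapsto v$ with $T\colon H^s\to H^{s+2}$ for $s\in\BbbN_0\cup\{-1\}$. The composite $f\mapsto \chi_{\rho'}T(\chi_\rho f)$ is then a bona fide operator and can be interpolated to give $B^{s}_{2,q}\to B^{s+2}_{2,q}$. The remainder $u_0:=u-\chi_{\rho'}T(\chi_\rho f)$ is harmonic near the target set and satisfies the homogeneous boundary condition on the straight edge, hence is smooth there with all norms controlled by $\|u_0\|_{H^1}\lesssim\|u\|_{H^1}+\|f\|_{H^{-1}}$. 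Adding the two pieces gives the Besov estimate. What your argument lacks is precisely this decomposition into ``operator part plus harmonic remainder''.

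A minor point: your explanation of the threshold $s>1/2$ via normal traces and reflection is not the paper's reason. The paper states that the restriction is one of notation: for $s\le 1/2$ the spaces $B^{-1+s}_{2,q}$ and $\widetilde B^{-1+s}_{2,q}$ differ, so a single formula covering Neumann and mixed conditions would require the tilde-spaces. For Dirichlet conditions the auxiliary operator $T$ above already maps $H^{-1}\to H^1$, and this is what extends the range down to $s>0$.
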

\begin{proof}
We restrict to $s >  1/2$ in the second part of the lemma in order to achieve a unified notation since the spaces
$B^{-1+s}_{2,q}$ and $\widetilde{B}^{-1+s}_{2,q}$ differ for $s \leq  1/2$. 

This is a rather standard elliptic regularity theorem. We sketch the proof to illuminate the point that also regularity assertions in Besov spaces 
are possible. We restrict to Dirichlet boundary conditions at one straight edge $\Gamma_1$ of $\widehat{A}_2$ and merely consider a local situation of three half-balls 
$H_\rho:= B_{\rho}(x_0) \cap \widehat{A}_2$, 
$H_{\rho'}:= B_{\rho'}(x_0) \cap \widehat{A}_2$, 
$H_{\rho^{\prime\prime}}:= B_{\rho^{\prime\prime}}(x_0) \cap \widehat{A}_2$ 
%$H_{\rho^{\prime\prime\prime}}:= B_{\rho^{\prime\prime\prime}}(x_0) \cap \widehat{A}_2$, 
with $x_0 \in \Gamma_1$ 
and $0 < \rho^{\prime\prime} < \rho' < \rho$.  
Consider $\chi_\rho \in C^\infty_0(B_\rho(x_0))$, $\chi_{\rho'} \in C^\infty_0(B_{\rho'}(x_0))$
with $\chi_\rho \equiv 1$ on $H_{\rho'}$ and $\chi_{\rho'} \equiv 1$ on $H_{\rho^{\prime\prime}}$. 
Let $T$ be the solution operator for the Poisson problem on $H_{\rho}$ with Dirichlet boundary conditions on 
$\partial H_{\rho}$. Then, by elliptic regularity, the map $f \mapsto \chi_{\rho'} T \chi_{\rho}$ is bounded $H^{s}(H_{\rho}) \rightarrow H^{s+2}(H_{\rho'})$ 
for any $s \in \BbbN_0 \cup \{-1\}$.  By interpolation, the map $f \mapsto \chi_{\rho'} T \chi_{\rho}$ is bounded 
$B^{s}_{2,q}(H_{\rho}) \rightarrow B^{s+2}_{2,q}(B_{\rho'})$ for any $s > -1$, $q \in [1,\infty]$. 
The difference 
$u_0:= u - \chi_{\rho'} T \chi_{\rho} f$ satisfies homogeneous Dirichlet conditions on $\Gamma_1$ and is harmonic on $H_{\rho'}$. Hence, 
$u_0$ is smooth (up to $\Gamma_1$) on $H_{\rho'}$ and interior regularity provides $\|u_0\|_{H^{s'+2}(H_{\rho^{\prime\prime}})} \lesssim \|u_0\|_{H^1(H_{\rho'})}$
for any $s' \ge 0$. Finally, $\|u_0\|_{H^1(H_{\rho'})} \leq \|u\|_{H^1(H_{\rho'})} + \|\chi_{\rho'} T \chi_{\rho} f\|_{H^1(H_{\rho'})} 
\lesssim \|u\|_{H^1(H_{\rho'})} + \|\chi f\|_{H^{-1}(\widehat{A}_2)} 
\lesssim \|u\|_{H^1(H_\rho)} + \|f\|_{H^{s}_{2,q}(\widehat{A}_2)}$, where $\chi$ 
is yet another smooth cut-off function with $\chi \equiv 1$ on $B_{\rho}(x_0)$ and supported by a sufficiently small neighborhood of $B_\rho(x_0)$. 

The local estimates can be combined into a global one by a smooth partition of unity to result in the desired estimates for the sets $\widehat{A}_1$ 
and $\widehat{A}_2$. 
\end{proof}

The following lemma demonstrates a type of scaling arguments employed in connection with weighted spaces.

\begin{lemma}
\label{lemma:scaling-arguments-cone}
Let $k\in\mathbb{N}_0$, $\gamma\in\mathbb{R}$, and $\epsilon\in (0,1)$. Further let $f\in L_{loc}^2(\mathcal{C})$ and $u \in H^1_{loc}(\mathcal{C})$ 
satisfy $-\Delta u = f$ with either homogeneous Dirichlet boundary conditions, homogeneous Neumann boundary conditions, or homogeneous mixed boundary 
conditions. Then: 
\begin{enumerate}[label=(\roman*)]
\item \label{item:lemma:scaling-arguments-cone-i} For $f\in K_{\gamma}^k(\mathcal{C})$, $u\in K_{\gamma-k-2}^0(\mathcal{C})$ and $\beta\in(\mathbb{N}_0)^2$, $|\beta|\in [0,k+2]$, it holds
\begin{align}
\label{eq:lemma:scaling-arguments-cone-1}
\|r^{-|\beta|+\gamma} D^{k+2-|\beta|} u\|_{L^2(\mathcal{C})} \lesssim \sum_{|\alpha|\leq k} \|r^{|\alpha|-k+\gamma} D^\alpha f\|_{L^2(\mathcal{C})} + \|r^{-k-2+\gamma} u\|_{L^2(\mathcal{C})}.
\end{align}
\item \label{item:lemma:scaling-arguments-cone-ii} Let $f\in H^{k+\epsilon}(\mathcal{C}_1)$ with $\operatorname{supp} f \subset B_1(0)$. Further assume that there exists $R>1$ such that 
$u \in K^{k+2}_{-\epsilon}(\mathcal{C}_R)$. 
%\begin{align*}
%\sum_{|\alpha| \leq k+2} \|r^{|\alpha|-k-2-\epsilon} D^\alpha u\|_{L^2(\mathcal{C}_R)} < \infty.
%\end{align*}
Then 
\begin{align*}
|D^{k+2} u|_{H^\epsilon(\mathcal{C}_1)} \lesssim |D^k f|_{H^\epsilon(\mathcal{C}_1)} + \sum_{|\alpha| \leq k} \|r^{|\alpha|-k-\epsilon} D^\alpha f\|_{L^2(\mathcal{C}_1)} \!+\!\! \sum_{|\alpha| \leq k+2} \|r^{|\alpha|-k-2-\epsilon} D^\alpha u\|_{L^2(\mathcal{C}_R)}.
\end{align*}
\end{enumerate}
\end{lemma}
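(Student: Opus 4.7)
\textbf{Proof proposal for Lemma~\ref{lemma:scaling-arguments-cone}.}

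The plan is to use the classical dyadic decomposition of the cone together with a scaling argument that reduces everything to a fixed reference annulus, where the elliptic regularity of Lemma~\ref{lemma:regularity-lemma-annuli} is directly applicable. Specifically, I would cover $\mathcal{C}$ by the overlapping annuli $A_j := A(2^{-j-1}, 2^{-j+1})$, $j \in \BbbZ$, together with slightly larger annuli $\widetilde{A}_j := A(2^{-j-2}, 2^{-j+2})$ and intermediate ``interior'' annuli $A_j^{\mathrm{in}} := A(2^{-j-1/2}, 2^{-j+1/2})$; on each $\widetilde{A}_j$ the function $r$ is comparable to $\lambda_j := 2^{-j}$ with constants independent of $j$, and the overlap is uniformly bounded.

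For part~\ref{item:lemma:scaling-arguments-cone-i}, define $\widehat{u}_j(x) := u(\lambda_j x)$ and $\widehat{f}_j(x) := f(\lambda_j x)$ on the fixed reference annulus $\widehat{A} := A(1/4,4)$. Then $-\Delta \widehat{u}_j = \lambda_j^2 \widehat{f}_j$ with the same boundary conditions on the two straight edges of $\widehat{A}$, and the standard elliptic regularity estimate on $\widehat{A}$ (applied with integer smoothness index $k$) yields
\begin{equation*}
\sum_{|\beta|\le k+2} \|D^{k+2-|\beta|} \widehat{u}_j\|_{L^2(\widehat{A}^{\mathrm{in}})}
\;\lesssim\; \lambda_j^{2} \sum_{|\alpha|\le k} \|D^\alpha \widehat{f}_j\|_{L^2(\widehat{A})} + \|\widehat{u}_j\|_{L^2(\widehat{A})}.
\end{equation*}
A direct change of variables gives $\|D^m \widehat{v}_j\|_{L^2(\widehat{A})} = \lambda_j^{m-1} \|D^m v\|_{L^2(\widetilde{A}_j)}$ for any function $v$, and on $\widetilde{A}_j$ the factor $r^\sigma$ is equivalent to $\lambda_j^\sigma$ for any $\sigma \in \BbbR$. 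Combining these, multiplying the estimate on $\widehat{A}$ by $\lambda_j^{\gamma-k-1}$, and observing that this prefactor absorbs exactly the powers of $\lambda_j$ coming from both sides, I obtain the local estimate
\begin{equation*}
\|r^{-|\beta|+\gamma} D^{k+2-|\beta|} u\|_{L^2(A_j)}
\;\lesssim\; \sum_{|\alpha|\le k} \|r^{|\alpha|-k+\gamma} D^\alpha f\|_{L^2(\widetilde{A}_j)} + \|r^{-k-2+\gamma} u\|_{L^2(\widetilde{A}_j)}.
\end{equation*}
Squaring and summing over $j \in \BbbZ$, using the finite overlap of the $\widetilde{A}_j$, produces \eqref{eq:lemma:scaling-arguments-cone-1}.

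For part~\ref{item:lemma:scaling-arguments-cone-ii}, the same scaling setup applies but the key estimate on $\widehat{A}$ is now the fractional-order elliptic regularity of Lemma~\ref{lemma:regularity-lemma-annuli}, which gives
\begin{equation*}
|D^{k+2}\widehat{u}_j|_{H^\epsilon(\widehat{A}^{\mathrm{in}})} \;\lesssim\; \lambda_j^{2}\,\|\widehat{f}_j\|_{H^{k+\epsilon}(\widehat{A})} + \|\widehat{u}_j\|_{H^1(\widehat{A})},
\end{equation*}
valid for all three boundary-condition cases considered. Using the additional scaling identity $|D^m \widehat{v}_j|_{H^\epsilon(\widehat{A})} = \lambda_j^{m+\epsilon-1} |D^m v|_{H^\epsilon(\widetilde{A}_j)}$ for the Sobolev--Slobodeckij seminorm, and again multiplying by the appropriate power of $\lambda_j$ so that the leading-order term $|D^{k+2}u|_{H^\epsilon}$ carries no residual $\lambda_j$-factor, I find that all remaining terms on the right-hand side acquire weights of the form $r^{|\alpha|-k-\epsilon}$ on the $f$-side and $r^{|\alpha|-k-2-\epsilon}$ on the $u$-side, consistent with the statement. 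Summation over the annuli $A_j$ contained in $\mathcal{C}_1$ (with the outermost finitely many $j$ handled by a straightforward compactness argument inside $\mathcal{C}_R$) yields the claimed estimate.

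The main obstacle is purely bookkeeping: one has to match the powers of $\lambda_j$ appearing from the scaling of derivatives, of the equation ($-\Delta \widehat{u}_j = \lambda_j^2 \widehat{f}_j$), of the weights $r^\sigma \sim \lambda_j^\sigma$, and—in part~\ref{item:lemma:scaling-arguments-cone-ii}—of the $H^\epsilon$-seminorm under dilation, and verify that in each case every term on the right-hand side scales with the same power of $\lambda_j$ as the left-hand side, so that the constants obtained after summation are independent of $j$. The uniformity of the elliptic-regularity constant on $\widehat{A}$ across the three boundary-condition scenarios is exactly what Lemma~\ref{lemma:regularity-lemma-annuli} provides, so no further effort is needed there.
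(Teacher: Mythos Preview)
Your proposal is correct and follows essentially the same route as the paper: dyadic decomposition of $\mathcal{C}$ into annuli, scaling to a fixed reference annulus where Lemma~\ref{lemma:regularity-lemma-annuli} applies, scaling back with the appropriate power of $\lambda_j$ to convert constants into $r$-weights, and summing using finite overlap. The only cosmetic differences are that the paper uses $\|\widehat{u}\|_{H^{k+2}}$ rather than $\|\widehat{u}\|_{H^1}$ on the right-hand side in part~\ref{item:lemma:scaling-arguments-cone-ii} (yielding the full range $|\alpha|\le k+2$ directly), and it writes the scaling parameter as a continuous $\rho$ rather than dyadic $\lambda_j=2^{-j}$, but neither affects the argument.
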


\begin{proof}
Define the annuli $\widehat{A_1}:=A(1/2,2)$ and $\widehat{A_2}:=A(1/4,4)$, cf.\ \eqref{def:annuli}. For $\rho>0$ scaling yields for $A_{i,\rho}:=\rho\widehat{A_i}$, $i=1,2$, $\widehat{u}(\xi):=u(\rho\xi)$ and $\widehat{f}(\xi):=f(\rho\xi)$ that $-\Delta\widehat{u}=\rho^2 \widehat{f}$ on $\widehat{A_2}$.

We start the proof of \ref{item:lemma:scaling-arguments-cone-i} by noting the elliptic regularity estimate
\begin{align}
\label{eq:Kgammas-facts-vi}
\|\widehat{u}\|_{H^{k+2}(\widehat{A_1})} \lesssim \rho^2 \|\widehat{f}\|_{H^k(\widehat{A_2})} + \|\widehat{u}\|_{L^2(\widehat{A_2})},
\end{align}
cf. \cite[eq.~(4.3)]{Dauge_1997}. We multiply \eqref{eq:Kgammas-facts-vi} by $\rho^{\gamma-k-2}$ and obtain after scaling
\begin{align*}
\sum_{|\alpha|\leq k+2} \rho^{2|\alpha|+2\gamma-2k-4} \|D^\alpha u\|_{L^2(A_{1,\rho})}^2 \lesssim \sum_{|\alpha|\leq k} \rho^{2|\alpha|+2\gamma-2k} \|D^\alpha f\|_{L^2(A_{2,\rho})}^2 + \rho^{2\gamma-2k-4} \|u\|_{L^2(A_{2,\rho})}^2.
\end{align*}
The definition of the annuli implies $2^{-i}\rho<r<2^i\rho$, $i=1$, $2$, on $A_{i,\rho}$. Thus we get further
\begin{align*}
\sum_{|\alpha|\leq k+2} \|r^{|\alpha|+\gamma-k-2} D^\alpha u\|_{L^2(A_{1,\rho})}^2 \lesssim \sum_{|\alpha|\leq k} \|r^{|\alpha|+\gamma-k} D^\alpha f\|_{L^2(A_{2,\rho})}^2 + \|r^{\gamma-k-2} u\|_{L^2(A_{2,\rho})}^2.
\end{align*}
We now cover $\mathcal{C}$ by annuli $A_{1,2^{-j}}$, $j\in\mathbb{Z}$. Since they only have finite overlap, we obtain
\begin{align*}
\|r^{-|\beta|+\gamma} D^{k+2-|\beta|} u\|_{L^2(\mathcal{C})}^2 %&\leq \sum_{|\alpha|\leq k+2} \|r^{|\alpha|+\gamma-k-2} D^\alpha u\|_{L^2(\mathcal{C})}^2 \\
&\lesssim \sum_{|\alpha|\leq k} \|r^{|\alpha|+\gamma-k} D^\alpha f\|_{L^2(\mathcal{C})}^2 + \|r^{\gamma-k-2} u\|_{L^2(\mathcal{C})}^2
\end{align*}
for $|\beta|\in [0,k+2]$, whereupon the result follows.

The proof of \ref{item:lemma:scaling-arguments-cone-ii} follows in a similar way. We prove it assuming $R > 2$. We have 
\begin{align}
\label{eq:lemma:scaling-arguments-cone-5}
|D^{k+2} \widehat{u}|_{H^\epsilon(\widehat{A_1})} \lesssim \rho^2 \|\widehat{f}\|_{H^{k+\epsilon}(\widehat{A_2})} + \|\widehat{u}\|_{H^{k+2}(\widehat{A_2})},
\end{align}
which follows from Lemma~\ref{lemma:regularity-lemma-annuli} after scaling with $\rho$. Then the usual scaling arguments yield
\begin{align*}
&|D^{k+2} u|_{H^\epsilon(A_{1,\rho})}^2 \lesssim \rho^{2-2(k+2+\epsilon)} |D^{k+2} \widehat{u}|_{H^\epsilon(\widehat{A_1})}^2 \\
&\qquad \lesssim \rho^{2-2(k+2+\epsilon)} \Bigl(\rho^4 \sum_{|\alpha| \leq k} \|D^\alpha \widehat{f}\|_{L^2(\widehat{A_2})}^2 + \rho^4 |D^k \widehat{f}|_{H^\epsilon(\widehat{A_2})}^2 + \sum_{|\alpha| \leq k+2} \|D^\alpha \widehat{u}\|_{L^2(\widehat{A_2})}^2\Bigr) \\
&\qquad \lesssim \rho^{2-2(k+2+\epsilon)} \Bigl(\rho^4 \sum_{|\alpha| \leq k} \rho^{-2+2|\alpha|} \|D^\alpha f\|_{L^2(A_{2,\rho})}^2 + \rho^4 \rho^{-2+2(k+\epsilon)} |D^k f|_{H^\epsilon(A_{2,\rho})}^2  
\\ & \qquad \mbox{}
+ \sum_{|\alpha| \leq k+2} \rho^{-2+2|\alpha|} \|D^\alpha u\|_{L^2(A_{2,\rho})}^2\bigr) \\
&\qquad \lesssim  \sum_{|\alpha| \leq k} \rho^{-2k-2\epsilon+2|\alpha|} \|D^\alpha f\|_{L^2(A_{2,\rho})}^2 \!+\! |D^k f|_{H^\epsilon(A_{2,\rho})}^2 \!+\!\!\! \sum_{|\alpha| \leq k+2} \rho^{-2k-2\epsilon-4+2|\alpha|} \|D^\alpha u\|_{L^2(A_{2,\rho})}^2.
\end{align*}
As in \ref{item:lemma:scaling-arguments-cone-i} we obtain by covering arguments
\begin{align*}
|D^{k+2} u|_{H^\epsilon(\mathcal{C}_1)}^2 &\lesssim \sum_{|\alpha| \leq k} \|r^{-k-\epsilon+|\alpha|} D^\alpha f\|_{L^2(\mathcal{C}_1)}^2 \!+\! |D^k f|_{H^\epsilon(\mathcal{C}_1)}^2 
 + \sum_{|\alpha| \leq k+2} \|r^{-k-\epsilon-2+|\alpha|} D^\alpha u\|_{L^2(\mathcal{C}_R)}^2.
\qedhere
\end{align*}
\end{proof}

%\end{appendices}

%--------------------------------------
\ifarxiv
\section{Some proofs}
%----------------------
\begin{numberedproof}{of Lemma~\ref{lemma:embedding-Wkp}}
The proof is structurally the same as that of Lemma~\ref{lemma:Sobolev-to-cone-second-version-appendix}/Lemma~\ref{lemma:Sobolev-to-cone-second-version}. 
We use an analogous notation as there. The case $k = 0$ is trivial, so we concentrate on $k \ge 1$. 

\emph{Step 1:} We claim for $k \ge 1$ the estimate 
\begin{align}
\label{eq:norm-equivalence-A12-Lp}
\|f\|_{W^{k,p}(A(1,2))} &\leq C |\nabla^k f|_{L^p(A(1,2))} +
\sum_{\ell=1}^2 \sum_{0 \leq j \leq k-1 }\|\nabla^j f\|_{L^p(\Gamma^1_\ell)}
\end{align}
It suffices to ascertain that 
$f \mapsto \|f\|_\ast:= \sum_{\ell=1}^2 \sum_{j \leq k-1} \|\nabla^j f\|_{L^p(\Gamma^1_\ell)}$ is a norm
on ${\mathcal P}_{k-1}$. This follows in the same way as for the case $p=2$ in the proof  
Lemma~\ref{lemma:Sobolev-to-cone-second-version}.

\emph{Step 2: $k \ge 1$:}
Scaling to $A(1,2)$, using the norm equivalence (\ref{eq:norm-equivalence-A12-Lp}), and scaling back yields
\begin{align}
\label{eq:lemma:Sobolev-to-cone-second-version-1-Lp}
&\int_{A(d,2d)} r^{-pk} |f|^p \lesssim d^{-pk+2} \|\widehat{f}\|_{L^p(A(1,2))}^p
\lesssim d^{-pk+2} \|\widehat{f}\|^p_{W^{k,p}(A(1,2))}
\\ &
\nonumber
\qquad \lesssim d^{-pk+2} \left(|D^k \widehat{f}|_{L^p(A(1,2))}^2 + \sum_{j=0}^{k-1} \sum_{l=1}^2 \|\nabla^j \widehat{f}\|_{L^p(\Gamma_l^1)}^p\right) \\
%&\qquad \lesssim d^{-2k-2\epsilon+2} \left(d^{-2+2(k+\epsilon)} |D^k f|_{H^\epsilon(A(d,2d))}^2 + \sum_{j=0}^{k-1} d^{-1+2j} \sum_{l=1}^2 \|\nabla^j f\|_{L^2(\Gamma_l^d)}^2\right) \\
&\qquad \lesssim |D^k f|_{L^p(A(d,2d))}^2 + \sum_{j=0}^{k-1} \sum_{l=1}^2 \|r^{-(k-(1/p+j))} \nabla^j f\|_{L^p(\Gamma_l^d)}^2,
\nonumber
\end{align}
where we used $d \sim r$ on $A(d,2d)$.  Covering $\mathcal{C}$ by annuli of the form $A(d,2d)$, we obtain
\begin{align*}
\int_{\mathcal{C}} r^{-pk} |f|^p \lesssim |D^k f|_{L^p(\mathcal{C}_1)}^p 
+ \sum_{j=0}^{k-1} \sum_{l=1}^2 \|r^{-(k-(1/p+j))} \nabla^j f\|_{L^p(\Gamma_\ell^{\mathcal{C}_1})}^p.
\end{align*}
As $f\in W^{k,p}(\mathcal{C}_1)$, the trace theorem gives
$\nabla^j f|_{\Gamma_\ell^{\mathcal{C}_1}}\in W^{k-j-1/p}(\Gamma_\ell^{\mathcal{C}_1})$ for $j \in \{0,\ldots,k-1\}$.
We now distinguish the cases $p > 2$ and $p < 2$.

\emph{The case $p > 2$:} Then, $\lfloor k-2/p\rfloor = k-1$. 
Since $\partial_x^i \partial_y^{j'} f(0)=0$ for $i+j'\leq \lfloor k-2/p\rfloor = k-1$, we even have
$\nabla^j f|_{\Gamma_\ell^{\mathcal{C}_1}}\in W_0^{k-j-1/p,p}(\Gamma_l^{\mathcal{C}_1})$. 
It follows by \cite[Thm.~{1.4.4.4}]{Grisvard_2011}
\begin{align}
\label{eq:lemma:Sobolev-to-cone-second-version-2-Lp}
\sum_{j=0}^{k-1} \|r^{-(k-(1/p+j))} \nabla^j f\|_{L^p(\Gamma_\ell^{\mathcal{C}_1})}^p \lesssim \|f\|_{W^{k-1/p}(\Gamma_\ell^{\mathcal{C}_1})}^p 
\lesssim \|f\|_{W^{k,p}(\mathcal{C}_1)}^p.
\end{align}
\emph{The case $p < 2$:} Then, $\lfloor k-2/p\rfloor = k-2$. We have $\nabla^{k-1} f \in W^{1-1/p,p}(\Gamma_l^{\mathcal{C}_1})$. 
In view ovew of $p < 2$, which implies $1-1/p < 1/p$, we have have   
$\nabla^{k-1} f \in W^{1-1/p,p}(\Gamma_l^{\mathcal{C}_1}) = 
\widetilde{W}^{1-1/p,p}(\Gamma_l^{\mathcal{C}_1})$, \cite[Cor.~{1.4.4.5}]{Grisvard_2011}.
Since $\partial_x^i \partial_y^{j'} f(0)=0$ for $i+j'\leq \lfloor k-2/p\rfloor = k-2$, taking 
$k-1-j$ (tangential) antiderivatives shows for $j=0,\ldots,k-1$ that 
$\chi \nabla^j f \in \widetilde{W}^{(k-1-j)+1-1/p,p}(\Gamma_l^{\mathcal{C}_1})$, where $\chi$ is smooth cut-off functions
with $\chi \equiv 1$ near the origin, which we merely introduce for notational convenience to localize near the origin. 
Hence, in view of \cite[Thm.~{1.4.4.4}]{Grisvard_2011} we arrive at 
\begin{align}
\label{eq:lemma:Sobolev-to-cone-second-version-2-Lp}
\sum_{j=0}^{k-1} \|r^{-(k-(1/p+j))} \nabla^j f\|_{L^p(\Gamma_\ell^{\mathcal{C}_1})}^p \lesssim \|f\|_{W^{k-1/p}(\Gamma_\ell^{\mathcal{C}_1})}^p 
\lesssim \|f\|_{W^{k,p}(\mathcal{C}_1)}^p.
\end{align}

\end{numberedproof}

%-----------------------------------------------------------
\section{Solving elliptic equations with the Mellin calculus: the $L^2$-setting}
\label{appendix:L2}
%-----------------------------------------------------------
We present classical material about solving elliptic equation using the 
Mellin calculus. In this section, we focus on the $L^2$-setting, which meshes
well with the Mellin transformation due Plancherel's equality. We follow 
the presentation \cite{Dauge_1997}. 
%-----------------------------------------------------------
\subsection{Fourier transformation and Paley-Wiener theorem} 
%---------------------------------------------------------------------
Define 
\begin{equation}
{\mathcal F}f(\xi):= \widehat f(\xi) := \frac{1}{\sqrt{2\pi}} \int_{x} e^{-i x \xi} f(x)\, dx
\end{equation}
with inverse formula
\begin{equation}
f(x)= \frac{1}{\sqrt{2\pi}} \int_{\xi} e^{i  x \xi} \widehat f(\xi)\, d\xi. 
\end{equation}
\emph{Facts:}
\begin{enumerate}[label=(\roman*)]
\item
\label{fourier-facts-i}
for $f \in L^2$, the Fourier transformation is defined as 
$$
\widehat f(\xi) = \operatorname{l.i.m.}_{R \rightarrow \infty} \frac{1}{\sqrt{2\pi}} \int_{-R}^R e^{-i x \xi} f(x)\, dx, 
$$
where $\operatorname{l.i.m.}$ stands for ``limit in the mean'', i.e., in $L^2$; this definition is consistent with
defining the Fourier transformation first on the Schwartz space ${\mathcal S}$, 
then by duality on the space of tempered distributions ${\mathcal S}^\prime$---see, e.g., \cite[Chap.~6, Sec.~2]{yosida80a}. 
\item
\label{fourier-facts-ii}
${\mathcal F}:L^2(\BbbR) \rightarrow L^2(\BbbR)$ isometric isomorphism by Plancherel's theorem. 
\item
\label{fourier-facts-iii}
If $f \in L^2$ with $\operatorname{supp} f \subset [-B,B]$, then $\widehat f$ is an entire function with $|\widehat f(z)| \leq C e^{B |\operatorname{Im} z|}$
\item 
\label{fourier-facts-iv}
If $f \in L^2$ with $\operatorname{supp} f \subset [0,\infty)$, then $\widehat f$ is holomorphic in $\{z \in \BbbC\,|\, \operatorname{Im} z < 0\}$. 
\end{enumerate}
A generalization of \ref{fourier-facts-iv} is 
\begin{lemma}
\label{lemma:7.1} 
Let $a < b$. Then: 
\begin{enumerate}[label=(\roman*)]
\item
\label{item:lemma:7.1-i}
$e^{ax} f \in L^2$ and $e^{bx} f \in L^2$ implies 
\begin{enumerate}[label=(\alph*)]
\item 
\label{item:lemma:7.1-ia}
$\widehat f$ holomorphic in the strip  $\{z \in \BbbC\,|\, a < \operatorname{Im} z < b\}$
\item
\label{item:lemma:7.1-ib}
$\|e^{a x} f \|^2_{L^2}  = \|\widehat f(\cdot + ia)\|^2_{L^2} = \lim_{\eta \rightarrow a+} \|\widehat f(\cdot + i \eta)\|^2_{L^2}$
%{\tiny strictly speaking, $\widehat f(\cdot + ia)$ has to be understood as an $L^2$ function in view of 
%(\ref{item:lemma:7.1-id}) below}
\item
\label{item:lemma:7.1-ic}
$\|e^{b x} f \|^2_{L^2}  = \|\widehat f(\cdot + ib)\|^2_{L^2} = \lim_{\eta \rightarrow b-} \|\widehat f(\cdot + i \eta)\|^2_{L^2}$
\item
\label{item:lemma:7.1-id}
$\widehat f(\cdot + i \eta) \rightarrow \widehat f(\cdot + ia)$ in $L^2$ as $\eta \rightarrow a$ and 
$\widehat f(\cdot + i \eta) \rightarrow \widehat f(\cdot + ib)$ in $L^2$ as $\eta \rightarrow b$
\item
\label{item:lemma:7.1-ie}
for $\theta \in [0,1]$ there holds 
$$
\|\widehat f(\cdot + i (\theta a + (1-\theta) b)\|_{L^2} \leq \|\widehat f(\cdot + ia)\|^\theta_{L^2} \|\widehat f(\cdot + i b)\|^{1-\theta}_{L^2}
$$
\end{enumerate}
\item 
\label{item:lemma:7.1-ii}
If a function $\widehat f$ is holomorphic in $\{z \in \BbbC\,|\, a < \operatorname{Im} z < b\}$ and $\sup_{\eta \in (a,b)} \|\widehat f(\cdot + i \eta)\|_{L^2} < \infty$, 
then the inversion formula 
$$
f(x) = \frac{1}{\sqrt{2\pi}} \int_{\operatorname{Im} \zeta = \eta}\widehat f(\zeta) e^{i \zeta x} \, d\zeta
$$
holds for any $a < \eta < b$. 
(The integral is to be understood as $\operatorname*{l.i.m.}_{R \rightarrow \infty} \int_{-R}^R$.)
Moreover, $e^{a x} f \in L^2$ and $e^{bx} f \in L^2$. 
\end{enumerate}
\end{lemma}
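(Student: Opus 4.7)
The plan is to view the values of $\widehat{f}$ on a horizontal line $\{\operatorname{Im}\zeta = \eta\}$ as the ordinary $L^2$-Fourier transform of the modulated function $g_\eta(x):=e^{\eta x}f(x)$. The key preliminary observation is that for $\eta\in[a,b]$ one has the pointwise bound $|e^{\eta x}|^2\leq|e^{ax}|^2+|e^{bx}|^2$ (split according to $x\geq 0$ or $x\leq 0$), so by hypothesis $g_\eta\in L^2(\BbbR)$ with $\|g_\eta\|_{L^2}\leq \|e^{ax}f\|_{L^2}+\|e^{bx}f\|_{L^2}$. I would then define (consistently with the classical formula when $f\in L^1$) $\widehat{f}(\xi+i\eta):=\widehat{g_\eta}(\xi)$, where the right-hand side is the $L^2$-Fourier transform in the sense of fact \ref{fourier-facts-i}.

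With this setup, part \ref{item:lemma:7.1-ib} and \ref{item:lemma:7.1-ic} are immediate from Plancherel's identity applied to $g_\eta$, and the $L^2$-continuity in \ref{item:lemma:7.1-id} follows from $L^2$-continuity of $\eta\mapsto g_\eta$, which in turn is a standard dominated-convergence argument based on the majorant $e^{ax}|f(x)|+e^{bx}|f(x)|$. Holomorphy in the \emph{open} strip, part \ref{item:lemma:7.1-ia}, is verified by approximation: first replace $f$ by $f_n:= f\,\mathbf{1}_{[-n,n]}\in L^1\cap L^2$, for which the defining integral $\widehat{f_n}(\zeta)=\tfrac{1}{\sqrt{2\pi}}\int_{-n}^n e^{-ix\zeta}f(x)\,dx$ is visibly entire, then observe that $\widehat{f_n}(\cdot+i\eta)\to\widehat{f}(\cdot+i\eta)$ in $L^2$ uniformly on compact $\eta$-intervals contained in $(a,b)$ (again via the majorant). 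A locally uniform $L^2$-limit of holomorphic functions on each horizontal line, together with uniform bounds on a rectangle, gives holomorphy by Morera applied to a Cauchy-type integral (or, equivalently, by noting that the weak derivative in $\bar{\zeta}$ vanishes). Part \ref{item:lemma:7.1-ie} is then the classical Hadamard three-lines theorem applied to $\zeta\mapsto \langle \widehat{f}(\cdot+\zeta),\phi\rangle_{L^2}$ for test functions $\phi$, combined with taking supremum over $\|\phi\|_{L^2}=1$.

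For the converse \ref{item:lemma:7.1-ii}, the hypothesis gives a family $\{\widehat{f}(\cdot+i\eta)\}_{\eta\in(a,b)}\subset L^2$ uniformly bounded in $\eta$. For each such $\eta$, I define $f_\eta(x):=\tfrac{1}{\sqrt{2\pi}}\int_{\operatorname{Im}\zeta=\eta}e^{ix\zeta}\widehat{f}(\zeta)\,d\zeta$, understood as the inverse $L^2$-Fourier transform of $\widehat{f}(\cdot+i\eta)$ times $e^{-\eta x}$. The main task is to show that $e^{\eta x}f_\eta(x)$ is independent of $\eta$, which I would do by applying Cauchy's theorem on a rectangle $[-R,R]\times[\eta_1,\eta_2]$ to $e^{ix\zeta}\widehat{f}(\zeta)$ (for fixed $x$, against a test function in $\xi$), then sending $R\to\infty$ and using that the vertical sides contribute vanishingly in the mean thanks to the uniform $L^2$-bound. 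Once this independence is established, setting $f:=f_\eta$ shows $e^{\eta x}f\in L^2$ for every $\eta\in(a,b)$; passing $\eta\to a^+$ and $\eta\to b^-$ and invoking Fatou (or monotone convergence on $\{x\geq 0\}$ and $\{x\leq 0\}$ separately) yields $e^{ax}f,e^{bx}f\in L^2$.

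The main obstacle I anticipate is the rigorous identification on horizontal lines in \ref{item:lemma:7.1-ia}: one must pass between the pointwise Cauchy-integral description of holomorphy and the $L^2$-definition of $\widehat{f}(\cdot+i\eta)$. The truncation argument $f_n=f\mathbf{1}_{[-n,n]}$ is the cleanest bridge, but care is needed to ensure local uniform $L^2$-convergence in $\eta$ so that holomorphy transfers to the limit; once this is in place, the remaining assertions are routine applications of Plancherel, dominated convergence, Hadamard's three-lines theorem, and Cauchy's theorem.
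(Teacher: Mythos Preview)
Your outline is correct and follows the same overall architecture as the paper: identify $\widehat f(\cdot+i\eta)$ with the Fourier transform of $g_\eta=e^{\eta x}f$, use Plancherel and dominated convergence for \ref{item:lemma:7.1-ib}--\ref{item:lemma:7.1-id}, and for \ref{item:lemma:7.1-ii} shift the contour via Cauchy's theorem on rectangles, killing the vertical sides along a suitable sequence and recovering the endpoint integrability by monotone convergence on the two half-lines. Two of your sub-arguments differ from the paper's and are worth noting. For \ref{item:lemma:7.1-ia} you go through a truncation $f_n=f\mathbf{1}_{[-n,n]}$ and then have to upgrade locally-uniform-in-$\eta$ $L^2$-convergence on lines to holomorphy of the limit; this works (via the subharmonic/Cauchy estimate $|F(z_0)|\lesssim\|F\|_{L^2(\text{disk})}$), but the paper avoids it entirely by observing that for $\zeta$ in the \emph{open} strip the defining integral $\int e^{-ix\zeta}f(x)\,dx$ is already absolutely convergent (split $x\gtrless 0$ and pair with $e^{bx}f$, $e^{ax}f$ respectively), so Morera applies directly. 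For \ref{item:lemma:7.1-ie} you invoke Hadamard three-lines, whereas the paper simply applies Plancherel to rewrite $\|\widehat f(\cdot+i(\theta a+(1-\theta)b))\|_{L^2}^2=\int e^{2\theta ax}|f|^{2\theta}\,e^{2(1-\theta)bx}|f|^{2(1-\theta)}$ and uses H\"older with exponents $1/\theta$, $1/(1-\theta)$; this one-line computation is more elementary and does not require having first established holomorphy. For \ref{item:lemma:7.1-ii}, your phrase ``vanishingly in the mean'' should be made precise exactly as the paper does: from $\sup_\eta\|\widehat f(\cdot+i\eta)\|_{L^2}<\infty$ and Fubini one gets $\int_\xi\int_{\eta}^{\eta'}|\widehat f(\xi+iy)|^2\,dy\,d\xi<\infty$, hence there is a sequence $\xi_n\to\infty$ along which $\int_{\eta}^{\eta'}|\widehat f(\pm\xi_n+iy)|^2\,dy\to 0$, which is what is actually used to discard the vertical sides.
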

\begin{proof}
\emph{Proof of \ref{item:lemma:7.1-ia}:}  For compact subsets $K \subset S:= \{z \in \BbbC\,|\, a < \operatorname{Im} z < b\}$ the integral 
of the Fourier transform is absolutely uniformly (in $z$) bounded: 
\begin{align*}
\int_{|x| \leq R} |e^{-i x z} f(x)| &\leq 
\int_{x \in \BbbR} |e^{-i x z} f(x)| \leq 
\int_{x} |f(x)| e^{x \operatorname{Im} z} = \int_{x < 0} |f(x)| e^{x \operatorname{Im} z} + \int_{x > 0} |f(x)| e^{x \operatorname{Im} z}
\\
&\leq \int_{x < 0} |f(x)| e^{ax}  e^{x (\operatorname{Im} z - a)}+ 
     \int_{x > 0} |f(x)| e^{bx} e^{x (\operatorname{Im} z - b)}  \\
&\stackrel{\operatorname{Im} z - a > 0, \operatorname{Im} z - b < 0}{\lesssim} 
\|e^{ax} f\|_{L^2} + \|e^{bx} f\|_{L^2}
\end{align*}
From this, it is easy to see that $z \mapsto \widehat f(z)$ is a continuous function and then that
path integrals $\int_{\Gamma} \widehat f(z)\,dz = 0$ for closed paths $\Gamma$ since $z \mapsto e^{i x z}$ is holomorphic. 
Hence, by Morera's theorem, $\widehat f$ is holomorphic. 

\emph{Proof of \ref{item:lemma:7.1-ib}:}  
\begin{itemize}
\item 
$\|e^{ax} f\|_{L^2} = \|\widehat f(\cdot + ia)\|_{L^2}$ is Plancherel
\item 
$\displaystyle 
\|\widehat f(\cdot + i \eta)\|_{L^2} \stackrel{\text{Plancherel}}{ = } \|e^{\eta x} f\|_{L^2} \stackrel{\eta \rightarrow a+}{\rightarrow }
\|e^{a x} f\|_{L^2} 
$
by Lebesgue dominiated convergence since $e^{\eta x} \leq \max\{e^{ax}, e^{bx}\}$. 
(Note: $e^{\eta x} \leq \max\{e^{ax}, e^{bx}\}$)
\end{itemize}
\emph{Proof of \ref{item:lemma:7.1-id}:}  
\begin{align*}
\|\widehat f(\cdot + i \eta) - \widehat f(\cdot + i a)\|_{L^2} 
\stackrel{\text{Plancherel}}{ = } \|(e^{\eta x} - e^{ax}) f\|_{L^2} \stackrel{\eta \rightarrow a}{\rightarrow } 0 \mbox{ by dominated convergence}
\end{align*}
\emph{Proof of \ref{item:lemma:7.1-ie}:}  
\begin{align*}
\|\widehat f(\cdot + i (\theta a + (1-\theta) b)\|^2_{L^2} &\stackrel{\text{Plancherel}}{ = } \|e^{(a \theta + (1-\theta) b)x} f\|^2_{L^2} 
 = \int_{x} e^{2 \theta a x}|f|^{2\theta} e^{2(1-\theta) b x} |f|^{2(1-\theta)}  \\
& \stackrel{\text{H\"older}}{\leq} \left( \int_{x} e^{2ax} |f|^2\right)^{\theta} \left(\int_x e^{2bx} |f|^2\right)^{1-\theta}
\end{align*}
\emph{Proof of \ref{item:lemma:7.1-ii}:}  
\emph{idea:} define $f$ by inversion formula for \emph{fixed} $\eta \in (a,b)$ and show independence of $\eta$ with Cauchy integral formula.

Fix $\eta \in (a,b)$ and \emph{define}
$$
f(x) = \frac{1}{\sqrt{2\pi}} \int_{\operatorname{Im} \zeta = \eta}  e^{i x \zeta} \widehat f(\zeta)\, d\zeta. 
$$
More precisely, using that $\widehat f(\cdot + i \eta) \in L^2$, we have 
$$
e^{\eta x} f(x):= \operatorname*{l.i.m.}_{R \rightarrow \infty} \frac{1}{\sqrt{2\pi}} \int_{\xi=-R}^R e^{i x \xi} \widehat f(\xi + i \eta)\, d\xi. 
$$
By holomorphy of $\widehat f$, we have by Cauchy's integral theorem by setting 
$\Gamma_{\eta} = \{z\,|\, \operatorname{Im} z = \eta, |\operatorname{Re} z| \leq \xi_n\}$, 
$\Gamma_{\eta'} = \{z\,|\, \operatorname{Im} z = \eta', |\operatorname{Re} z| \leq \xi_n\}$, 
$\Gamma_l(\xi_n) = \{z\,|\, \operatorname{Re} z = -\xi_n, \quad \operatorname{Im} z \in (\eta,\eta')\}$ with paths oriented appropriately: 
\begin{align*}
\int_{z \in \Gamma_\eta} e^{i x z} \widehat f(z) & = \int_{z \in \Gamma_{\eta'}} e^{i x z} \widehat f(z) + \int_{z \in \Gamma_l} e^{i x z} \widehat f(z) + 
\int_{z \in \Gamma_r} e^{i x z} \widehat f(z) 
\end{align*}
\emph{Claim:} there is a sequence $(\xi_n)_n$ with $\xi_n \rightarrow +\infty$ such that 
$\int_{z \in \Gamma_l(\xi_n)} |\widehat f(z)|^2 + \int_{z \in \Gamma_r(\xi_n)} |\widehat f(z)|^2 \rightarrow 0$. 
To see this, note the assumption $\sup_{\eta \leq y \leq \eta'} \int_{\xi \in \BbbR} |\widehat f (\xi + i y)|^2 < \infty$. Hence, 
\begin{align*}
\infty & > \int_{y \in (\eta,\eta')} \int_{\xi} |\widehat f(\xi + i y)|^2 \stackrel{\text{Fubini}}{  = }
\int_{\xi} \underbrace{ \int_{y=\eta}^{\eta'} |\widehat f(\xi+ iy)|^2\, dy}_{=:g(\xi) \in C(\BbbR)}\,d\xi
\end{align*}
Hence, there exists a sequence $(\xi_n)_n$ with $\xi_n \rightarrow \infty$ such that $g(\pm \xi_n) \rightarrow 0$. 
(Use $\int_{\xi=0}^\infty g(\xi) + g(-\xi)\,d\xi < \infty$, which implies that $g(\xi_n) + g(-\xi_n)\rightarrow 0$.)
We conclude  for fixed $x$ 
\begin{equation}
\label{eq:7.1}
\lim_{n \rightarrow \infty} \int_{\xi = -\xi_n}^{\xi_n} e^{i x (\xi + i \eta)} \widehat f(\xi + i \eta)\, d\xi 
= \lim_{n \rightarrow \infty} \int_{-\xi_n}^{\xi_n} e^{i x (\xi + i \eta')}\widehat f(\xi + i \eta')\, d\xi. 
\end{equation}
Since $\widehat f(\cdot + i \eta) \in L^2$ and $\widehat f(\cdot + i \eta') \in L^2$, we have 
\begin{align*}
&\lim_{n \rightarrow \infty} \int_{-\xi_n}^{\xi_n} e^{i x \xi} \widehat f(\xi + i \eta)\, d\xi \mbox{ exists in $L^2$ and equals $e^{\eta x} f(x)$ }\\
&\lim_{n \rightarrow \infty} \int_{-\xi_n}^{\xi_n} e^{i x \xi} \widehat f(\xi + i \eta')\, d\xi \mbox{ exists in $L^2$.}
\end{align*}
By passing, if necessary, to subsequences, the convergence is also pointwise a.e.. We conclude using (\ref{eq:7.1})
$$
\int_{\operatorname{Im} \zeta = \eta} e^{i x \zeta} f(\zeta) = \int_{\operatorname{Im} \zeta = \eta'} e^{i x \zeta} \widehat f(\zeta), 
$$
where the convergence of both integrals is understood as a Fourier transformation in $\xi \in \BbbR$, 
i.e., as $\lim_{R \rightarrow \infty} \int_{-R}^R$. 

We now show $e^{ax} \in L^2$ and $e^{bx} f \in L^2$: By Plancherel, for $\eta \in (a,b)$, we have 
$$
\|e^{\eta x} f\|_{L^2} = \|\widehat f(\cdot + i \eta)\|_{L^2} \leq C \quad \mbox {uniformly in $\eta$}. 
$$
For $x > 0$ and $\eta \rightarrow b-$ we have $e^{\eta x} |f| \uparrow e^{bx} |f|$ and $\|e^{\eta x} f\|_{L^2(0,\infty)} \leq C$. By the monotone convergence theorem, 
$e^{b x} f \in L^2(0,\infty)$. Analogously, $e^{ax} f \in L^2(-\infty,0)$. This shows $e^{ax} f$, $e^{bx} f \in L^2(\BbbR)$. 
\end{proof}
By letting $a \rightarrow -\infty$ and $b=0$ (or, similarly, $a = 0$ and $b \rightarrow \infty$) we obtain
\begin{corollary}
\label{cor:7.2}
\begin{enumerate}[label=(\roman*)]
\item 
\label{item:cor:7.2-i}
Let $f \in L^2$ with $\operatorname{supp} f \subset [0,\infty)$. Then $f$ is holomorphic on $\{z \in \BbbC\,|\, \operatorname{Im} z < 0\}$ and 
$\sup_{\eta < 0} \|\widehat f(\cdot + i \eta)\|_{L^2} \leq \|\widehat f\|_{L^2}$ and $\lim_{\eta \rightarrow 0} \|\widehat f(\cdot + i \eta) - \widehat f\|_{L^2} =0$. 
\item
\label{item:cor:7.2-ii}
Conversely, a function $\widehat f$ that is holomorphic on $\{\operatorname{Im} z < 0\}$ and satisfies $\sup_{\eta < 0} \|\widehat f(\cdot + i \eta)\|_{L^2} < \infty$
is the FT of a function $f \in L^2$ with $\operatorname{supp} f \subset [0,\infty)$. 
\end{enumerate}
\end{corollary}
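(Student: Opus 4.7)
The plan is to deduce Corollary~\ref{cor:7.2} directly from Lemma~\ref{lemma:7.1} by letting $a \to -\infty$ while keeping $b=0$, as indicated by the prefatory remark. The hypothesis ``$\operatorname{supp} f \subset [0,\infty)$'' will correspond exactly to the Lemma's hypothesis that $e^{ax} f \in L^2$ for every $a \le 0$, since on $[0,\infty)$ one has $|e^{ax} f| \le |f|$ whenever $a \le 0$.

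For part \ref{item:cor:7.2-i}, fix an arbitrary $a < 0$. Since $f \in L^2$ and $f$ vanishes on $(-\infty,0)$, both $e^{ax} f$ and $e^{0 \cdot x} f = f$ lie in $L^2(\BbbR)$. Lemma~\ref{lemma:7.1}\ref{item:lemma:7.1-i}\ref{item:lemma:7.1-ia} then yields holomorphy of $\widehat f$ on the strip $\{a < \operatorname{Im} z < 0\}$, and letting $a \to -\infty$ gives holomorphy on the whole lower half-plane. The norm bound follows from Lemma~\ref{lemma:7.1}\ref{item:lemma:7.1-i}\ref{item:lemma:7.1-ib} applied with lower level $\eta \in (a,0)$, namely $\|\widehat f(\cdot + i \eta)\|_{L^2} = \|e^{\eta x} f\|_{L^2} \le \|f\|_{L^2} = \|\widehat f\|_{L^2}$ (the last equality is Plancherel). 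The continuity statement at $\eta \to 0-$ is just Lemma~\ref{lemma:7.1}\ref{item:lemma:7.1-i}\ref{item:lemma:7.1-id} with $b = 0$.

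For part \ref{item:cor:7.2-ii}, let $\widehat f$ be the given holomorphic function and fix $a < 0$. The hypothesis provides $\sup_{\eta \in (a,0)} \|\widehat f(\cdot + i \eta)\|_{L^2} < \infty$, so Lemma~\ref{lemma:7.1}\ref{item:lemma:7.1-ii} produces a function $f_a \in L^2$ with $e^{ax} f_a \in L^2$ and $e^{0 \cdot x} f_a = f_a \in L^2$, defined for any $\eta \in (a,0)$ by the inversion formula
\begin{equation*}
f_a(x) = \frac{1}{\sqrt{2\pi}} \int_{\operatorname{Im} \zeta = \eta} \widehat f(\zeta) e^{i \zeta x}\, d\zeta.
\end{equation*}
Since the right-hand side is independent of $a$ (we may move the contour freely within $\{\operatorname{Im} z < 0\}$ by the Cauchy integral argument already used in the proof of Lemma~\ref{lemma:7.1}\ref{item:lemma:7.1-ii}), we obtain a single function $f \in L^2$ satisfying $e^{ax} f \in L^2$ for every $a < 0$.

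The one genuinely nontrivial step, which I anticipate as the main technical obstacle, is showing $\operatorname{supp} f \subset [0,\infty)$. The bound $\|e^{ax} f\|_{L^2}^2 = \|\widehat f(\cdot + ia)\|_{L^2}^2 \le C$ uniformly in $a < 0$ implies in particular
\begin{equation*}
\int_{-\infty}^0 |f(x)|^2 e^{2ax}\, dx \le C \quad \text{for all } a < 0.
\end{equation*}
For $x < 0$ one has $e^{2ax} \to +\infty$ as $a \to -\infty$, so Fatou's lemma forces $\int_{-\infty}^0 |f(x)|^2\, dx = 0$, i.e.\ $f = 0$ a.e.\ on $(-\infty,0)$. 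Finally, to check that the given $\widehat f$ really is the Fourier transform of this $f$, one observes that Lemma~\ref{lemma:7.1}\ref{item:lemma:7.1-i}\ref{item:lemma:7.1-id} (now applied to the already-constructed $f$) gives $\widehat f(\cdot + i\eta) \to \widehat f(\cdot)$ in $L^2$ as $\eta \to 0-$, matching the Fourier transform of $f$ on the real line by the inversion formula.
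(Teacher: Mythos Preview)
Your proof is correct and follows exactly the approach sketched in the paper: apply Lemma~\ref{lemma:7.1} with $b=0$ and arbitrary $a<0$, then let $a\to-\infty$. Your Fatou argument for deducing $f=0$ on $(-\infty,0)$ from the uniform bound $\|e^{ax}f\|_{L^2}\le C$ is the detailed version of the paper's one-line ``Hence, $f(x)=0$ for $x<0$,'' and your final paragraph checking that the boundary value of the given $\widehat f$ matches the Fourier transform of the constructed $f$ is a welcome clarification the paper leaves implicit.
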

\begin{proof}
\emph{Proof of \ref{item:cor:7.2-i}:} 
$\operatorname{supp} f \subset [0,\infty)$ implies that we have may take $a = -\infty$ and $b= 0$ in Lemma~\ref{lemma:7.1}. 

\emph{Proof of \ref{item:cor:7.2-ii}:} 
Lemma~\ref{lemma:7.1}, (\ref{item:lemma:7.1-ii}) shows $e^{ax} f \in L^2$ for every $a < 0$ and 
$\|e^{ax} f\|_{L^2}$ is bounded uniformly in $a$. Hence, $f(x) = 0$ for $x < 0$. 
\end{proof}
%------------------------------------
\subsection{Mellin transformation}
%-------------------------------------
\subsubsection{Mellin transformation in 1D}
%-------------------------------------
The Mellin transformation is the appropriate tool to analyze functions of the form $u(r) = r^\beta U(r)$, $r > 0$. It is defined
by a) substitution $r = e^t$: $u(r) \leftrightarrow \check u(t) := u(e^t)$ (``Euler transformation'')  and 
b) Fourier transformation of $\check u$: 
\begin{align}
\label{eq:mellin}
({\mathcal M}u)(\xi) &:= ({\mathcal F} \check u)(\xi)  = \frac{1}{\sqrt{2\pi}} \int_{t = -\infty}^\infty e^{-i \xi t} \check u(t)\, dt
 = \frac{1}{\sqrt{2\pi}} \int_{r=0}^\infty  e^{-i \xi \ln r} u(r) \frac{dr}{r} \\
\nonumber 
& = \frac{1}{\sqrt{2\pi}} \int_{r=0}^\infty r^{-i \xi} u(r) \frac{dr}{r}
\end{align}
with the (formal) inverse 
$$
u(r) = \frac{1}{\sqrt{2\pi}} \int_{\xi=-\infty}^\infty r^{i\xi} ({\mathcal M} u)(\xi) \,d\xi. 
$$
\begin{lemma} There holds the norm equivalence 
$$
\|r^\beta u\|_{L^2(\BbbR^+)} \sim \|{\mathcal M} u(\cdot + (\beta+1/2) i)\|_{L^2}
$$
\end{lemma}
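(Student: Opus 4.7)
The plan is to reduce the claim to Plancherel's identity for the Fourier transform by means of the substitution $r = e^t$ that already underlies the definition \eqref{eq:mellin} of $\mathcal{M}$.

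First I would observe that the Mellin transform on the horizontal line $\operatorname{Im}\zeta = \beta + 1/2$ is, up to an exponential weight, a Fourier transform: writing $\zeta = \xi + i(\beta + 1/2)$ and substituting $r = e^t$ in \eqref{eq:mellin} gives
\begin{equation*}
(\mathcal{M}u)\bigl(\xi + i(\beta+1/2)\bigr)
= \frac{1}{\sqrt{2\pi}} \int_{-\infty}^\infty e^{-i\xi t}\, e^{(\beta+1/2) t}\, \check u(t)\, dt
= \mathcal{F}\bigl[e^{(\beta+1/2)\cdot}\, \check u\bigr](\xi).
\end{equation*}

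Second, I would apply Plancherel's theorem (fact \ref{fourier-facts-ii}) to the function $t \mapsto e^{(\beta+1/2)t}\check u(t)$, which yields the isometry
\begin{equation*}
\bigl\|(\mathcal{M}u)\bigl(\cdot + i(\beta+1/2)\bigr)\bigr\|_{L^2(\BbbR)}^2
= \int_{-\infty}^\infty e^{(2\beta+1)t}\, |u(e^t)|^2\, dt.
\end{equation*}

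Third, I would change variables back via $r = e^t$, so that $dt = dr/r$ and $e^{(2\beta+1)t} = r^{2\beta+1}$, which converts the right-hand side into $\int_0^\infty r^{2\beta}|u(r)|^2\,dr = \|r^\beta u\|_{L^2(\BbbR^+)}^2$. This gives the identity (in fact, with constant $1$, so the equivalence is actually an equality), provided one interprets $\mathcal{M}u$ on the relevant line, as for the Fourier transform, in the $L^2$-sense (``limit in the mean'') when the integral defining \eqref{eq:mellin} does not converge absolutely.

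The only point requiring minor care is the usual technical remark that the definition of $\mathcal{M}u$ via the integral in \eqref{eq:mellin} is in general only a formal expression on the target line $\{\operatorname{Im}\zeta = \beta+1/2\}$; one first verifies the identity for $u$ such that $r^\beta u$ is, say, a Schwartz function in the variable $t = \ln r$ (so both sides are finite and the computation is literal), and then extends by density using the fact that both sides of the claimed equivalence are $L^2$-norms. This is the same density argument as the one used to define $\mathcal{F}$ on $L^2$ in fact \ref{fourier-facts-i}, and it is the closest thing to an obstacle — but it is completely routine.
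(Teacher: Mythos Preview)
Your proof is correct and follows exactly the same approach as the paper: identify $\mathcal{M}u(\cdot + i(\beta+1/2))$ as the Fourier transform of $e^{(\beta+1/2)t}\check u$ via the substitution $r=e^t$, apply Plancherel, and change variables back to recover $\|r^\beta u\|_{L^2(\BbbR^+)}$. The paper's version is simply a more compressed presentation of the same two observations.
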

\begin{proof}
\begin{itemize}
\item 
$e^{at} \check u \in L^2 \quad \Longleftrightarrow \quad {\mathcal M} u(\cdot + i a) \in L^2$ 
\item
$r^\beta  u \in L^2(\BbbR^+) \quad \Longleftrightarrow \quad \infty > \int_r r^{2\beta} u^2 = \int_t e^{2 \beta t} \check u^2(t) e^t\, dt$
\end{itemize}
\end{proof}
The analog of Corollary~\ref{cor:7.2} is the case of $u$ with $u(r) = 0$ for $r \ge 1$. Then, ${\mathcal M} u$ is holomorphic 
in the half-space $\{\operatorname{Im} z > 0\}$. However, the decay of $\check u$ at $t = -\infty$ or, of $u$ at $r = 0$, allows one to extend ${\mathcal M} u$
meromorphically into the lower half plane: 
\begin{lemma}
\label{lemma:8.2}
Let $u \in C^{k,\sigma}([0,\infty))$ with $u(r) = 0$ for $r \ge 1$. Then ${\mathcal M} u$ has a meromorphic extension to 
$\{ z \in \BbbC\,|\, \operatorname{Im} z > - (k + \sigma)\}$ with simple poles at $0, -i$, $-2i,\ldots,-ki$. 
\end{lemma}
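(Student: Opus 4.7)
\textbf{Proof plan for Lemma~\ref{lemma:8.2}.} The plan is to split ${\mathcal M} u$ into an explicit meromorphic part coming from the Taylor polynomial of $u$ at $r=0$ and a holomorphic remainder. Since $u \in C^{k,\sigma}([0,\infty))$, the Taylor expansion at $r=0$ gives
\begin{equation*}
u(r) = T_k(r) + R_k(r), \qquad T_k(r) := \sum_{j=0}^k \frac{u^{(j)}(0)}{j!} r^j,
\end{equation*}
with a remainder bound $|R_k(r)| \lesssim r^{k+\sigma}$ for $r \in [0,1]$ (from the integral form of the remainder together with the $\sigma$-H\"older continuity of $u^{(k)}$). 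Because $u$ vanishes on $[1,\infty)$, we have ${\mathcal M} u(\zeta) = \frac{1}{\sqrt{2\pi}}\int_0^1 r^{-i\zeta - 1} u(r)\,dr$, and the natural splitting is
\begin{equation*}
{\mathcal M} u(\zeta) = \sum_{j=0}^k \frac{u^{(j)}(0)}{j!} \, {\mathcal M}[\chi_{[0,1]} r^j](\zeta) + \frac{1}{\sqrt{2\pi}} \int_0^1 r^{-i\zeta - 1} R_k(r)\,dr.
\end{equation*}

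First I would compute the building block ${\mathcal M}[\chi_{[0,1]} r^j](\zeta)$. For $\operatorname{Im}\zeta > -j$ the integrand satisfies $|r^{-i\zeta -1 + j}| = r^{\operatorname{Im}\zeta + j - 1}$ and so
\begin{equation*}
{\mathcal M}[\chi_{[0,1]} r^j](\zeta) = \frac{1}{\sqrt{2\pi}}\int_0^1 r^{j - i\zeta - 1}\,dr = \frac{1}{\sqrt{2\pi}\,(j - i\zeta)}.
\end{equation*}
The right-hand side is meromorphic on all of $\BbbC$ with a simple pole at $\zeta = -ij$, and this furnishes the meromorphic extension of ${\mathcal M}[\chi_{[0,1]} r^j]$. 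Summing these contributions for $j = 0, \ldots, k$ produces simple poles exactly at the points $0, -i, \ldots, -ki$, with residues proportional to the Taylor coefficients $u^{(j)}(0)$.

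Next I would handle the remainder integral. Because $|R_k(r)| \lesssim r^{k+\sigma}$, its integrand is bounded in modulus by $C\,r^{\operatorname{Im}\zeta + k + \sigma - 1}$, which is integrable on $(0,1)$ precisely when $\operatorname{Im}\zeta > -(k+\sigma)$. A standard dominated-convergence argument (differentiating under the integral, using that $|\partial_\zeta r^{-i\zeta - 1}| = |\ln r|\,r^{\operatorname{Im}\zeta - 1}$) shows that $\zeta \mapsto \int_0^1 r^{-i\zeta - 1} R_k(r)\,dr$ is holomorphic on the half-plane $\{\operatorname{Im}\zeta > -(k+\sigma)\}$. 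Combining this with the explicit meromorphic formula for the polynomial part yields the desired meromorphic extension of ${\mathcal M} u$ to $\{\operatorname{Im}\zeta > -(k+\sigma)\}$, with at most simple poles on $\{0, -i, \ldots, -ki\}$.

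There is essentially no hard step here: the proof is driven entirely by the explicit formula $\int_0^1 r^{j - i \zeta -1}\,dr = (j - i\zeta)^{-1}$ and the Taylor remainder estimate. The only point that needs mild care is the remainder bound $|R_k(r)| \lesssim r^{k+\sigma}$ when $\sigma \in (0,1)$, which follows from writing $u^{(k)}(r) = u^{(k)}(0) + \bigl(u^{(k)}(r) - u^{(k)}(0)\bigr)$ and integrating $k$ times; the H\"older contribution integrates to a term bounded by $r^{k+\sigma}$, while the polynomial contributions of $u^{(k)}(0)$ combine with the lower-order Taylor terms to reproduce $T_k$.
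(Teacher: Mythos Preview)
Your proof is correct and follows essentially the same approach as the paper: Taylor-expand $u$ at $r=0$, compute ${\mathcal M}[\chi_{(0,1)} r^j](\zeta) = \frac{1}{\sqrt{2\pi}(j-i\zeta)}$ explicitly, and use the remainder bound $|R_k(r)| \lesssim r^{k+\sigma}$ to see that the remainder's Mellin transform is holomorphic on $\{\operatorname{Im}\zeta > -(k+\sigma)\}$. The paper phrases the last step via the Euler substitution $r=e^t$ (so $|\check R(t)| \lesssim e^{(k+\sigma)t}$ with support in $(-\infty,0]$), but this is the same argument as your direct dominated-convergence estimate.
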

\begin{proof}
Write 
\begin{align*}
u(r) & = \sum_{j=0}^k \frac{u^{(j)}(0)}{j!}  r^j \chi_{(0,1)}(r) + R(r),  \\
|R(r)| &\leq 
\begin{cases}
C r^{r + \sigma} & r \in (0,1) \\
0 & r \ge 1
\end{cases}
\end{align*}
Then
$$
{\mathcal M} u(z) = \sum_{j=0}^k \frac{u^{(j)}(0)}{j!}  {\mathcal M}(r^j \chi_{(0,1)}(r)) (z) + {\mathcal M} R(z)
$$
Since $|\check R(t)| \leq C e^{t (k+\sigma)}$ and $\operatorname{supp} \check R \subset (-\infty,0]$ we get that ${\mathcal M} R$
is holomorphic on $\{\operatorname{Im} z > - (k + \sigma)\}$. 

We compute (for $\operatorname{Im} z > 0$)
$$
{\mathcal M} (r^j \chi_{(0,1)}(r))(z) = \frac{1}{\sqrt{2\pi}} \int_{r=0}^1 r^{j-i z} \frac{dr}{r} = \frac{1}{\sqrt{2\pi}} \frac{1}{j-i z}
$$
so that 
$$
{\mathcal M} u(z) = \frac{1}{\sqrt{2\pi}} \sum_{j=0}^k \frac{u^{(j)}(0)}{j!}  \frac{1}{j - i z} + {\mathcal M} R(z)
$$
This formula is valid for $\operatorname{Im} z >0$. (This is where \textsl{a priori} the left-hand side is defined.)
The right-hand side is meromorphic function on $\{\operatorname{Im} z > -(k+\sigma)\}$. Hence, the left-hand side has a meromorphic
extension to $\{\operatorname{Im} z > -(k+\sigma)\}$. 
\end{proof}
Analogous to Lemma~\ref{lemma:7.1}, we have 
\begin{lemma}
If $\beta_1 < \beta_2$ and $r^{\beta_1} u \in L^2$, $r^{\beta_2} u \in L^2$, then ${\mathcal M} u$ is holomorphic in the strip
$\{\beta_1 + 1/2 < \operatorname{Im} z < \beta_2 + 1/2\}$. 
\end{lemma}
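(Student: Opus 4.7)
The plan is to reduce this statement directly to Lemma~\ref{lemma:7.1}\ref{item:lemma:7.1-i}\ref{item:lemma:7.1-ia} via the Euler substitution $r = e^t$ that defines the Mellin transform. Recall from (\ref{eq:mellin}) that $({\mathcal M}u)(z) = ({\mathcal F}\check u)(z)$ where $\check u(t) := u(e^t)$. Thus holomorphy of ${\mathcal M}u$ in a horizontal strip is equivalent to holomorphy of $\widehat{\check u}$ in that same strip, and Lemma~\ref{lemma:7.1} already supplies the precise criterion in terms of weighted $L^2$ integrability of $\check u$.

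First I would translate the hypotheses via change of variables: using $dr/r = dt$, a direct calculation gives
\begin{equation*}
\|r^\beta u\|_{L^2(\BbbR^+)}^2 = \int_{r=0}^\infty r^{2\beta} |u(r)|^2\,dr = \int_{t=-\infty}^\infty e^{(2\beta+1)t}|\check u(t)|^2\,dt = \|e^{(\beta+1/2)t}\check u\|_{L^2(\BbbR)}^2,
\end{equation*}
exactly as noted in the norm equivalence preceding the statement. Hence the hypotheses $r^{\beta_1} u, r^{\beta_2} u \in L^2(\BbbR^+)$ are equivalent to $e^{at}\check u, e^{bt}\check u \in L^2(\BbbR)$ with $a := \beta_1 + 1/2 < \beta_2 + 1/2 =: b$.

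Then I would invoke Lemma~\ref{lemma:7.1}\ref{item:lemma:7.1-i}\ref{item:lemma:7.1-ia} applied to $\check u$: this yields that $\widehat{\check u}$ is holomorphic in the open strip $\{z \in \BbbC : a < \operatorname{Im} z < b\}$. Since $({\mathcal M}u)(z) = (\widehat{\check u})(z)$ by definition, this is precisely the claim that ${\mathcal M}u$ is holomorphic in $\{\beta_1 + 1/2 < \operatorname{Im} z < \beta_2 + 1/2\}$. No real obstacle is expected here — the content of the lemma is already packaged in Lemma~\ref{lemma:7.1}, and the only step is the bookkeeping shift by $1/2$ coming from the Jacobian $dr/r$ of the Euler substitution. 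As a minor addendum, one could also record the quantitative bound $\|{\mathcal M}u(\cdot + i\eta)\|_{L^2} = \|r^{\eta-1/2} u\|_{L^2(\BbbR^+)}$ for $\eta \in (a,b)$ by composing the above identity with Plancherel (i.e., Lemma~\ref{lemma:7.1}\ref{item:lemma:7.1-i}\ref{item:lemma:7.1-ib}\ref{item:lemma:7.1-ic}), which is frequently the form in which the statement is used elsewhere in the Mellin calculus.
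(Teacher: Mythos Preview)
Your proposal is correct and matches the paper's approach: the paper states this lemma immediately after the norm equivalence $\|r^\beta u\|_{L^2(\BbbR^+)} \sim \|{\mathcal M}u(\cdot + (\beta+1/2)i)\|_{L^2}$ with the remark ``Analogous to Lemma~\ref{lemma:7.1}, we have'', and your write-up simply fills in that analogy via the Euler substitution exactly as intended.
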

%----------------------
\subsection{Mellin transformation and $K^s_\gamma$}
\label{sec:mellin-transformation-and-Ks}
%----------------------

\underline{Recall:} $\mathcal{C}= \{x \in \BbbR^d\,|\, \frac{x}{|x|} \in G\}$ for a $G \subset \partial B_1(0)$ 

\underline{Notation:} We denote by $\widetilde{\cdot}$ a function in polar coordinates and by $\check{\cdot}$ the subsequent 
Euler transformation:
\begin{align*}
u(x) & = \widetilde u(r,\theta) \mbox{ (change of variables: Cartesian $\leftrightarrow$ polar coordiate)} \\
\check u(t,\theta) & = u(e^t,\theta) \mbox{ (Euler transformation $r = e^t$.)}
\end{align*}
Note: $r\partial_r \widetilde u = \partial_t \check u$. 

In order to understand more generally how derivatives transform under changes the variables $x \leftrightarrow (r,\theta) \leftrightarrow (t,\theta)$, 
we consider
\begin{example} 
for $d = 2$ with $x_1 = r \cos \theta$, $x_2 = r \sin\theta$ we have 
\begin{align*}
\partial_{x_1} u & = (\cos \theta \partial_r  - \frac{1}{r} \sin \theta \partial_\theta) \widetilde u
 = e^{-t} (\cos \theta \partial_t  - \sin \theta \partial_\theta) \check u \\
\partial_{x_2} u & = (\sin  \theta \partial_r  + \frac{1}{r} \cos \theta \partial_\theta) \widetilde u
 = e^{-t} (\sin \theta \partial_t  + \cos \theta \partial_\theta) \check u.  
\end{align*}
{To see this, note: 
$\widetilde u(r,\theta) = u(r \cos \theta, r \sin \theta) \quad \Longrightarrow \quad 
\partial_r \widetilde u = \partial_{x_1} u \cos\theta + \partial_{x_2} u \sin \theta$ and 
$\partial_\theta \widetilde u = - r \partial_{x_1} u \sin\theta + r \partial_{x_2} u \cos \theta$. 
}
\eex
\end{example} 
Generally, we have 
\begin{lemma}
\label{lemma:xrt-change-of-variables}
\begin{enumerate}[label=(\roman*)]
\item
\label{item:lemma:xrt-change-of-variables-i}
Let $|\alpha | \ne 0$. Then, writing $\beta = (\beta_1,\beta')$ 
\begin{align*}
D^{\alpha}_x u & = r^{-|\alpha|} \sum_{0 < |\beta| \leq |\alpha|} d_{\alpha\beta}(\theta) (r \partial_r)^{\beta_1} D^{\beta'}_\theta \widetilde u. 
\end{align*}
with $d_{\alpha\beta}$ smooth on $\partial B_1(0)$. Hence, 
\begin{align*}
D^{\alpha}_x u & = e^{-|\alpha| t} \sum_{0 < |\beta| \leq |\alpha|} d_{\alpha\beta}(\theta) D^{\beta}_{t,\theta} \check u
\end{align*}
\item
\label{item:lemma:xrt-change-of-variables-ii}
let $|\beta| \ne 0$. Then 
\begin{align*}
D^\beta_{t,\theta} \check u & = \sum_{0 < |\alpha| \leq |\beta|} r^{|\alpha|} d^\star_{\alpha\beta}(\theta) D^\alpha_x u
\end{align*}
\end{enumerate}
\end{lemma}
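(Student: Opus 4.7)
My plan is to prove both parts by induction on the order of differentiation, using the following basic identities which I would establish first. Writing $x = r \omega$ with $\omega = \omega(\theta) \in \partial B_1(0)$ a smooth parameterization, the chain rule gives
\begin{equation*}
\partial_{x_i} = \omega_i(\theta)\,\partial_r + r^{-1}\, V_i(\theta,\partial_\theta), \qquad r\partial_r = \sum_{i=1}^d x_i \partial_{x_i}, \qquad \partial_{\theta_j} x_i = r\,\partial_{\theta_j}\omega_i(\theta),
\end{equation*}
where each $V_i$ is a first-order tangential operator on $\partial B_1(0)$ with smooth coefficients. The identification $r = e^t$ converts $r\partial_r$ into $\partial_t$ and $r^{-|\alpha|}$ into $e^{-|\alpha|t}$, so the two displayed forms in \ref{item:lemma:xrt-change-of-variables-i} are equivalent once either is shown.

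For \ref{item:lemma:xrt-change-of-variables-i}, the base case $|\alpha|=1$ is the first identity above. In the induction step, I apply $\partial_{x_i} = \omega_i \partial_r + r^{-1} V_i$ to a term of the form $r^{-m} d_{\alpha\beta}(\theta)(r\partial_r)^{\beta_1} D^{\beta'}_\theta \widetilde u$ and expand by Leibniz. Four types of terms arise: differentiating $r^{-m}$ yields $-m \omega_i r^{-m-1}$ times the same operator applied to $\widetilde u$; $V_i$ acting on $d_{\alpha\beta}(\theta)$ produces $r^{-m-1}(V_i d_{\alpha\beta})(\theta)$ times the same operator; $\omega_i \partial_r = \omega_i r^{-1}(r\partial_r)$ acting on the inner part raises $\beta_1$ by one while introducing a factor $r^{-m-1}\omega_i$; and $r^{-1} V_i$ acting on the inner part raises $|\beta'|$ by one (since $V_i$ commutes with $r\partial_r$ and $D^{\beta'}_\theta$, up to coefficients that are smooth in $\theta$). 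Each contribution has the required form with $|\alpha|$ replaced by $m+1$ and $0<|\beta|\le m+1$, closing the induction.

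For \ref{item:lemma:xrt-change-of-variables-ii}, the base case $|\beta|=1$ follows from $\partial_t \check u = (r\partial_r)\widetilde u = \sum_i x_i \partial_{x_i} u = r\sum_i \omega_i(\theta) \partial_{x_i} u$ and $\partial_{\theta_j}\check u = \sum_i (\partial_{\theta_j} x_i) \partial_{x_i} u = r\sum_i (\partial_{\theta_j}\omega_i)(\theta) \partial_{x_i} u$. For the induction step I apply $\partial_t$ or $\partial_{\theta_j}$ to a term $r^{|\alpha|} d^\star_{\alpha\beta}(\theta) D^\alpha_x u$. The factor $r^{|\alpha|}$ contributes $|\alpha| r^{|\alpha|}$ under $\partial_t = r\partial_r$ and nothing under $\partial_{\theta_j}$; the coefficient $d^\star_{\alpha\beta}(\theta)$ changes only by differentiation in $\theta$; and $\partial_t D^\alpha_x u = (r\partial_r) D^\alpha_x u = \sum_i r\omega_i(\theta) D^{\alpha+e_i}_x u$, respectively $\partial_{\theta_j} D^\alpha_x u = r\sum_i (\partial_{\theta_j}\omega_i)(\theta) D^{\alpha+e_i}_x u$, which in both cases contributes an extra factor $r$, a smooth function of $\theta$, and a Cartesian derivative of one higher order. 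All resulting summands fit the template $r^{|\alpha'|} \tilde d(\theta) D^{\alpha'}_x u$ with $0 < |\alpha'| \le |\beta|+1$.

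The only genuine issue is bookkeeping: verifying at each induction step that the factors of $r^{\pm 1}$ track correctly and that $V_i$, $(r\partial_r)$, and $D^{\beta'}_\theta$ can be reordered at the price of introducing only smooth $\theta$-dependent coefficients. This is routine once one notes $[V_i, r\partial_r] = 0$ (they act in disjoint variables) and that $\partial_r = r^{-1}(r\partial_r)$, so no genuine obstacle arises.
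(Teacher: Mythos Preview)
Your proof is correct and follows exactly the approach indicated in the paper, namely induction on the differentiation order; the paper's proof consists of the single sentence ``Induction on the differentiation order,'' and you have supplied the routine details.
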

\begin{proof}
Induction on the differentiation order. 
\end{proof}
Using Lemma~\ref{lemma:xrt-change-of-variables}, we can show
\begin{lemma}
\label{lemma:uKsgamma-Mu}
For $s \in \BbbN_0$ and $\gamma \in \BbbR$ there holds 
$$
u \in K^s_\gamma(\mathcal{C}) \quad \Longleftrightarrow \quad e^{t (\gamma - s + d/2)} \check u \in H^s(\BbbR \times G). 
$$
\end{lemma}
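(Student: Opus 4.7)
The plan is to unfold both norms in polar coordinates, substitute $r = e^t$, and convert between Cartesian derivatives and $(t,\theta)$-derivatives using Lemma~\ref{lemma:xrt-change-of-variables}. Setting $a := \gamma - s + d/2$, the arithmetic should work out so that the four $r$-powers appearing in the calculation---the weight $r^{|\alpha|-s+\gamma}$ in the definition of $K^s_\gamma$, the $r^{\pm|\alpha|}$ factors from Lemma~\ref{lemma:xrt-change-of-variables}, the polar Jacobian $r^{d-1}$, and the extra factor $r = e^t$ coming from $dr = r\,dt$---combine to exactly $e^{2at}$.

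For the direction $\Rightarrow$, I would first record the preliminary equivalence
\begin{equation*}
\|e^{at}\check v\|^2_{H^s(\BbbR\times G)} \sim \sum_{|\beta|\le s}\|e^{at} D^\beta_{t,\theta}\check v\|^2_{L^2(\BbbR\times G)},
\end{equation*}
which follows by induction on $|\beta|$ from the product rule, using that $\partial_t$ is the only derivative that produces constants times $e^{at}$. Then, given $u\in K^s_\gamma(\mathcal{C})$, for each multi-index $\beta$ with $0<|\beta|\le s$ I apply Lemma~\ref{lemma:xrt-change-of-variables}\ref{item:lemma:xrt-change-of-variables-ii} to write $D^\beta_{t,\theta}\check u = \sum_{0<|\alpha|\le|\beta|} r^{|\alpha|} d^\star_{\alpha\beta}(\theta) D^\alpha_x u$ and change variables back to $(r,\theta)$:
\begin{equation*}
\int_{\BbbR\times G} e^{2at}|D^\beta\check u|^2\,dt\,d\sigma \lesssim \sum_{0<|\alpha|\le|\beta|}\int_G\!\!\int_0^\infty r^{2a-d}\,r^{2|\alpha|}|D^\alpha u|^2\,r^{d-1}\,dr\,d\sigma = \sum_{0<|\alpha|\le|\beta|}\|r^{|\alpha|-s+\gamma}D^\alpha u\|^2_{L^2(\mathcal{C})},
\end{equation*}
since $2a-d+2|\alpha| = 2(|\alpha|-s+\gamma)$. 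The $\beta=0$ contribution reduces directly to $\|r^{-s+\gamma}u\|^2_{L^2(\mathcal{C})}$. Summing over $|\beta|\le s$ and invoking the preliminary equivalence yields $\|e^{at}\check u\|_{H^s(\BbbR\times G)}\lesssim \|u\|_{K^s_\gamma(\mathcal{C})}$.

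For the direction $\Leftarrow$, I argue symmetrically. Given $e^{at}\check u\in H^s(\BbbR\times G)$, fix $|\alpha|\le s$. If $|\alpha|>0$, Lemma~\ref{lemma:xrt-change-of-variables}\ref{item:lemma:xrt-change-of-variables-i} gives $D^\alpha_x u = e^{-|\alpha|t}\sum_{0<|\beta|\le|\alpha|} d_{\alpha\beta}(\theta) D^\beta_{t,\theta}\check u$, and passing to $(t,\theta)$-coordinates reveals
\begin{equation*}
\|r^{|\alpha|-s+\gamma}D^\alpha u\|^2_{L^2(\mathcal{C})} \lesssim \sum_{0<|\beta|\le|\alpha|}\int_{\BbbR\times G} e^{2at}|D^\beta\check u|^2\,dt\,d\sigma \lesssim \|e^{at}\check u\|^2_{H^s(\BbbR\times G)}
\end{equation*}
after one more appeal to the preliminary equivalence. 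The $|\alpha|=0$ term is again handled by direct change of variables.

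The main obstacle is essentially bookkeeping: verifying that the four $r$-powers collapse to $e^{2at}$ with the precise exponent $a=\gamma-s+d/2$, and that the constants coming from the smooth coefficients $d_{\alpha\beta}(\theta)$ and $d^\star_{\alpha\beta}(\theta)$ (which are uniformly bounded on the compact $G$) do not interfere with the norm equivalence. Nothing in the argument is genuinely delicate, but all the components need to be lined up consistently to recover the stated equivalence as a two-sided estimate.
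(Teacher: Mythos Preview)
Your proof is correct and follows essentially the same approach as the paper: both use Lemma~\ref{lemma:xrt-change-of-variables} to convert between Cartesian and $(t,\theta)$-derivatives and then verify that the powers of $r$ collapse to $e^{2at}$ with $a=\gamma-s+d/2$. You are in fact more thorough---the paper only sketches the direction ``$\Rightarrow$'' and leaves the product-rule equivalence $\|e^{at}\check v\|_{H^s}\sim\sum_{|\beta|\le s}\|e^{at}D^\beta\check v\|_{L^2}$ implicit, whereas you state it explicitly and treat both directions.
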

\begin{proof}
``$\Longrightarrow$'': $u \in K^s_\gamma(\mathcal{C})$ implies $r^{-s + |\sigma| + \gamma} D^\sigma u \in L^2$ and 
\begin{align*}
e^{t (\gamma - s +d/2)} |D^\beta \check u| & \leq \sum_{|\alpha| \leq |\beta|} r^{|\alpha| + \gamma - s + d/2} |d^\star_{\beta\alpha}|\,|D^\alpha_x u| . 
\end{align*}
With the Jacobian $r dt = dr$ we get 
\begin{align*}
\|e^{t (\gamma - s + d/2)} D^\beta \check u\|^2_{L^2} 
\lesssim \sum_{|\alpha | \leq |\beta|} \int_{r,\theta} r^{2(|\alpha|+ \gamma - s + d/2)} |D^\alpha_{x} u|^2 \frac{dr}{r} \, d\theta
=  \sum_{|\alpha | \leq |\beta|} \int_{r,\theta} r^{2(|\alpha|+ \gamma - s)} |D^\alpha_{x} u|^2 r^{d-1} dr \, d\theta
\end{align*}
\end{proof}
The Mellin transform of $u \in K^s_\gamma(\mathcal{C})$ is taken as the Mellin transform w.r.t.\ $r$ leaving the variable $\theta$
as a parameter. A more elegant point of view is to consider Fourier transforms of functions $u \in L^2(\BbbR; X)$ with some
Hilbert space $X$ (e.g., $H^s(G)$). We note that the Paley-Wiener theorem also holds in that setting. 

\begin{theorem}
\label{thm:8.7}
Let $s \in \BbbN_0$, $\gamma \in \BbbR$. Set $\eta:= s - \gamma - d/2$ (Note: this is the negative of the exponent in 
Lemma~\ref{lemma:uKsgamma-Mu}.) Then:
\begin{enumerate}[label=(\roman*)]
\item
\label{item:thm:8.7-i}
Let $u \in K^s_\gamma(\mathcal{C})$. Then $e^{t (\gamma - s +d/2)} \check u \in L^2(\BbbR; H^s(G))$ and ${\mathcal M} u(\cdot - i \eta) \in 
L^2(\BbbR; H^s(G))$. Additionally, 
\begin{align}
\label{eq:thm:8.7-10}
\|u\|^2_{K^s_\gamma(\mathcal{C})} &\sim \int_{\xi \in \BbbR} \|{\mathcal M} u(\xi - i \eta)\|^2_{H^s(G; |\xi|)}\, d\xi
\end{align}
with 
\begin{align}
\nonumber 
\|v\|^2_{H^s(G; \rho)} & := \sum_{|\beta | \leq s} \rho^{2\beta_1} \|D^{\beta'} v\|^2_{L^2(G)}, \qquad \beta = (\beta_1,\beta') \\
\label{eq:thm:8.7-20}
& \sim \sum_{|\beta'| \leq s} (1+ \rho)^{2 (s - |\beta'|)}\|D^{\beta'} v\|^2_{L^2(G)}. 
\end{align}
\item
\label{item:thm:8.7-ii}
Conversely, let $U (\cdot - i \eta) \in L^2(\BbbR; H^s(G))$ with 
$$
\int_{\xi \in \BbbR} \|U (\cdot - i \eta)\|^2_{H^s(G; |\xi|)} \, d\xi < \infty. 
$$
Then $U (\cdot - i\eta)$ is the Mellin transform of a function $u \in K^s_\gamma(\mathcal{C})$.  
\end{enumerate}
\end{theorem}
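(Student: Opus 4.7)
The proof rests on two successive changes of variables---$r = e^t$, then Fourier transform in $t$---together with the Plancherel identity. The plan is first to rewrite the $K^s_\gamma$-norm in the cylindrical variables $(t,\theta) \in \BbbR \times G$, and then to pass to the dual Fourier variable $\xi$.

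For \ref{item:thm:8.7-i}, I begin from the definition of the norm. Substituting $r = e^t$ with Jacobian $r^{d-1}dr = e^{dt}dt$ and noting $e^{-\eta t} r^{|\alpha|} = r^{|\alpha| - s + \gamma + d/2}$, each summand of $\|u\|^2_{K^s_\gamma}$ becomes $\|e^{-\eta t} r^{|\alpha|} D^\alpha_x u\|^2_{L^2(\BbbR \times G)}$. Lemma~\ref{lemma:xrt-change-of-variables} then shows that, for $|\alpha|, |\beta|>0$, the products $r^{|\alpha|} D^\alpha_x u$ and the cylinder derivatives $D^\beta_{t,\theta} \check u$ are linked by invertible systems of transformations with $\theta$-smooth, uniformly bounded coefficients; the $|\alpha| = |\beta| = 0$ case is trivial. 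Summing over $|\alpha| \leq s$ gives the reformulation (which is essentially Lemma~\ref{lemma:uKsgamma-Mu})
\begin{equation*}
\|u\|^2_{K^s_\gamma(\mathcal{C})} \sim \sum_{|\beta| \leq s} \|e^{-\eta t} D^\beta_{t,\theta} \check u\|^2_{L^2(\BbbR \times G)}.
\end{equation*}

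Next I apply the Fourier transform in $t$, viewed as an isometry on $L^2(\BbbR; L^2(G))$. The modulation $e^{-\eta t}$ shifts the dual variable, so that with $\beta = (\beta_1, \beta')$,
\begin{equation*}
{\mathcal F}_t \bigl[ e^{-\eta t} \partial_t^{\beta_1} D^{\beta'}_\theta \check u \bigr](\xi, \theta) = (i\xi + \eta)^{\beta_1} D^{\beta'}_\theta {\mathcal M}[u](\xi - i\eta, \theta);
\end{equation*}
since $|i\xi + \eta|^{2\beta_1} = (\xi^2 + \eta^2)^{\beta_1}$, Plancherel (applied pointwise in $\theta$ and then integrated over $G$) converts each summand into $\int_\xi (\xi^2 + \eta^2)^{\beta_1} \|D^{\beta'}_\theta {\mathcal M}[u](\xi - i\eta)\|^2_{L^2(G)} \, d\xi$. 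Summing over $|\beta| \leq s$ and using the elementary bound $\sum_{\beta_1 = 0}^{s - |\beta'|} (\xi^2 + \eta^2)^{\beta_1} \sim (1 + |\xi|)^{2(s - |\beta'|)}$ (uniformly in $\xi \in \BbbR$, with constants depending on $\eta$ and $s$) yields (\ref{eq:thm:8.7-10}) in the second form of the $H^s(G;|\xi|)$ norm in (\ref{eq:thm:8.7-20}); the equivalence of the two forms in (\ref{eq:thm:8.7-20}) is the analogous polynomial identity in $\rho$ and is purely algebraic.

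For \ref{item:thm:8.7-ii}, I reverse the chain. Starting from $U(\cdot - i\eta) \in L^2(\BbbR; H^s(G))$ with finite weighted integral, I set $v := {\mathcal F}_t^{-1} U(\cdot - i\eta)$ and $\check u := e^{\eta t} v$, so that ${\mathcal F}_t[e^{-\eta t} \check u] = U(\cdot - i\eta)$. Reading the estimates of \ref{item:thm:8.7-i} backwards as \emph{a priori} bounds on $\check u$ in terms of its Fourier transform shows $e^{-\eta t} \check u \in H^s(\BbbR \times G)$; Lemma~\ref{lemma:uKsgamma-Mu} then places $u(x) := \check u(\ln|x|, x/|x|)$ in $K^s_\gamma(\mathcal{C})$ with Mellin transform agreeing with $U$ on $\{\operatorname{Im} \zeta = -\eta\}$. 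The principal obstacle is bookkeeping rather than conceptual: managing the vector-valued Plancherel identity together with the complex shift of the Fourier variable by $-i\eta$, so that the $\theta$-dependent chain-rule coefficients $d_{\alpha\beta}$, $d^\star_{\alpha\beta}$ from Lemma~\ref{lemma:xrt-change-of-variables} collapse into constants depending only on $G$, $s$, and $\eta$.
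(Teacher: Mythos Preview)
Your proof is correct and follows essentially the same approach as the paper: reduce $K^s_\gamma$ to $e^{-\eta t}\check u \in H^s(\BbbR\times G)$ via Lemma~\ref{lemma:uKsgamma-Mu}, then apply Plancherel in $t$. The only cosmetic difference is that the paper differentiates the product $e^{-\eta t}\check u$ directly (so the Fourier multiplier is $(i\xi)^{\beta_1}$, giving exactly $|\xi|^{2\beta_1}$ in the norm), whereas you differentiate $\check u$ first and then shift, producing the multiplier $(i\xi+\eta)^{\beta_1}$ and hence the weight $(\xi^2+\eta^2)^{\beta_1}$; since $\eta$ is fixed these are equivalent with constants depending on $\eta$, and both match the second form in (\ref{eq:thm:8.7-20}).
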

\begin{proof}
\emph{Proof of \ref{item:thm:8.7-i}:} 
\begin{align*}
u \in K^s_\gamma(\mathcal{C}) \quad \stackrel{\text{Lemma~\ref{lemma:uKsgamma-Mu}}}{\Longrightarrow} \quad 
e^{-\eta t} \check u \in H^s(\BbbR \times G) \quad \Longrightarrow \quad e^{-\eta t} \check u \in L^2(\BbbR; H^s(G))
\end{align*}
Hence, ${\mathcal M} u(\cdot - i\eta) \in L^2(\BbbR; H^s(G))$. From Parseval and $e^{-\eta t}\check u \in H^s(\BbbR\times G)$ we 
get 
\begin{align*}
\|\partial_t^{\beta_1} D^{\beta'}_\theta (e^{-\eta t} \check u)\|^2_{L^2(\BbbR)} 
\sim \|\,|\xi|^{\beta_1} \partial^{\beta'}_\theta {\mathcal M} u(\cdot - i \eta)\|^2_{L^2(\BbbR)}. 
\end{align*}
Hence, 
\begin{align*}
\|u\|^2_{K^s_\gamma(\mathcal{C})} &\sim \|e^{-\eta t} \check u\|^2_{H^s(\BbbR \times G)}  
= \sum_{|\beta| \leq s} \|\partial^{\beta_1}_t \partial^{\beta'}_\theta (e^{-\eta t} \check u)\|^2_{L^2} \\
& \sim  \sum_{|\beta| \leq s} \int_{\xi} |\xi|^{2\beta_1} \|\partial^{\beta'}_\theta {\mathcal M} u(\cdot - i\eta)\|^2_{L^2(G)} . 
\end{align*}
This shows (\ref{eq:thm:8.7-10}). 
The norm equivalence in (\ref{eq:thm:8.7-20})  is best seen for the special case $s = 1$ and $d=2$: 
\begin{align*}
\sum_{|\beta| \leq 1}\rho^{2 \beta_1} \|D^{\beta'} v\|^2_{L^2(G)} & = \|v\|^2_{L^2(G)} + \rho^{2 \cdot 0} \|D^1 v\|^2_{L^2(G)} + \rho^{2 \cdot 1} \|D^0 v\|^2_{L^2(G)}
\sim (1 + \rho^2) \|v\|^2_{L^2(G)} + \|D^{\beta_2} v\|^2_{L^2(G)}. 
\end{align*}
\emph{Proof of \ref{item:thm:8.7-ii}:} 
The inverse FT of $U(\cdot - i \eta)$ is $e^{-\eta t} \check u \in L^2(\BbbR; H^s(G))$. By Parseval, in fact, 
$$
\|e^{-\eta t} \check u\|^2_{H^s(\BbbR \times G)} \sim \int_{\xi} \|U(\cdot - i \eta)\|^2_{H^s(G; |\xi|)}\, d\xi. 
$$ 
By norm equivalence (\ref{eq:thm:8.7-10}), we see $u \in K^s_\gamma$. 
\end{proof}
%--------------------------------
\subsection{Solving elliptic equations with Mellin transformation in the case $d = 2$}
%--------------------------------

We consider for $d = 2$ and $s \ge 2$ 
on $\mathcal{C}= \{(r\cos \theta, r \sin \theta)\,|\, r > 0, \theta \in G:= (0,\omega)\}$
\begin{align}
\label{eq:8.4} 
-\Delta u  = f \in K^{s-2}_\gamma \quad \mbox{ in $\mathcal{C}$}, 
& & u = 0 \quad \mbox{ for $\theta = 0$ and $\theta = \omega$}
\end{align}
\underline{goal:} show unique solvability in $K^s_\gamma$

\underline{warning:} (\ref{eq:8.4}) does not prescribe ``boundary conditions''
at $r=0$ and $r = \infty$. Hence, one  cannot expect uniqueness unless
one prescribes an ``energy space'', e.g., $K^s_\gamma$. 
In fact: for $\gamma \ne \delta$ solutions $u_\gamma \in K^s_\gamma$
and $u_\delta \in K^s_\delta$ of (\ref{eq:8.4}) may be different. 

Writing (\ref{eq:8.4}) in polar coordinates gives 
\begin{align}
\label{eq:8.5} 
- \left( (r \partial_r)^2 \widetilde u + \partial^2_\theta \widetilde u \right) & = r^2 \widetilde f, 
\end{align}
or, upon setting $r = e^t$,  
\begin{subequations}
\label{eq:8.6} 
\begin{align}
- \left( \partial^2_t \check u + \partial^2_\theta \check u \right) & = e^{2t} \check f
=: \check g \quad \mbox{ in $\BbbR \times G$},  \\
\check u(\cdot,0) & = \check u(\cdot,\omega)  = 0
\end{align}
\end{subequations}
Fourier transformation in $t$ yields 
\begin{align}
\label{eq:8.7} 
- (- \xi^2 + \partial^2_\theta) {\mathcal M} u(\xi) & = {\mathcal M} g(\xi) \ = {\mathcal F} (\check g)(\xi),
\end{align}
or, with the operator ${\mathcal L}(\xi):= (\xi^2 \operatorname{I} - \partial^2_\theta): H^2(G) \cap H^1_0(G) \rightarrow L^2(G)$ 
\begin{subequations}
\label{eq:8.8}
\begin{align}
{\mathcal L}(\xi) {\mathcal M} u(\xi) & = {\mathcal M}g(\xi),  \\
({\mathcal M}u)(\xi,0) & = ({\mathcal M} u)(\xi,\omega) = 0
\end{align}
\end{subequations}
Note: for $s \ge 2$ we have 
\begin{align}
\label{eq:cone-strip-equivalence}
f \in K^{s-2}_\gamma & \Longleftrightarrow e^{t (\gamma-s+2+1)} \check f \in H^{s-2}(\BbbR \times G)  \\
\nonumber 
& \Longleftrightarrow e^{t(\gamma-s+1)} \check g \in H^{s-2}  \\
\nonumber 
& 
\Longleftrightarrow {\mathcal M} g(\cdot-i \eta) \in L^2(\BbbR; H^{s-2}(G))
\end{align}
with 
$\eta = -(\gamma-s+1)$ and 
$$
\int_\xi \|{\mathcal M} g(\cdot - i \eta)\|^2_{H^{s-2}(G; |\xi|)}\, d\xi \sim \|f\|^2_{K^{s-2}_\gamma(\Gamma)} < \infty. 
$$
Next, we solve (\ref{eq:8.8}) for ${\mathcal M} u$. We observe 
\begin{lemma}
\label{lemma:8.8}
Consider 
\begin{subequations}
\label{eq:8.9} 
\begin{align}
- \partial^2_\theta \widehat  u + \zeta^2 \widehat u & = F \in L^2 \quad \mbox{ in $G$}, 
\\
\widehat u(0) & = \widehat  u(\omega)  = 0. 
\end{align}
\end{subequations}
Set 
\begin{equation}
\sigma^D:= \{\lambda_n:= \frac{\pi}{\omega} n\,|\, n \in \BbbN\}. 
\end{equation}
Then:
\begin{enumerate}[label=(\roman*)]
\item 
\label{item:lemma:8.8-i}
For $\zeta \in \BbbC \setminus (\pm i \sigma^D)$ problem (\ref{eq:8.9}) has 
a  unique solution $\widehat u_\zeta \in H^2(G) \cap H^1_0(G)$.  
\item
\label{item:lemma:8.8-ii}
Let $\{\xi + i \eta \,|\, \xi \in \BbbR\} \cap (\pm i \sigma^D) = \emptyset$. 
Then there is $C = C(\eta)$ such that for all $\zeta \in \{\xi + i \eta\,|\, \xi \in \BbbR\}$ the solution $\widehat u_\zeta$ of (\ref{eq:8.9}) satisfies 
\begin{align*}
\|\widehat u_\zeta\|_{H^2(G; |\xi|)} & \sim 
(1 + |\xi|^2)^2 \|\widehat u_\zeta\|_{L^2} + (1 + |\xi|^2) |\widehat u_\zeta|_{H^1} + |\widehat u_\zeta|_{H^2} \leq C \|F\|_{L^2}. 
\end{align*}
\end{enumerate}
\end{lemma}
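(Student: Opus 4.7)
The natural approach is spectral decomposition of the self-adjoint operator $A := -\partial_\theta^2$ on $L^2(G)$, $G = (0,\omega)$, with homogeneous Dirichlet boundary conditions. An orthonormal basis of eigenfunctions is $e_n(\theta) = \sqrt{2/\omega}\sin(\lambda_n \theta)$, $n \in \mathbb{N}$, with eigenvalues $\lambda_n^2$. Expanding $F = \sum_n F_n e_n$ in $L^2(G)$, the only candidate solution of \eqref{eq:8.9} is
\begin{equation*}
\widehat u_\zeta = \sum_{n=1}^\infty \frac{F_n}{\lambda_n^2 + \zeta^2} e_n.
\end{equation*}
The denominators never vanish since $\lambda_n^2 + \zeta^2 = 0$ iff $\zeta = \pm i \lambda_n \in \pm i \sigma^D$, which is excluded. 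This establishes uniqueness in \ref{item:lemma:8.8-i}; existence reduces to verifying that the series converges in $H^2(G) \cap H^1_0(G)$, which follows from the estimate obtained below together with the spectral characterizations $\|v\|_{L^2}^2 = \sum |v_n|^2$, $|v|_{H^1}^2 = \sum \lambda_n^2|v_n|^2$, $|v|_{H^2}^2 = \sum \lambda_n^4 |v_n|^2$ on $H^2(G)\cap H^1_0(G)$.

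The core assertion \ref{item:lemma:8.8-ii} will follow once the uniform resolvent bound
\begin{equation}
\label{eq:resolvent-bound}
\left|\lambda_n^2 + \zeta^2\right| \ge c(\eta)\,(1 + \lambda_n^2 + \xi^2) \qquad \forall\, n \in \BbbN,\; \xi \in \BbbR
\end{equation}
is established, since one then has, using the norm equivalence in \eqref{eq:thm:8.7-20},
\begin{equation*}
\|\widehat u_\zeta\|_{H^2(G;|\xi|)}^2 \sim \sum_n \Big[(1+|\xi|)^4 + (1+|\xi|)^2\lambda_n^2 + \lambda_n^4 \Big] \frac{|F_n|^2}{|\lambda_n^2+\zeta^2|^2} \lesssim c(\eta)^{-2}\,\|F\|_{L^2}^2.
\end{equation*}

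\textbf{The main obstacle} is therefore \eqref{eq:resolvent-bound}. Writing $\zeta = \xi + i \eta$, a direct computation gives
\begin{equation*}
|\lambda_n^2 + \zeta^2|^2 = (\lambda_n^2 - \eta^2)^2 + \xi^2\bigl(2\lambda_n^2 + 2\eta^2 + \xi^2\bigr).
\end{equation*}
I would split into two regimes. In the ``far'' regime $\lambda_n^2 + \xi^2 \ge 2\eta^2$ one has $\lambda_n^2 + \xi^2 - \eta^2 \ge \tfrac{1}{2}(\lambda_n^2 + \xi^2)$, and keeping only the first term inside the square yields $|\lambda_n^2 + \zeta^2| \ge \tfrac{1}{2}(\lambda_n^2 + \xi^2)$. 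In the ``near'' regime $\lambda_n^2 + \xi^2 < 2\eta^2$, there are only finitely many admissible indices $n$; for each of them the hypothesis $i\eta \notin \pm i\sigma^D$ guarantees $(\lambda_n^2 - \eta^2)^2 \ge c_n > 0$, and together with $|\lambda_n^2 + \zeta^2|^2 \ge (\lambda_n^2-\eta^2)^2 + \xi^4$ this yields $|\lambda_n^2 + \zeta^2| \ge c(\eta)(1 + \xi^2)$. Combining the two regimes produces \eqref{eq:resolvent-bound}, with the constant $c(\eta)$ degenerating only as $i\eta$ approaches $\pm i \sigma^D$, which is exactly the anticipated behaviour.
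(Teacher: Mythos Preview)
Your proposal is correct and follows essentially the same route as the paper: both expand in the Dirichlet sine eigenfunctions $e_n(\theta)=\sqrt{2/\omega}\,\sin(\lambda_n\theta)$, write $\widehat u_\zeta=\sum_n F_n/(\lambda_n^2+\zeta^2)\,e_n$, and reduce \ref{item:lemma:8.8-ii} to the resolvent estimate $|\lambda_n^2+\zeta^2|\gtrsim 1+\xi^2+\lambda_n^2$ via a large/small split in $\xi^2+\lambda_n^2$. The paper phrases the near regime using the factorization $|\zeta^2+\lambda_n^2|=|\zeta-i\lambda_n|\,|\zeta+i\lambda_n|$ (distance to $\pm i\sigma^D$), whereas you use $(\lambda_n^2-\eta^2)^2\ge c_n>0$ on the finitely many relevant indices; these are equivalent observations.
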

\begin{proof}
We only show \ref{item:lemma:8.8-ii}. For simplicity, we work with the 
solution formula,noting  $\int_0^\omega \sin^2(\lambda_n \theta) = \frac{\omega}{2}$, 
\begin{align*}
\widehat u_\zeta &= \sum_n \frac{F_n}{\lambda^2_n + \zeta^2} \sin (\lambda_n \theta)
\\
F & = \sum_n F_n \sin (\lambda_n \theta); 
& \|F\|^2_{L^2} & = \sum_n |F_n|^2 
\end{align*}
and 
\begin{align*}
\|\widehat u_\zeta\|^2_{L^2(G)} & = \sum_n \frac{|F_n|^2}{|\zeta^2 + \lambda^2_n|^2}, \\
|\widehat  u_\zeta|^2_{H^1(G)} & = \sum_n \frac{|F_n|^2}{|\zeta^2 + \lambda^2_n|^2} \lambda^2_n,\\
|\widehat u_\zeta|^2_{H^2(G)} & = \sum_n \frac{|F_n|^2}{|\zeta^2 + \lambda^2_n|^2} \lambda^4_n
\end{align*}
The condition $\operatorname{Im} \zeta  \cap \sigma^D = \emptyset$ (i.e., $\eta \not\in \pm \sigma^D$) implies 
for fixed $\eta$
\footnote{for large $x^2 = \xi^2 + \lambda_n^2$ we have $x^2 - \eta^2 \sim x^2$ so the desired estimate follows; 
for $\xi$, $\lambda_n$ from a compact set, note $|\zeta^2 + \lambda_n^2| = |\zeta - i \lambda_n| \cdot |\zeta + i \lambda_n|$
i.e., the term is determined by the distance of $\pm i \sigma^D$ from the line $\operatorname{Im} \zeta = \eta$, which is assumed positive}
$$
\left| \frac{1}{(\xi + i \eta)^2 + \lambda^2_n}
\right|^2 = \frac{1}{(\xi^2 -\eta^2 + \lambda^2_n)^2 + 4 \xi^2 \eta^2}
\lesssim \min\{|\xi|^{-4}, \lambda^{-4}_n\}
$$
so that 
\begin{align*}
(1 + |\xi|^2)^2 \|\widehat u_\zeta\|^2_{L^2} + 
(1 + |\xi|^2) |\widehat u_\zeta|^2_{H^1} + 
|\widehat u_\zeta|^2_{H^2} \lesssim \sum_{n} |F_n|^2 \sim \|F\|^2_{L^2}. 
\end{align*}
\end{proof}
Lemma~\ref{lemma:8.8} allows us to estimate for (fixed) $\eta$ with $\{\eta \} \cap \sigma^D  = \emptyset$ and arbitrary 
$\zeta = \xi - i \eta$ 
the solution ${\mathcal M}u(\zeta)$ of (\ref{eq:8.7}): 
\begin{equation}
\label{eq:8.10}
\|{\mathcal M} u(\zeta)\|_{H^2(G; |\xi|)} \lesssim \|{\mathcal M}g(\zeta)\|_{L^2(G)}
\quad \mbox{uniformly in $\xi \in \BbbR$}. 
\end{equation}
Hence, by Theorem~\ref{thm:8.7} with $s = 2$, $\eta = 2-\gamma-1$ we get 
\begin{align}
\label{eq:proof-of-thm-8.9}
\|u\|^2_{K^2_\gamma(\mathcal{C})} &\sim 
\int_{\xi \in \BbbR} \|{\mathcal M} u(\cdot - i \eta) \|^2_{H^2(G; |\xi|)}
\lesssim \int_{\xi} \|{\mathcal M} g(\cdot - i \eta)\|^2_{L^2(G)}\, d\xi 
\sim \|e^{-t\eta} \check g\|^2_{L^2(\BbbR \times G)}  \\
\nonumber 
&\sim \|e^{t (-\eta+2)} \check f\|_{L^2(\BbbR \times G)} 
=  \|e^{t (\gamma+1)} \check f\|_{L^2(\BbbR \times G)} 
\sim \|f\|^2_{K^0_\gamma}. 
\end{align}
We have thus ``explicitly'' constructed the solution in $K^2_\gamma$ using the Fourier inversion formula 
on $\operatorname{Im} \zeta = -\eta = \gamma-1$. 
In general, one has
\begin{theorem}
\label{thm:8.9}
Let $s \in \BbbN$, $s \ge 2$, $\gamma \in \BbbR$. Set $\eta = -(\gamma - s + d/2)$ with $d = 2$. 
Assume that the line $\{\zeta = \xi + i \eta\,|\, \xi \in \BbbR\} \cap \pm i \sigma^D = \emptyset$. 
Then for every $f \in K^{s-2}_\gamma$
the problem (\ref{eq:8.4}) has a unique solution $u \in K^s_\gamma$ with the \textsl{a priori} estimate 
$$
\|u\|_{K^s_\gamma} \lesssim \|f\|_{K^{s-2}_\gamma}. 
$$
\end{theorem}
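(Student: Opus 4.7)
My proof plan is to extend the Mellin-calculus argument already carried out for $s=2$ above the theorem to general $s\ge 2$, the only genuinely new ingredient being a higher-order, parameter-dependent regularity estimate for the 1D operator $\mathcal{L}(\zeta)$.

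First I would reduce to the parameter-dependent 1D problem by applying the Mellin transform in $r$ to (8.4). Setting $g:=r^2 f$, this yields the family $\mathcal{L}(\zeta)\mathcal{M}u(\zeta)=\mathcal{M}g(\zeta)$ with homogeneous Dirichlet data on $\partial G$ for $\zeta$ on the line $\operatorname{Im}\zeta=-\eta=\gamma-s+d/2$. By the hypothesis on the line $\operatorname{Im}\zeta=\eta$, this other line also avoids $\pm i\sigma^D$ (the set is symmetric about $0$), so Lemma~\ref{lemma:8.8}\ref{item:lemma:8.8-i} provides unique pointwise solvability.

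The crux is to upgrade Lemma~\ref{lemma:8.8}\ref{item:lemma:8.8-ii} from the $s=2$ bound to
\begin{equation*}
\|\hat u_\zeta\|_{H^s(G;|\xi|)}\ \lesssim\ \|F\|_{H^{s-2}(G;|\xi|)}\qquad \text{uniformly for }\zeta=\xi+i\eta,\ \xi\in\BbbR.
\end{equation*}
The base case $s=2$ is Lemma~\ref{lemma:8.8}\ref{item:lemma:8.8-ii}. For $s\ge 3$ one differentiates the ODE $-\partial_\theta^2\hat u_\zeta+\zeta^2\hat u_\zeta=F$ in $\theta$ to obtain, pointwise in $\theta$, the identity $\partial_\theta^{k+2}\hat u_\zeta=\zeta^2\partial_\theta^k\hat u_\zeta-\partial_\theta^k F$. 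Taking $L^2(G)$-norms, multiplying by $(1+|\xi|)^{2(s-k-2)}$, and using $|\zeta|^2\sim 1+|\xi|^2$ (since $\eta$ is fixed) shows that each top-order contribution in the norm $\|\hat u_\zeta\|_{H^s(G;|\xi|)}$ is absorbed into lower-order contributions plus $\|F\|_{H^{s-2}(G;|\xi|)}$; an induction on $s$ closes the estimate. This is the main technical step; everything else is essentially bookkeeping.

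Integrating this pointwise bound in $\xi$ and invoking Theorem~\ref{thm:8.7} twice---once for $u$ with parameters $(s,\gamma)$ and once for $f$ with parameters $(s-2,\gamma)$---yields
\begin{equation*}
\|u\|_{K^s_\gamma}^2\sim\int_{\BbbR}\|\mathcal{M}u(\xi-i\eta)\|_{H^s(G;|\xi|)}^2\,d\xi\lesssim\int_{\BbbR}\|\mathcal{M}g(\xi-i\eta)\|_{H^{s-2}(G;|\xi|)}^2\,d\xi\sim\|f\|_{K^{s-2}_\gamma}^2,
\end{equation*}
where the last equivalence rests on the shift identity $\mathcal{M}g(\zeta)=\mathcal{M}f(\zeta+2i)$, which lands the integration line precisely at $\operatorname{Im}=-\eta+2=\gamma-(s-2)+d/2$, the line appropriate to $K^{s-2}_\gamma$. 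Existence then follows by defining $u$ via inverse Mellin transformation of $\mathcal{L}(\zeta)^{-1}\mathcal{M}g(\zeta)$ on the line $\operatorname{Im}\zeta=-\eta$ and appealing to Theorem~\ref{thm:8.7}\ref{item:thm:8.7-ii} to certify $u\in K^s_\gamma$ with the boundary condition inherited from $\hat u_\zeta|_{\partial G}=0$. Uniqueness is automatic: a harmonic $u\in K^s_\gamma\subset K^2_\gamma$ with zero Dirichlet data satisfies $\mathcal{L}(\zeta)\mathcal{M}u(\zeta)=0$ on the selected line, and Lemma~\ref{lemma:8.8}\ref{item:lemma:8.8-i} forces $\mathcal{M}u\equiv 0$.
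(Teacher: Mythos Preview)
Your proposal is correct and follows exactly the natural extension of the paper's argument; the paper only writes out the case $s=2$ in detail (culminating in \eqref{eq:proof-of-thm-8.9}) and then states the general theorem without proof, so your upgrade of Lemma~\ref{lemma:8.8}\ref{item:lemma:8.8-ii} to $\|\widehat u_\zeta\|_{H^s(G;|\xi|)}\lesssim\|F\|_{H^{s-2}(G;|\xi|)}$ via the ODE identity $\partial_\theta^{k+2}\widehat u_\zeta=\zeta^2\partial_\theta^k\widehat u_\zeta-\partial_\theta^kF$ and induction on $k$ is precisely the missing step, and the remaining bookkeeping with the Mellin shift $\mathcal Mg(\zeta)=\mathcal Mf(\zeta+2i)$ and Theorem~\ref{thm:8.7} is fine.

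One small inaccuracy in your uniqueness argument: the inclusion $K^s_\gamma\subset K^2_\gamma$ is \emph{false} on the infinite cone $\mathcal C$, since the weight exponents $r^{-s+\gamma}$ and $r^{-2+\gamma}$ are ordered oppositely at $r\to 0$ and $r\to\infty$. You do not need this inclusion: your own argument already works directly on the line $\operatorname{Im}\zeta=-\eta$ appropriate to $K^s_\gamma$. If $u\in K^s_\gamma$ solves the homogeneous problem, then $\mathcal L(\zeta)\mathcal Mu(\zeta)=0$ on that line, and Lemma~\ref{lemma:8.8}\ref{item:lemma:8.8-i} forces $\mathcal Mu(\cdot-i\eta)\equiv 0$, hence $u=0$ by Theorem~\ref{thm:8.7}. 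Simply drop the parenthetical inclusion.
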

For future reference in the context of the $L^p$-setting in Appendix~\ref{appendix:Lp}, we formulate 
Theorem~\ref{thm:8.9} as a result in the strip 
\begin{equation*} 
D:= \BbbR \times G
\end{equation*}
in terms of the weighted norm for functions $\check{v} = \check{v}(t,\phi)$ and $\eta \in \BbbR$
\begin{equation}
\|\check{v}\|_{W^{s,p}_\eta(D)}:= \|e^{\eta t} \check{v} \|_{W^{s,p}(D)}. 
\end{equation}
\begin{corollary} 
\label{cor:strip-solution}
Let $D = \BbbR \times G$. 
Let $d = 2$, and let $s$, $\gamma$ satisfy the assumptions of Theorem~\ref{thm:8.9}. Set $\eta = -(\gamma - s +d/2)$ with $d = 2$. 
For every $\check{g} \in W^{s-2,2}_\eta(D)$
there is a unique solution $\check{u} \in W^{s,2}_\eta(D)$ of (\ref{eq:8.6}) and 
\begin{equation}
\label{eq:cor:strip-solution-10}
\|\check{u}\|_{W^{s,2}_{-\eta}(D)} \lesssim \|\check{g}\|_{W^{s-2,2}_{-\eta}(D)}. 
\end{equation}
Furthermore, let $\varepsilon > 0$ be such that $\{\zeta = \xi + i \eta'\,|\, \xi \in \BbbR, 
|\eta' - \eta| \leq \varepsilon\} \cap \pm i \sigma^D = \emptyset$.
Then, this solution $\check{u}$ satisfies for all $\eta' \in \BbbR$ with $|\eta' - \eta| \leq \varepsilon$ 
\begin{equation}
\label{eq:cor:strip-solution-20}
\|\check{u}\|_{W^{s,2}_{-\eta'}(D)} \lesssim \|\check{g}\|_{W^{s-2,2}_{-\eta'}(D)}. 
\end{equation}
\end{corollary}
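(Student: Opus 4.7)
The plan is in two stages: first, deduce (\ref{eq:cor:strip-solution-10}) from Theorem~\ref{thm:8.9} by translating between the strip $D$ and the cone ${\mathcal C}$; second, extend the estimate to nearby weights $\eta'$ via a Cauchy-type contour shift on the Mellin side.

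For the first stage, I would set $\gamma := s - d/2 - \eta = s - 1 - \eta$. Lemma~\ref{lemma:uKsgamma-Mu} identifies $\|\check v\|_{W^{s,2}_{-\eta}(D)}$ with $\|v\|_{K^s_\gamma({\mathcal C})}$, and a brief computation using $\check g = e^{2t}\check f$ shows
\begin{equation*}
\check g \in W^{s-2,2}_{-\eta}(D) \ \Longleftrightarrow\  f \in K^{s-2}_\gamma({\mathcal C}), \qquad \|\check g\|_{W^{s-2,2}_{-\eta}(D)} \sim \|f\|_{K^{s-2}_\gamma({\mathcal C})}.
\end{equation*}
The hypothesis on $\eta$ is exactly that of Theorem~\ref{thm:8.9}, which produces a unique $u \in K^s_\gamma$ solving (\ref{eq:8.4}) with the required norm bound; re-expressing this on the strip yields the solution $\check u \in W^{s,2}_{-\eta}(D)$ of (\ref{eq:8.6}) together with (\ref{eq:cor:strip-solution-10}). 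Uniqueness in $W^{s,2}_{-\eta}(D)$ follows because any homogeneous solution $\check w$ has, after multiplication by $e^{-\eta t}$ and Fourier transformation in $t$, a transform $\widehat W$ satisfying ${\mathcal L}(\xi - i\eta)\widehat W(\xi) = 0$ for a.e.\ $\xi$, and Lemma~\ref{lemma:8.8}\ref{item:lemma:8.8-i} then forces $\widehat W \equiv 0$.

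For the second stage, assume additionally $\check g \in W^{s-2,2}_{-\eta'}(D)$ with $|\eta' - \eta| \leq \varepsilon$. The hypothesis on $\varepsilon$ guarantees that the line $\operatorname{Im}\zeta = \eta'$ also avoids $\pm i\sigma^D$, so applying the first stage with $\eta$ replaced by $\eta'$ (and correspondingly $\gamma' = s - 1 - \eta'$) produces a solution $\check v \in W^{s,2}_{-\eta'}(D)$ of (\ref{eq:8.6}) obeying the analogous estimate. It then suffices to identify $\check v$ with $\check u$: both are representable as inverse Mellin transforms
\begin{equation*}
\check u(t,\theta) = \frac{1}{\sqrt{2\pi}}\int_{\operatorname{Im}\zeta = -\eta} e^{it\zeta}({\mathcal L}(\zeta))^{-1}{\mathcal M}[g](\zeta,\theta)\,d\zeta,
\end{equation*}
and analogously for $\check v$ along $\operatorname{Im}\zeta = -\eta'$. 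Since $\check g$ lies in the weighted space for both weights, the $H^{s-2}(G)$-valued version of Lemma~\ref{lemma:7.1}\ref{item:lemma:7.1-i} gives holomorphy of ${\mathcal M}[g]$ in the open strip between the two lines with uniform $L^2$-in-$\xi$ bounds on every horizontal slice; Lemma~\ref{lemma:8.8}\ref{item:lemma:8.8-ii} then promotes this to $H^s(G)$-valued holomorphy of the integrand. A contour shift closing the path along vertical segments $\operatorname{Re}\zeta = \pm R_n$ for a subsequence $R_n \to \infty$ chosen so that the lateral integrals vanish in $L^2$ (as in the proof of Lemma~\ref{lemma:7.1}\ref{item:lemma:7.1-ii}) identifies the two representations and yields $\check u = \check v$, which together with the estimate for $\check v$ proves (\ref{eq:cor:strip-solution-20}).

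The main obstacle I anticipate is the Hilbert-space-valued Cauchy contour shift: one must ensure uniform control of ${\mathcal M}[g]$ across the full strip and construct vertical cut-offs along which the lateral boundary contributions vanish. Both ingredients are straightforward vector-valued analogues of the scalar Paley-Wiener arguments encoded in Lemma~\ref{lemma:7.1}, but the sign conventions between the Fourier variable in $t$, the Mellin variable $\zeta$, and the weights $\pm\eta$ require careful tracking to avoid off-by-one errors.
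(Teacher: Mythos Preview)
Your proposal is correct and follows essentially the same route as the paper: translate the strip problem to the cone via Lemma~\ref{lemma:uKsgamma-Mu} and the relation $\check g = e^{2t}\check f$, invoke Theorem~\ref{thm:8.9}, and translate back for (\ref{eq:cor:strip-solution-10}); then use a Cauchy contour shift on the Mellin side for (\ref{eq:cor:strip-solution-20}). The only minor technical variation is that the paper justifies the contour shift by first reducing to compactly supported $\check g$ (whose Mellin transform is entire) and then arguing by density, whereas you work directly with $\check g$ in both weighted spaces and appeal to the vector-valued Paley--Wiener Lemma~\ref{lemma:7.1}; both are valid and amount to the same argument.
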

\begin{proof} Given $\check{g}$, let $\check{f} = e^{-2t} \check{g}$ be defined by (\ref{eq:8.6}). Let $u$ be the unique solution 
given by Theorem~\ref{thm:8.9}. Then, the function $\check{u}$ in the $(t,\phi)$-variables corresponding to $u$ satisfies 
by the norm equivalence (\ref{eq:cone-strip-equivalence})
\begin{align*}
\|\check{u}\|_{W^{s,2}_{\gamma-s+1}(D)} & \sim \|u\|_{K^s_\gamma(\mathcal{C})} \lesssim \|f\|_{K^{s-2}_\gamma(\mathcal{C})} 
\sim \|\check{f}\|_{W^{s-2,2}_{\gamma-s+2+1}(D)}  = \|\check{g}\|_{W^{s-2,2}_{\gamma-s+1}(D)}. 
\end{align*}
This shows (\ref{eq:cor:strip-solution-10}). To see
(\ref{eq:cor:strip-solution-20}), one start with the case of $\check{g}$ having compact support so that its Fourier transform
is entire; the general case then follows by density. Recall that $\check{u}$ is obtained by a Fourier inversion formula. The Cauchy integral
theorem allows one to move line of integration in the Fourier inversion formula as long as one does not cross the spectrum
of ${\mathcal L}$ --- this argument will be worked out in the following Section~\ref{sec:solution-decomposition}. This is to say:
\begin{align*}
\sqrt{2 \pi} \check{u}(t,\phi) &= \int_{\operatorname{Im} \zeta = - \eta} e^{-i t \zeta} {\mathcal M} u(\zeta,\phi)\,d\zeta  
= \int_{\operatorname{Im} \zeta = -\eta'} e^{-i t \zeta} {\mathcal M} u(\zeta,\phi)\,d\zeta  
= \int_{\operatorname{Im} \zeta = -\eta'} e^{-i t \zeta} {\mathcal L}^{-1} \check{g} (\zeta,\phi)\,d\zeta.  
\end{align*}
The argument is then concluded with norm equivalences as done in the proof of Theorem~\ref{thm:8.9} in 
(\ref{eq:proof-of-thm-8.9}). 

\end{proof}
%--------------------------------------
\section{Solution decomposition}
\label{sec:solution-decomposition}
%--------------------------------------
Consider 
\begin{align}
\label{eq:8.11}
-\Delta u_1 &= f \in L^2, & u_1 &= 0 \quad \mbox{ for $\theta = 0$ and $\theta= \omega$}
\end{align}
with 
\begin{subequations}
\label{eq:8.12}
\begin{align}
u_1 & \in H^1(\mathcal{C}) & \operatorname{supp} u_1 & \subset B_1(0), \\
f  & \in L^2(\mathcal{C}) & \operatorname{supp} f & \subset B_1(0). 
\end{align}
\end{subequations}
We note: 
\begin{enumerate}[label=(\alph*)]
\item
$f \in K^0_0(\mathcal{C})$, $u_1 \in K^1_0(\mathcal{C}) \cap K^2_1(\mathcal{C})$ 
\item 
$\exists u_0 \in K^2_0(\mathcal{C})$ solution of (\ref{eq:8.11}) by Thm.~\ref{thm:8.9}. 
(The solution is obtained by the Fourier inversion formula on $\operatorname{Im} \zeta = -1$! the condition that the spectrum of ${\mathcal L}$ doesn't
meet this line is satisfied unless $\omega = \pi$--but this is not an interesting case! )
\end{enumerate}
\underline{question:} relation between $u_1$ and $u_0$?

\underline{answer:} $u_0$ and $u_1$ differ by at most a singularity function

%----------

Mellin transformation yields 
\begin{align*}
(-\partial^2_\theta + \zeta^2){\mathcal M} u_1 & = {\mathcal M} g \quad \mbox{ on $\{\operatorname{Im} \zeta > 0\}$}, \\
(-\partial^2_\theta + \zeta^2){\mathcal M} u_0 & = {\mathcal M} g \quad \mbox{ on $\{\operatorname{Im} \zeta =-1\}$} 
\end{align*}
\begin{itemize}
\item
$f \in K^0_0(\mathcal{C})$ \quad $\Longrightarrow$ \quad $e^{t(1-2)}\check g \in L^2(\BbbR \times G)$ \quad $\Longrightarrow$ \quad 
${\mathcal M} g$ holomorphic on $\{\operatorname{Im} \zeta > -1\}$ with values in $L^2(G)$
\item
$u_1 \in K^2_1(\mathcal{C})$ \quad $\Longrightarrow$ \quad $e^{t(-2+ 1+1)} \check u_1 \in H^2(\BbbR \times G)$ \quad $\Longrightarrow$ \quad
${\mathcal M} u_1$ holomorphic on $\{\operatorname{Im} \zeta > 0\}$ with values in $H^2(G)$
\item
${\mathcal M} g(\cdot-i) \in L^2(\BbbR; L^2(G))$ 
\item
${\mathcal M} u_1(\cdot) \in L^2(\BbbR; H^2(G))$ 
\item
${\mathcal M} u_0(\cdot-i) \in L^2(\BbbR; H^2(G))$ 
\item
$\zeta \mapsto ({\mathcal L}(\zeta))^{-1}$ is meromorphic on $\BbbC$ with poles at $\pm i \lambda_n$, $n \in \BbbN$.  
\end{itemize}
From 
$$
\underbrace{{\mathcal L}(\zeta)}_{\text{mero.}} \underbrace{{\mathcal M} u_1(\zeta)}_{\text{ hol. on } \operatorname{Im} \zeta > 0}
= \underbrace{{\mathcal M} g(\zeta)}_{\text{hol. on } \operatorname{Im} \zeta > -1}
$$
we define the meromorphic extension of ${\mathcal M} u_1$ to $\{\operatorname{Im} \zeta > -1\}$ by 
$$
{\mathcal U} (\zeta) := {\mathcal M} u_1(\zeta):= {\mathcal L}^{-1} {\mathcal M} g(\zeta)
$$
We note that 
$$
{\mathcal U}|_{\operatorname{Im} \zeta = -1} = {\mathcal M} u_0|_{\operatorname{Im} \zeta = -1}
$$
\begin{theorem}
\label{thm:8.10}
\begin{align*}
-u_1 + u_0 &= \sum_{\substack{ \zeta \in \pm i \sigma^D \\ \operatorname{Im} \zeta \in (-1,0)}} 
\frac{2 \pi i}{\sqrt{2 \pi}} \operatorname{Res} r^{i \zeta} {\mathcal L}(\zeta)^{-1} {\mathcal M} g(\zeta)
\end{align*}
(Note that the sum is either empty or has one exactly one term)
\end{theorem}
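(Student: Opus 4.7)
My plan is to apply the residue theorem to the meromorphic function
$\zeta \mapsto r^{i\zeta}\mathcal{U}(\zeta) = r^{i\zeta}\mathcal{L}(\zeta)^{-1}\mathcal{M}[g](\zeta)$ on a rectangular contour bounded by the two horizontal lines $\{\operatorname{Im}\zeta = 0\}$ and $\{\operatorname{Im}\zeta = -1\}$ (on which the inverse Mellin representations of $u_1$ and $u_0$ respectively live) and by vertical segments $\{\operatorname{Re}\zeta = \pm\xi_n\}$ for a suitable sequence $\xi_n \to \infty$. On the upper line, $\mathcal{U}$ agrees with $\mathcal{M}[u_1]$ (as pointed out in Section~\ref{sec:solution-decomposition}), and on the lower line it agrees with $\mathcal{M}[u_0]$; the meromorphic extension $\mathcal{U}$ carries finitely many poles in the open strip $\{-1 < \operatorname{Im}\zeta < 0\}$, all at points of $-i\sigma^D$ (since $\mathcal{M}[g]$ is holomorphic on $\{\operatorname{Im}\zeta > -1\}$ and only $\mathcal{L}^{-1}$ contributes poles).

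First I would fix $r > 0$ and write the inversion formulas
\begin{align*}
\sqrt{2\pi}\, u_1(r,\phi) &= \lim_{R\to\infty}\int_{-R}^{R} r^{i\xi}\,\mathcal{M}[u_1](\xi,\phi)\,d\xi, \\
\sqrt{2\pi}\, u_0(r,\phi) &= \lim_{R\to\infty}\int_{-R}^{R} r^{i(\xi-i)}\,\mathcal{M}[u_0](\xi-i,\phi)\,d\xi,
\end{align*}
both convergent as $L^2$-limits (after suitable weight); by a Fubini/pointwise-subsequence argument these also converge pointwise a.e.\ (in $r$, say) along a common subsequence. Then I would apply Cauchy's residue theorem to the rectangle $\Gamma_n$ with horizontal sides on $\operatorname{Im}\zeta \in \{0, -1\}$ and vertical sides at $\operatorname{Re}\zeta = \pm\xi_n$, obtaining
\begin{equation*}
\oint_{\Gamma_n} r^{i\zeta}\mathcal{U}(\zeta,\phi)\,d\zeta = 2\pi i \sum_{\substack{\zeta_0 \in -i\sigma^D \\ \operatorname{Im}\zeta_0 \in (-1,0)}} \operatorname*{Res}_{\zeta = \zeta_0} r^{i\zeta}\mathcal{L}(\zeta)^{-1}\mathcal{M}[g](\zeta),
\end{equation*}
provided $\xi_n$ is chosen so that the rectangle avoids all real poles (but $\pm i \sigma^D$ is purely imaginary, so this is automatic).

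The main technical obstacle is to show that the contributions of the two vertical segments vanish along a suitable subsequence $\xi_n \to \infty$. Here I would adapt the argument already used in Lemma~\ref{lemma:7.1}\ref{item:lemma:7.1-ii}: from
\begin{equation*}
\int_{\eta=-1}^{0}\int_{\xi\in\BbbR}\|\mathcal{U}(\xi+i\eta,\cdot)\|_{H^2(G;|\xi|)}^2\,d\xi\,d\eta < \infty,
\end{equation*}
which follows from the mapping estimate of Lemma~\ref{lemma:8.8}\ref{item:lemma:8.8-ii} uniformly in $\eta \in [-1+\delta, -\delta]$ (extended to the endpoints by the norm-equivalences of Theorem~\ref{thm:8.7}), Fubini yields a continuous function $g(\xi) := \int_{-1}^0 \|\mathcal{U}(\xi+i\eta,\cdot)\|_{L^2(G)}^2\,d\eta$ that is integrable on $\BbbR$, hence admits a sequence $\xi_n \to \infty$ with $g(\pm\xi_n)\to 0$. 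Combined with the Cauchy-Schwarz inequality applied over $\phi \in G$ and then $\eta \in (-1,0)$, this forces the integrals over the vertical sides to tend to zero along $\xi_n$.

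Passing to the limit $n\to\infty$ in the contour identity, the horizontal sides produce (with orientation accounted for) $\sqrt{2\pi}\,u_1(r,\phi)$ on the upper line and $-\sqrt{2\pi}\,u_0(r,\phi)$ on the lower line, while the vertical sides vanish. Dividing by $\sqrt{2\pi}$ and rearranging yields precisely the claimed identity
\begin{equation*}
-u_1 + u_0 \;=\; \sum_{\substack{\zeta_0 \in -i\sigma^D \\ \operatorname{Im}\zeta_0 \in (-1,0)}} \frac{2\pi i}{\sqrt{2\pi}}\operatorname*{Res}_{\zeta=\zeta_0} r^{i\zeta}\mathcal{L}(\zeta)^{-1}\mathcal{M}[g](\zeta).
\end{equation*}
The remark that the sum contains at most one term then follows from inspecting which $\lambda_n = n\pi/\omega$ can satisfy $\lambda_n \in (0,1)$: for $\omega \le \pi$ none do (the sum is empty and $u_1 = u_0$), and for $\omega \in (\pi,2\pi)$ only $n=1$ qualifies.
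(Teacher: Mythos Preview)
Your approach is essentially identical to the paper's: both reduce to the contour-shifting/residue argument of Lemma~\ref{lemma:7.1}\ref{item:lemma:7.1-ii}, with the paper simply citing that lemma and then computing the residue at $-i\lambda_1$ explicitly. One small caveat: your double integral $\int_{-1}^{0}\int_{\BbbR}\|\mathcal{U}(\xi+i\eta)\|^2\,d\xi\,d\eta$ is not finite when a pole of $\mathcal{L}^{-1}$ lies in the open strip, but the Fubini/subsequence argument only needs integrability for $|\xi|$ large, which holds because the poles of $\mathcal{U}$ sit on the imaginary axis.
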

\begin{proof}
The Fourier inversion formulas (along the real line) yield 
\begin{align*}
\check u_1 & = {\mathcal F}^{-1} ({\mathcal M} u_1(\cdot)) = {\mathcal F}^{-1} {\mathcal U}(\cdot), \\
\check u_0 & = {\mathcal F}^{-1} ({\mathcal M} u_0(\cdot - i)) = {\mathcal F}^{-1} {\mathcal U}(\cdot-i). 
\end{align*}
As in the proof of Lemma~\ref{lemma:7.1}, the Fourier inversion formula is a line integral that can be shifted. 
%{\tiny That is: one has to ascertain that the integrals over the two short vertical paths vanish in the limit}. 
We now assume $\lambda_1 < 1$ so that when shifting the integral from the $\operatorname{Im} \zeta = -1$ to the line
$\operatorname{Im} \zeta = 0$, one passes across the pole at $- i\lambda_1$, and a residue of 
$r^{i \zeta} {\mathcal U}(\zeta)$ at $\zeta = -i \lambda_1$ is picked up, i.e., 
$$
\operatorname{Res}_{\zeta = -i \lambda_1}  {\mathcal L}(\zeta)^{-1} {\mathcal M} g(\zeta) r^{i \zeta}. 
$$
The formal calculation is: 
\begin{align*}
\check u_1 (t,\theta) & = {\mathcal F}^{-1}({\mathcal U})(t,\theta)  
= \frac{1}{\sqrt{2\pi}} \int_{\operatorname{Im} \zeta = 0} e^{i \zeta t} {\mathcal U}(\zeta)\, d\zeta, \\
\check u_0 (t,\theta) & 
= \frac{1}{\sqrt{2\pi}} \int_{\operatorname{Im} \zeta = -1} e^{i \zeta t} {\mathcal U}(\zeta)\, d\zeta
\end{align*}
so that by Lemma~\ref{lemma:7.1} 
\begin{align*}
-\check u_1 + \check u_0 & = \frac{2 \pi i}{\sqrt{2\pi}} \operatorname{Res}_{\zeta = -i \lambda_1} e^{i \zeta t} {\mathcal U}(\zeta) 
\stackrel{r = e^t}{ =} \frac{2 \pi i}{\sqrt{2\pi}} \operatorname{Res}_{\zeta = -i \lambda_1} r^{i\zeta} {\mathcal U}(\zeta) . 
\end{align*}
The residue $\operatorname{Res}_{\zeta = -i \lambda_1} r^{i \zeta} {\mathcal U}(\zeta) = 
\operatorname{Res}_{\zeta = -i \lambda_1} r^{i \zeta} {\mathcal L}(\zeta)^{-1} {\mathcal M} g(\zeta)$  can be evaluated explicitly. 
We start from 
\begin{align*}
\underbrace{(-\partial^2_\theta + \zeta^2)}_{={\mathcal L}(\zeta)}{\mathcal U}(\zeta) & = {\mathcal M}g(\zeta)=: 
F(\zeta) = \sum_{n} F_n(\zeta)\sqrt{\frac{2}{\omega}} \sin(\lambda_n \theta). 
\end{align*}
We use the solution formula
\begin{align*}
{\mathcal U}(\zeta) &= \sum_n \frac{F_n(\zeta)}{\zeta^2 + \lambda^2_n} \sqrt{\frac{2}{\omega}} \sin(\lambda_n \theta) \\
&= \underbrace{ \frac{F_1(\zeta)}{\zeta^2 + \lambda^2_1}}_{ 
               = \frac{F_1(\zeta)}{2 i \lambda_1}((\zeta - i\lambda_1)^{-1} - (\zeta + i \lambda_1)^{-1})}
\sqrt{\frac{2}{\omega}} \sin(\lambda_1 \theta) 
+ 
\underbrace{ \sum_{n \ge 2} \frac{F_n(\zeta)}{\zeta^2 + \lambda^2_n} \sqrt{\frac{2}{\omega}} \sin(\lambda_n \theta) }_{\text{hol. in $\operatorname{Im} \zeta \in (-1,0)$}}
\end{align*}
Hence (note: $r^{i \zeta}$ is holomorphic near $\zeta = -i \lambda_1$)
\begin{align*}
\operatorname{Res}_{\zeta = -i \lambda_1} r^{i \zeta} {\mathcal L}(\zeta)^{-1} {\mathcal M} g(\zeta) 
&=  \frac{-F_1(-i \lambda_1)}{2 i \lambda_1} r^{i (-i \lambda_1)} \sqrt{\frac{2}{\omega}} \sin(\lambda_1 \theta)
\end{align*}
We further compute 
\begin{align*}
F_1(-i \lambda_1) & = \sqrt{\frac{2}{\omega}} \int_G \sin (\lambda_1 \theta) {\mathcal F}(e^{2t} \widetilde f(e^t,\theta)) |_{\xi = -i\lambda_1}\, d\theta \\
& = \sqrt{\frac{2}{\omega}} \int_G \sin (\lambda_1 \theta) \frac{1}{\sqrt{2\pi}} \int_{t} e^{2t} \widetilde f(e^t,\theta) e^{-i(-i \lambda_1) t}\, dt\, d\theta \\
& \stackrel{r = e^t}{=} \sqrt{\frac{2}{2 \pi \omega}} \int_G \int_r \sin (\lambda_1 \theta)  r  \widetilde f(r,\theta) r^{-\lambda_1}\, dr\, d\theta \\
& = \sqrt{\frac{2}{\omega}} \frac{1}{\sqrt{2\pi}} \int_\mathcal{C}r^{-\lambda_1} \sin(\lambda_1 \theta) f\, dx 
\end{align*}
\end{proof}
The procedure in Theorem~\ref{thm:8.10} shows that the difference $u_0  - u_1$ is given by 
$$
u_1 - u_0 = \ell(f) r^{\lambda_1} \sin(\lambda_1 \theta)
$$
where $r^{\lambda_1} \sin(\lambda_1 \theta)$ is a solution of the homogeneous equation that satisfies the boundary conditions. 
The linear functional $\ell$ is given explicitly by 
$$
\ell(f) = -\frac{1}{\pi} \int_\mathcal{C}r^{-\lambda_1} \sin(\lambda_1 \theta) f\, dx
$$
We also note that the function $u_0 \in H^2(\mathcal{C}\cap B_1(0))$, i.e., is ``regular'' near $r = 0$. 

As a corollary of Theorem~\ref{thm:8.10}, we have 
\begin{corollary}
\label{cor:8.11}
Let $\Omega\subset \BbbR^2$ be a polygon with vertices $A_1,\ldots,A_J$ and interior angles $\omega_j \in (0,2\pi)/\setminus \{\pi\}$. Then, for 
every $f \in L^2(\Omega)$ the solution $u \in H^1_0(\Omega)$ of $-\Delta u = f$ can be written as 
$u = u_{H^2} + \sum_{j=1}^J c_j \chi_j r_j^{\pi/\omega_j} \sin(\frac{\pi}{\omega_j} \theta)$, where:  
$\chi_j$ is a smooth cut-off function with $\chi_j \equiv 1$ near $A_j$; 
the coefficients $c_j = 0$ if $\omega_j < \pi$ and for $\omega_j > \pi$ it is given by 
\begin{align*}
c_j (f) &= -\frac{1}{\pi} \int_\Omega f s_j \eta_j + u \Delta (s_j \eta_j),  \\
s_j &:= r_j^{-\pi/\omega_j} \sin \frac{\pi}{\omega_j} \theta, 
\end{align*}
where $\eta_j$ is an arbitrary  smooth cut-off function with $\eta_j \equiv 1$ near $A_j$; $u_{H^2} \in H^2$ with $\|u_{H^2}\|_{H^2(\Omega)} \leq C \|f\|_{L^2(\Omega)}$. 
\end{corollary}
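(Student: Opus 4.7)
The plan is to reduce the global statement on the polygon to the local result near a corner (Theorem~\ref{thm:8.10}) via a partition of unity, and to massage the resulting stress intensity formula into the stated global form by integration by parts. I will proceed in three main stages.

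\textbf{Localization.} Let $\chi_1,\ldots,\chi_J$ be smooth cut-off functions with $\chi_j\equiv 1$ in a small neighborhood of the vertex $A_j$, with pairwise disjoint supports contained in small balls around each $A_j$, and set $\chi_0:=1-\sum_{j=1}^J\chi_j$. Then $\operatorname{supp}\chi_0$ is compactly contained in $\overline\Omega$ away from all vertices, so the local boundary is either empty or a single straight segment, and standard elliptic regularity \cite{Evans_2010,Gilbarg} gives $\chi_0 u\in H^2(\Omega)$ with a bound in terms of $\|f\|_{L^2}+\|u\|_{H^1}$. For each $j\ge 1$, after a rigid motion and rotation that places $A_j$ at the origin and one side of the angle on $\{\phi=0\}$, the function $v_j:=\chi_j u$ has support in a small ball around $0$, lies in $H^1(\mathcal{C}_{j})$ where $\mathcal{C}_j$ is the infinite cone of opening $\omega_j$, vanishes on the two lateral sides, and satisfies
\begin{equation*}
-\Delta v_j=\chi_j f - 2\nabla\chi_j\cdot\nabla u-(\Delta\chi_j)u =: \widetilde f_j\in L^2(\mathcal{C}_j),\qquad \operatorname{supp} \widetilde f_j\subset B_1(0).
\end{equation*}

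\textbf{Applying the cone decomposition.} Each $v_j$ falls within the hypotheses (\ref{eq:8.11})–(\ref{eq:8.12}). Theorem~\ref{thm:8.10} applied to $v_j$ yields $v_j=v_{j,0}+\delta_j$ with $v_{j,0}\in H^2$ near $0$ and
\begin{equation*}
\delta_j= c_j\, r_j^{\pi/\omega_j}\sin(\tfrac{\pi}{\omega_j}\phi),\qquad c_j=-\tfrac{1}{\pi}\int_{\mathcal{C}_j} r_j^{-\pi/\omega_j}\sin(\tfrac{\pi}{\omega_j}\phi)\,\widetilde f_j\,d\mathbf{x},
\end{equation*}
and $\delta_j\equiv 0$ when $\omega_j<\pi$ because then the sum in Theorem~\ref{thm:8.10} is empty (no pole of $\mathcal{L}^{-1}$ lies in the strip $-1<\operatorname{Im}\zeta<0$). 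Summing $u=\chi_0 u+\sum_j v_j$, the $v_{j,0}$ and $\chi_0 u$ contributions combine into the claimed $u_{H^2}\in H^2(\Omega)$.

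\textbf{Cleaning up the stress intensity formula.} The raw expression for $c_j$ above involves the awkward commutator terms $-2\nabla\chi_j\cdot\nabla u-(\Delta\chi_j)u$. To arrive at the formula stated in the corollary, I will apply Green's identity on $\Omega$ to the pair $(\chi_j u, s_j\eta_j)$, where $s_j=r_j^{-\pi/\omega_j}\sin(\tfrac{\pi}{\omega_j}\phi)$ is the dual singular function and $\eta_j$ is a smooth cut-off supported near $A_j$ with $\eta_j\equiv 1$ on $\operatorname{supp}\chi_j$. The key facts are that $s_j$ is harmonic on $\mathcal{C}_j$, vanishes on the two lateral sides, and $\eta_j s_j$ has vanishing trace on $\partial\Omega$; hence no boundary terms appear. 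Using $\eta_j\equiv 1$ on $\operatorname{supp}\chi_j$, the integration by parts gives
\begin{equation*}
\int_\Omega \widetilde f_j\, s_j\,d\mathbf{x}=\int_\Omega (-\Delta(\chi_j u))\,s_j\eta_j\,d\mathbf{x}=\int_\Omega \chi_j u\,(-\Delta(s_j\eta_j))\,d\mathbf{x}=-\int_\Omega u\,\Delta(s_j\eta_j)\,d\mathbf{x},
\end{equation*}
where in the last step I use that $\Delta(s_j\eta_j)$ is supported away from $A_j$ (since $s_j\eta_j$ is harmonic where $\eta_j\equiv 1$) so that the factor $\chi_j$ can be dropped. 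Combining with $\int_\Omega\widetilde f_j s_j=\int_\Omega \chi_j f s_j-\int_\Omega(2\nabla\chi_j\cdot\nabla u+(\Delta\chi_j)u)s_j$ and one more integration by parts on the $\nabla u$ term yields the desired identity $c_j=-\tfrac{1}{\pi}(\int_\Omega f s_j\eta_j+\int_\Omega u\,\Delta(s_j\eta_j))$, with the resulting coefficient formula independent of the choice of $\eta_j$.

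\textbf{Main obstacle.} The delicate point is the integration-by-parts step: $s_j$ is not in $L^2$ near $A_j$, so the integrals must be interpreted as limits of integrals on $\Omega\setminus B_\varepsilon(A_j)$ and one must verify that the boundary contributions on $\partial B_\varepsilon(A_j)$ vanish as $\varepsilon\to 0$. This requires a careful pairing of the $H^{\pi/\omega_j}$-growth of $u$ near the corner against the $r^{-\pi/\omega_j}$-decay of $s_j$, using $\pi/\omega_j+(-\pi/\omega_j)=0$ and the cancellation $\int_0^{\omega_j}\sin^2(\tfrac{\pi}{\omega_j}\phi)\,d\phi=\omega_j/2$ in the angular integral on the circular arc.
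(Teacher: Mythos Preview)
Your overall strategy---localize with cut-offs, apply Theorem~\ref{thm:8.10} at each vertex, then rewrite the stress intensity coefficient---is exactly the paper's. The gap is in your third step, where the displayed chain
\[
\int_\Omega (-\Delta(\chi_j u))\,s_j\eta_j \;=\; \int_\Omega \chi_j u\,(-\Delta(s_j\eta_j)) \;=\; -\int_\Omega u\,\Delta(s_j\eta_j)
\]
fails at both equalities. For the second: since you chose $\eta_j\equiv 1$ on $\operatorname{supp}\chi_j$, the support of $\Delta(s_j\eta_j)\subset\operatorname{supp}\nabla\eta_j$ is \emph{disjoint} from $\operatorname{supp}\chi_j$, so $\chi_j u\,\Delta(s_j\eta_j)\equiv 0$, not $u\,\Delta(s_j\eta_j)$. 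For the first: $s_j\eta_j\notin H^1$ near $A_j$, and when you excise $B_\varepsilon(A_j)$ and let $\varepsilon\to 0$ the arc contribution does \emph{not} vanish---a direct computation with $u\sim c_j r^{\pi/\omega_j}\sin(\tfrac{\pi}{\omega_j}\phi)$ and $s_j=r^{-\pi/\omega_j}\sin(\tfrac{\pi}{\omega_j}\phi)$ gives exactly $c_j\pi$. So the identity is circular: it merely reproduces $c_j=-\tfrac1\pi\int s_j\widetilde f_j$ and yields no new formula. Your ``Main obstacle'' paragraph anticipates a difficulty here but misdiagnoses it as a technicality; it is in fact fatal to this route.

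The paper sidesteps both issues by using a \emph{single} cut-off: localize directly with $\eta_j$ (no nested $\chi_j$), so Theorem~\ref{thm:8.10} gives $c_j=-\tfrac1\pi\int_\Omega s_j\bigl(-\Delta(\eta_j u)\bigr)$. Expand $-\Delta(\eta_j u)=\eta_j f-2\nabla\eta_j\cdot\nabla u-(\Delta\eta_j)u$ and integrate the middle term by parts once; this is legitimate because $\nabla\eta_j$ is supported away from $A_j$ (so $s_j$ is smooth there) and $u=0$ on $\partial\Omega$. Using $\Delta s_j=0$ one obtains the stated formula directly. This is precisely the ``one more integration by parts on the $\nabla u$ term'' you allude to, but applied with $\eta_j$ playing the role of your $\chi_j$; introducing a second, larger cut-off is what creates the trouble. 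Independence of the choice of $\eta_j$ then follows separately by observing that for two cut-offs the difference $s_j(\eta_j-\eta_j')$ is smooth with support away from all vertices, so Green's identity (now unproblematic) applied to $u$ and $s_j(\eta_j-\eta_j')$ gives zero.
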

\begin{proof}
localize near $A_j$ using $\eta_j$: Then $(\eta_j u)$ solves 
$$
-\Delta (\eta_j u) = - (\Delta \eta_j u+ 2 \nabla \eta_j \cdot \nabla u + \eta_j \underbrace{\Delta u}_{ = -f})
$$
From Theorem~\ref{thm:8.10}, we get 
\begin{align*}
c_j & = -\frac{1}{\pi} \int_\Omega s_j (-\Delta (\eta_j u) ) = \cdots \stackrel{\Delta s_j = 0}{=} 
-\frac{1}{\pi} \int_\Omega f \eta_j s_j + u \Delta (\eta_j s_j) \,dx. 
\end{align*}
\end{proof}
%---------------------------------
\subsection{Solution decomposition for the Neumann problem}
%---------------------------------
Consider 
\begin{equation}
-\Delta u_1 = f \in L^2, \quad \partial_n u_1 = 0 \quad \mbox{ for $\theta=  0$ and $\theta = \omega$}
\end{equation}
with 
\begin{align*}
u_1 &\in H^1(\mathcal{C}), 
& \operatorname{supp} u_1 & \subset B_1(0), 
& \operatorname{supp} f & \subset B_1(0). 
\end{align*}
Note: $f \in K^0_0(\mathcal{C})$, $u_1 \in K^1_\delta$ for every $\delta > 0$ ($\delta =  0$ not possible)

Mellin transformation yields 
$$
\underbrace{(-\partial^2_\theta + \zeta^2 )}_{=:{\mathcal L}^N(\zeta)}  {\mathcal M} g_1  = {\mathcal M} g \quad \mbox{ on $\{\operatorname{Im} \zeta > \delta\}$} 
$$
and the boundary conditions are $\partial_\theta {\mathcal M} u_1(0) = \partial_\theta {\mathcal M} u_1(\omega) = 0$. 

One can check: 
\begin{lemma}
Set $\sigma^N:= \{\lambda^N_n:= n \frac{\pi}{\omega}\,|\, n \in \BbbN_0\}$. Then 
\begin{align}
(-\partial^2_\theta + \zeta^2) \widehat  u =  F \in L^2(G), 
\quad \widehat u^\prime(0) = \widehat u^\prime(\omega) = 0
\end{align}
has a unique solution $\widehat u_\zeta$ for $\zeta \not\in \pm i \sigma^N$ and 
$$
\|\widehat u_\zeta\|_{H^2(G; |\zeta|)} \lesssim \|F\|_{L^2(G)}. 
$$
\end{lemma}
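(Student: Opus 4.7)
The plan is to mimic the proof of Lemma~\ref{lemma:8.8} step by step, replacing the Dirichlet sine basis $\{\sin(\lambda_n\theta)\}$ by the Neumann cosine basis $\{\cos(\lambda^N_n\theta)\}_{n\ge 0}$, which is an orthogonal basis of $L^2(G)$ whose elements satisfy the Neumann boundary conditions at $\theta=0$ and $\theta=\omega$. This choice makes separation of variables diagonalize the operator $-\partial_\theta^2+\zeta^2$ on the domain $\{v\in H^2(G):v'(0)=v'(\omega)=0\}$ with eigenvalues $\lambda_n^N{}^2+\zeta^2$.

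First, I would expand $F=\sum_{n\ge 0} F_n\cos(\lambda^N_n\theta)$ with $\|F\|_{L^2(G)}^2\sim \sum_n |F_n|^2$, and, under the assumption $\zeta\notin\pm i\sigma^N$ (which guarantees $\zeta^2+\lambda_n^{N\,2}\ne 0$ for every $n$, including $n=0$), define
\[
\widehat u_\zeta=\sum_{n\ge 0}\frac{F_n}{\zeta^2+\lambda_n^{N\,2}}\cos(\lambda^N_n\theta).
\]
One checks directly that this series defines the unique $H^2(G)$-element satisfying $-\widehat u_\zeta''+\zeta^2\widehat u_\zeta=F$ and the Neumann conditions (uniqueness is immediate: a homogeneous solution would have to be in the kernel of ${\mathcal L}^N(\zeta)$, but that kernel is trivial under $\zeta\notin\pm i\sigma^N$).

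Next I would derive the a priori estimate. As in the proof of Lemma~\ref{lemma:8.8}, for $\zeta=\xi+i\eta$ with $\{\xi+i\eta:\xi\in\BbbR\}\cap\pm i\sigma^N=\emptyset$ one has the key pointwise bound
\[
\bigl|(\xi+i\eta)^2+\lambda_n^{N\,2}\bigr|^{-2}=\bigl((\xi^2-\eta^2+\lambda_n^{N\,2})^2+4\xi^2\eta^2\bigr)^{-1}\lesssim_\eta \min\{|\xi|^{-4},\lambda_n^{N\,-4}\},
\]
where the constant depends on $\eta$ through the distance of the line $\operatorname{Im}\zeta=\eta$ to the discrete set $\pm\sigma^N$ (this is where the hypothesis $\zeta\notin\pm i\sigma^N$ enters, together with the convention $\lambda^N_0=0$). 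Since $\|\partial_\theta^k\widehat u_\zeta\|_{L^2(G)}^2\sim\sum_n\frac{|F_n|^2}{|\zeta^2+\lambda_n^{N\,2}|^2}\lambda_n^{N\,2k}$ for $k=0,1,2$, the bound $\min\{|\xi|^{-4},\lambda_n^{N\,-4}\}\cdot(1+\lambda_n^{N\,2})^{2k}$ gives $\lesssim (1+|\xi|^2)^{2-2k}$, hence
\[
(1+|\xi|^2)^2\|\widehat u_\zeta\|_{L^2}^2+(1+|\xi|^2)|\widehat u_\zeta|_{H^1}^2+|\widehat u_\zeta|_{H^2}^2\lesssim\sum_n|F_n|^2\sim\|F\|_{L^2(G)}^2,
\]
which, by the norm equivalence in (\ref{eq:thm:8.7-20}), is exactly $\|\widehat u_\zeta\|_{H^2(G;|\xi|)}^2\lesssim\|F\|_{L^2}^2$.

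The only genuinely new issue compared with Lemma~\ref{lemma:8.8} is the presence of the eigenvalue $\lambda^N_0=0$, which is why one must include $0\in\sigma^N$ and exclude $\zeta=0$; the $n=0$ term contributes $F_0/\zeta^2$ and is controlled by the very same bound $|\zeta|^{-4}\lesssim_\eta (1+|\xi|^2)^{-2}$ provided $\eta\ne 0$. Aside from this bookkeeping, the proof is identical to the Dirichlet case.
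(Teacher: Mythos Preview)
Your proposal is correct and is exactly the argument the paper has in mind: the paper does not spell out a proof here but merely writes ``One can check:'' before stating the lemma, signalling that the proof of Lemma~\ref{lemma:8.8} carries over verbatim with the cosine basis in place of the sine basis. Your observation that the only new feature is the eigenvalue $\lambda^N_0=0$, forcing $0\in\sigma^N$ and requiring $\eta\ne 0$ for the $n=0$ bound, is precisely the point.
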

\begin{corollary}
For $f \in K^0_0(\mathcal{C})$ there exists a unique $u_0 \in K^2_0$ solution of 
\begin{align*}
-\Delta u_0 = f \quad \mbox{ in $\mathcal{C}$}, 
\qquad \partial_n u_0 = 0 \quad \mbox{ on $\partial\mathcal{C}$}. 
\end{align*}
\end{corollary}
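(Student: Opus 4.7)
The plan is to mimic the proof of Theorem~\ref{thm:8.9} (the Dirichlet case) verbatim, replacing the Dirichlet Mellin symbol $\mathcal{L}(\zeta)$ with its Neumann counterpart $\mathcal{L}^N(\zeta)$ and invoking the lemma immediately preceding the corollary (which provides, for $\zeta \notin \pm i\sigma^N$, a unique solution $\widehat u_\zeta \in H^2(G)$ of the 1D Neumann problem together with the weighted estimate $\|\widehat u_\zeta\|_{H^2(G;|\zeta|)} \lesssim \|F\|_{L^2(G)}$). First I would translate the hypothesis $f \in K^0_0(\mathcal{C})$ via the norm equivalence of Theorem~\ref{thm:8.7} into the statement that $\mathcal{M}g(\cdot - i\eta) \in L^2(\BbbR; L^2(G))$ with $g = r^2 f$ and $\eta = s - \gamma - d/2 = 2-0-1 = 1$, so that we work on the horizontal line $\operatorname{Im}\zeta = -1$.

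The key verification before anything else is that this line does not meet $\pm i\sigma^N = \{0, \pm i n\pi/\omega : n \in \BbbN_0\}$. Since $\sigma^N$ is real and $\omega \in (0,2\pi)$, the only way $-1 \in \pm\sigma^N$ is $\omega = \pi$, which is excluded (and implicitly so throughout the paper; in the smooth case $\omega = \pi$ the result follows from classical elliptic regularity as in Remark~\ref{rem:omega=pi}). With this hurdle out of the way, the Neumann 1D lemma is applicable pointwise in $\xi \in \BbbR$ to $\zeta = \xi - i$, and yields a function $\mathcal{M}u_0(\zeta) := (\mathcal{L}^N(\zeta))^{-1}\mathcal{M}g(\zeta)$ with the uniform bound $\|\mathcal{M}u_0(\cdot - i)\|_{H^2(G;|\xi|)} \lesssim \|\mathcal{M}g(\cdot - i)\|_{L^2(G)}$.

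I would then define $u_0$ by the inverse Mellin transform on the line $\operatorname{Im}\zeta = -1$, i.e.\ set $\check u_0 := \mathcal{F}^{-1}(\mathcal{M}u_0(\cdot - i))$ and pass back to polar (and Cartesian) coordinates. The norm equivalence of Theorem~\ref{thm:8.7}\ref{item:thm:8.7-ii} applied with $s=2$, $\gamma=0$ immediately gives $u_0 \in K^2_0(\mathcal{C})$ together with the \textsl{a priori} estimate
\begin{equation*}
\|u_0\|_{K^2_0(\mathcal{C})} \;\sim\; \Bigl(\int_{\xi} \|\mathcal{M}u_0(\xi - i)\|^2_{H^2(G;|\xi|)}\, d\xi\Bigr)^{1/2} \lesssim \|f\|_{K^0_0(\mathcal{C})}.
\end{equation*}
That $u_0$ solves $-\Delta u_0 = f$ in the strong sense follows by reversing the Mellin identity $\mathcal{L}^N(\zeta)\mathcal{M}u_0 = \mathcal{M}g$; the homogeneous Neumann conditions at $\phi = 0,\omega$ transfer from the 1D solution $\widehat u_\zeta$ through the inverse transform (since $\partial_\phi \mathcal{M}u_0(\zeta,\phi) = 0$ at the endpoints for every $\zeta$ on the integration line).

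Uniqueness is the dual of existence. If $v \in K^2_0(\mathcal{C})$ solves the homogeneous Neumann problem, then $\mathcal{M}v(\cdot - i) \in L^2(\BbbR; H^2(G))$ and satisfies $\mathcal{L}^N(\xi - i)\mathcal{M}v(\xi - i) = 0$ for a.e.\ $\xi \in \BbbR$. Injectivity of $\mathcal{L}^N(\zeta)$ at every $\zeta$ with $\operatorname{Im}\zeta = -1$ (again because $\omega \neq \pi$) forces $\mathcal{M}v \equiv 0$ on this line, and the inverse Mellin transform gives $v \equiv 0$. The main (and really only) obstacle is the pole-avoidance check at $\zeta = -i$; everything else is a routine application of the machinery already set up in Section~\ref{sec:mellin-transformation-and-Ks} and in the Neumann 1D lemma.
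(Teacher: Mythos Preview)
Your proposal is correct and is essentially the same argument as the paper's, which simply says ``proceed as in the Dirichlet case'' and notes that the only point to verify is that the line $\operatorname{Im}\zeta = -1$ avoids $\pm i\sigma^N$, i.e., $\omega \ne \pi$. You have merely spelled out the details that the paper leaves implicit.
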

\begin{proof}
We proceed as in the Dirichlet case. We have to make sure that the line $\operatorname{Im} \zeta = -1 = 0-2+1 (= \gamma-s+d/2)$ does
not meet $\pm i \sigma^N$. This is the case for $\omega \ne \pi$. 
\end{proof}
As in the proof of Theorem~\ref{thm:8.10}, we get 
$$
-u_1 + u_0 = \frac{2 \pi i}{\sqrt{2 \pi}} \sum_{\substack{ \zeta \in -i \sigma^N \\ \operatorname{Im} \zeta \in (-1,\delta)}} 
\operatorname{Res} r^{i \zeta} {\mathcal L}(\zeta)^{-1} {\mathcal M} g(\zeta)
$$
\begin{itemize}
\item
The residue at $-i \lambda_1 = - i \frac{\pi}{\omega}$ is computed as in the Dirichlet case and yields 
$$
- r^{\lambda_1} \cos (\lambda_1 \theta) \frac{1}{\pi} \int_{\mathcal{C}}r^{-\lambda_1} \cos (\lambda_1 \theta) f
$$
\item
$\zeta = 0$ is a double pole of ${\mathcal L}(\zeta)^{-1}$. We have 
\begin{align*}
{\mathcal U}(\zeta) & = \frac{F_0(\zeta)}{\zeta^2} \sqrt{\frac{1}{\omega}} 
+ \underbrace{ \sum_{n=1}^\infty \frac{F_n(\zeta)}{\zeta^2 + \lambda_n^2} \sqrt{\frac{2}{\omega}} \cos (\lambda_n \theta) }_
             {\rightarrow \text{ Residue at $- i \lambda_1$}}
\end{align*}
and 
\begin{align*}
\frac{r^{i\zeta} F_0(\zeta)}{\zeta^2}\sqrt{\frac{1}{\omega}} = 
& \sqrt{\frac{1}{\omega}} \frac{1}{\zeta^2} F_0(\zeta) e^{i \zeta \ln r} 
\stackrel{\text{Taylor}}{=} \sqrt{\frac{1}{\omega}} \frac{1}{\zeta^2} (F_0(0) + F^\prime_0(0) \zeta + \cdots ) ( 1  + i \ln r{\zeta} + \cdots ) \\
& 
\sqrt{\frac{1}{\omega}} \left( F_0(0) \frac{1}{\zeta^2} + (F^\prime_0(0) + F_0(0) i \ln r) \frac{1}{\zeta} + \cdots 
\right)
\end{align*}
Therefore 
\begin{align*}
\operatorname*{Res}_{\zeta = 0} r^{i\zeta} \frac{F_0(\zeta)}{\zeta^2} \sqrt{\frac{1}{\omega}} 
& =
\sqrt{\frac{1}{\omega}} \left( i \ln r F_0(0) + F^\prime_0(0)\right) 
 = a \ln r + b
\end{align*}
for suitable $a$, $b \in \BbbC$. 
\footnote{calculation of $a$: 
\begin{align*}
a &= i \sqrt{\frac{1}{\omega}} F_0(0) 
=  i \sqrt{\frac{1}{\omega}}  \sqrt{\frac{1}{\omega}} \int_G {\mathcal F}(e^{2t} f(e^t,\theta))|_{\xi = 0}  
=  i\frac{1}{\omega} \frac{1}{\sqrt{2\pi}} \int_{\theta=0}^\omega\int_t e^{2t} f(e^t,\theta)
=  i \frac{1}{\omega} \frac{1}{\sqrt{2\pi}} \int_{\mathcal{C}}f
\end{align*}
}%endkommentar
%Therefore 
%\begin{align*}
%\operatorname*{Res}_{\zeta = 0} r^{i\zeta} \frac{F_0(\zeta)}{\zeta^2} \sqrt{\frac{1}{\omega}} & 
%\sqrt{\frac{1}{\omega}} i \ln r F_0(0) 
%=  \sqrt{\frac{1}{\omega}} i \ln r \sqrt{\frac{1}{\omega}} \int_G {\mathcal F}(e^{2t} f(e^t,\theta))|_{\xi = 0}  \\
%&=  \frac{1}{\omega} i \ln r \frac{1}{\sqrt{2\pi}} \int_{\theta=0}^\omega\int_t e^{2t} f(e^t,\theta)
%=  \frac{1}{\omega} i \ln r \frac{1}{\sqrt{2\pi}} \int_\Gamma f
%\end{align*}
We conclude 
$$
u_1 = \underbrace{u_0}_{\in H^2} + a  \ln r + b + \underbrace{\gamma}_{\in \BbbR} r^{\pi/\omega} \cos (\frac{\pi}{\omega} \theta)
$$
If we inject the information that $u_1 \in H^1(\mathcal{C})$, then we see that $a = 0$. Hence, 
$$
u_1 = u_0 + b+ \gamma r^{\pi/\omega} \cos (\frac{\pi}{\omega} \theta). 
$$
\end{itemize}
\begin{remark}
The example of the Neumann problem illustrates several things: 
\begin{itemize}
\item the operator ${\mathcal L}$ may have poles of higher order. This leads to the introduction of the factors $r^{\lambda} \log^k r$ 
in the singularity function: To evaluate the residue at $\lambda$ of 
$(\zeta - \lambda)^{-n} F(\zeta) e^{i \ln r \zeta}$, 
one expands 
$F(\zeta) e^{i \ln r \zeta}$ as a Taylor series at $\zeta = \lambda$ to get 
\begin{align*}
F(\zeta) e^{i \ln r \zeta} &= 
e^{ i \lambda \ln r} F(\zeta) e^{i \ln r (\zeta-\lambda)} = 
e^{i \lambda \ln r} \sum_{j=0}^\infty F_j (\zeta - \lambda)^j \sum_{k=0}^\infty \frac{1}{k!} (i \ln r)^k (\zeta - \lambda)^k  \\
& = e^{i \lambda \ln r} \sum_{j=0}^\infty (\zeta - \lambda)^j \sum_{k=0}^j \frac{1}{k!} (i \ln r)^k F_{k-j}, 
\end{align*}
so that the residue of $(\zeta-\lambda)^{-n} F(\zeta) e^{i \ln r \zeta}$ at $\lambda$ is 
$$
2 \pi i e^{i \lambda \ln r} \sum_{k=0}^{n-1} \frac{1}{k!} (i \ln r)^k F_{k-j}
$$
\item 
In general, not only ${\mathcal L}$ has poles but also the right-hand side ${\mathcal M}g$. This arises for smooth right-hand sides
as discussed in Lemma~\ref{lemma:8.2}. These poles in the solution correspond to polynomials (since they are at $-i n$, $n \in \BbbN$). 
\eremk
\end{itemize}
\end{remark}

%-----------------------------------------------------------
\section{Solving elliptic equations with the Mellin calculus: the $L^p$-setting}
\label{appendix:Lp}
%-----------------------------------------------------------
We present classical material about solving elliptic equation using the 
Mellin calculus. In this section, we focus on the $L^p$-setting. Extensive references
paying special attention to the 3D case including edges are the monographs 
\cite{nazarov-plamenevsky-1994, mazya-rossmann2010, KozlovMazyaRossmann2001}. 
For certain aspects, we follow the original work 
\cite{mazya-plamenevskii78, mazya-plamenevskii78-translated}. 
%-----------------------------------------------------------
\subsection{$L^p$-analysis of parameter-dependent problems}
%-----------------------------------------------------------
The analysis of the parameter-dependent problem (\ref{eq:8.9}) in an $L^p$-setting 
is conveniently done in the analog of the the norms $H^k(G; |\xi|)$ given,  
for $1 < p < \infty$, by 
\begin{equation}
\|v\|^p_{W^{s,p}(G; \rho)}:= \sum_{|\beta'| \leq s} (1 + \rho)^{2(s-|\beta'|)} \|D^{\beta'} v\|^p_{L^p(G)}. 
\end{equation}

We start with the analog of Lemma~\ref{lemma:8.8} in an $L^p$-setting: 
\begin{lemma}
\label{lemma:8.8-lp}
Let $\widehat u$ solve 
\begin{subequations}
\label{eq:8.9-lp}
\begin{align}
- \partial^2_\theta \widehat  u + \zeta^2 \widehat u & = F \in L^2 \quad \mbox{ in $G$},
\\
\widehat u(0) & = \widehat  u(\omega)  = 0.
\end{align}
\end{subequations}
Set 
\begin{equation}
\sigma^D:= \{\lambda_n:= \frac{\pi}{\omega} n\,|\, n \in \BbbN\} 
\end{equation}
and let $p \in (1,\infty)$. Then:
\begin{enumerate}[label=(\roman*)]
\item
\label{item:lemma:8.8-i-lp}
For $\zeta \in \BbbC \setminus (\pm i \sigma^D)$ problem (\ref{eq:8.9}) has
a  unique solution $\widehat u_\zeta \in W^{2,p}(G) \cap W^{1,p}_0(G)$.
\item
\label{item:lemma:8.8-ii-lp}
Let $\{\xi + i \eta \,|\, \xi \in \BbbR\} \cap (\pm i \sigma^D) = \emptyset$.
Then there is $C = C(\eta)$ such that for all $\zeta \in \{\xi + i \eta\,|\, \xi \in \BbbR\}$ the solution $\widehat u_\zeta$ of (\ref{eq:8.9}) satisfies
\begin{align}
\label{lemma:8.9-lp}
\|\widehat u_\zeta\|_{W^{2,p}(G; |\xi|)} & \lesssim 
(1 + |\xi|^2)^2 \|\widehat u_\zeta\|_{L^p(G)} + (1 + |\xi|^2) |\widehat u_\zeta|_{W^{1,p}(G)} + |\widehat u_\zeta|_{W^{2,p}(G)} \leq C \|F\|_{L^p(G)}.
\end{align}
\end{enumerate}
\end{lemma}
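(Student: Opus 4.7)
The plan is to follow the strategy of the $L^2$-proof in Lemma~\ref{lemma:8.8}, but replace the Parseval-style Fourier-series argument (which uses Hilbert-space orthogonality) with an argument based on the \emph{explicit Green's function} of the ODE. For $\zeta\notin\pm i\sigma^D$ this Green's function is
\begin{equation*}
G_\zeta(\theta,\theta') \;=\; \frac{1}{\zeta\sinh(\zeta\omega)}\cdot
\begin{cases}
\sinh(\zeta\theta)\sinh(\zeta(\omega-\theta')), & \theta\le\theta',\\
\sinh(\zeta\theta')\sinh(\zeta(\omega-\theta)), & \theta\ge\theta',
\end{cases}
\end{equation*}
and is meromorphic in $\zeta$ with simple poles exactly at the zeros of $\zeta^{-1}\sinh(\zeta\omega)$, i.e.\ precisely at $\pm i\sigma^D$.

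For \ref{item:lemma:8.8-i-lp}, the formula $\widehat u_\zeta(\theta)=\int_0^\omega G_\zeta(\theta,\theta')F(\theta')\,d\theta'$ immediately produces a classical $W^{2,p}(G)\cap W_0^{1,p}(G)$ solution of (\ref{eq:8.9-lp}); uniqueness is trivial, since any element of $\ker\mathcal{L}(\zeta)\cap W_0^{1,p}(G)$ would be a classical solution of $-u''+\zeta^2 u=0$ with Dirichlet conditions, which forces $u\equiv 0$ precisely when $\zeta\notin\pm i\sigma^D$. For \ref{item:lemma:8.8-ii-lp}, the assumption that $\{\xi+i\eta:\xi\in\BbbR\}\cap\pm i\sigma^D=\emptyset$ is equivalent to the lower bound $|\sinh(\zeta\omega)|\gtrsim e^{|\xi|\omega}$ for $|\xi|$ large (uniformly on the line), while trivially $|\sinh(\zeta s)|\lesssim e^{|\xi|s}$ for $s\ge 0$. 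Combining these gives, for some $c=c(\eta)>0$, the pointwise kernel estimates
\begin{equation*}
|G_\zeta(\theta,\theta')|\;\lesssim\;\frac{1}{1+|\xi|}\,e^{-c|\xi|\,|\theta-\theta'|},
\qquad
|\partial_\theta G_\zeta(\theta,\theta')|\;\lesssim\; e^{-c|\xi|\,|\theta-\theta'|},
\end{equation*}
and, for bounded $|\xi|$, the Green's function is uniformly continuous in $\zeta$ away from $\pm i\sigma^D$, so the same bounds (with a constant depending on $\eta$) hold trivially.

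Once these kernel bounds are in hand, Young's convolution inequality applied to $\widehat u_\zeta=G_\zeta\star F$ and $\widehat u_\zeta'=\partial_\theta G_\zeta \star F$ gives $\|\widehat u_\zeta\|_{L^p(G)}\lesssim (1+|\xi|)^{-2}\|F\|_{L^p(G)}$ and $\|\widehat u_\zeta'\|_{L^p(G)}\lesssim (1+|\xi|)^{-1}\|F\|_{L^p(G)}$, since the $L^1$-norms (in $\theta'$) of the kernel and its derivative scale like $(1+|\xi|)^{-2}$ and $(1+|\xi|)^{-1}$ respectively. The bound on $\widehat u_\zeta''$ is then read off the equation: $\widehat u_\zeta'' =\zeta^2 \widehat u_\zeta-F$, whence $\|\widehat u_\zeta''\|_{L^p}\lesssim (1+|\xi|)^2\|\widehat u_\zeta\|_{L^p}+\|F\|_{L^p}\lesssim\|F\|_{L^p}$. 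Putting these three estimates together produces exactly (\ref{lemma:8.9-lp}). The main obstacle is the clean derivation of the exponential decay factor $e^{-c|\xi||\theta-\theta'|}$ in the Green's function, which requires tracking the cancellations in the ratio $\sinh(\zeta\theta)\sinh(\zeta(\omega-\theta'))/\sinh(\zeta\omega)$; the hypothesis on $\eta$ is used precisely to prevent the denominator from being small compared to $e^{|\xi|\omega}$, so that no spurious loss of decay occurs uniformly in $|\xi|$.
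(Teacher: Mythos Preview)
Your proof is correct and takes a genuinely different route from the paper's. The paper proceeds in three steps: (1) the Mikhlin multiplier theorem on $\BbbR^d$ to get the $\zeta$-uniform estimate for $-\Delta v + \zeta^2 v = f$ in full space; (2) localization with cut-off functions and an antisymmetric extension to reduce the boundary-value problem on $G$ to the full-space estimate, which yields (\ref{lemma:8.9-lp}) for $|\xi|$ large; and (3) a Fredholm alternative plus a compactness argument (contradiction with $\|\widehat u_n\|_{W^{2,p}}=1$, $F_n\to 0$) to handle $\zeta$ in a bounded range away from $\pm i\sigma^D$. Your approach instead writes down the explicit Green's function for the second-order ODE, bounds it pointwise by a convolution-type kernel $\tfrac{1}{1+|\xi|}e^{-c|\xi||\theta-\theta'|}$, and applies Young's inequality. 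This is considerably more elementary---no Mikhlin theorem, no compactness---and is entirely self-contained; the paper even remarks after its proof that in 1D one can bypass Mikhlin, though it suggests a Fourier-series\slash De~Leeuw argument rather than yours. The trade-off is that the paper's argument is a template that works verbatim for higher-order operators and higher-dimensional cross-sections $G$, where no closed-form Green's function is available; your argument is tied to the explicit ODE structure. Two minor points worth tightening: the kernel $G_\zeta(\theta,\theta')$ is not literally a convolution, so Young applies to the translation-invariant majorant rather than to $G_\zeta$ itself (you effectively use this, but the phrasing ``$G_\zeta\star F$'' is slightly imprecise); and the case $\zeta=0$ (which is allowed when $\eta=0$) deserves a word, since the formula has $\zeta$ in the denominator---of course the numerator also vanishes to second order there, so $G_0$ is just the piecewise-linear Green's function of $-\partial_\theta^2$, and no issue arises.
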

\begin{proof}
\emph{1.~Step ($L^p$-estimate in full space, $\operatorname{Re} \zeta^2 > 0$):} 
Consider $-\Delta v  + \zeta^2 v = f $ in $\BbbR^d$. By $\operatorname{Re} \zeta^2 > 0$, we consider solutions that decay at $\infty$. 
By Fourier transformation (with Fourier variable $\xi$), the solution $\widehat{v}$ is given by 
\begin{equation*}
\widehat v(\xi) = \frac{1}{|\xi|^2  + \zeta^2} \widehat{f}. 
\end{equation*} 
By the Mikhlin multiplier theorem (see, e.g., \cite[Chap.~{IV}, Thm.~{3}]{emstein}) 
The mappings $f \mapsto |\zeta|^2 v$, $f \mapsto |\zeta| D v$, and $f \mapsto D^2 v$ are uniformly-in-$\zeta$ bounded maps in $L^p$ with 
\begin{equation}
\label{eq:result-mikhlin-multiplier-thm}
|\zeta|^2 \|v\|_{L^p(\BbbR^d)} + |\zeta| \|D v\|_{L^p(\BbbR^d)} + \|D^2 v\|_{L^p(\BbbR^d)} \leq C_p \|f\|_{L^2(\BbbR^d)}. 
\end{equation}
\emph{2.~Step ((\ref{lemma:8.9-lp}) for $\zeta = \xi + i \eta$ with large $|\xi|$):} To make use of the full-space result of Step~1, we localize the solution 
$\widehat{u}$ of (\ref{eq:8.9-lp}) by considering $\varphi \widehat{u}$ for a smooth cut-off function $\varphi$. Near the boundary, 
we antisymmetrically extend $\widehat{u}$ so as to preserve the $W^{2,p}$-regularity. The function $\varphi \widehat{u}$ then 
satisfies an equation of the form studied in Step~1 with right-hand side $f = \varphi F + 2 \varphi^\prime \widehat{u}^\prime + \varphi^{\prime\prime} \widehat{u}$. 
From Step~1, we then get the localized estimate 
\begin{equation*}
\|D^2 (\varphi \widehat u)\|_{L^p(\BbbR^d)} + 
|\zeta| \|D (\varphi \widehat u)\|_{L^p(\BbbR^d)} + 
|\zeta|^2 \|\varphi \widehat u\|_{L^p(\BbbR^d)} 
\lesssim \|\varphi F\|_{L^p(\BbbR^d)} + \|\varphi^\prime \widehat u^\prime\|_{L^p(\BbbR^d)} + \|\varphi^{\prime\prime} \widehat u\|_{L^p(\BbbR^d)}. 
\end{equation*}
Using a partition of unity and summing the local contributions, we obtain from this 
\begin{equation*}
\|D^2 \widehat u\|_{L^p(G)} + 
|\zeta| \|\widehat u\|_{L^p(G)} + 
|\zeta|^2 \|\widehat u\|_{L^p(G)} 
\lesssim \|F\|_{L^p(G)} + \|\widehat u^\prime\|_{L^p(G)} + \|\widehat u\|_{L^p(G)}. 
\end{equation*}
For $|\zeta|$ sufficiently large, the terms 
$\|D \widehat {u} \|_{L^p(G)}$  and $\|\widehat {u} \|_{L^p(G)}$ of the right-hand can be absorbed by the left-hand side, and the claimed estimate is shown. 

\emph{3.~Step ((\ref{lemma:8.9-lp}) for finite $\zeta$):} Consider for sufficiently large $\lambda > 0$ the equation 
\begin{equation}
\label{eq:lemma:8.9-lp-10}
-v^{\prime\prime} + \lambda^2 v  = F + \lambda^2 v - \zeta^2 v, \quad v(0) = v(\omega) = 0. 
\end{equation}
Since by Step~2, the left-hand side operator is a bounded linear operator $L^p(G) \rightarrow W^{2,p}(G)$, we have the Fredholm alternative
for (\ref{eq:8.9-lp}). Since by assumption $\zeta \not\in \pm i \sigma^D$, we have uniqueness of solutions of (\ref{eq:8.9-lp}). We conlude from the Fredholm alternative existence
and thus, for each $\zeta \not\in \pm i \sigma^D$, a constant $C_\zeta > 0$ with 
\begin{equation*}
\|\widehat{u}\|_{W^{2,p}(G)} \leq C_\zeta \|F\|_{L^p(G)}. 
\end{equation*}
We next show uniformity of $C_\zeta$ if $\zeta$ ranges in a compact set $K$ satisfying $K \cap \pm i \sigma^D  = \emptyset$. This follows by 
contradiction. Otherwise, there is a sequence $(\widehat{u}_n,\zeta_n)_{n} \subset W^{2,p}(G)\setminus \{0\} \times K $ such that $F_n:= -\widehat{u}^{\prime\prime}_n + \zeta_n^2 \widehat{u}_n$
satisfies 
\begin{align*}
\forall n \in \BbbN \quad \colon \quad 
\|\widehat{u}_n \|_{W^{2,p}(G)} > n \|F_n\|_{L^p(G)} . 
\end{align*}
We may normalize $\|\widehat{u}_n \|_{W^{2,p}(G)} = 1$. This implies $F_n \rightarrow 0$ in $L^p(G)$. Furthermore, by Banach-Alaoglu, 
there is $\widehat{u}_\infty \in W^{2,p}(G)$ with $\widehat{u}_n \rightharpoonup \widehat{u}_\infty$ weakly in $W^{2,p}(G)$. By compact
embedding $W^{2,p}(G) \subset W^{1,p}(G)$, we may assume $\widehat{u}_n \rightarrow \widehat{u}_\infty$ in $W^{1,p}(G)$. 
By compactness of $K$, we may also assume $\zeta_n \rightarrow \zeta \in K$ as $n \rightarrow \infty$. The limit $\widehat{u}_\infty$
satisfies in view of $F_n \rightarrow 0$ 
\begin{equation}
-\widehat{u}^{\prime\prime}_\infty + \zeta^2 \widehat{u}_\infty  = 0, \quad \widehat{u}_\infty(0) = \widehat{u}_\infty(\omega) = 0. 
\end{equation}
Since $\zeta \in K$ and $K \cap \pm i \sigma^D = \emptyset$, we conclude $\widehat{u}_\infty = 0$. Hence, $\widehat{u}_n \rightarrow 0$ in $W^{1,p}(G)$. 
Finally, from (\ref{eq:lemma:8.9-lp-10}) for fixed $\lambda$ sufficiently large we have the \textsl{a priori} bound 
\begin{align*}
\|\widehat{u}_n \|_{W^{2,p}(G)} \lesssim \|F_n\|_{L^p(G)} + \|\widehat{u}_n\|_{W^{1,p}(G)} \rightarrow 0 \quad \mbox{ as $n \rightarrow \infty$.}
\end{align*}
We conclude that $\widehat{u}_n \rightarrow 0 = \widehat{u}_\infty$ in $W^{2,p}(G)$. But this contradicts $\|\widehat{u}_n\|_{W^{2,p}(G)} = 1$ 
for all $n \in \BbbN$.  
\end{proof}
\begin{remark}
The explicit $\zeta$-dependence in (\ref{eq:result-mikhlin-multiplier-thm})
relies on the H\"ormander-Mikhlin multiplier theorem in full-space. 
In the present 1D setting, an alternative proof could be based on Fourier series representations of the solution as done in the 
proof of the corresponding $L^2$-result in Lemma~\ref{lemma:8.8}. For this, the basic ingredient is that by De Leeuw's theorem (see, e.g., \cite[Chap.~{VII}, Thm.~{3.8}]{stein-weiss71}), the 
Mikhlin multiplier theorem also holds for Fourier series if the Fourier multiplier is the restriction to ${\mathbb Z}$ of a full-space Fourier multiplier satisfying
the conditions of the Mikhlin multiplier theorem. 
\eremk
\end{remark}
%----------------------------------------
\subsection{Elliptic problems in a strip}
%----------------------------------------
Following \cite{mazya-plamenevskii78,mazya-plamenevskii78-translated}, we consider equation (\ref{eq:8.6}) in the strip 
$D:= \BbbR \times G$. The procedure to obtain estimates in exponentially weighted spaces differs from that in the $L^2$-based
setting as Parseval's equality is not available in an $L^p$-setting. Instead, in the analysis of (\ref{eq:8.6}) one localizes
in such a way that exponential functions have bounded variation and connects with the $L^2$-results through lower order terms. 
In the strip $D:= \BbbR \times G = \BbbR \times (0,\omega)$, it is convenient to introduce the norms 
\begin{equation}
\|f\|_{W^{s,p}_\beta(D)}:= \|\exp(\beta t) f\|_{W^{s,p}(D)}
\end{equation}

The point of the following lemma is that the lower order norm of the solution is measured in $L^2$ rather than $L^p$, which allows 
one to employ $L^2$-based results from Appendix~\ref{appendix:L2}. 
\begin{lemma}[\protect{\cite[(4.6)]{mazya-plamenevskii78-translated}}]
\label{lemma:mazya-plamenevskii78-eq.4.6}
Let $\check{u}$ solve (\ref{eq:8.6}), i.e., 
\begin{subequations}
\label{eq:8.6-Lp}
\begin{align}
- (\partial_t^2 \check{u} + \partial_\theta^2 \check{u})  & = \check{g} \quad \mbox{ in $D = \BbbR \times G$}, \\
\check{u}(\cdot,0) & = \check{u}(\cdot,\omega)  = 0 
\end{align}
\end{subequations}
Let $\eta_1 = \eta_1(t)$, $\eta_2  = \eta_2(t) \in C^\infty_0(\BbbR)$ with $\eta_2 \equiv 1$ on $\operatorname{supp} \eta_1$. 
Then, for $k \in \BbbN_0$, 
\begin{align}
\label{eq:lemma:mazya-plamenevskii78-eq.4.6-10}
\|\eta_1 \check{u}\|_{W^{k+2,p}}(D)  \leq C \|\eta_2 \check{g}\|_{W^{k,p}(D)} + \|\eta_2 \check{u}\|_{L^2(D)}, \\  
%\end{equation}
%as well as 
%\begin{equation}
\label{eq:lemma:mazya-plamenevskii78-eq.4.6-15}
\mbox{ and } \qquad \|\eta_1 \check{u}\|_{W^{k+2,p}}(D)  \leq C \|\eta_2 \check{g}\|_{W^{k,p}(D)} + \|\eta_2 \check{u}\|_{L^p(D)}. 
\end{align}
\end{lemma}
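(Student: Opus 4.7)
The plan is to establish the $L^p$-version (\ref{eq:lemma:mazya-plamenevskii78-eq.4.6-15}) first by a standard localization/absorption argument based on interior $L^p$-elliptic regularity, and then convert it into (\ref{eq:lemma:mazya-plamenevskii78-eq.4.6-10}) via a Sobolev-interpolation trick on the bounded $t$-support of $\eta_2$.

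For the base step, setting $v:=\eta_1\check u$ and using $-\Delta\check u=\check g$, I would note that $v$ solves
$$
-\Delta v = \eta_1\check g - 2\eta_1'(t)\partial_t\check u - \eta_1''(t)\check u,
\qquad v|_{\theta\in\{0,\omega\}}=0,
$$
with $\operatorname{supp}_t v$ compact. Classical $L^p$-regularity for the Dirichlet Laplacian in the strip $D$—obtainable either by Fourier transformation in $t$, invoking Lemma~\ref{lemma:8.8-lp} for each $\zeta=\xi\in\BbbR$ and lifting the resulting resolvent bound to $L^p(D)$ via a Mikhlin--H\"ormander multiplier argument (the symbols are smooth in $\xi$ since the line $\operatorname{Im}\zeta=0$ is disjoint from $\pm i\sigma^D$), or by odd reflection across both long sides reducing matters to full-space Calder\'on--Zygmund theory—gives
$$
\|v\|_{W^{2,p}(D)}\lesssim \|\widetilde\eta\check g\|_{L^p(D)}+\|\widetilde\eta\check u\|_{W^{1,p}(D)}
$$
for any cutoff $\widetilde\eta$ with $\widetilde\eta\equiv 1$ on $\operatorname{supp}(\eta_1',\eta_1'')$ and $\operatorname{supp}\widetilde\eta\subset\{\eta_2=1\}$. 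A chain of nested cutoffs $\eta_1=\xi_0\prec\xi_1\prec\cdots\prec\xi_N=\eta_2$, combined with the interpolation $\|u\|_{W^{1,p}}\leq \varepsilon\|u\|_{W^{2,p}}+C_\varepsilon\|u\|_{L^p}$ and absorption, then eliminates the $W^{1,p}$-term and yields the $k=0$ case of (\ref{eq:lemma:mazya-plamenevskii78-eq.4.6-15}). For general $k$ I would bootstrap by tangential differentiation: $\partial_t^j\check u$, $j\leq k$, still satisfies zero Dirichlet conditions at $\theta\in\{0,\omega\}$ and solves an analogous problem with right-hand side $\partial_t^j\check g$, while the missing $\theta$-derivatives are recovered algebraically from $\partial_\theta^2\check u=-\partial_t^2\check u-\check g$.

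Granted (\ref{eq:lemma:mazya-plamenevskii78-eq.4.6-15}), the $L^2$-variant (\ref{eq:lemma:mazya-plamenevskii78-eq.4.6-10}) follows by exploiting the boundedness of $\operatorname{supp}_t\eta_2$. When $1<p\leq 2$, H\"older gives $\|\eta_2\check u\|_{L^p(D)}\lesssim \|\eta_2\check u\|_{L^2(D)}$ directly. When $2<p<\infty$, the Gagliardo--Nirenberg interpolation on the bounded support yields
$$
\|\eta_2\check u\|_{L^p(D)}\leq \varepsilon\|\eta_2\check u\|_{W^{k+2,p}(D)}+C_\varepsilon\|\eta_2\check u\|_{L^2(D)};
$$
inserting this into (\ref{eq:lemma:mazya-plamenevskii78-eq.4.6-15}) applied to an intermediate cutoff and absorbing the $W^{k+2,p}$-term through yet another nested-cutoff chain produces (\ref{eq:lemma:mazya-plamenevskii78-eq.4.6-10}).

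The main obstacle is the bookkeeping in the nested-cutoff absorption steps, and in particular making the absorption rigorous before one knows \emph{a priori} that $\check u\in W^{k+2,p}_{\mathrm{loc}}$; this is handled by the standard mollification / difference-quotient routine, first proving the estimate for smooth approximants $\check u_\varepsilon$ with quantities known to be finite, and then passing to the limit. The rest reduces to a careful but routine iteration once $L^p$-elliptic regularity for the Dirichlet Laplacian on the strip is in hand.
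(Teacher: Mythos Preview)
Your argument is correct, and for (\ref{eq:lemma:mazya-plamenevskii78-eq.4.6-15}) it is essentially the paper's route: the paper simply invokes the local $L^p$-elliptic regularity results of Gilbarg--Trudinger (Thms.~9.11, 9.13), which amounts to the same localization-plus-Calder\'on--Zygmund mechanism you spell out.

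For the $L^2$-variant (\ref{eq:lemma:mazya-plamenevskii78-eq.4.6-10}), however, the paper takes a cleaner and genuinely different path that avoids your Ehrling/absorption bookkeeping entirely. Instead of deriving the $L^2$-estimate from the $L^p$-one, the paper splits $\check u = w + z$ directly: $w$ is the Newton potential of $\eta\check g$ (for a cut-off $\eta$), so $\|w\|_{W^{2,p}}\lesssim\|\check g\|_{L^p}$ by Calder\'on--Zygmund; $z=\check u-w$ is then harmonic on the localized region, whence interior regularity for harmonic functions gives $\|z\|_{H^s}\lesssim\|z\|_{L^2}$ for any $s$, and choosing $s$ large enough for the Sobolev embedding $H^s\subset W^{2,p}$ yields the desired control with $\|\check u\|_{L^2}$ on the right. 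This buys a one-shot argument with no nested cut-offs and no need to know \textsl{a priori} that $\check u\in W^{k+2,p}_{\mathrm{loc}}$. Your route works too, but the splitting trick is worth knowing: it is the standard device for trading an $L^p$ lower-order term for an $L^2$ one whenever the relevant homogeneous problem enjoys hypoellipticity.
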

\begin{proof}
For the proof of (\ref{eq:lemma:mazya-plamenevskii78-eq.4.6-15}), we refer to 
\cite[Thm.~{9.11}]{Gilbarg} (interior estimate) and \cite[Thm.~{9.13}]{Gilbarg} (boundary estimates). 

We now show (\ref{eq:lemma:mazya-plamenevskii78-eq.4.6-10}). 
We restrict to the case $k = 0$ and, for simplicity, on the case of interior regularity of a $D' \subset\subset D$. 
We select $D^{\prime\prime}$, $D^{\prime\prime\prime}$ with 
$D' \subset\subset D^{\prime\prime} \subset\subset D^{\prime\prime\prime} \subset\subset D$. 
We split $\check{u} = w + z$, where $w$ is given by the Newton potential of $\eta \check{g}$ for a suitable cut-off function $\eta$ 
with $\eta \equiv 1$ on $D^{\prime\prime}$. 
By, e.g., \cite[Thm.~{9.9}]{Gilbarg}, we may assume $\|w\|_{W^{2,p}(\BbbR^2)} \lesssim \|\check{g}\|_{L^p(D^{\prime\prime\prime})}$. 
Since $-\Delta z = 0$ on $D^{\prime\prime}$, interior regularity gives 
\begin{equation*}
\|z\|_{H^{s}(D')} \leq C_s \|z\|_{L^2(D^{\prime\prime})} 
\end{equation*}
for any $s \ge 0$.  Choosing $s$ so that the Sobolev embedding theorem $H^s(D') \subset W^{2,p}(D')$ holds, we get 
\begin{align*} 
\|\check{u}\|_{W^{2,p}(D')} \leq \|w\|_{W^{2,p}(D')} + \|z\|_{W^{2,p}(D')} 
& \lesssim \|\check{g}\|_{L^p(D^{\prime\prime})} + \|z\|_{L^2(D^{\prime\prime})}
\lesssim \|\check{g}\|_{L^p(D^{\prime\prime\prime})} + \|\check{w}\|_{L^2(D^{\prime\prime})} + \|\check{u}\|_{L^2(D^{\prime\prime})} \\
\lesssim \|\check{g}\|_{L^p(D^{\prime\prime\prime})} + \|\check{u}\|_{L^2(D^{\prime\prime})}.  
\end{align*} 
\end{proof}
The following lemma is the key ingredient for introducing exponential weights using localization: 
\begin{lemma}[\protect{\cite[Lemma~{4.1}]{mazya-plamenevskii78-translated}}] 
\label{lemma:mazya-plamenevskii78-lemma-4.1}
Let $(\zeta_\ell)_{\ell \in \BbbZ}$ be a smooth partition of unity subordinate to the covering of $\BbbR$ by the intervals 
$[\ell-1,\ell+1]$, $\ell \in \BbbZ$. Let $\eta \in \BbbR$ be such that $\{\xi + i \eta\,|\, \xi \in \BbbR\} \cap \pm i \sigma^D  = \emptyset$. 
Let $\check{g} \in L^{p}(D) \cap L^2(D)$ satisfy $\operatorname{supp} \check{g} \subset [m-1,m+1] \times G$ for some $m \in \BbbZ$. 
Then there is $\varepsilon > 0$ such that the solution $\check{u}$ of (\ref{eq:8.6-Lp}) provided by Corollary~\ref{cor:strip-solution} 
satisfies 
\begin{align}
\label{eq:lemma:mazya-plamenevskii78-lemma-4.1-100}
\|e^{\eta t} \zeta_\ell \check{u}\|_{L^p(D)} \lesssim e^{-|m - \ell| \varepsilon} \|e^{\eta t} \check{g} \|_{L^p(D)}. 
\end{align}
\end{lemma}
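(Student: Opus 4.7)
The plan rests on two observations: first, Corollary~\ref{cor:strip-solution} supplies not just a single weighted $L^2$-stability estimate but a full $\varepsilon$-range of them around the line $\operatorname{Im}\zeta = \eta$; second, the mismatch between $L^2$ and $L^p$ can be handled locally via the regularity bound (\ref{eq:lemma:mazya-plamenevskii78-eq.4.6-10}). By the hypothesis on $\eta$ and the discreteness of $\sigma^D$, I first choose $\varepsilon > 0$ so small that the closed strip $\{\zeta \in \BbbC : |\operatorname{Im}\zeta-\eta|\leq\varepsilon\}$ is still disjoint from $\pm i\sigma^D$. Corollary~\ref{cor:strip-solution} (applied with its parameter equal to $-\beta$, so that the weight in the estimate becomes $e^{\beta t}$) then gives, uniformly in $\beta\in[\eta-\varepsilon,\eta+\varepsilon]$,
\begin{equation*}
\|e^{\beta t}\check{u}\|_{L^2(D)} \lesssim \|e^{\beta t}\check{g}\|_{L^2(D)} \lesssim e^{\beta m}\|\check{g}\|_{L^2(D)},
\end{equation*}
where the second inequality uses $\operatorname{supp}\check{g}\subset[m-1,m+1]\times G$. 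Since on $\operatorname{supp}\zeta_\ell\subset[\ell-1,\ell+1]$ the weight $e^{\eta t}$ is comparable to $e^{(\eta-\beta)\ell}e^{\beta t}$, one obtains
\begin{equation*}
\|e^{\eta t}\zeta_\ell\check{u}\|_{L^2(D)} \lesssim e^{\eta m+(\eta-\beta)(\ell-m)}\|\check{g}\|_{L^2(D)}.
\end{equation*}
Choosing $\beta=\eta+\varepsilon$ when $\ell\geq m$ and $\beta=\eta-\varepsilon$ when $\ell<m$, and recalling $\|e^{\eta t}\check{g}\|_{L^2(D)}\sim e^{\eta m}\|\check{g}\|_{L^2(D)}$, one lands at the $L^2$-version of the claim: $\|e^{\eta t}\zeta_\ell\check{u}\|_{L^2(D)} \lesssim e^{-\varepsilon|m-\ell|}\|e^{\eta t}\check{g}\|_{L^2(D)}$.

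To upgrade to $L^p$ I would pick a larger cut-off $\tilde\zeta_\ell\in C^\infty_0$ with $\tilde\zeta_\ell\equiv1$ on $\operatorname{supp}\zeta_\ell$ and $\operatorname{supp}\tilde\zeta_\ell\subset[\ell-2,\ell+2]$, apply (\ref{eq:lemma:mazya-plamenevskii78-eq.4.6-10}) with $k=0$, multiply through by $e^{\eta\ell}$ (which is comparable to $e^{\eta t}$ on $\operatorname{supp}\tilde\zeta_\ell$), and invoke the Sobolev embedding $W^{2,p}\hookrightarrow L^p$ on the left to arrive at
\begin{equation*}
\|e^{\eta t}\zeta_\ell\check{u}\|_{L^p(D)} \lesssim \|e^{\eta t}\tilde\zeta_\ell\check{g}\|_{L^p(D)} + \|e^{\eta t}\tilde\zeta_\ell\check{u}\|_{L^2(D)}.
\end{equation*}
The first right-hand term is bounded by $\|e^{\eta t}\check{g}\|_{L^p(D)}$ and vanishes unless $|m-\ell|\leq 3$; the second is bounded by the previous $L^2$-estimate applied with $\tilde\zeta_\ell$ in place of $\zeta_\ell$, yielding $\lesssim e^{-\varepsilon|m-\ell|}\|e^{\eta t}\check{g}\|_{L^2(D)}$. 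For $p\geq 2$, H\"older's inequality on the bounded support of $\check{g}$ gives $\|e^{\eta t}\check{g}\|_{L^2(D)}\lesssim\|e^{\eta t}\check{g}\|_{L^p(D)}$, which completes the proof in this range.

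The main obstacle is the range $p<2$, where $L^p$-norms on bounded sets do not control $L^2$-norms. The cleanest remedy is to prove an $L^p$-analog of Corollary~\ref{cor:strip-solution} directly, using the $L^p$-resolvent bound of Lemma~\ref{lemma:8.8-lp} together with the same contour-shift/Mellin-inversion scheme; because Plancherel is unavailable this step requires more care, but once in place the argument of the first paragraph can be repeated verbatim in $L^p$ and the upgrade step is unnecessary. An alternative is to close an $L^p$-bootstrap using the $L^p$-on-$L^p$ estimate (\ref{eq:lemma:mazya-plamenevskii78-eq.4.6-15}) over the partition $\{\zeta_k\}_{k\in\BbbZ}$, summing a geometric series whose ratio is again the spectral-gap factor $e^{-\varepsilon}$. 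Either route, which is the path followed by Maz'ya and Plamenevskii, reproduces the same exponential rate $\varepsilon$ already transparent in the $L^2$-derivation.
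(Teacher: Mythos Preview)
Your treatment of the case $p\ge 2$ is correct and is exactly the paper's argument: first derive the weighted $L^2$-decay from Corollary~\ref{cor:strip-solution} by sliding the weight to $\eta\pm\varepsilon$, then upgrade locally to $L^p$ via (\ref{eq:lemma:mazya-plamenevskii78-eq.4.6-10}), and finally use H\"older on $\operatorname{supp}\check g$ to replace $\|e^{\eta t}\check g\|_{L^2}$ by $\|e^{\eta t}\check g\|_{L^p}$.

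For $p<2$ you correctly locate the obstruction and point in the right direction, but you do not name the replacement for Plancherel, and your second suggested route does not close. The paper's tool is the Hausdorff--Young inequality. Concretely: on $\operatorname{supp}\zeta_\ell$ one has by H\"older in $t$ (using $p\le 2\le p'$)
\[
\|e^{\eta_1 t}\zeta_\ell\check u\|_{L^p(D)}^p \;\lesssim\; \int_G \|e^{\eta_1 t}\check u(x,\cdot)\|_{L^{p'}(\BbbR)}^p\,dx,
\]
and Hausdorff--Young applied to the inner norm gives $\|e^{\eta_1 t}\check u(x,\cdot)\|_{L^{p'}_t}\lesssim\|\widehat u(x,\cdot+i\eta_1)\|_{L^p_\xi}$. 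The $L^p$-resolvent bound of Lemma~\ref{lemma:8.8-lp} then yields $\|\widehat u(\cdot,\zeta)\|_{L^p(G)}\lesssim(1+|\zeta|^2)^{-1}\|\widehat g(\cdot,\zeta)\|_{L^p(G)}$, and since $\check g$ has compact $t$-support one controls $\|\widehat g(\cdot,\zeta)\|_{L^p(G)}$ pointwise in $\zeta$ by $\|e^{\eta_1 t}\check g\|_{L^p(D)}$. Integrating the factor $(1+|\zeta|^2)^{-p}$ over the line $\operatorname{Im}\zeta=\eta_1$ gives $\|e^{\eta_1 t}\zeta_\ell\check u\|_{L^p(D)}\lesssim\|e^{\eta_1 t}\check g\|_{L^p(D)}$ directly, and your weight-shifting trick ($\eta_1=\eta\pm\varepsilon$ according to the sign of $\ell-m$) finishes as before.

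Your proposed alternative---bootstrapping via (\ref{eq:lemma:mazya-plamenevskii78-eq.4.6-15}) and summing a geometric series---does not work here: that estimate bounds a localized $L^p$-norm of $\check u$ only in terms of a slightly larger localized $L^p$-norm of $\check u$, so there is nothing to sum against without first having some global $L^p$-control of $\check u$. In fact, the present lemma is precisely the input to the geometric-summation step in the subsequent Theorem~\ref{thm:mazya-plamenevskii78-thm-4.1}, so invoking that mechanism here would be circular.
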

\begin{proof}
We follow the proof of \cite[Lemma~{4.1}]{mazya-plamenevskii78-translated}. Let $\varepsilon'>0$ be so small that 
$\{\zeta \in \BbbC\,|\, \operatorname{Im} \zeta \in [\eta-\varepsilon',\eta+\varepsilon']\} \cap \pm i \sigma^D  = \emptyset$. 
We start by noting that $\check{g} \in L^2(D)$ implies that the 
solution $\check{u}$ given by Corollary~\ref{cor:strip-solution} 
satisfies for any $\eta_1 \in [\eta- \varepsilon',\eta+\varepsilon']$ 
\begin{align}
\label{eq:lemma:mazya-plamenevskii78-lemma-4.1-10}
\|\check{u}\|_{W^{2,2}_{\eta_1}(D)} \lesssim \|\check{g}\|_{W^{0,2}_{\eta_1}(D)}. 
\end{align}
Define 
\begin{equation*}
M_{p,\eta}:= \|e^{\eta t}\check{g}\|_{L^p(D)}. 
\end{equation*}
\emph{The case $p \ge 2$:} For any $\eta_1 \in (\eta - \varepsilon', \eta+\varepsilon')$ we estimate 
\begin{align*}
\left(\int_{\ell-1}^{\ell+1} \|e^{\eta t} \check{u}\|^2_{L^2(G)}\,dt\right)^{1/2} 
&\stackrel{(\ref{eq:lemma:mazya-plamenevskii78-lemma-4.1-10})}{ \lesssim } e^{(\eta - \eta_1)\ell} M_{2,\eta_1} 
 \stackrel{\text{$p \ge 2$, $\operatorname{supp} \check{g} \subset [m-1,m+1]$}}{\lesssim } e^{(\eta - \eta_1)\ell} M_{p,\eta_1} 
\lesssim e^{(\eta - \eta_1)(\ell-m)} M_{p,\eta}. 
\end{align*}
For $|\ell - m |\leq 2$, the estimate (\ref{eq:lemma:mazya-plamenevskii78-eq.4.6-10}) leads to 
\begin{align}
\label{eq:lemma:mazya-plamenevskii78-lemma-4.1-20}
\|e^{\eta t} \zeta_\ell \check{u}\|_{L^p(D)} \lesssim e^{(\eta - \eta_1)(\ell-m)} M_{p,\eta} + \left(\int_{\ell-2}^{\ell+2} \|e^{\eta t} \check{u}\|^2_{L^2(G)}\, dt\right)^{1/2}. 
\end{align}
For $|\ell - m| > 2$, the estimate (\ref{eq:lemma:mazya-plamenevskii78-lemma-4.1-20}) is even true without the term 
$e^{(\eta - \eta_1)(\ell-m)} M_{p,\eta}$ by the interior regularity arguments used in the proof of Lemma~\ref{lemma:mazya-plamenevskii78-eq.4.6}. 
Selecting $\eta_1=\eta+\varepsilon'$ for $m < \ell$ and $\eta_1 = \eta-\varepsilon'$ for $m > \ell$  shows 
(\ref{eq:lemma:mazya-plamenevskii78-lemma-4.1-100}). 

\emph{The case $p \leq 2$:} 
By the boundedness of $\zeta_\ell$ and the compact support of $\zeta_\ell$, the H\"older inequality gives with the conjugate exponent $p' = p/(p-1)$ 
\begin{align*}
\|e^{\eta_1 t} \zeta_\ell \check{u}\|^p_{L^p(D)} \lesssim \int_{G} \left(\int_{t \in \BbbR} |e^{\eta_1 t} \check{u}(x,t)|^{p'}\,dt\right)^{p/p'}\,dx
\end{align*}
Applying the Hausdorff-Young inequality to the inner integral yields for the (partial) Fourier transform $\widehat u(x,\zeta):={\mathcal F}_{t \rightarrow\zeta} \check{u}(x,t)$  
\begin{align}
\label{eq:lemma:mazya-plamenevskii78-lemma-4.1-30}
\|e^{\eta_1 t} \check{u}\|^p_{L^p(D)} &\lesssim \int_{\operatorname{Im} \zeta = \eta_1} \|\widehat{u}(\cdot,\zeta)\|^p_{L^p(G)} \,d\zeta. 
\end{align}
From Lemma~\ref{lemma:8.8-lp} we get with the Fourier transform $\widehat{g}$ of $\check{g}$ for $\operatorname{Im} \zeta = \eta_1$ 
\begin{align*}
\|\widehat{u}(\cdot,\zeta)\|^p_{L^p(G)} &\lesssim (1 + |\zeta|^2)^{-p} \|\widehat{g}(\cdot,\zeta)\|^p_{L^p(G)} 
\leq (1 + |\zeta|^2)^{-p} \int_{x \in G} \int_{t \in \BbbR} \left|e^{-i \zeta t} \check{g}(x,t)\right| \, dt\, dx  \\
&\stackrel{\operatorname{Im} \zeta = \eta_1}{\lesssim} (1 + |\zeta|^2)^{-p} \int_{t \in \BbbR} \|e^{\eta_1 t} \check{g}(\cdot,t)\|^p_{L^p(G)}. 
\lesssim (1 + |\zeta|^2)^{-p} \|e^{\eta_1 t} \check{g}\|^p_{L^p(D)}. 
\end{align*}
Inserting this in (\ref{eq:lemma:mazya-plamenevskii78-lemma-4.1-30}) yields in view of $p > 1$ 
\begin{align*}
\|e^{\eta_1 t} \check{u}\|^p_{L^p(D)} &\lesssim \int_{\operatorname{Im} \zeta = \eta_1} 
(1 + |\zeta|^2)^{-p} \|e^{\eta_1 t} \check{g}\|^p_{L^p(D)}
\,d\zeta 
\lesssim \|e^{\eta_1 t} \check{g}\|^p_{L^p(D)}. 
\end{align*}
In total, we have arrived at 
\begin{align*}
\|e^{\eta_1 t} \zeta_\ell \check{u}\|_{L^p(D)} \lesssim \|e^{\eta_1 t} \check{g}\|_{L^p(D)} \lesssim M_{p,\eta_1}. 
\end{align*}
In turn, this implies in view of the support properties of $\zeta_\ell$ and $\check{g}$ 
\begin{align*}
\|e^{\eta t} \zeta_\ell \check{u}\|_{L^p(D)} &\lesssim e^{(\eta-\eta_1) \ell} \|e^{\eta_1 t} \zeta_\ell \check{u}\|_{L^p(D)} 
\lesssim e^{(\eta - \eta_1)\ell} M_{p,\eta_1} \lesssim e^{(\eta - \eta_1)(\ell-m)} M_{p,\eta}. 
\end{align*}
Again, selecting $\eta_1 = \eta + \varepsilon'$ for $m < \ell$ and $\eta_1 = \eta - \varepsilon'$ for $m \ge \ell$ yields 
the desired result. 
\end{proof}
We now come to the analog of Corollary~\ref{cor:strip-solution}: 
\begin{theorem}[\protect{\cite[Thm.~{4.1}]{mazya-plamenevskii78-translated}}]
\label{thm:mazya-plamenevskii78-thm-4.1}
Let $k \in \BbbN_0$, $\eta \in \BbbR$ be such that $\{\xi + i \eta\,|\, \xi \in \BbbR\} \cap \pm i \sigma^D  = \emptyset$ and $p \in (1,\infty)$. 
Then, for every $\check{g} \in W^{k,p}_\eta(D)$ there is a unique solution $\check{u} \in W^{k+2,p}_{\eta}(D) $ of (\ref{eq:8.6-Lp}) with the 
\sl{a priori} estimate 
\begin{align}
\label{eq:thm:mazya-plamenevskii78-thm-4.1-10}
\|\check{u}\|_{W^{k+2,p}_{\eta}(D)} \lesssim \|\check{g}\|_{W^{k,p}_{\eta}(D)}. 
\end{align}
\begin{proof}
We restrict to the case $k = 0$. By density, it suffices to assume that $\check{g} \in W^{k,p}_\eta(D) \cap W^{k,2}_\eta(D)$. Let ${\mathcal R}$
be the solution operator $\check{g} \mapsto \check{u}$ from Corollary~\ref{cor:strip-solution}. With a smooth and uniformly (in $\ell$) bounded partition of unity $(\zeta_\ell)_{\ell}$ 
subordinate to the the partitioning of $\BbbR$ by intervals $[\ell-1,\ell+1]$, we can define the solution $\check{u} \in W^{k+2,2}_\eta(D)$ as 
\begin{equation*}
\check{u} = {\mathcal R} \check{g} = \sum_{\ell,m} \zeta_\ell {\mathcal R} (\zeta_m \check{g}). 
\end{equation*}
\emph{Step 1:}
The map given by $(u_m)_{m \in \BbbZ} \mapsto (\sum_{m} e^{-|\ell - m|\varepsilon} u_m)_{\ell \in \BbbZ}$ is a bounded, linear 
maps $\ell^p \rightarrow \ell^p$ 
for $ p \in [1,\infty]$ if $\varepsilon > 0$.  This follows from Young's convolution theorem. Alternatively, one may consider 
the extremal cases $p=1$ and $p=\infty$
\begin{align*}
p&=1\colon  &\qquad 
\sum_{\ell} |\sum_{m} e^{-|\ell-m|\varepsilon} u_m| & \leq 
\sum_{m} |u_m| \sum_{\ell} e^{-|\ell-m|\varepsilon}  \lesssim \|(u_m)_m\|_{\ell^1} , \\
p&=\infty\colon & \qquad 
\sup_{\ell} |\sum_{m} e^{-|\ell-m| \varepsilon} u_m| &\lesssim 
\sup_{\ell} \|(u_m)_m\|_{\ell^\infty} \sum_{m} e^{-|\ell-m|\varepsilon} \lesssim \|(u_m)_m\|_{\ell^\infty}. 
\end{align*}
Interpolating between $p=1$ and $p=\infty$ concludes the proof. 

\emph{Step 2:} We claim that 
\begin{align}
\label{eq:thm:mazya-plamenevskii78-thm-4.1-20}
\|e^{\eta t} \check{u}\|_{L^p(D)} \lesssim \|e^{\eta t} \check{g}\|_{L^p(D)}. 
\end{align}
To see this, we note that the functions $\zeta_m \check{g}$ have support properties as required in Lemma~\ref{lemma:mazya-plamenevskii78-lemma-4.1}. 
We therefore conclude from Lemma~\ref{lemma:mazya-plamenevskii78-lemma-4.1} that 
\begin{align}
\label{eq:thm:mazya-plamenevskii78-thm-4.1-30}
\|e^{\eta t} \zeta_\ell \check{u} \|_{L^p(D)} \lesssim e^{-|m - \ell|\varepsilon} \|e^{\eta t} \zeta_m \check{g}\|_{L^p(D)}. 
\end{align}
Since $(\zeta_\ell)_\ell$ is a partition of unity with the finite the overlap property
$\sup_{t \in \BbbR} \operatorname{card} \{\ell \in \BbbZ\,|\, \zeta_\ell(t) \ne 0\} \leq 3$, we get,  
denoting by $\chi_A$ the characteristic function of the set $A$, 
\begin{align*}
\|e^{\eta t} \check{u} \|^p_{L^p(D)} & = 
\int_{D} \left| \sum_{\ell} \chi_{\{\zeta_\ell \ne 0\}} \cdot  \zeta_\ell e^{\eta t} \check{u}\right|^p 
\leq \int_D \left( \sum_{\ell} \chi_{\{\zeta_\ell \ne 0\}}^{p'} \right)^{p/p'} \sum_{\ell} |\zeta_\ell e^{\eta t} \check{u} |^p
\leq 3^{p/p'}
\|(\|e^{\eta t} \zeta_\ell \check{u}\|_{L^p(D)})_\ell\|^p_{\ell^p}. 
\end{align*}
(\ref{eq:thm:mazya-plamenevskii78-thm-4.1-30}) and Step~1 then imply 
\begin{align*}
\|e^{\eta t} \check{u} \|_{L^p(D)} & 
\lesssim \| (\|e^{\eta t} \zeta_m \check{g}\|_{L^p(D)} )_m\|_{\ell^p} 
\lesssim \|e^{\eta t} \check{g}\|_{L^p(D)}, 
\end{align*}
where we used again the support properties of $(\zeta_\ell)_\ell$. The proof (\ref{eq:thm:mazya-plamenevskii78-thm-4.1-20}) is complete. 

\emph{Step 3:} 
From (\ref{eq:lemma:mazya-plamenevskii78-eq.4.6-15}) in Lemma~\ref{lemma:mazya-plamenevskii78-eq.4.6} we infer 
\begin{align*}
\|e^{\eta t} (\zeta_\ell \check{u}) \|_{W^{k+2,p}(D)} 
\lesssim \|e^{\eta t} \zeta_\ell^\prime \check{g}\|_{W^{k,p}(D)} + \|e^{\eta t} \zeta_\ell^\prime \check{u}\|_{L^p(D)}, 
\end{align*}
for a cut-off function $\zeta_\ell$ with slightly larger support than $\zeta_\ell$. Combining these local estimates produces 
\begin{align*}
\|e^{\eta t} \check{u}\|_{W^{k+2,p}(D)} \lesssim \|e^{\eta t} \check{g}\|_{w^{k,p}(D)} + \|e^{\eta t} \check{u}\|_{L^p(D)}.  
\end{align*}
Inserting the estimate from Step~3, we get 
\begin{align*}
\|\check{u}\|_{W^{k+2,p}_\eta(D)} \lesssim \|\check{g}\|_{W^{k,p}(D)}. 
\end{align*}
\emph{Step 4:} We now show uniqueness in $W^{k+2,p}_\eta(D)$ of the solution (\ref{eq:8.6-Lp}). Let $u \in W^{k+2,p}_{\eta}(D)$
solve (\ref{eq:8.6-Lp}) with $\check{g} = 0$. Hence, $\check{u}$ is smooth up to the boundary and thus locally in any Sobolev space. 
Define the anuli $D_M:=\{(x,t) \in D\,|\, M < t < M+1\}$ cut-off functions $\varphi_M \in C^\infty_0(\BbbR)$ with 
$\varphi_M \equiv 1$ for $|t| \leq M$ and (uniformly in $M$) bounded derivatives. Define 
$\widetilde D_M:= \left(\overline{D}_{M-1} \cup \overline{D}_{M} \cup \overline{D}_{M+1}\right)^\circ$ 
and 
$D^\prime_M:= \left(\overline{D}_{M-2} \overline{D}_{M-1} \cup \overline{D}_{M} \cup \overline{D}_{M+1} \overline{D}_{M+2}\right)^\circ$. 
From (\ref{eq:lemma:mazya-plamenevskii78-eq.4.6-15}) and $-\Delta u = 0$, we infer for any $k$,
\begin{equation}
\label{eq:thm:mazya-plamenevskii78-thm-4.1-50}
\|u\|_{W^{k,p}(\widetilde{D}_M)} \lesssim \|u\|_{L^p(D^\prime_M)}. 
\end{equation}
The function $u_M:= \varphi_M u$ satisfies 
\begin{align*}
-\Delta u_M & = -2 \nabla \varphi_M \cdot \nabla u - \Delta \varphi_M u =:\check{g}_M, \quad u_M = 0 \quad \mbox{ on $\partial D$}. 
\end{align*} 
We may select by Sobolev's embedding theorem $k$ with $W^{k,p}(\widetilde{D}_M) \subset L^2(\widetilde{D}_M)$ (with embedding constant independent of $M$). 
Thus, with (\ref{eq:thm:mazya-plamenevskii78-thm-4.1-50}) we get 
\begin{align*}
\|e^{\eta t} \check{g}_M \|_{L^{2}(D)} \lesssim \|e^{\eta t} u\|_{L^p(D^\prime_M)}. 
\end{align*}
We conclude from the $L^2$-theory of Corollary~\ref{cor:strip-solution}
$$
\|u_M\|_{W^{2,2}_\eta (D)} \lesssim \|e^{\eta t} \check{g}_M\|_{L^2(D)} \lesssim \|e^{\eta t} u\|_{L^p(D^\prime_M)}. 
$$
Since $e^{\eta t} u \in L^p(D)$ (by $u \in W^{k+2,p}_\eta(D)$)  the right-hand side tends to zero as $M \rightarrow \infty$. We conclude that 
$u = 0$ pointwise on $D$. Thus $u = 0$. 
\end{proof}
\end{theorem}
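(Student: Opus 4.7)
My plan is to reduce the theorem to a sequence of local estimates that are glued together via a partition of unity on the strip $D$, exploiting the exponential decay established in Lemma~\ref{lemma:mazya-plamenevskii78-lemma-4.1}. The $L^2$-based theory of Corollary~\ref{cor:strip-solution} supplies a candidate solution; the real work is upgrading its regularity from weighted $L^2$ to weighted $W^{k+2,p}$. By density I may first assume $\check g \in W^{k,p}_\eta(D) \cap W^{k,2}_\eta(D)$, so that $\check u := {\mathcal R} \check g \in W^{k+2,2}_\eta(D)$ is well-defined by Corollary~\ref{cor:strip-solution}, and it suffices to prove the claimed \emph{a priori} bound together with uniqueness in $W^{k+2,p}_\eta(D)$.

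For the \emph{a priori} estimate, I would fix a smooth partition of unity $(\zeta_\ell)_{\ell \in \BbbZ}$ subordinate to the cover of $\BbbR$ by intervals $[\ell-1, \ell+1]$, decompose $\check g = \sum_m \zeta_m \check g$, and write $\check u = \sum_\ell \zeta_\ell \check u = \sum_{\ell,m} \zeta_\ell\, {\mathcal R}(\zeta_m \check g)$. Lemma~\ref{lemma:mazya-plamenevskii78-lemma-4.1} then yields, for some $\varepsilon > 0$ depending on $\eta$ and the spectrum $\pm i\sigma^D$,
\begin{equation*}
\|e^{\eta t} \zeta_\ell\, {\mathcal R}(\zeta_m \check g)\|_{L^p(D)} \lesssim e^{-\varepsilon|\ell - m|}\, \|e^{\eta t} \zeta_m \check g\|_{L^p(D)}.
\end{equation*}
Summing in $\ell$ reduces to a discrete convolution with the kernel $(e^{-\varepsilon|j|})_{j \in \BbbZ} \in \ell^1(\BbbZ)$; Young's convolution inequality (or direct interpolation between $\ell^1$ and $\ell^\infty$) then gives $\|e^{\eta t}\check u\|_{L^p(D)} \lesssim \|e^{\eta t}\check g\|_{L^p(D)}$ after exploiting the finite overlap of $(\zeta_\ell)_\ell$ to recombine the local $L^p$-norms. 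Finally, the local elliptic estimate \eqref{eq:lemma:mazya-plamenevskii78-eq.4.6-15} of Lemma~\ref{lemma:mazya-plamenevskii78-eq.4.6} applied on each strip $[\ell-1,\ell+1] \times G$ upgrades the $L^p$-bound on $\check u$ to the desired $W^{k+2,p}$-bound, after combining local estimates (which again uses finite overlap and is compatible with the exponential weight since $e^{\eta t}$ has bounded variation on each unit interval).

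For uniqueness, suppose $\check u \in W^{k+2,p}_\eta(D)$ solves the homogeneous problem. Then $\check u$ is smooth by interior regularity, and truncating with a cutoff $\varphi_M$ supported in $|t| \leq M+1$ produces $u_M := \varphi_M \check u$ which satisfies an equation with right-hand side $\check g_M := -2\nabla \varphi_M \cdot \nabla \check u - \Delta \varphi_M \cdot \check u$ supported in the annular piece $M - 1 \leq |t| \leq M + 1$. By Sobolev embedding on unit strips the weighted $L^p$-norm of $\check g_M$ controls its weighted $L^2$-norm, and Corollary~\ref{cor:strip-solution} then bounds $\|u_M\|_{W^{2,2}_\eta}$ by $\|e^{\eta t}\check u\|_{L^p}$ on the annular set, which tends to $0$ as $M \to \infty$ since $e^{\eta t}\check u \in L^p(D)$.

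The main obstacle I foresee is the exponential-decay step encapsulated in Lemma~\ref{lemma:mazya-plamenevskii78-lemma-4.1}, which is the only place where the spectral assumption $\{\operatorname{Im}\zeta = \eta\} \cap \pm i\sigma^D = \emptyset$ enters and where the $L^p$-theory must genuinely interact with the $L^2$-based Mellin/Fourier analysis. The technical point is that Parseval is unavailable in $L^p$, so one must shift the Fourier inversion contour by a small $\varepsilon'$ on each side of $\{\operatorname{Im}\zeta = \eta\}$ and translate the resulting exponential weight shift into geometric decay across the partition of unity, handling the cases $p \geq 2$ and $p < 2$ separately via Hausdorff-Young and the parameter-dependent estimate of Lemma~\ref{lemma:8.8-lp}. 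Once that decay is in hand, the summation argument above is essentially mechanical.
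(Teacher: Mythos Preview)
Your proposal is correct and follows essentially the same route as the paper's proof: density reduction to $W^{k,p}_\eta \cap W^{k,2}_\eta$, construction of $\check u$ via the $L^2$-based solution operator of Corollary~\ref{cor:strip-solution}, the exponential-decay estimate of Lemma~\ref{lemma:mazya-plamenevskii78-lemma-4.1} combined with Young's inequality on $\ell^p(\BbbZ)$ to obtain the weighted $L^p$-bound, upgrade to $W^{k+2,p}$ via the local elliptic estimate \eqref{eq:lemma:mazya-plamenevskii78-eq.4.6-15}, and uniqueness by truncation and the $L^2$-theory. One minor point in your uniqueness sketch: the phrase ``Sobolev embedding on unit strips the weighted $L^p$-norm of $\check g_M$ controls its weighted $L^2$-norm'' is only immediate for $p \ge 2$; for $p < 2$ the paper first invokes interior regularity \eqref{eq:lemma:mazya-plamenevskii78-eq.4.6-15} on the harmonic $\check u$ to gain enough derivatives that $W^{k,p} \subset L^2$ applies, which you implicitly have available since you already noted $\check u$ is smooth.
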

%-----------------------------------------------------
\subsection{Solution decomposition in a cone} 
%-----------------------------------------------------
For the relation between the spaces on the cone ${\mathcal C}$ and the strip $D$, we have the analog of Lemma~\ref{lemma:uKsgamma-Mu}: 
\begin{lemma} 
\label{lemma:uKsgamma-Mu-Lp}
For $s \in \BbbN_0$, $\gamma \in \BbbR$, $p \in (1,\infty)$ there holds with $d = 2$
\begin{align}
u \in K^s_\gamma(\mathcal{C}) \quad \Longleftrightarrow \quad e^{t (\gamma - s + d/p)} \check{u}  \in W^{s,p}(D). 
\end{align}
Here, $u$ and $\check{u}$ are related via the Euler transformation $r  = e^t$. 
\end{lemma}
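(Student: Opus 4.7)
The plan is to prove the lemma by essentially the same change-of-variables argument used for the $L^2$-version in Lemma~\ref{lemma:uKsgamma-Mu}, adapting only the Jacobian-and-weight bookkeeping to the $L^p$ setting. The key algebraic input is Lemma~\ref{lemma:xrt-change-of-variables}, which provides two inverse expansions: for $|\alpha|\ne 0$,
\begin{equation*}
D^\alpha_x u = e^{-|\alpha| t}\sum_{0<|\beta|\le |\alpha|} d_{\alpha\beta}(\theta)\,D^\beta_{t,\theta}\check u,
\qquad
D^\beta_{t,\theta}\check u = \sum_{0<|\alpha|\le |\beta|} r^{|\alpha|} d^\star_{\alpha\beta}(\theta)\,D^\alpha_x u,
\end{equation*}
together with the trivial case $\alpha=\beta=0$, $D^0_x u=u$ and $\check u(t,\theta)=u(e^t\cos\theta,e^t\sin\theta)$. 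These formulas reduce the equivalence of norms to a pointwise, weight-bearing equivalence of the sums.

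For the forward direction I would take $u\in K^s_\gamma({\mathcal C})$ and insert the first expansion into the definition
\begin{equation*}
\|u\|_{K^s_\gamma({\mathcal C})}^p = \sum_{|\alpha|\le s}\int_G\int_0^\infty r^{p(|\alpha|-s+\gamma)}|D^\alpha_x u|^p\,r\,dr\,d\theta.
\end{equation*}
Substituting $r=e^t$ (so $dx=r\,dr\,d\theta=e^{dt}\,dt\,d\theta$ with $d=2$), using $|D^\alpha_x u|^p\lesssim e^{-p|\alpha|t}\sum_{|\beta|\le|\alpha|}|D^\beta_{t,\theta}\check u|^p$, and cancelling the powers of $e^{|\alpha|t}$ reduces the integrand to
\begin{equation*}
e^{pt(\gamma-s)}\,e^{dt}\sum_{|\beta|\le|\alpha|}|D^\beta_{t,\theta}\check u|^p
= e^{pt(\gamma-s+d/p)}\sum_{|\beta|\le|\alpha|}|D^\beta_{t,\theta}\check u|^p.
\end{equation*}
Summing in $\alpha$ and recognising the right-hand side as (a polynomial in) the $W^{s,p}(D)$-norm of $e^{t(\gamma-s+d/p)}\check u$ gives the desired bound. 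The backward direction is symmetric: one expands $D^\beta_{t,\theta}\check u$ via the second formula of Lemma~\ref{lemma:xrt-change-of-variables}, and the factor $r^{|\alpha|}$ exactly restores the $r^{|\alpha|-s+\gamma}$-weight in the Cartesian norm after the Jacobian $e^{dt}\,dt\,d\theta$ is absorbed.

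The only delicate points are bookkeeping rather than analytic. First, the expansions in Lemma~\ref{lemma:xrt-change-of-variables} are restricted to $|\alpha|,|\beta|\ne 0$, so the case $s=0$ (and more generally the $\alpha=0$ term in any sum) must be handled separately, but there one has simply $|u|^p\,r\,dr\,d\theta = e^{dt}|\check u|^p\,dt\,d\theta$, yielding the correct weight $e^{pt(\gamma+d/p)}$ directly. Second, care is needed to check that the derivative orders match: a term $D^\beta_{t,\theta}\check u$ with $|\beta|\le|\alpha|\le s$ appears in the $W^{s,p}(D)$-norm, and conversely every $D^\beta_{t,\theta}\check u$ with $|\beta|\le s$ is controlled by Cartesian derivatives $D^\alpha_x u$ with $|\alpha|\le|\beta|\le s$. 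The main (mild) obstacle is thus to verify that after these substitutions the two finite sums of weighted $L^p$-integrals are pointwise equivalent up to the smooth, bounded coefficients $d_{\alpha\beta}(\theta)$, $d^\star_{\alpha\beta}(\theta)$; once this is done, the equivalence of norms — and hence the equivalence of membership in the two spaces — follows immediately.
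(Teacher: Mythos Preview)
Your proposal is correct and is exactly the approach the paper takes: its proof consists of the single line ``Analogous to the proof of Lemma~\ref{lemma:uKsgamma-Mu}'', and you have spelled out precisely that analogy, adjusting the Jacobian bookkeeping from $d/2$ to $d/p$. The only minor point worth flagging is that recognising $\sum_{|\beta|\le s}\int e^{pt(\gamma-s+d/p)}|D^\beta_{t,\theta}\check u|^p$ as equivalent to $\|e^{t(\gamma-s+d/p)}\check u\|_{W^{s,p}(D)}^p$ requires a Leibniz-rule step (derivatives of the exponential weight produce only lower-order terms with the same weight), but this is standard and the paper's $L^2$ proof glosses over it as well.
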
 
\begin{proof}
Analogous to the proof of Lemma~\ref{lemma:uKsgamma-Mu}. 
\end{proof}
\begin{numberedproof}{of Proposition~\ref{prop:Lp}}
One essentially repeats the arguments of Proposition~\ref{prop:solution_u1}. 
Note that $\operatorname{supp} f \subset B_1(0)$ by the support properties of $u_1$. 

\emph{Step 1:} We show that we may assume $f \in W^{k,p} \cap C^\infty(\overline{{\mathcal C}})$ together with $\operatorname{supp} f \subset B_{2}(0)$
and the conditions 
\begin{equation}
\label{eq:proof-of-prop-Lp-10}
\partial^i_x \partial^j_y f(0) = 0, 
\qquad i+j \leq k-2/p. 
\end{equation}

We start by noting that for any finite cone ${\mathcal C}_r$, $r > 1$, the solution $u_1$ (restricted to ${\mathcal C}_r$) 
is the solution of the Dirichlet problem
\begin{align}
\label{eq:proof-of-prop-Lp-20}
-\Delta u_1 & = f \quad \mbox{ on ${\mathcal C}_r$}, \qquad u_1 = 0 \quad \mbox{ on $\partial {\mathcal C}_r$}. 
\end{align}
%To clarify the presentation, we denote the solution of (\ref{eq:proof-of-prop-Lp-20}) 
%(which is defined on ${\mathcal C}_r$ only) by $u^\prime_1$ so that $u_1|_{{\mathcal C}_r} = u^\prime_1$. 
Next, we approximate $f \in W^{k,p}({\mathcal C})$ by a function $\widetilde f \in C^\infty(\overline{\mathcal{C}})$. We may assume that $\widetilde{f} = 0$ 
outside ${\mathcal C}_{1+\varepsilon}$, $\varepsilon > 0$, and we may assume that $\widetilde{f}$ satisfies 
(\ref{eq:proof-of-prop-Lp-10}) since the point evaluations in (\ref{eq:proof-of-prop-Lp-10}) are continuous on $W^{k,p}$ by assumption and we thus 
may subtract a suitable polynomial.  Let $\widetilde{u}_1$ solve  
\begin{align}
\label{eq:proof-of-prop-Lp-30}
-\Delta \widetilde{u}_1 & = f \quad \mbox{ on ${\mathcal C}_r$}, \qquad \widetilde{u}_1 = 0 \quad \mbox{ on $\partial {\mathcal C}_r$}. 
\end{align}
Then, 
$$\|u_1 - \widetilde{u}_1\|_{W^{1,p}({\mathcal C}_r)} \lesssim \|f - \widetilde{f}\|_{L^p({\mathcal C}_r)} 
$$ 
by, e.g., \cite[Thm.~{7.1}]{giaquinta-martinazzi12}. (The condition that $\mathcal{C}_r$ be bilipschitz equivalent to a cube may be addressed
by replacing ${\mathcal C}_r$ by a curvilinear quadrilateral with an additional corner on the curved part of $\partial{\mathcal C}_r$.) 
Next, for a cut-off function $\widetilde{\chi} \in C^\infty_0(B_r)$ with $\widetilde{\chi} \equiv 1$ on $B_{1+2 \epsilon}(0)$, we consider 
$\widehat{u}_1:= \widetilde{\chi} \widetilde{u}_1$, which solves on ${\mathcal C}$ 
\begin{align*}
-\Delta \widehat{u}_1 & = \widetilde{\chi} \widetilde{f} - 2 \nabla \widetilde{\chi} \cdot \nabla \widetilde{u}_1 - \Delta \widetilde{\chi} \widetilde{u}_1 
=: \widehat{f}. 
\end{align*}
We note that 
\begin{enumerate}
\item $\widehat{f} \in C^\infty(\overline{\mathcal{C}}) \cap W^{k,p}({\mathcal C})$ 
and satisfies the condition (\ref{eq:proof-of-prop-Lp-10});  
\item $\widehat{u}_1 = \widetilde{u}_1$ near the origin; 
\item $\widetilde{u}_1$ can be assumed smooth on $\mathcal{C}_r \setminus \mathcal{C}_{1+\epsilon}$  and there $\widetilde{u}_1$ 
is controlled by $\|\widetilde{f}\|_{L^p}$. 
\end{enumerate} 
We conclude that $\widehat{u}_1$ corresponds to an $\widehat{f} \in W^{k,p}(\mathcal{C}) \cap C^\infty(\overline{\mathcal{C}})$ and, as $\widetilde{f} \rightarrow f$, it approaches 
$u_1$ on ${\mathcal C}_{1+2 \epsilon}$. 

\emph{Step 2:} By Step~1, we may assume $f \in W^{k,p}(\mathcal{C}) \cap C^\infty(\overline{\mathcal C})$. Hence, the representation formulas from
Proposition~\ref{prop:solution_u1} are valid. In particular, the Mellin transform of $f$ is meromorphic with possible poles 
at $\{- i m\,|\, m \in \BbbN_0, m \ge \lfloor k-2/p\rfloor\}$. We may therefore define $u_0$ as in the proof of Proposition~\ref{prop:solution_u1}
by the inverse Fourier transformation on the line $\operatorname{Im} \zeta = - (\gamma - s + 2/p)$ with $\gamma = 0$ and $s = k+2$. 
Theorem~\ref{thm:mazya-plamenevskii78-thm-4.1} and Lemma~\ref{lemma:uKsgamma-Mu-Lp} provide
\begin{align*}
\|u_0\|_{K^{k+2,p}_0(\mathcal{C})} & \sim \| \check{u}\|_{W^{k+2,p}_{-(k+2)+2/p}(D)} \lesssim \|\check{g} \|_{W^{k,p}_{-(k+2)+2/p}(D)} 
 = \|\check{f}\|_{W^{k,p}_{-k+2/p}(D)} \sim \|f\|_{K^{k,p}_{0}(\mathcal{C})} \lesssim \|f\|_{W^{k,p}(\mathcal{C}) }. 
\end{align*}
The solution $u_1$ can be represented as the inverse Mellin transform on the line $\operatorname{Im} \zeta = 0$. By the residue theorem, we conclude 
the stated representation (\ref{eq:representation-formula-Lp-1}).  

It is worth pointing out that Lemma~\ref{lemma:embedding-Wkp} shows that the 
functionals $f \mapsto \int_{\mathcal{C}} r^{-\lambda^D_j} f\, d\mathbf{x}$ are continuous on 
$\{f \in W^{k,p}({\mathcal C})\,|\, \partial^i_x \partial^j f(0) = 0 \quad \mbox{ for $i+j\leq k-2/p$}\}$. 
\end{numberedproof}

\fi 
%--------------------------------------
%\input{ausgelagertes_material}
%--------------------------------------
%\subsection*{Acknowledgement}
%JMM acknowledges support by the Austrian Science Fund (FWF) project \href{https://doi.org/10.55776/F65}{10.55776/F65} and CR support
%by the FWF under project P 28367-N35. 
%--------------------------------------
\bibliographystyle{abbrv}
\bibliography{maxwell}
%--------------------------------------
{\small
{\em Authors' addresses}:
{\em Jens Markus Melenk}, {\em Claudio Rojik}, Technische Universit\"at Wien, Vienna, Austria
 e-mail: \texttt{melenk@\allowbreak tuwien.ac.at}.

\end{document}